\numberwithin{equation}{section}
\theoremstyle{plain}
\newtheorem{cor}[equation]{Corollary}
\newtheorem{lem}[equation]{Lemma}
\newtheorem*{thm*}{Theorem}
\newtheorem*{cor*}{Corollary}
\newtheorem*{lem*}{Lemma}
\newtheorem*{prop*}{Proposition}
\newtheorem*{ax*}{Axiom}
\theoremstyle{definition}
\newtheorem{defn}[equation]{Definition}
\newtheorem*{defn*}{Definition}
\newtheorem*{problem*}{Problem}
\newtheorem{ex}[equation]{Example}
\newtheorem*{ex*}{Example}
\newtheorem*{exs*}{Examples}
\newtheorem{rem}[equation]{Remark}
\newtheorem*{rem*}{Remark}
\newtheorem*{aside*}{Aside}
\newtheorem*{intuition*}{Intuition}
\newtheorem*{notn*}{Notation}
\newtheorem*{conv*}{Convention}
\newtheorem*{interp*}{Interpretation}
\newtheorem*{mnem*}{Mnemonic}
\newtheorem*{exer*}{Exercise}
\newtheorem*{conjecture*}{Conjecture}
\newcommand{\beginex}{\begin{ex}$\rhd$ }
\newcommand{\mendex}{\hfill$\lhd$\end{ex}}
\newcounter{ppart}
\def\Z{\mathbb{Z}}
\newcommand{\dd}{\mathrm{d}}
\titleformat{\chapter}{}{}{0em}{\bf\huge\thechapter.~}
\newcommand*{\toccontents}{\@starttoc{toc}}
\newcommand\blfootnote[1]{%
  \begingroup
  \renewcommand\thefootnote{}\footnote{#1}%
  \addtocounter{footnote}{-1}%
  \endgroup
}
\renewcommand{\arraystretch}{1.2}
\newcommand{\ra}[1]{\renewcommand{\arraystretch}{#1}}
\DeclareMathOperator{\disc}{disc}
\renewcommand\Re{\operatorname{Re}}
\renewcommand\Im{\operatorname{Im}}
\newcommand{\bz}{\begin{itemize}}
\newcommand{\ez}{\end{itemize}}
\newcommand{\ep}{\varepsilon}
\newcommand{\mb}{\mathbb}
\def\lb{\langle}
\def\rb{\rangle}
\def\Z{\mb{Z}}
\def\C{\mb{C}}
\def\R{\mb{R}}
\def\Q{\mb{Q}}
\def\N{\mb{N}}
\def\O{\mathcal{O}}
\begin{document}

\author{Dustin Hinkel}
\title{Constructing Simultaneous Diophantine Approximations of Certain Cubic Numbers}
\date{}
%\makeindex
\maketitle

\blfootnote{{\em Date:} December~8, 2014}

\begin{abstract}
For $K$ a cubic field with only one real embedding and $\alpha,\beta\in K$, we show how to construct an increasing sequence $\{m_n\}$ of positive integers and a subsequence $\{\psi_n\}$ such that (for some constructible constants $C_1,C_2>0$) 
$%\begin{equation*}
\max\{\|m_n\alpha\|,\|m_n\beta\|\}<\frac{C_1}{m_n^{1/2}} 
$ %\end{equation*}
and 
$%\begin{equation*}
\|\psi_n\alpha\|<\frac{C_2}{\psi_n^{1/2}\log \psi_n} 
$ %\end{equation*}
for all $n$.
As a consequence, we have 
$%\begin{equation*}
\psi_n\|\psi_n\alpha\|\|\psi_n\beta\|<\frac{C_1 C_2}{\log \psi_n}, 
$ %\end{equation*}
thus giving an effective proof of Littlewood's conjecture for the pair $(\alpha,\beta)$. Our proofs are elementary and use only standard results from algebraic number theory and the theory of continued fractions.

\end{abstract}

\toccontents

%\tableofcontents

%\setlength{\parindent}{0cm}

%% ================================================================

\chapter{Introduction}
\label{chap:intro}
\section{Background and Context}

Littlewood conjectured that for real numbers $\alpha$ and $\beta$,
\begin{equation}\label{LWconj}
\liminf_{n\to\infty} n\|n\alpha\|\|n\beta\|=0,
\end{equation}
where $\|x\|=|x-\lfloor x+\frac{1}{2} \rfloor|$ denotes the
distance from $x$ the nearest integer. It is straightforward\footnote{See Remark~\ref{rem:1abdependent} for example.} to show this is true if $1,\alpha,\beta$ are $\Q$-linearly dependent. From the theory of continued fractions, we know that the conjecture is true if either $\alpha$ or $\beta$ has unbounded partial quotients in its continued fraction expansion. Although almost all real numbers have unbounded partial quotients (with respect to Lebesgue measure -- see Theorem 29 of \cite{bib:Kh}), we actually have very few examples\footnote{
Most of these involve values of the exponential function, for instance
 (see \cite{bib:Le1}, \cite{bib:Sh}, \cite{bib:Th}, \cite{bib:RS})
\begin{gather*}
e=[2;1,2,1,1,4,1,1,6,1,\dots]=[2;\overline{1,2n,1}]_{n=1}^\infty,\\
e^{1/k}=[1;\overline{(2n-1)k-1,1,1}]_{n=1}^\infty,\\
\coth(1/k)=\frac{e^{2/k}+1}{e^{2/k}-1}=[k;3k,5k,7k,9k,\dots]
\end{gather*}}
of common numbers which are known to have unbounded partial quotients. For instance, we don't know whether the partial quotients are bounded or unbounded for $\pi$ or for any specific non-quadratic algebraic numbers.

Cassels and Swinnerton-Dyer had the first major result with their 1955 paper \cite{bib:CaSw-Dy}. They showed that
 the conjecture holds for the pair $(\alpha,\beta)$ when $\alpha,\beta\in\R$ are both in the same cubic field. Their proof involved showing that
\begin{equation*}\label{result:casw-dy}
\inf_{m,n\neq 0}|mn|\|m\alpha+n\beta\|=0,
\end{equation*}
and then showing that this implies \eqref{LWconj}.

Peck showed a slightly stronger result in 1961 with \cite{bib:Pe}. He showed that if $K\subset \R$ is an algebraic extension of $\Q$ of degree $n$, and if $\gamma_1,\dots,\gamma_n$ is a $\Q$-basis of $K$, then there exist infinitely many integer $n$-tuples $(q_1,\dots,q_n)$ with $q_1>0$ and $\gcd(q_1,\dots,q_n)=1$ such that
\begin{equation*}
 |q_1\gamma_j-q_j\gamma_1|<\frac{C_1}{q_1^{1/(n-1)}\log q_1^{1/(n-2)}}
\end{equation*}
(for $j=2,\dots,n-2$), and
\begin{equation*}
|q_1\gamma_n-q_n\gamma_1|<\frac{C_2}{q_1^{1/(n-1)}}.
\end{equation*}
In particular, if $[K:\Q]=3$ and $(\gamma_1,\gamma_2,\gamma_3)=(1,\alpha,\beta)$, then there are infinitely many triples $(q_1,q_2,q_3)$ with $q=q_1>0$ such that
\begin{gather}
\|q\alpha\|=|q\gamma_2-q_2|<\frac{C_1}{q^{1/2}\log q},\label{ineq:Peck1}\\
\|q\beta\|=|q\gamma_3-q_3|<\frac{C_2}{q^{1/2}},\label{ineq:Peck2}
\end{gather}
or
\begin{equation*}
q\|q\alpha\|\|q\beta\|<\frac{C_3}{\log q}.
\end{equation*}
This implies the results from \cite{bib:CaSw-Dy}.

Pollington and Velani showed \cite{bib:PoVe} that if $\alpha$ has bounded partial quotients, there exist uncountably many $\beta$ with bounded quotients such that
\begin{equation*}
q\|q\alpha\|\|q\beta\|<\frac{1}{\log q}
\end{equation*}
for infinitely many $q\in\N$.

In \cite{bib:deMa}, de Mathan showed how to construct, for a given quadratic $\alpha$, a $\Q$-linearly independent irrational $\beta$ with bounded partial quotients such that Littlewood's conjecture holds for the pair $(\alpha,\beta)$. This was the first explicit example of $\Q$-linearly independent pairs $(\alpha,\beta)$ with (known) bounded partial quotients satisfying Littlewood's conjecture.

Einsiedler, Katok, Lindenstrauss \cite{bib:EiKaLi} proved that the set of counterexamples to Littlewood's conjecture has Hausdorff dimension 0.

Adamczewski and Bugeaud \cite{bib:AdBu} showed how, given an $\alpha$ with bounded partial quotients, to construct uncountably many $\beta$ with bounded partial quotients such that the conjecture holds for $(\alpha,\beta)$.

Modifying the arguments of Peck in \cite{bib:Pe}, Bugeaud \cite{bib:Bu2} has recently shown, for $K$ a cubic field with only one real embedding, and for $1,\alpha,\beta$ a $\Q$-basis of $K$, how to construct a linear recursive sequence $\{q_n\}$ of positive integers eventually satisfying
\begin{equation*}
\max\{\|q_n\alpha\|,\|q_n\beta\|\}<\frac{C}{q_n^{1/2}}
\end{equation*}
for some $C$.

\section{Our Results}

The work of this paper is most similar to results in Peck \cite{bib:Pe} and Bugeaud \cite{bib:Bu2}. What differs is that we effectively construct sequences whose terms satisfy Peck's inequalities \eqref{ineq:Peck1} and \eqref{ineq:Peck2}. These inequalities motivate the following definition.

For the sake of stating our results more easily, we will call a sequence $\{s_n\}$ of positive integers a {\bf Peck sequence} for the pair $(\sigma,\tau)$ if there are constants $M_1,M_2$ and a subsequence $\{\psi_n\}$ of $\{s_n\}$ such that for all $n$
\begin{gather*}
\max\{\|s_n \sigma\|,\|s_n\tau\|\}<\frac{M_1}{s_n^{1/2}},\\
\psi_n\|\psi_n\sigma\|<\frac{M_2}{\psi_n^{1/2}\log \psi_n}.
\end{gather*}

In Theorem~1, we show that if $\theta$ is the only real root of an irreducible cubic of the form $x^3-Px-Q\in\Z[x]$, then we can construct a Peck sequence for the pair $(\theta,\theta^2)$. In Theorem~2, we show that if $\alpha$ is the only real root of an irreducible cubic in $\Q[x]$, then we can construct a Peck sequence for the pair $(\alpha,\alpha^2)$. Finally, we show in Theorem~3 that if $K$ is a real cubic field with only one real embedding, and if $\alpha,\beta\in K$, then we can construct a Peck sequence for the pair $(\alpha,\beta)$. As a consequence, we can construct $n$ for which the Littlewood product $n\|n\alpha\|\|n\beta\|$ is arbitrarily small, thus providing an effective proof of Littlewood's conjecture for the pair $(\alpha,\beta)$. Other than relying on Dirichlet's units theorem and the ability to produce a unit in a ring of cubic integers, our proofs are elementary and constructive.

Chapter~2 contains the proofs of our theorems. Chapter~3 is devoted to showing how to construct Peck sequences for several examples of $(\alpha,\beta)$. In the Appendix, we give some of the algorithms we've used in our constructions.

\section{Overview and Motivating Example}
\label{sec:overview}

This work arose from considering the cubic pell equation
\begin{equation}\label{eq:cubicpell}
x^3+my^3+m^2z^3-3mxyz=N_{K/\Q}(x+y\theta+z\theta^2)=1
\end{equation}
(see Chapter 7 of \cite{bib:Ba}), where $m\in\Z$ is not a perfect cube, $\theta=\sqrt[3]{m}$, and $K=\Q(\theta)$. (We assume $m>0$.) It is a tedious-but-not-difficult algebra exercise to verify that \eqref{eq:cubicpell} factors as
%$N_{K/\Q}$ (as a polynomial in $x,y,z$) factors in $K[x,y,z]$ as
\begin{equation}\label{eq:normfac}
N_{K/\Q}(x+y\theta+z\theta^2)
=\frac{1}{2}
(x+y\theta+z\theta^2)
((x-y\theta)^2+(x-z\theta^2)^2+(y\theta-z\theta^2)^2).
\end{equation}
From this, we see that
\begin{equation}\label{ineq:origbound}
\max\{
|x-y\theta|,|x-z\theta^2|,|y\theta-z\theta^2|
\}
\leq
\left(
    \frac{2 N_{K/\Q}(x+y\theta+z\theta^2)}{x+y\theta+z\theta^2}
    \right)^{1/2}.
\end{equation}
So if $|N_{K/\Q}(x+y\theta+z\theta^2)|$ is ``small'' compared to $|x+y\theta+z\theta^2|$, then $|x-y\theta|$, $|y\theta-z\theta^2|$, and $|x-z\theta^2|$ will also be ``small''. In this case we would have
\begin{equation*}
\frac{y}{z}\approx \theta
\mbox{\quad\quad and \quad\quad}
\frac{x}{z}\approx \theta^2.
\end{equation*}

That is, we can find simultaneous Diophantine approximations to~$\theta$ and~$\theta^2$ by considering~$\zeta$ with small norm and large absolute value.\\

In particular, we consider units with large absolute value. Dirichlet's units theorem guarantees that we can produce infinite sequences $\{\lambda^n\}$ of distinct units (where we can assume $\lambda>1$), so we have $N_{K/\Q}(\lambda)=1$ and $\lambda^n\to\infty$. If it is the case that $\O_K=\Z[\theta]$,
and if we
define\footnote{We discuss how to compute these in Remark \ref{rem:kncomp} and in the Appendix.}
$a_n$, $b_n$, and $c_n$ as the coordinates of $\lambda^n$ in the basis $1, \theta, \theta^2$, then we will see (using \eqref{ineq:origbound}) that eventually
\begin{align*}
\|c_n\theta\|
=
\frac{|b_n\theta-c_n\theta^2|}{\theta}
<
\frac{1}{\theta}\cdot
\frac{\sqrt{2}}{\lambda^{n/2}}
<
\frac{C}{c_n^{1/2}},\\
\|c_n\theta^2\|
=
|a_n-c_n\theta^2|
<
\frac{\sqrt{2}}{\lambda^{n/2}}
<
\frac{C}{c_n^{1/2}}
\end{align*}
for some constant $C$ which is independent of $n$. In particular, the sequence
\begin{equation*}
\{c_n\|c_n\theta\|\|c_n\theta^2\|\}
\end{equation*}
of Littlewood products is bounded. If we could make either $c_n^{1/2}\|c_n\theta\|$ or $c_n^{1/2}\|c_n\theta^2\|$ arbitrarily small, then we could make $c_n\|c_n\theta\|\|c_n\theta^2\|$ arbitrarily small. That is, we would have a constructive proof of Littlewood's conjecture for the pair $(\sqrt[3]{m},\sqrt[3]{m^2})$.

\subsection{Example 1}
\label{intro:ex1}
Consider the pair $(\theta,\theta^2)$ for $\theta=\sqrt[3]{2}$.  Now $\lambda=1+\theta+\theta^2$ is a unit greater than 1 ($\lambda\approx 3.8473$), so we can use the sequence $\{\lambda^n\}$ to produce a sequence of simultaneous approximations to $(\theta,\theta^2)$. Define
$a_n$, $b_n$, and $c_n$ by
\begin{equation*}
a_n+b_n\theta+c_n\theta^2=\lambda^n=(1+\theta+\theta^2)^n.
\end{equation*}
The first few $\lambda^n$ are
\begin{gather*}
\lambda^1=1+\theta+\theta^2,\\
\lambda^2=5+4\theta+3\theta^2, \\
\lambda^3=19+15\theta+12\theta^2, \\
\lambda^4=73+58\theta+46\theta^2, \\
\lambda^5=281+223\theta+177\theta^2,
\end{gather*}
and first several $c_n$ are:
\begin{equation*}
1,\ 3,\ 12,\ 46,\ 177,\ 681,\ 2620,\ 10080,\ 38781,\ 149203,\ 574032,\ 2208486,\
8496757.%,\ 32689761.
\end{equation*}

By the previous discussion, we have that $\{c_n^{1/2}\|c_n\theta\|\}$ and $\{c_n^{1/2}\|c_n\theta^2\|\}$, and therefore \begin{equation*}
\{c_n\|c_n\theta\|\|c_n\theta^2\|\},
\end{equation*}
are bounded. In Figure~\ref{fig:LWprodcn} we show $c_n\|c_n\theta\|\|c_n\theta^2\|$ for $n\leq 1000$. To provide some context, we also show $n\|n\theta\|\|n\theta^2\|$ for $n\leq 1000$ in Figure~\ref{fig:LWprodall}.

\begin{figure}[!hptb]
    \centering
    \includegraphics[width=.8\textwidth]{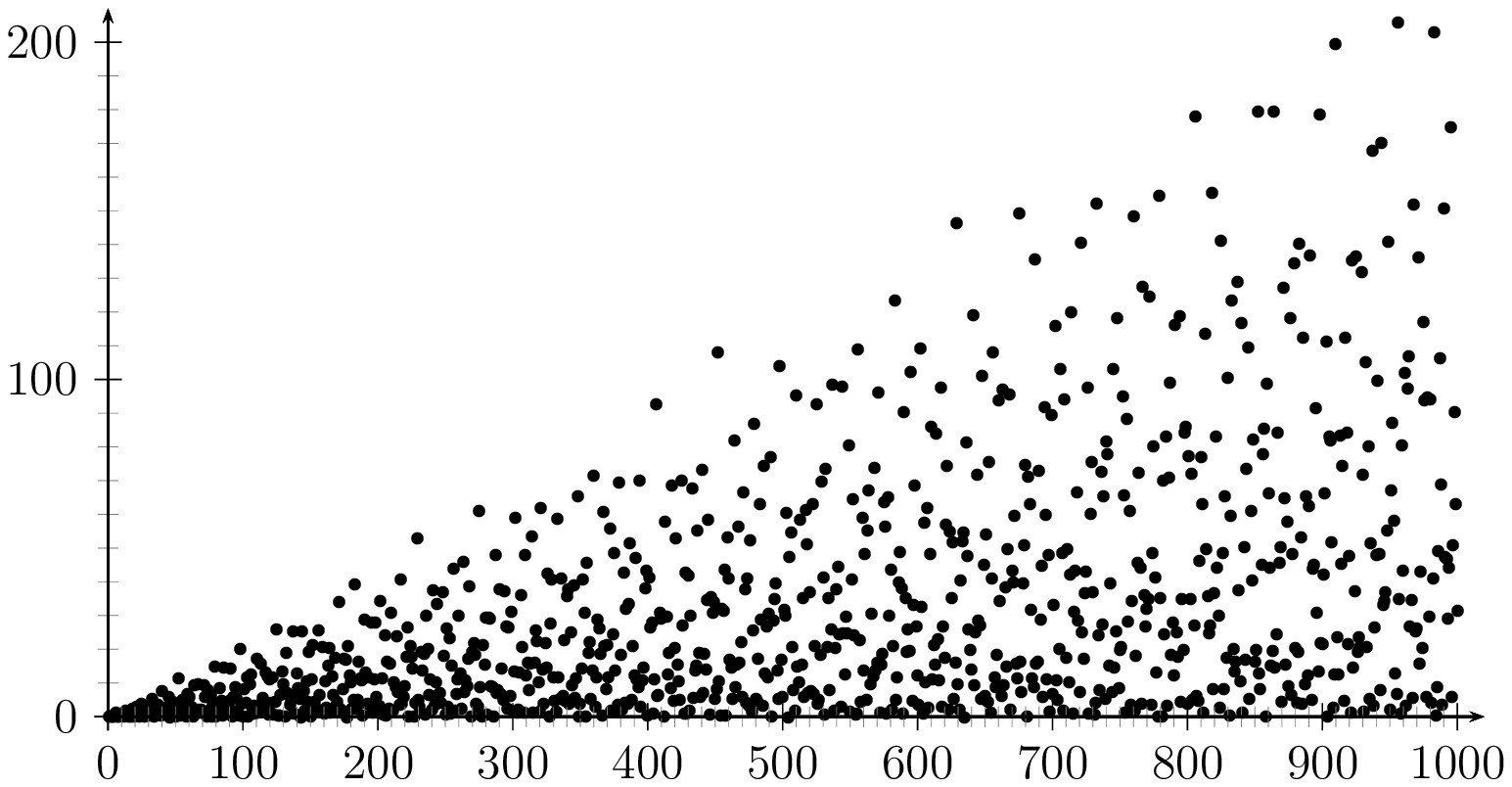}
    \caption{
    $n\|n\sqrt[3]{2}\|\|n\sqrt[3]{4}\|$ for $n\leq 1000$
    }
    \label{fig:LWprodall}
%\end{figure}
%
%\begin{figure}[!hptb]
%    \centering
    \includegraphics[width=.8\textwidth]{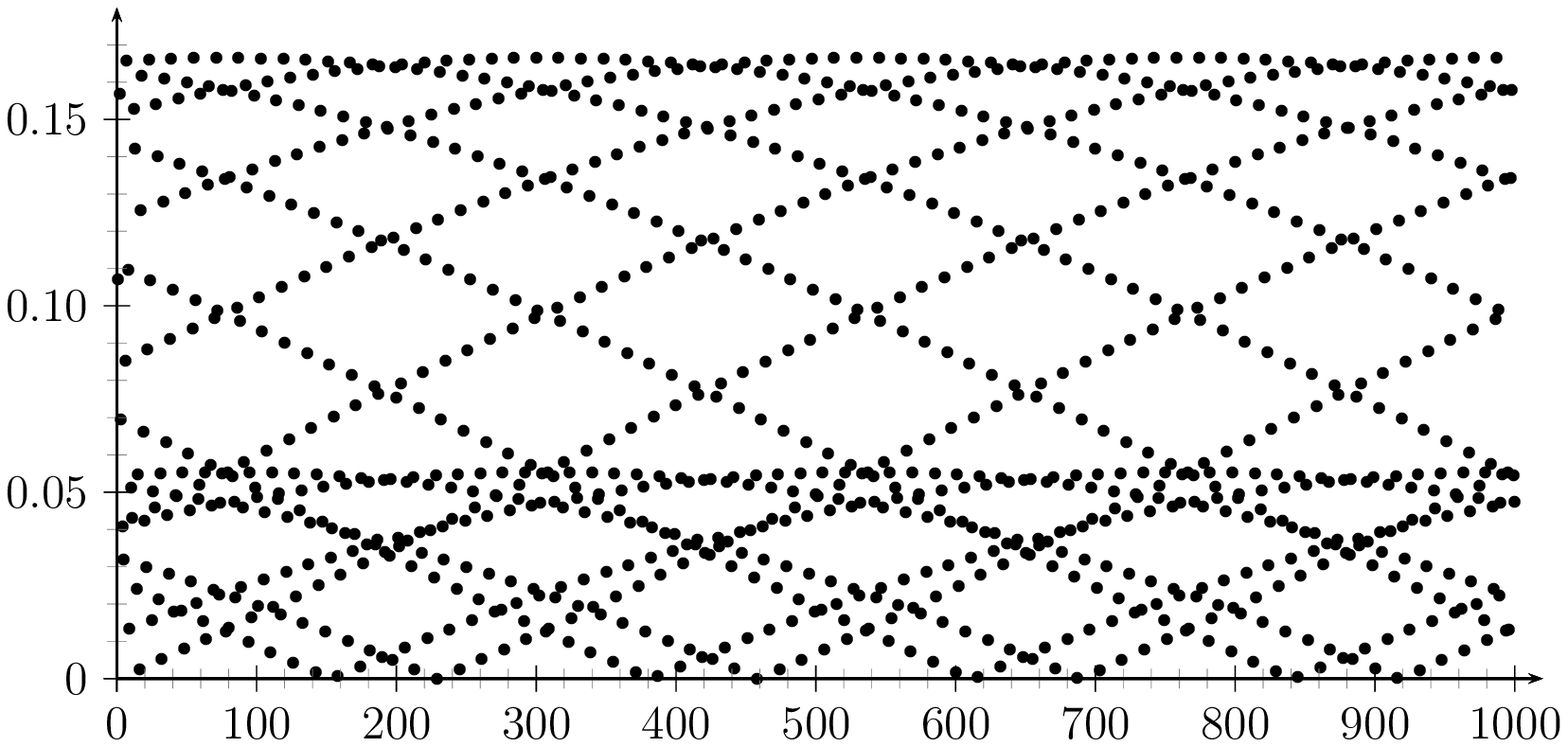}
    \caption{
    $c_n\|c_n\sqrt[3]{2}\|\|c_n\sqrt[3]{4}\|$ for $n\leq 1000$
    }
    \label{fig:LWprodcn}
\end{figure}

\begin{figure}[!hptb]
    \centering
    \includegraphics[width=.95\textwidth]{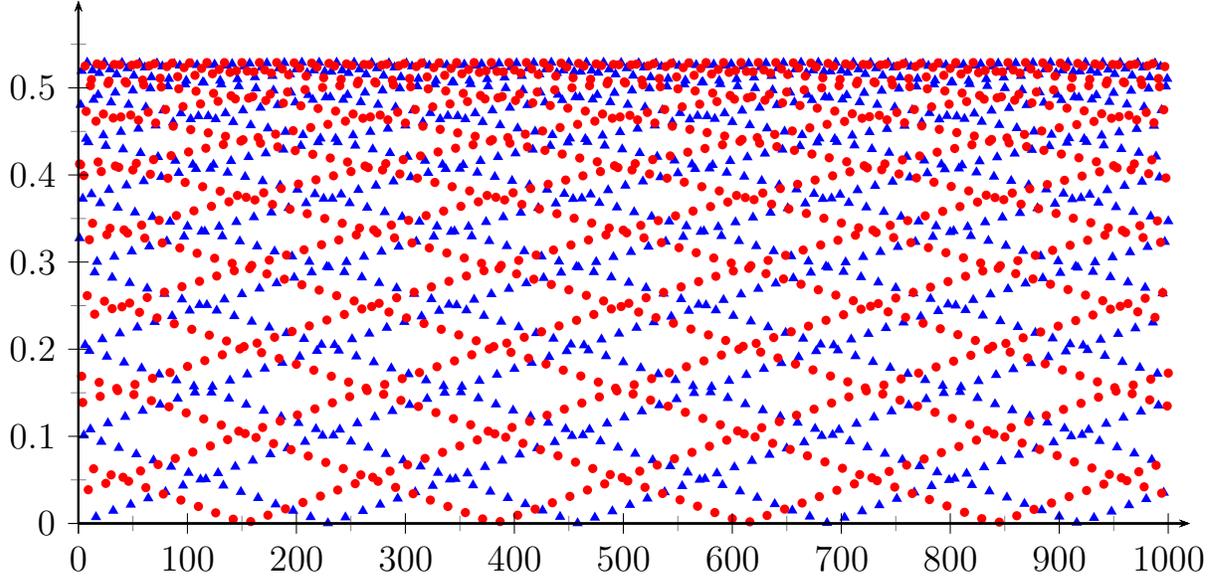}
    \caption{
    $\theta\cdot c_n^{1/2}\|c_n\sqrt[3]{2}\|$ (triangles) and $c_n^{1/2}\|c_n\sqrt[3]{4}\|$ (circles) for $n\leq 1000$
    }
    \label{fig:LWfactorsredblue}
\end{figure}

%\begin{figure}[!hptb]
%    \centering
%    \includegraphics[width=.8\textwidth]{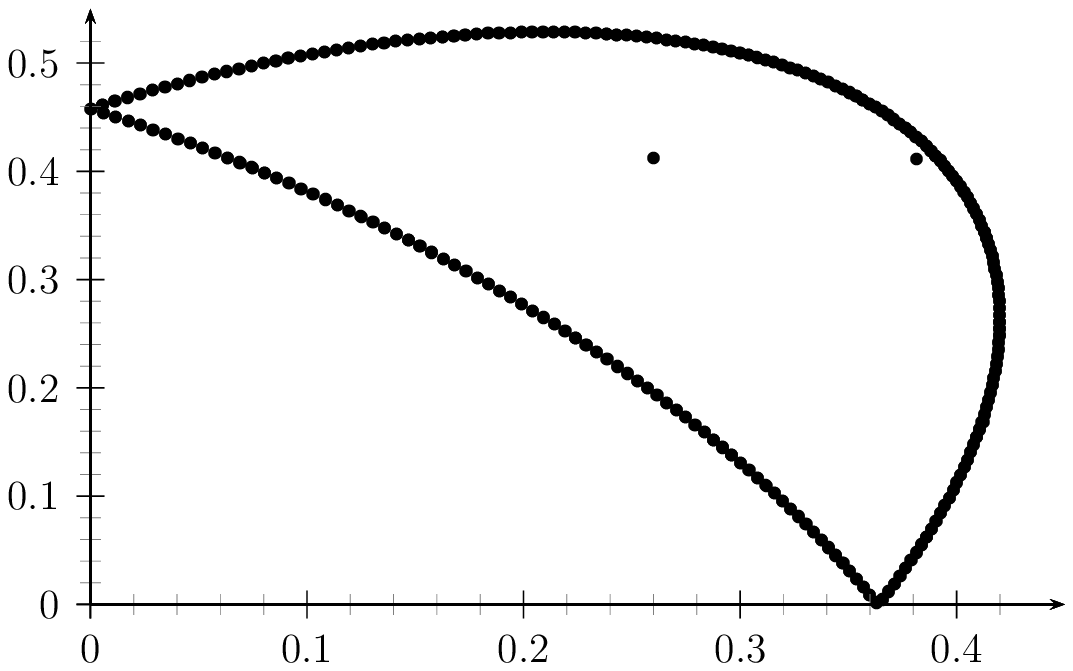}
%    \caption{
%    The points $(c_n^{1/2}\|c_n\theta\|,c_n^{1/2}\|c_n\theta^2\|)$ for $n\leq 500$
%    }
%    \label{fig:LWfactorspointsAbsVal}
%\end{figure}

Figure~\ref{fig:LWfactorsredblue} shows $c_n^{1/2}\theta\|c_n\theta\|=c_n^{1/2}|b_n\theta-c_n\theta^2|$ and $c_n^{1/2}\|c_n\theta^2\|=c_n^{1/2}|a_n-c_n\theta^2|$ for $n\leq 1000$. Since the graphs are so similar, we consider the points $(c_n^{1/2}\|c_n\theta\|,c_n^{1/2}\|c_n\theta^2\|)$.

In Figure~\ref{fig:LWfactorspointsAbsVal} we have the first 500 points $(c_n^{1/2}\|c_n\theta\|,c_n^{1/2}\|c_n\theta^2\|)$. From this graph, we see that $c_n^{1/2}\|c_n\theta\|$ and $c_n^{1/2}\|c_n\theta^2\|$ almost completely determine each other. In order to uncover more information about this relationship, we use the following notation.

\begin{defn}
For $x\in\R$, we define\footnote{This is more commonly denoted by $\{\cdot\}$. We use $\lb\cdot\rb$ to avoid confusion with sequence/set notation.}
$\langle x\rangle=x-\left\lfloor x+\frac{1}{2}\right\rfloor$. (So $\|x\|=|\langle x\rangle|$.)
\end{defn}

\begin{figure}[!h ptb]
    \centering
    \includegraphics[width=.75\textwidth]{disfigex0.0.2plotLWfactorspointsAbsVal.eps}
    \captionof{figure}{
    The points $(c_n^{1/2}\|c_n\sqrt[3]{2}\|,c_n^{1/2}\|c_n\sqrt[3]{4}\|)$ for $n\leq 500$
    }
    \label{fig:LWfactorspointsAbsVal}
%\end{figure}
%\begin{figure}[!h ptb]
%    \centering
    \includegraphics[scale=.75]{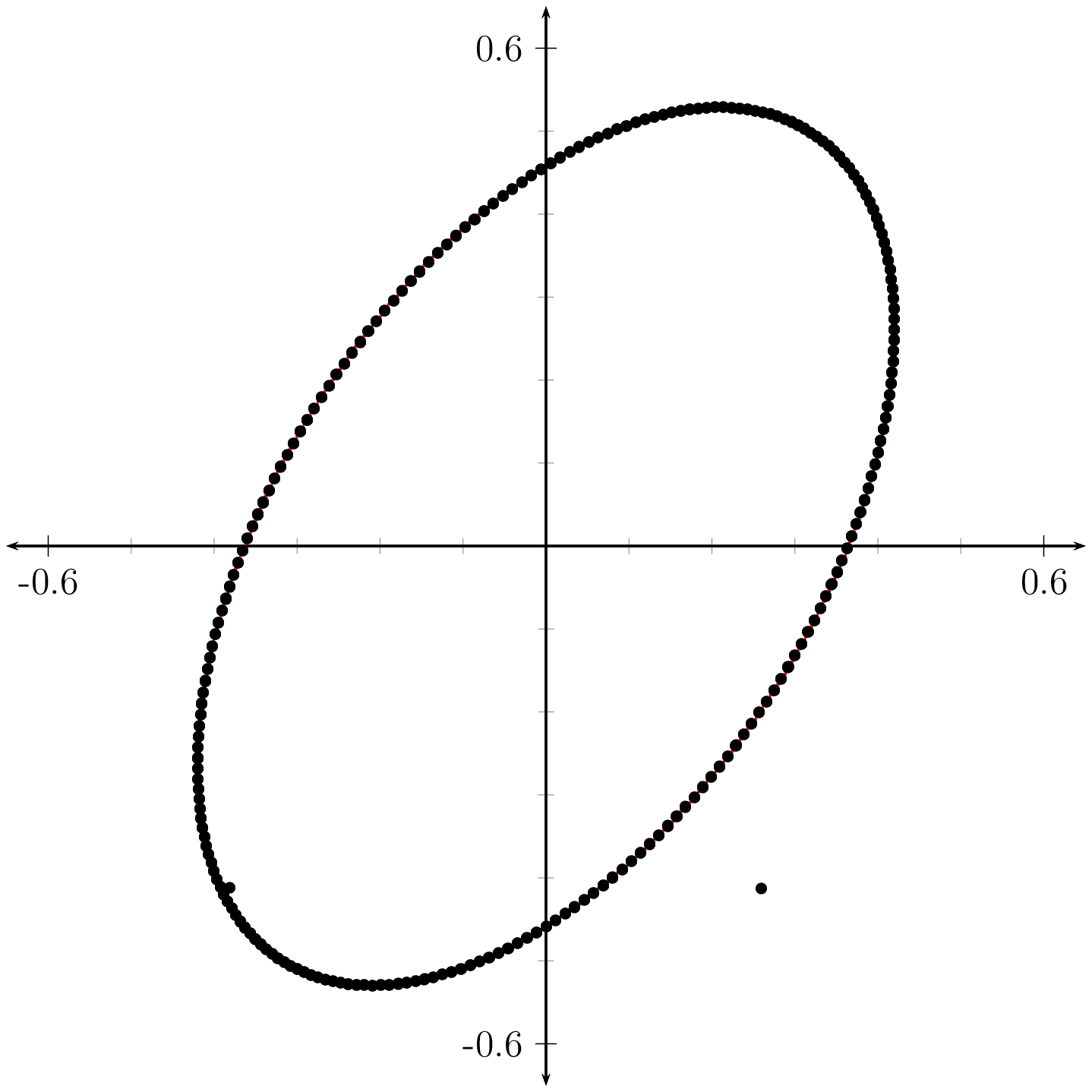}
    \captionof{figure}{
    $(c_n^{1/2}\langle c_n\sqrt[3]{2}\rangle,c_n^{1/2}\langle c_n\sqrt[3]{4}\rangle)$ for
    $n\leq 500$.
    }
    \label{fig:LWseqex1long}
\end{figure}
\begin{figure}[!hptb]
    \centering
    \includegraphics[scale=.8]{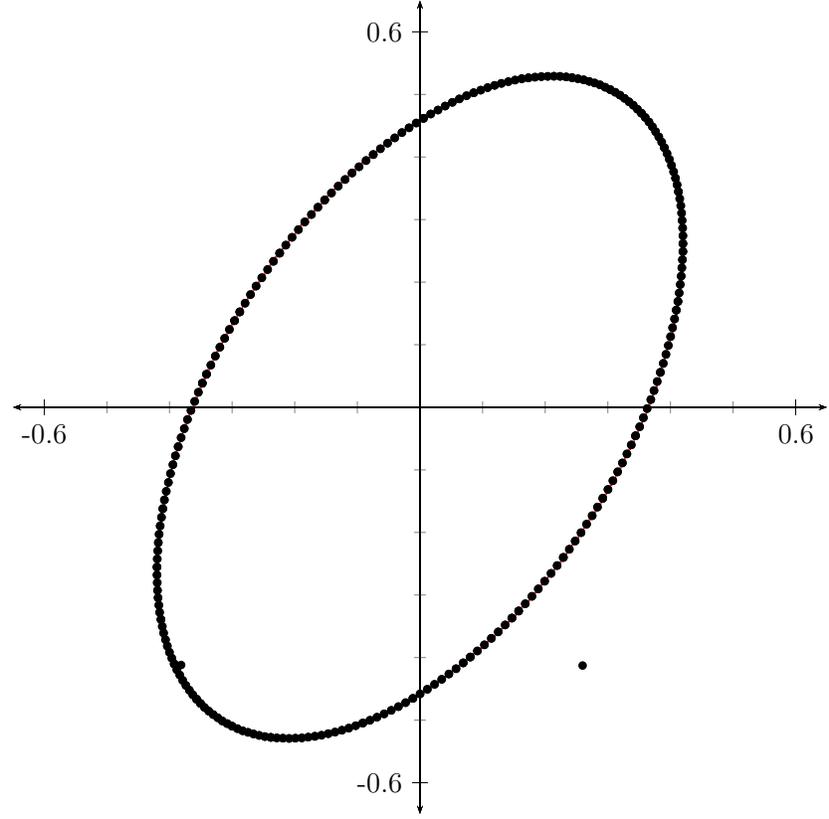}
    \captionof{figure}{
    $(c_n^{1/2}\langle c_n\sqrt[3]{2}\rangle,c_n^{1/2}\langle c_n\sqrt[3]{4}\rangle)$ for
    $n\leq 25$.
    }
    \label{fig:LWseqex1short}
\end{figure}

By considering $\lb c_n\theta\rb$ and $\lb c_n\theta^2\rb$, we can better see the relationship between $\|c_n\theta\|$ and $\|c_n\theta^2\|$. We plot the points $(c_n^{1/2}\lb c_n\theta\rb,c_n^{1/2}\lb c_n\theta^2\rb)$ for $n\leq 500$ in Figure~\ref{fig:LWseqex1long} and for $n\leq 25$ in Figure~\ref{fig:LWseqex1short}. From these graphs, we see that the sequence $\{(c_n^{1/2}\langle c_n\theta\rangle,c_n^{1/2}\langle c_n\theta^2\rangle)\}$ appears to fill an ellipse. In Lemma~\ref{lem:ellipse} we will see how to derive that the equation of the ellipse in this example is
\begin{equation*}
(\sqrt[3]{2}\cdot x-2y)^2+3\sqrt[3]{4}\cdot x^2=\frac{4}{3\sqrt[3]{4}},
\end{equation*}
and that the sequence is in fact asymptotic to the ellipse. In Table~\ref{tab:LWseqcbrt2} we list $c_n$, $c_n^{1/2}\langle c_n\theta\rangle$, $c_n^{1/2}\langle c_n\theta^2\rangle$, and $c_n\|c_n\theta\|\|c_n\theta^2\|$ for the first several $n$.

From Figure~\ref{fig:LWseqex1short}, it appears that the points go around the ellipse at a fairly constant rate of rotation. For $n>1$, going forward five terms in the sequence yields a point on the ellipse a little more than two full counterclockwise rotations away, so the angle between successive terms would be a little more than $4\pi/5\approx 2.51$. Indeed, by transforming the ellipse to a circle and applying the same transformation to the points $\{(c_n^{1/2}\langle c_n\theta\rangle,c_n^{1/2}\langle c_n\theta^2\rangle)\}$, we find that the angles between successive points seem to converge to roughly $2.551688241$. If we knew the exact angle of rotation, then maybe we could use it to predict when either $c_n^{1/2}\langle c_n\theta\rangle$ and $c_n^{1/2}\langle c_n\theta^2\rangle$ gets arbitrarily close to zero.

We will see that by considering the sequence $\{Q_n\}$ of denominators of convergents of $\phi/\pi$, where
\begin{equation*}
\phi
=
\arctan
    \left(
    \frac{\sqrt{3\theta^2}\cdot(b_1-c_1\theta)}{2a_1-b_1\theta-c_1\theta^2}
    \right)
=
\arctan\left(\frac{\sqrt{3}(\theta-\theta^2)}{2-\theta-\theta^2}\right)
\end{equation*}
(and where $\pi-\phi\approx 2.551688241$), we have
\begin{equation*}
c_{Q_n}^{1/2}\|c_{Q_n}\theta\|<\frac{C}{Q_{n+1}}
\end{equation*}
for some constant $C$. Therefore
\begin{equation*}
\lim_{n\to\infty}
c_{Q_n}\|c_{Q_n}\theta\|\|c_{Q_n}\theta^2\|
=\lim_{n\to\infty}
    \left(
    |c_{Q_n}^{1/2}\langle c_{Q_n}\theta\rangle|
    \cdot
    |c_{Q_n}^{1/2}\langle c_{Q_n}\theta^2\rangle|
    \right)
=0.
\end{equation*}

\begin{table}[p!htb]
\centering
\ra{1.3}
\begin{tabular}{@{} lclll @{}}
\toprule
$n$
& $a_n+b_n\theta+c_n\theta^2=(1+\theta+\theta^2)^n$
&$c_n^{1/2}\langle c_n\theta\rangle$
&$c_n^{1/2}\langle c_n\theta^2\rangle$
& $c_n\|c_n\theta\|\|c_n\theta^2\|$\\
\midrule
1  &  $1 + \theta + \theta^2 $  &  0.2599210  &  -0.4125989  &  0.1072432\\
2  &  $5 + 4\theta + 3\theta^2 $  &  -0.3814614  &  -0.4118762  &  0.1571149\\
3  &  $19 + 15\theta + 12\theta^2 $  &  0.4124103  &  0.1690919  &  0.0697352\\
4  &  $73 + 58\theta + 46\theta^2 $  &  -0.2959246  &  0.1386877  &  0.04104111\\
5  &  $281 + 223\theta + 177\theta^2 $  &  0.08016847  &  -0.3993077  &  0.03201189\\
6  &  $1081 + 858\theta + 681\theta^2 $  &  0.1627079  &  0.5249569  &  0.0854146\\
7  &  $4159 + 3301\theta + 2620\theta^2 $  &  -0.3505866  &  -0.4731548  &  0.1658817\\
8  &  $16001 + 12700\theta + 10080\theta^2 $  &  0.4199639  &  0.2614234  &  0.1097884\\
9  &  $61561 + 48861\theta + 38781\theta^2 $  &  -0.3473879  &  0.03867265  &  0.01343441\\
10  &  $236845 + 187984\theta + 149203\theta^2 $  &  0.1573906  &  -0.3256969  &  0.0512616\\
11  &  $911219 + 723235\theta + 574032\theta^2 $  &  0.08580665  &  0.5026315  &  0.04312913\\
12  &  $3505753 + 2782518\theta + 2208486\theta^2 $  &  -0.3000002  &  -0.5096706  &  0.1529013\\
13  &  $13487761 + 10705243\theta + 8496757\theta^2 $  &  0.4127900  &  0.3444347  &  0.1421792\\
\midrule
$n$
&
$c_n$
&$c_n^{1/2}\langle c_n\theta\rangle$
&$c_n^{1/2}\langle c_n\theta^2\rangle$
& $c_n\|c_n\theta\|\|c_n\theta^2\|$\\

\midrule

14  &  32689761  &  -0.386052  &  -0.0627756  &  0.0242346\\
15  &  125768040  &  0.228823  &  -0.240102  &  0.054941\\
16  &  483870160  &  0.00575037  &  0.461823  &  0.00265565\\
17  &  1861604361  &  -0.238380  &  -0.527441  &  0.125732\\
18  &  7162191603  &  0.390435  &  0.414778  &  0.161944\\
19  &  27555258052  &  -0.410517  &  -0.161915  &  0.066469\\
20  &  106013953326  &  0.291840  &  -0.145677  &  0.0425145\\
21  &  407869825737  &  -0.0745174  &  0.404029  &  0.0301072\\
22  &  1569206595241  &  -0.167993  &  -0.525814  &  0.088333\\
23  &  6037243216260  &  0.353720  &  0.469867  &  0.166202\\
24  &  23227219260240  &  -0.419885  &  -0.255100  &  0.107113\\
25  &  89362594024741  &  0.344124  &  -0.0458947  &  0.0157935\\
26  &  343806683071203  &  -0.152045  &  0.331376  &  0.050384\\
27  &  1322735050548072  &  -0.0914277  &  -0.504849  &  0.0461571\\
28  &  5088987794882566  &  0.303996  &  0.507676  &  0.154332\\
%29  &  19578975219363117  &  -0.413810  &  -0.338902  &  0.140241\\
%30  &  75326624093285121  &  0.383751  &  0.0555759  &  0.0213273\\
\bottomrule
\end{tabular}
\caption{Approximating $\theta=\sqrt[3]{2}$ and $\theta^2$ using units of $\Q(\theta)$}
\label{tab:LWseqcbrt2}
\end{table}

This paper is a generalization of these results to include all pairs $(\alpha,\beta)$, where $\alpha,\beta\in K\subset \R$ and $K$ is a cubic field with only one real embedding.

\newpage

\section{Heuristics}
\label{sec:heuristics}
\newcommand{\figscale}{.825}

The following is not anything resembling a proof, but it illustrates how often we might expect to encounter numbers $n$ satisfying
\begin{equation*}
n\|n\alpha\|\|n\beta\|<\ep
\end{equation*}
or
\begin{equation*}
n\|n\alpha\|\|n\beta\|<\frac{C}{\log n}
\end{equation*}
for a given $\ep$ or $C$.

Suppose $1,\alpha,\beta$ are $\Q$-linearly independent, and consider the sequences $\{\lb n\alpha \rb\}$ and $\{\lb n\beta\rb\}$. Define for $T\geq 1$ the sets $U_\ep(T)$ and $V_C(T)$ by
\begin{gather*}
U_\ep(T)=\{n\leq T: n\|n\alpha\|\|n\beta\|<\ep\},\\
V_C(T)=\{n\leq T: n\log n\|n\alpha\|\|n\beta\|<C\}.
\end{gather*}
Supposing the terms act like independent uniformly distributed random variables, we estimate expected values for the sizes of $U_\ep(T)$ and $V_C(T)$.

If $X, Y$ are independent and uniformly distributed over $(-\frac{1}{2},\frac{1}{2})$ and if $0<r<\frac{1}{4}$, then
\begin{align*}
P(|X||Y|<r)
&=
4 \iint\limits_{\substack {0<u,v<1/2\\uv<r}}\dd u\dd v\\
&=4\int_0^{2r}\frac{1}{2}\dd u+4\int_{2r}^{1/2}\frac{r}{u}\dd u\\
&=4r-4r\log 4r.
\end{align*}
So if we assume that $\lb n\alpha \rb$ and $\lb n\beta\rb$ act independent and uniform, and if
$\frac{\ep}{n},\frac{C}{n\log n}<\frac{1}{4}$, then
\begin{gather*}
P(n\|n\alpha\|\|n \beta\|
<
\ep)
=
P(\|n\alpha\|\|n\beta\|<\tfrac{\ep}{n})
=\frac{4\ep}{n}-\frac{4\ep}{n}\log\left(\frac{4\ep}{n}\right),\\
P(n\|n\alpha\|\|n \beta\|
<
\tfrac{C}{\log n})
=
P(\|n\alpha\|\|n\beta\|<\tfrac{C}{n\log n})
=\frac{4C}{n\log n}-\frac{4C}{n\log n}\log\left(\frac{4C}{n\log n}\right).
\end{gather*}

\subsection{Estimating $|U_\ep(T)|$}
Assume $\ep<\frac{1}{4}$. Then $\frac{\ep}{n}<\frac{1}{4}$ for all $n$. We estimate $|U_\ep(T)|$ as an expected value:
\begin{align*}
E\left(|U_\ep(T)|\right)
&=
\sum_{n\leq T} \frac{4\ep}{n}-\frac{4\ep}{n}\log\left(\frac{4\ep}{n}\right).
\end{align*}
Since $x(1-\log x)$ is positive and decreasing for $x\geq 1$, this sum equals
\begin{align*}
\int_1^T \frac{4\ep}{x}-\frac{4\ep}{x}\log\left(\frac{4\ep}{x}\right)\dd x+R
\end{align*}
for some $R$ with
\begin{equation*}
|R|
<
\frac{4\ep}{1}-\frac{4\ep}{1}\log\left(\frac{4\ep}{1}\right)
=
4\ep(1-\log 4\ep)<1
\end{equation*}
(since $4\ep<1$). So
\begin{align*}
E\left(|U_\ep(T)|\right)
&=
\int_1^T \left(4\ep-4\ep\log\left(\frac{4\ep}{x}\right)\right)\frac{\dd x}{x}+R\\
&=
\int_0^{\log T} \left(4\ep-4\ep\log4\ep+4\ep u\right)\dd u +R\\
&=
2\ep(\log T)^2+4\ep(1-\log 4\ep)\log T+R.
\end{align*}
Define
\begin{equation*}
F_\ep(T)
=
2\ep(\log T)^2+4\ep(1-\log 4\ep)\log T.
\end{equation*}
In Figures~\ref{fig:Uapprox1}, \ref{fig:Uapprox2}, \ref{fig:Uapprox3}, and~\ref{fig:Uapprox4} we compare the graphs of $F_\ep(T)$ and $|U_\ep(T)|$ for the following examples.
\begin{enumerate}
\item $(\sqrt{2},\sqrt{3})$: Both have bounded partial quotients, and it is unknown whether Littlewood's conjecture is true for this pair.
\item $(e,\pi)$: We know that $e$ has unbounded partial quotients, so we know that Littlewood's conjecture is true for this pair.
\item $(\sqrt[3]{2},\sqrt[3]{4})$: We do not know whether the partial quotients are bounded or unbounded. We {\em do} know that Littlewood's conjecture is true for this pair.
\item $(\frac{1+\sqrt{5}}{2},\frac{\sqrt{65}}{5})$: Like the pair $(\sqrt{2},\sqrt{3})$, these have small partial quotients. The continued fractions are $\frac{1+\sqrt{5}}{2}=[1;\overline{1}]$ and $\frac{\sqrt{65}}{5}=[1;\overline{1,1,1,1,2}]$. Since both numbers are ``badly approximable'' by rationals, it is worth considering how well they can be simultaneously approximated.
\end{enumerate}

%\begin{minipage}{\linewidth}
\begin{figure}[!hptb]
    \centering
    %\makebox[\linewidth]{
    %\includegraphics[scale=1]{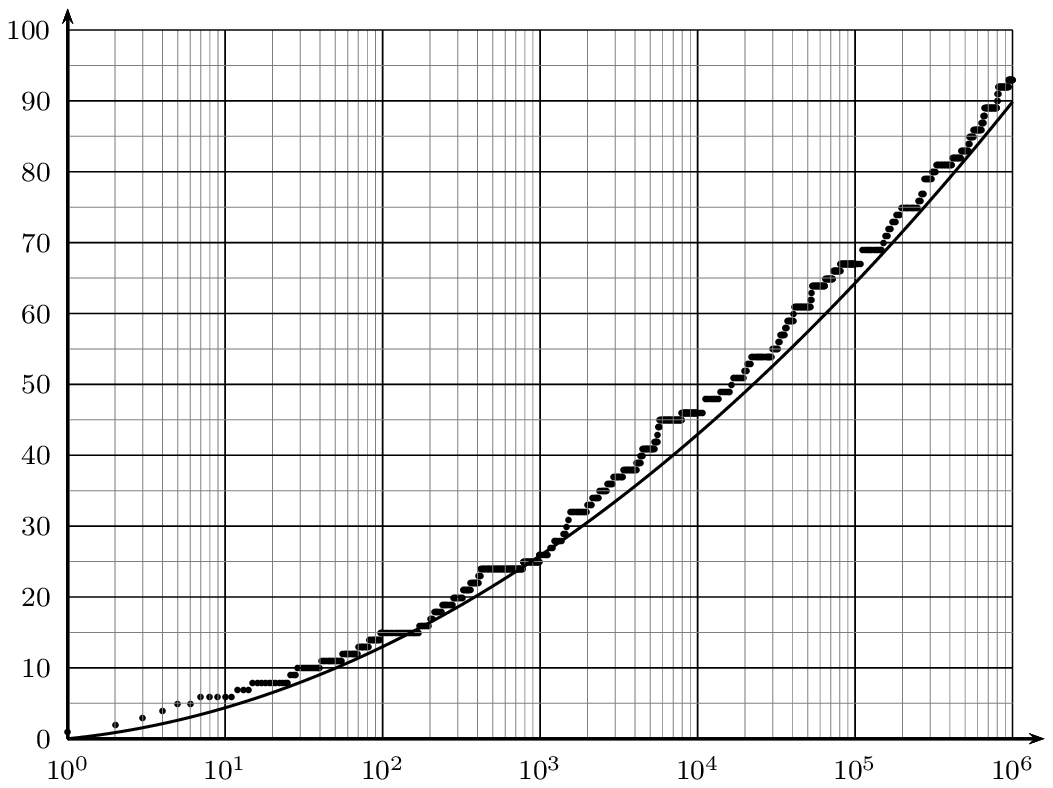}
    \includegraphics[width=\figscale\textwidth]{disfigheuristicex1f.eps}
    %}
    \caption{
    $|U_\ep(n)|$ and $F_\ep(n)$ for the pair $(\sqrt{2},\sqrt{3})$ (with $\ep=.2$).
    }
    \label{fig:Uapprox1}
    \bigskip
    \bigskip
    %\makebox[\linewidth]{
    %\includegraphics[scale=1]{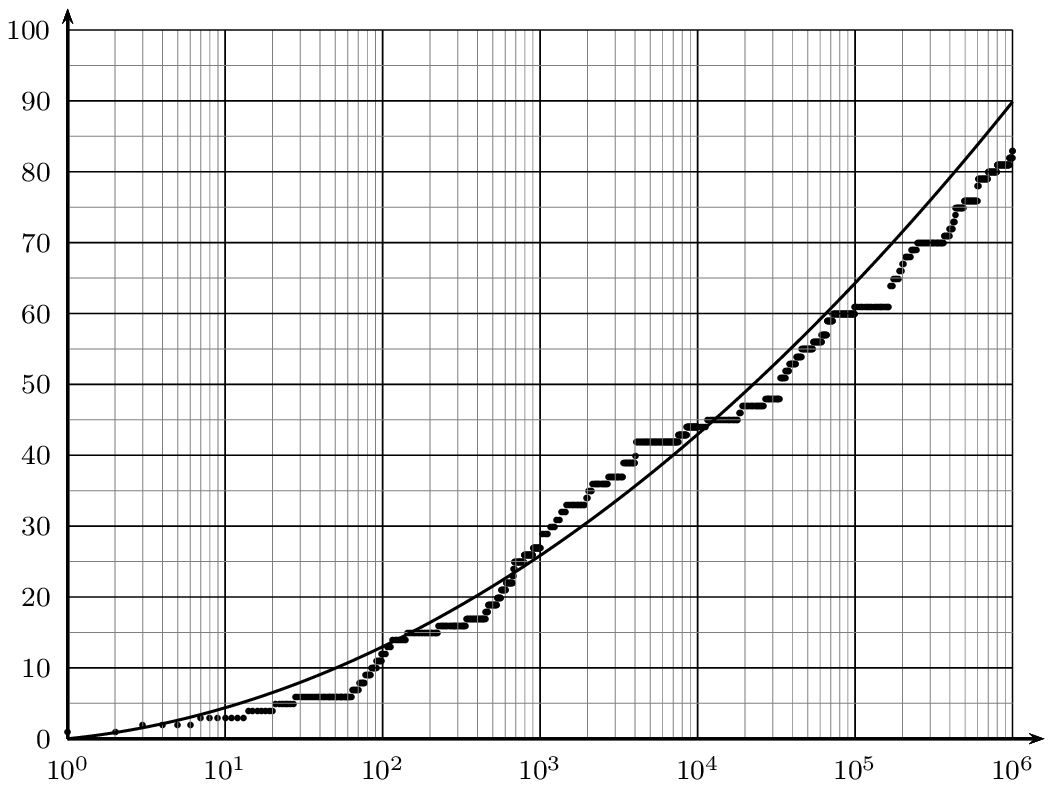}
    \includegraphics[width=\figscale\textwidth]{disfigheuristicex2f.eps}
    %}
    \caption{
    $|U_\ep(n)|$ and $F_\ep(n)$ for the pair $(e,\pi)$ (with $\ep=.2$).
    }
    \label{fig:Uapprox2}
\end{figure}

\begin{figure}[!hptb]
    \centering
    \includegraphics[width=\figscale\textwidth]{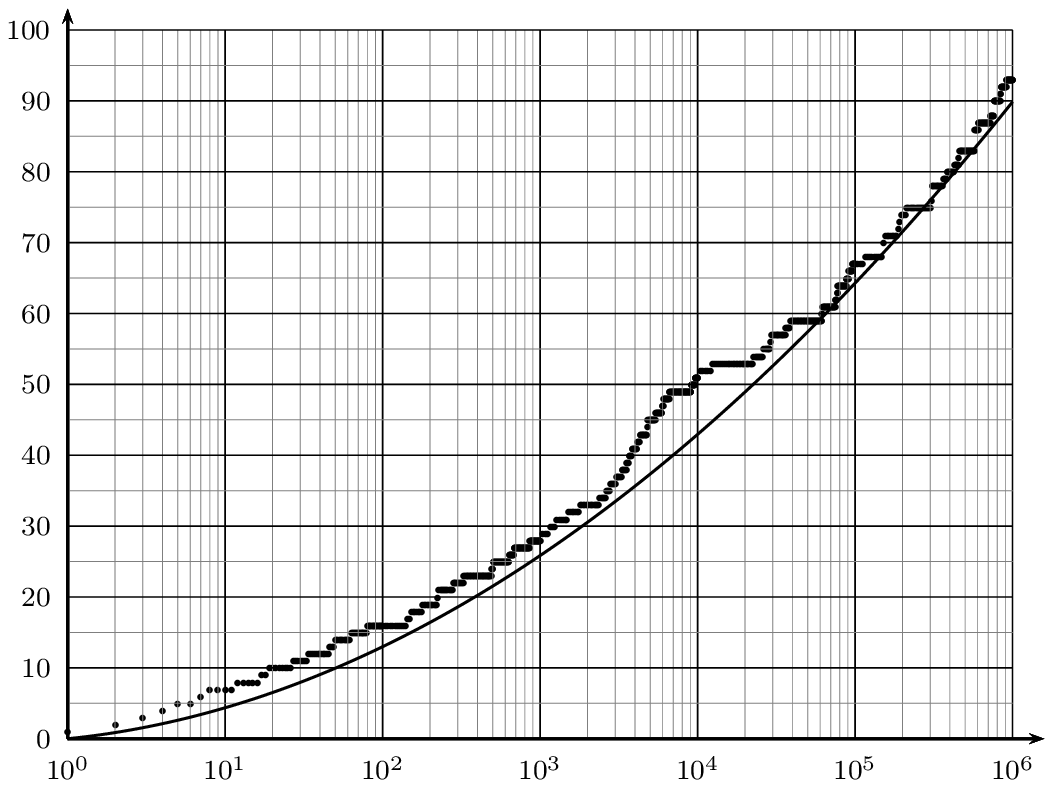}
    \caption{
    $|U_\ep(n)|$ and $F_\ep(n)$ for the pair $(\sqrt[3]{2},\sqrt[3]{4})$ (with $\ep=.2$).
    }
    \label{fig:Uapprox3}
    \bigskip
    \bigskip
    \includegraphics[width=\figscale\textwidth]{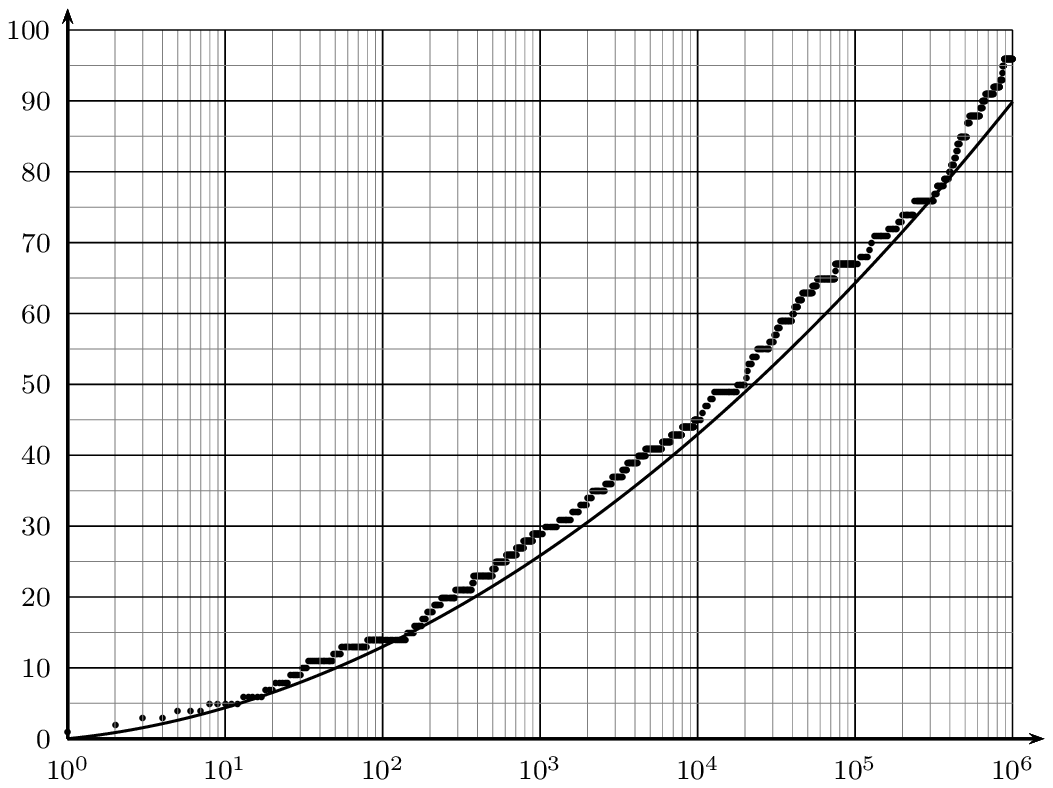}
    \caption{
    $|U_\ep(n)|$ and $F_\ep(n)$ for the pair $(\frac{1+\sqrt{5}}{2},\frac{\sqrt{65}}{5})$ (with $\ep=.2$).
    }
    \label{fig:Uapprox4}
\end{figure}

\subsection{Estimating $|V_C(T)|$}
Assume $C>0$, and let $m_C$ be the first integer such that $m_C\log m_C\geq 4C$. (Also, assume $T> m_C$.) For all $k<m_C$
\begin{gather*}
k \log k<4C,\\
\frac{1}{4}<\frac{C}{k \log k},\\
\|k\alpha\|\|k\beta\|\leq \frac{1}{4}<\frac{C}{k \log k}.
\end{gather*}
That is, $1,\dots,m_C-1$ are all in $V_C(T)$ (so $|V_C(m_C-1)|=m_C-1$). Since $\frac{C}{n\log n}<\frac{1}{4}$ for $m_C\leq n\leq T$, we estimate $|V_C(T)|$ as
\begin{align*}
E\left(|V_C(T)|\right)
&=
|V_C(m_C-1)|+\sum_{n=m_C}^T \frac{4C}{n\log n}-\frac{4C}{n\log n}\log\left(\frac{4C}{n\log n}\right)\\
&=
(m_C-1)+\sum_{n=m_C}^T \frac{4C}{n\log n}-\frac{4C}{n\log n}\log\left(\frac{4C}{n\log n}\right).
\end{align*}
Now
\begin{equation*}
\frac{4C}{x\log x}\left(1-\log\left(\frac{4C}{x\log x}\right)\right)
\end{equation*}
is positive and decreasing for $x\geq m_C$. (To see this, note that for $x\geq m_C$, $\frac{4C}{x\log x}$ is in $(0,1]$ and is decreasing.) So
\begin{align*}
\sum_{n=m_C}^T \frac{4C}{n\log n}-\frac{4C}{n\log n}\log\left(\frac{4C}{n\log n}\right)
=
\int_{m_C}^T \frac{4C}{x\log x}-\frac{4C}{x\log x}\log\left(\frac{4C}{x\log x}\right) \dd x + R,
\end{align*}
where $|R|<\frac{4C}{m_C\log m_C}(1-\log\frac{4C}{m_C\log m_C})<1$. If we let $I$ denote the integral, then
\begin{align*}
I
&=
\int_{m_C}^T \frac{4C}{\log x}-\frac{4C}{\log x}\log\left(\frac{4C}{x\log x}\right) \frac{\dd x}{x}\\
&=
4C
\int_{\log m_C}^{\log T}
\left(\frac{1-\log 4C}{x}+\frac{\log x}{x}+1\right) \dd x\\
&=
4C\left(
(1-\log 4C)\log x+\frac{(\log x)^2}{2}+x
\right)\biggr\rvert_{\log m_C}^{\log T}\\
&=
S_C(T)-S_C(m_C),
\end{align*}
where
\begin{align*}
S_C(x)=4C \log x+2C(\log\log x)^2+4C(1-\log 4C)\log\log x.
\end{align*}
%
%\begin{align*}
%S(m)
%&=
%4C \log m+2C(\log\log m)^2+4C(1-\log 4C)\log\log m.
%\end{align*}
%Since $m\log m\approx 4C$,
%we have
%\begin{align*}
%S(m)
%&\approx
%m(\log m)^2
%+
%\frac{m\log m} {2}(\log\log m)^2
%+
%m\log m(1-m\log m)\log\log m\\
%&=
%-(\log m)^2(m^2\log\log m-m)
%+
%\frac{m\log m} {2}(\log\log m)^2
%+
%m\log m \log\log m.
%\end{align*}
Then
\begin{align*}
E\left(|V_C(T)|\right)
&=
S_C(T)-S_C(m_C)+(m_C-1)+R,
\end{align*}
and $|R|<1$.
Define
\begin{equation*}
G_C(T)=S_C(T)-S_C(m_C)+(m_C-1).
\end{equation*}
In Figures~\ref{fig:Vapprox1}, \ref{fig:Vapprox2}, \ref{fig:Vapprox3}, and~\ref{fig:Vapprox4} we compare the graphs of $G_C(T)$ and $|V_C(T)|$ for the same pairs as before.

\newpage

\newpage

\begin{figure}[!hptb]
    \centering
    \includegraphics[width=\figscale\textwidth]{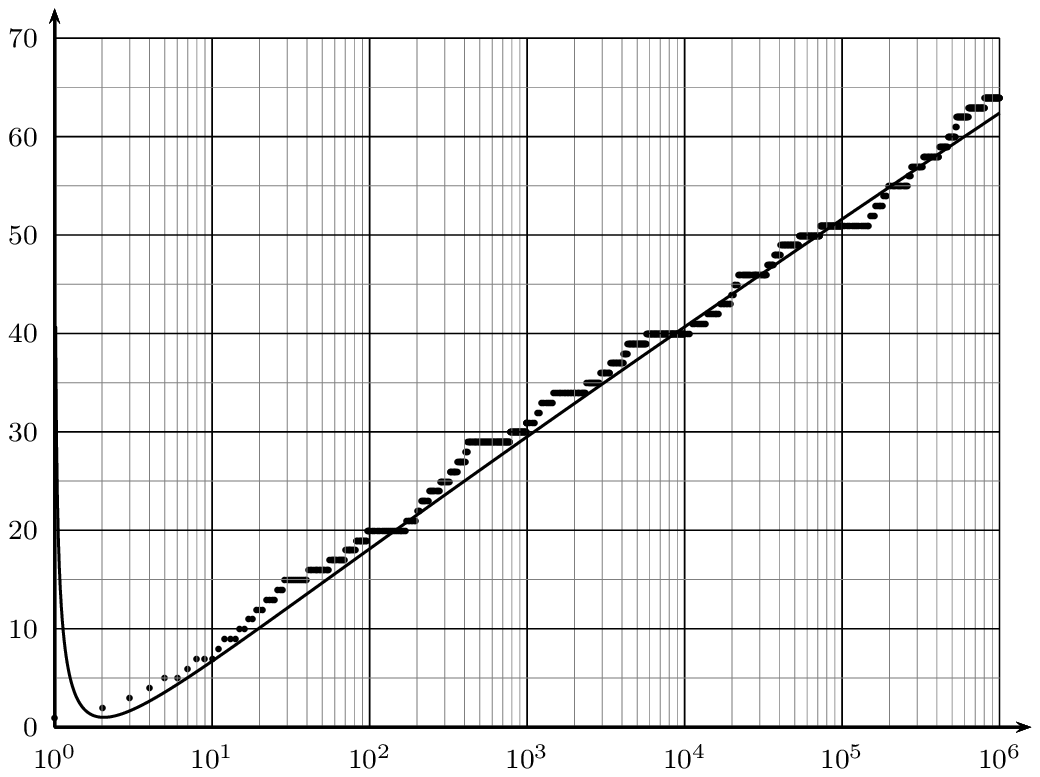}
    \caption{
    $|V_C(n)|$ and $G_C(n)$ for the pair $(\sqrt{2},\sqrt{3})$ (with $C=1$).
    }
    \label{fig:Vapprox1}
    \bigskip
    \includegraphics[width=\figscale\textwidth]{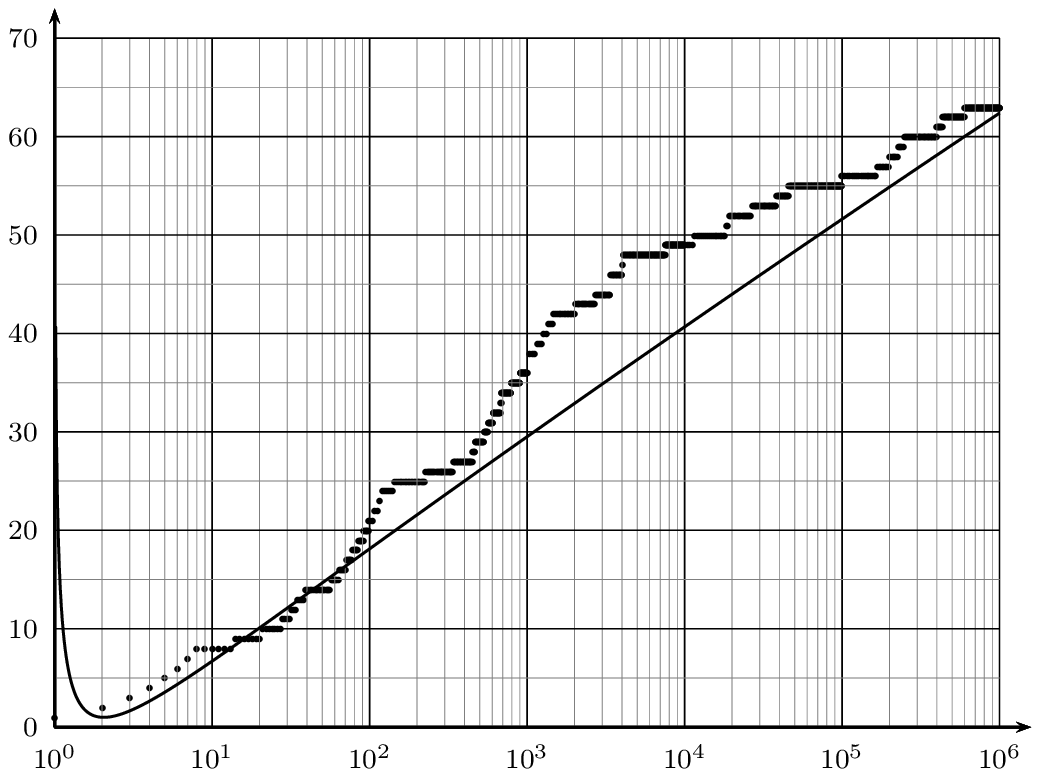}
    \caption{
    $|V_C(n)|$ and $G_C(n)$ for the pair $(e,\pi)$ (with $C=1$).
    }
    \label{fig:Vapprox2}
\end{figure}

\begin{figure}[!hptb]
    \centering
    \includegraphics[width=\figscale\textwidth]{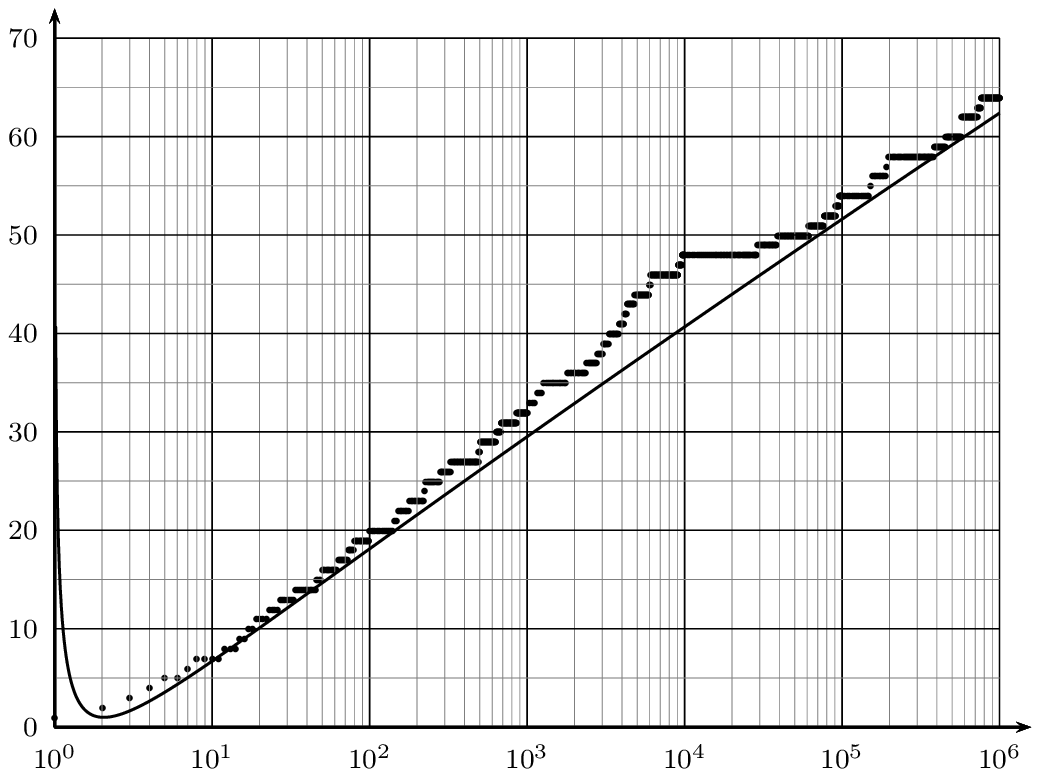}
    \caption{
    $|V_C(n)|$ and $G_C(n)$ for the pair $(\sqrt[3]{2},\sqrt[3]{4})$ (with $C=1$).
    }
    \label{fig:Vapprox3}
    \bigskip
    \includegraphics[width=\figscale\textwidth]{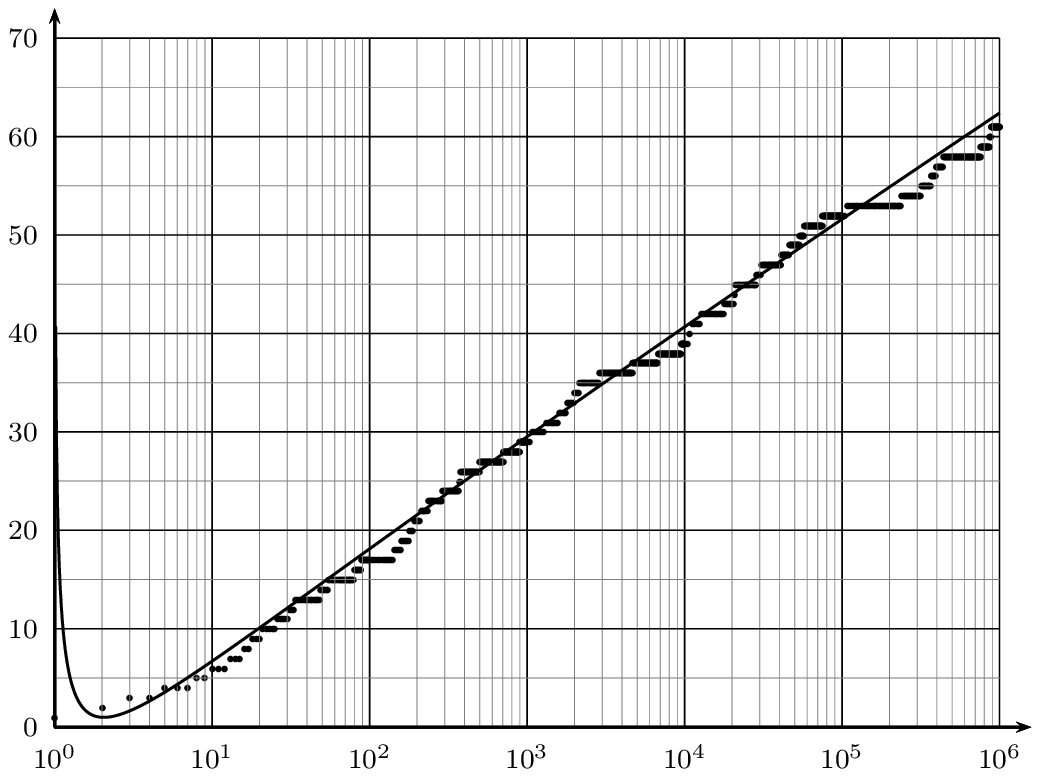}
    \caption{
    $|V_C(n)|$ and $G_C(n)$ for the pair $(\frac{1+\sqrt{5}}{2},\frac{\sqrt{65}}{5})$ (with $C=1$).
    }
    \label{fig:Vapprox4}
\end{figure}

\section{Acknowledgements}
\label{sec:ack}
This paper is adapted from my Ph.D. dissertation \cite{bib:Hi}, submitted at the University of Arizona in December 2014. I thank my advisor Marek Rychlik, as well as Kirti Joshi, Dinesh Thakur, and Dan Madden, for reading several drafts of my dissertation and providing countless valuable suggestions, and for encouraging me to adapt my dissertation into this paper. I would also like to thank John Brillhart and Yann Bugeaud for pointing me toward some results I would have otherwise missed.

I am grateful for the financial support I received (\mbox{VIGRE} fellowships, teaching assistantships, and a Thesis/Dissertation Tuition Scholarship) while working toward my Ph.D. at the University of Arizona.

% statement of problem; history of problem

%% ================================================================
\setlength{\parindent}{0cm}

\chapter{Main Results}
\label{chap:results}

%\subsection{Statement of Results}

In this chapter, we prove the following theorems:\\

{\bf Theorem 1. } {\em
Suppose $\theta\in\R$ is the only real root of an irreducible polynomial of the form $x^3-px-q\in\Z[x]$. Then we can construct a Peck sequence for the pair $(\theta,\theta^2)$.
}\\

{\bf Theorem 2. } {\em Suppose $\alpha\in\R$ is the only real root of an irreducible cubic polynomial in $\Q[x]$. Then we can construct a Peck sequence for the pair $(\alpha,\alpha^2)$.
}\\

{\bf Theorem 3. }{\em
Suppose $\alpha,\beta\in K$, where $K\subseteq \R$ is a cubic field with only one real embedding.
Then we can construct a Peck sequence for the pair $(\alpha,\beta)$.
}\\

The following immediately follows from Theorem~3:\\

{\bf Corollary. (Special Case of Littlewood's Conjecture) }{\em If $\alpha, \beta$ are as in Theorem~3, then we can construct a sequence $\{\psi_n\}$ of positive integers such that
\begin{equation*}
\lim_{n\to\infty}\psi_n\|\psi_n\alpha\|\|\psi_n\beta\|=0.
\end{equation*}
}
%(Note that this is the limit, not a liminf.)

% list the theorems

%% -------------------------------------------

\section{Lemmas}
\label{sec:lemmas}

% definitions and lemmas
\subsection{Definitions and Notation used in Lemmas}
Suppose $\theta\in\R$ is the only real root of the irreducible cubic polynomial
\[
x^3-px-q\in\Z[x].
\]
Let $K$ denote $\Q(\theta)$, and let $\sigma_1,\sigma_2,\sigma_3$ denote the embeddings of $K$ in $\C$. (We assume $\sigma_1$ is the real embedding. We will make an assumption about the choice of $\sigma_2$ and $\sigma_3$ in Remark~\ref{rem:thetaprime}.) For $\zeta\in K$, let $\zeta_i$ denote $\sigma_i (\zeta)$ for $i=1,2,3$ (so $\zeta_1=\zeta$).
%Put $\theta_i=\sigma_i(\theta)$ for $i=1,2,3$.
We use two standard results from number theory (see \cite{bib:Ma}):
\begin{itemize}
\item[(i)] There are positive integers $d$ for which $\O_K\subseteq \frac{1}{d}\Z[\theta]$. In particular, \begin{equation*}
    |\disc(1,\theta,\theta^2)|=|4p^3-27q^2|
    \end{equation*}
    would work (see Theorem 9 of Marcus). In practice, we can find this by computing (in PARI/GP, for instance) an integral basis.
\item[(ii)] If $K$ is a cubic field with only one real embedding, then the group of units of $\O_K$ has rank~1. (Dirichlet's units theorem)
\end{itemize}
%Let $d$ be an arbitrary (positive) integer multiple of $|\Delta|$. (In the proof of Theorem 1, we will use a specific integer multiple.)
Let $\lambda\in\O_K$ be a unit of infinite order (i.e., $\lambda^n\neq 1$ for $n>0$).
In particular $\lambda\neq\pm 1$, so we can assume WLOG that $\lambda>1$. (If not, then replace $\lambda$ with one of $\pm\lambda^{\pm 1}$.)\\

Define the sequences $\{a_n\}$, $\{b_n\}$, $\{c_n\}$ of rational numbers by
\begin{equation*}
a_n+b_n\theta+c_n\theta^2=\lambda^n.
\end{equation*}
Since $1,\theta,\theta^2$ is a $\Q$-basis of $\Q(\theta)$, each $a_n$, $b_n$, $c_n$ is well-defined. Since
%\begin{equation*}
$\lambda\in\O_K\subseteq\frac{1}{d}\Z[\theta],$
%\end{equation*}
each $a_n,b_n,c_n$ is in $\frac{1}{d}\Z$.
Also note that $\lambda_i^n=\sigma_i(\lambda^n)=a_n+b_n\theta_i+c_n\theta_i^2$.\\

We define auxiliary sequences $\{X_n\}$, $\{Y_n\}$, and $\{Z_n\}$ by:
\begin{gather*}
X_n=a_n+p c_n-b_n\theta\\
Y_n=a_n+p c_n -c_n\theta^2\\
Z_n=b_n\theta-c_n\theta^2
\end{gather*}
(where $p\in\Q$ is as in $x^3-px-q$).\\

Finally, define the sequence $\{k_n\}$ by $k_n=d c_n$.\\

%
%\begin{rem}\label{rem:lamWLOG}
%All of our sequences are completely determined by the initial choice of $\lambda$, and we only require that $\lambda$ be a unit of infinite order and $\lambda>1$. Any $\lambda^n$ (with $n>0$) also satisfies this requirement. We will see (in Lemmas \eqref{lem:bounds} and \eqref{lem:cninc}) that eventually:
%\begin{itemize}
%\item[(i)] $|Y_n|<\frac{1}{2d}$,
%\item[(ii)] $|Z_n/\theta|<\frac{1}{2d}$, and
%\item[(iii)] $c_n$ is increasing and positive.
%\end{itemize}
%Afterward we can assume WLOG that $\lambda$ is (and therefore all of our sequences are) chosen so that (i)-(iii) are satisfied for all $n$.
%\end{rem}

\begin{rem}\label{rem:kncomp}
We can easily compute $a_n,b_n,c_n$ for an arbitrary $n$ once we know $\lambda=a+b\theta+c\theta^2$. One way is to work in $\Q[x]/(x^3-px-q)$ (making the identification $\theta\leftrightarrow x$), and then (using repeated squaring)
\begin{equation*}
a_n+b_n\theta+c_n\theta^2
=(a+b\theta+c\theta^2)^n
\longleftrightarrow (a+bx+cx^2)^n \mod x^3-px-q.
\end{equation*}
Another way is to work in the matrix ring $\Q[T]$, where
\begin{equation*}
T=
\begin{pmatrix}
0 & 0 & q\\
1 & 0 & p\\
0 & 1 & 0\\
\end{pmatrix}
\end{equation*}
is the companion matrix of $x^3-px-q$. In this ring we identify $\theta$ and $T$, and so
\begin{equation*}
a_n+b_n\theta+c_n\theta^2
\longleftrightarrow
a_nI+b_nT+c_nT^2
=(aI+bT+cT^2)^n.
\end{equation*}
In particular,
\begin{equation*}
\begin{pmatrix}
a_n\\
b_n\\
c_n
\end{pmatrix}
=
\begin{pmatrix}
a & qc & qb\\
b & a+pc & pb+qc\\
c & b & a +pc
\end{pmatrix}^n
\begin{pmatrix}
1\\
0\\
0
\end{pmatrix}.
\end{equation*}
Again, this exponentiation can be done by repeated squaring.\\

We used this matrix method in Mathematica~8.0 to compute our examples.
%\footnote{Our reason is that for $\zeta\in K$, the corresponding matrix $M_\zeta\in\Q[T]$ contains more information about $\zeta$. For example, the eigenvalues of $M_\zeta$ are the conjugates of $\zeta$, and so $N(\zeta)=\det(M_\zeta)$.}
In the Appendix, we list some specific commands and functions we used.
\end{rem}

\subsection{Lemmas}

\begin{lem}\label{eq:theta23}
\[
\theta_2,\theta_3=\frac{-\theta\pm\sqrt{4p-3\theta^2}}{2}.
\]
\end{lem}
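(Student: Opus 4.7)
The plan is to factor $x^3 - px - q$ over $K$ using the known real root $\theta$, reducing to a quadratic whose roots we can read off by the quadratic formula.

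First I would perform the polynomial division of $x^3 - px - q$ by $x - \theta$ (or equivalently, find $a, b \in K$ with $x^3 - px - q = (x - \theta)(x^2 + ax + b)$). Expanding $(x - \theta)(x^2 + ax + b)$ and matching coefficients gives $a = \theta$ and $b = \theta^2 - p$, with the constant term check $-b\theta = -q$ reducing to $\theta^3 - p\theta = q$, which holds precisely because $\theta$ is a root of $x^3 - px - q$. So the factorization is
\[
x^3 - px - q = (x - \theta)\bigl(x^2 + \theta x + (\theta^2 - p)\bigr).
\]

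Next, since $\sigma_2(\theta), \sigma_3(\theta)$ are the two non-real roots of $x^3 - px - q$, they must be the roots of the quadratic factor $x^2 + \theta x + (\theta^2 - p)$. Applying the quadratic formula yields
\[
\theta_2, \theta_3 = \frac{-\theta \pm \sqrt{\theta^2 - 4(\theta^2 - p)}}{2} = \frac{-\theta \pm \sqrt{4p - 3\theta^2}}{2},
\]
which is exactly the stated formula.

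There is no real obstacle here; the only subtlety is a sign/consistency check that $4p - 3\theta^2 < 0$ so that the square root is purely imaginary (giving the expected complex conjugate pair), which follows from the irreducibility hypothesis together with the assumption that $\theta$ is the unique real root (equivalently, the discriminant $4p^3 - 27q^2$ is negative). Choosing a fixed square root of $4p - 3\theta^2$ in $\C$ then just amounts to labeling $\sigma_2$ versus $\sigma_3$, which is the convention deferred to the subsequent remark referenced in the setup.
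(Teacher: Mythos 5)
Your proof is correct and follows essentially the same approach as the paper: both identify the quadratic factor $(x-\theta_2)(x-\theta_3)=x^2+\theta x+(\theta^2-p)$ and apply the quadratic formula. The only cosmetic difference is that you obtain this quadratic by direct polynomial division and coefficient matching, whereas the paper reads off $\theta_2+\theta_3=-\theta$ and $\theta_2\theta_3=q/\theta$ from Vieta's formulas for the cubic and then simplifies $q/\theta$ to $\theta^2-p$.
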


\begin{proof}
We have two expressions for the minimal polynomial of $\theta$:
\begin{equation*}
x^3-px-q
\end{equation*}
and
\begin{equation*}
%x^3-px-q=x^3-T_{K/\Q}(\theta)x^2-px-N_{K/\Q}(\theta)
(x-\theta_1)(x-\theta_2)(x-\theta_3)=
x^3-(\theta_1+\theta_2+\theta_3)x^2+(\theta_1\theta_2
+\theta_1\theta_3+\theta_2\theta_3)x-\theta_1\theta_2\theta_3.
\end{equation*}
Comparing coefficients, we see that $\theta_1+\theta_2+\theta_3=0$ and $\theta_1\theta_2\theta_3=q$. Then
\begin{equation*}
(x-\theta_2)(x-\theta_3)=x^2-(\theta_2+\theta_3)x+\theta_2\theta_3=x^2-(-\theta) x+\frac{q}{\theta}.
\end{equation*}
Since $\theta^3=p\theta+q$, we have $\theta^2-p=\frac{q}{\theta}$. So
\begin{equation*}
(x-\theta_2)(x-\theta_3)=x^2+\theta x+(\theta^2-p).
\end{equation*}
Then we use the quadratic formula and the fact that $\theta_2$ and $\theta_3$ are roots.
\end{proof}

\begin{rem}\label{rem:thetaprime}

WLOG we assume that $\sigma_2$ and $\sigma_3$ are such that $\theta_2=\frac{-\theta+\sqrt{4p-3\theta^2}}{2}$ and $\theta_3=\frac{-\theta-\sqrt{4p-3\theta^2}}{2}$.

\end{rem}

\begin{lem}
We have the following inequalities:
\begin{align}
3\theta^2-4p&>0\label{ineq:disc}\\
\theta^2-p&>0\label{ineq:theta2}\\
3\theta^2-p&>0\label{ineq:theta3}
\end{align}
\end{lem}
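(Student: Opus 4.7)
The plan is to establish \eqref{ineq:disc} first as the fundamental inequality, and then to deduce \eqref{ineq:theta2} and \eqref{ineq:theta3} from it with only a short case analysis. First I would record the preliminary fact that $\theta \neq 0$: if $\theta = 0$ were a root of $x^3 - px - q$, then $q = 0$, so $x^3 - px - q = x(x^2 - p)$ would be reducible, contradicting the irreducibility hypothesis. This fact will be needed only in a boundary case below, but it is cheap to dispatch up front.

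For \eqref{ineq:disc}, I would apply Lemma \ref{eq:theta23}, which gives
\[
\theta_2,\,\theta_3 \;=\; \frac{-\theta \pm \sqrt{4p - 3\theta^2}}{2}.
\]
Since $K$ has only one real embedding, $\theta_2$ and $\theta_3$ must form a pair of non-real complex conjugates; this forces the quantity under the radical to be strictly negative, i.e.\ $4p - 3\theta^2 < 0$, which is \eqref{ineq:disc}. (Note that strict inequality holds because equality would force $\theta_2 = \theta_3$, contradicting separability of the minimal polynomial of $\theta$.)

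For \eqref{ineq:theta2}, I would rewrite \eqref{ineq:disc} as $\theta^2 > \tfrac{4p}{3}$ and split on the sign of $p$. If $p > 0$, then $\tfrac{4p}{3} > p$, so $\theta^2 > p$. If $p < 0$, then $\theta^2 \geq 0 > p$. If $p = 0$, then $\theta^2 > 0 = p$ using $\theta \neq 0$. Finally, for \eqref{ineq:theta3} I would use the identity
\[
3\theta^2 - p \;=\; 2\theta^2 + (\theta^2 - p),
\]
in which the first summand is strictly positive because $\theta \neq 0$ and the second is strictly positive by \eqref{ineq:theta2}.

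The arguments are all short, so there is no genuine obstacle; the only place where one must be a little careful is the boundary case $p = 0$ in the proof of \eqref{ineq:theta2}, which is the unique spot where the preliminary observation $\theta \neq 0$ is genuinely needed and where merely inheriting $\theta^2 \geq \tfrac{4p}{3}$ is not immediately sufficient.
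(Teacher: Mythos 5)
Your proof is correct and rests on the same key observation as the paper's: the factor $(x-\theta_2)(x-\theta_3)$ has non-real roots, so its discriminant $4p-3\theta^2$ is negative, giving \eqref{ineq:disc}; the remaining two inequalities are then elementary arithmetic. The paper derives \eqref{ineq:theta2} and \eqref{ineq:theta3} in one line from the chain $p < \tfrac{3\theta^2}{4} < \theta^2 < 3\theta^2$, tacitly using $\theta^2 > 0$, whereas you make the step $\theta \neq 0$ explicit (correctly justified via irreducibility) and substitute a three-way sign split on $p$ plus the decomposition $3\theta^2 - p = 2\theta^2 + (\theta^2 - p)$; this is slightly more laborious but a bit more careful about the boundary case, and either route is perfectly sound.
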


\begin{proof}
Because $(x-\theta_2)(x-\theta_3)$ has complex roots, its discriminant $4p-3\theta^2$ is negative. That is, $p<\frac{3\theta^2}{4}$. So we have
\begin{equation*}
p<\frac{3\theta^2}{4}<\theta^2<3\theta^2,
\end{equation*}
and therefore
$\theta^2-p>0$ and $3\theta^2-p>0$ as well.
%Since $\theta_2\theta_3=\frac{q}{\theta}=\theta^2-p$, and since $\theta_2$ and $\theta_3$ are complex conjugates and nonzero,
%\[
%\theta^2-p=\theta_2\theta_3=|\theta_2|^2>0.
%\]
%Lastly,
%\begin{equation*}
%3\theta^2-p=2\theta^2+(\theta^2-p)>\theta^2-p>0.
%\end{equation*}
\end{proof}

The following identities generalize the factorization of $N_{K/\Q}$ that we had in \eqref{eq:normfac}.
\begin{lem}\label{lem:ellipsoididentity}
If $\zeta=x+y\theta+z\theta^2\in K^\times$, then
\begin{equation}\label{eq:ident1}
(2(x+p z)-y\theta-z\theta^2)^2+(3\theta^2 -4p)(y-z\theta)^2=\frac{4 N(\zeta)}{\zeta}
\end{equation}
and
\begin{equation}\label{eq:ident2}
(x+pz-y\theta)^2+(x+pz-z\theta^2)^2+(1-2p/\theta^2)(y\theta-z\theta^2)^2=\frac{2 N(\zeta)}{\zeta}.
\end{equation}
\end{lem}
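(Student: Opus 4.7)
The plan is to compute $N(\zeta)/\zeta$ directly as $|\zeta_2|^2$ and match it against the right-hand sides.

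First I would observe that since $\zeta = \zeta_1$ is real, $N(\zeta)/\zeta = \zeta_2 \zeta_3$. The previous lemma gives $\theta_2, \theta_3 = \tfrac{-\theta \pm \sqrt{4p-3\theta^2}}{2}$, and since $3\theta^2 - 4p > 0$ by \eqref{ineq:disc}, the quantity under the radical is negative; hence $\theta_3 = \overline{\theta_2}$, from which $\zeta_3 = \overline{\zeta_2}$ and $N(\zeta)/\zeta = |\zeta_2|^2 = \Re(\zeta_2)^2 + \Im(\zeta_2)^2$. So it suffices to identify the real and imaginary parts of $\zeta_2$ and square them.

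Writing $\theta_2 = -\tfrac{\theta}{2} + i\tfrac{\sqrt{3\theta^2-4p}}{2}$, a short calculation gives
\[
\theta_2^2 = \bigl(p - \tfrac{\theta^2}{2}\bigr) - i\tfrac{\theta\sqrt{3\theta^2-4p}}{2}.
\]
Substituting into $\zeta_2 = x + y\theta_2 + z\theta_2^2$ and collecting, one finds
\[
\Re(\zeta_2) = \tfrac{1}{2}\bigl(2(x+pz) - y\theta - z\theta^2\bigr), \qquad \Im(\zeta_2) = \tfrac{1}{2}\sqrt{3\theta^2-4p}\,(y - z\theta).
\]
Squaring and summing then multiplying by $4$ yields identity \eqref{eq:ident1} immediately.

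For \eqref{eq:ident2} I would argue algebraically from \eqref{eq:ident1} rather than redo the computation. Set $A = x+pz-y\theta$, $B = x+pz-z\theta^2$, $C = y\theta - z\theta^2$, so that $A+B = 2(x+pz) - y\theta - z\theta^2$, $A-B = z\theta^2 - y\theta = -C$, and $y - z\theta = C/\theta$. Then \eqref{eq:ident1} reads
\[
(A+B)^2 + (3 - 4p/\theta^2)\,C^2 = \frac{4N(\zeta)}{\zeta},
\]
and the desired \eqref{eq:ident2}, multiplied by $2$, reads $2A^2 + 2B^2 + (2 - 4p/\theta^2)C^2 = \tfrac{4N(\zeta)}{\zeta}$. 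Subtracting the two, the discrepancy is exactly $(A+B)^2 - 2A^2 - 2B^2 + C^2 = -(A-B)^2 + C^2$, which vanishes because $A-B = -C$.

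The main (and only) obstacle is the bookkeeping in the real/imaginary split — one must be careful with the sign of $4p - 3\theta^2$ and keep the factors of $\theta$ straight when computing $\theta_2^2$. Once \eqref{eq:ident1} is in hand, \eqref{eq:ident2} is pure algebra using the identity $A - B = -C$.
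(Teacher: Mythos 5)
Your proof is correct and follows essentially the same route as the paper's. Both derive \eqref{eq:ident1} from $\zeta_2\zeta_3 = N(\zeta)/\zeta$ using the explicit form of $\theta_2$ from Lemma~\ref{eq:theta23} (your passage through $|\zeta_2|^2 = \Re(\zeta_2)^2 + \Im(\zeta_2)^2$ is the same computation the paper does by multiplying the conjugate expressions directly, with $\sqrt{4p-3\theta^2}$ kept formal), and both derive \eqref{eq:ident2} from \eqref{eq:ident1} by the same polynomial identity in the three auxiliary quantities — the paper phrases it as $(U+V)^2 + 3(U-V)^2 = 2U^2 + 2V^2 + 2(U-V)^2$ with $U-V = C$, while you subtract the two target identities and check the residual $-(A-B)^2 + C^2$ vanishes, which is the same fact.
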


\begin{proof}
%Let $\zeta_i$ denote $\sigma_i(\zeta)$.
We start with $\zeta_1\zeta_2\zeta_3=N(\zeta)$, or $\zeta_2\zeta_3=\frac{N(\zeta)}{\zeta}$. We wish to find $\zeta_2$ and $\zeta_3$ in terms of $\theta$. Now (using Lemma \ref{eq:theta23})
\begin{align*}
\left(\frac{-\theta\pm\sqrt{4p-3\theta^2}}{2}\right)^2
&=\frac{\theta^2\mp 2\theta\sqrt{4p-3\theta^2}+4p-3\theta^2}{4}
=p+\frac{-\theta^2\mp\theta\sqrt{4p-3\theta^2}}{2},
\end{align*}
so
\begin{align*}
\zeta_2
&=x+y\theta_2+z\theta_2^2\\
&=x+y\left(\frac{-\theta+\sqrt{4p-3\theta^2}}{2}\right)
+z\left(p+\frac{-\theta^2-\theta\sqrt{4p-3\theta^2}}{2}\right)\\
&=\left((x+p z)-\frac{y\theta}{2}-\frac{z\theta^2}{2}\right)
+\frac{\sqrt{4p-3\theta^2}}{2}(y-z\theta)\\
&=\frac{1}{2}\left(2(x+p z)-y\theta-z\theta^2\right)+\frac{\sqrt{4p-3\theta^2}}{2}(y-z\theta).
\end{align*}
Similarly we find that
\[
\zeta_3=\frac{1}{2}(2(x+p z)-y\theta-z\theta^2)-\frac{\sqrt{4p-3\theta^2}}{2}(y-z\theta).
\]
Then
\[
\frac{N(\zeta)}{\zeta}=\zeta_2\zeta_3
=\frac{1}{4}\left(2(x+p z)-y\theta-z\theta^2\right)^2
-\frac{4p-3\theta^2}{4}(y-z\theta)^2,
\]
or
\[
\frac{4 N(\zeta)}{\zeta}
=\left(2(x+p z)-y\theta-z\theta^2\right)^2+(3\theta^2-4p)(y-z\theta)^2.
\]
To get \eqref{eq:ident2} we use the identity
\[
(U+V)^2+3(U-V)^2=2U^2+2V^2+2(U-V)^2
\]
with $U=x+pz-z\theta^2$ and $V=x+pz-y\theta$:
\begin{align*}
\frac{4N(\zeta)}{\zeta}
&=\left(2(x+p z)-y\theta-z\theta^2\right)^2+3(y\theta-z\theta^2)^2-\frac{4p}{\theta^2}(y\theta-z\theta^2)^2\\
&=(U+V)^2+3(U-V)^2-\frac{4p}{\theta^2}(U-V)^2\\
&=2U^2+2V^2+2(U-V)^2-\frac{4p}{\theta^2}(U-V)^2,
\end{align*}
or
\begin{align*}
\frac{2N(\zeta)}{\zeta}
&=U^2+V^2+(1-2p/\theta^2)(U-V)^2\\
&=(x+pz-z\theta^2)^2+(x+pz-y\theta)^2+(1-2p/\theta^2)(y\theta-z\theta^2)^2.
\end{align*}
\end{proof}

\begin{defn}\label{defn:constants}
We define the constants $C_1$, $C_2$, and $C_3$ by:
\begin{gather*}
C_1=\max\{\sqrt{2},\frac{\sqrt{2}|\theta|}{\sqrt{3\theta^2-4p}}\}\\
C_2=\frac{\sqrt{d}}{\sqrt{3\theta^2-p}}\\
C_3=\max\{1,\frac{3C_2}{2}\}.
\end{gather*}
We will use these constants throughout the lemmas, as well as in the proofs of the theorems and the constructions in Chapter~3.
\end{defn}

\begin{lem}\label{lem:C1altdefn}
$C_1$ also satisfies
\begin{equation*}
C_1=
\begin{cases}
\frac{\sqrt{2}|\theta|}{\sqrt{3\theta^2-4p}},& \theta^2-2p<0\\
\sqrt{2}, & \theta^2-2p>0.
\end{cases}
\end{equation*}
\end{lem}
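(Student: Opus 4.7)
The plan is to directly compare the two candidates for the maximum, namely $\sqrt{2}$ and $\frac{\sqrt{2}|\theta|}{\sqrt{3\theta^2-4p}}$, and show that which one is larger is governed precisely by the sign of $\theta^2 - 2p$.

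First I would observe that both quantities are well-defined and positive: the denominator $\sqrt{3\theta^2-4p}$ is real and strictly positive by inequality \eqref{ineq:disc}. Since both numbers are positive, comparing them is equivalent to comparing their squares, i.e., comparing $2$ to $\frac{2\theta^2}{3\theta^2-4p}$. Clearing the positive denominator, this reduces to comparing $2(3\theta^2-4p)$ with $2\theta^2$, which is the same as comparing $4\theta^2 - 8p$ with $0$, i.e., the sign of $\theta^2 - 2p$.

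From this, $\frac{\sqrt{2}|\theta|}{\sqrt{3\theta^2-4p}} \geq \sqrt{2}$ if and only if $\theta^2 - 2p \leq 0$, with strict inequality on both sides simultaneously. Hence when $\theta^2 - 2p < 0$ the maximum is attained by $\frac{\sqrt{2}|\theta|}{\sqrt{3\theta^2-4p}}$, and when $\theta^2 - 2p > 0$ the maximum is attained by $\sqrt{2}$, exactly as claimed. There is no real obstacle here — the lemma is essentially an unpacking of the definition of $C_1$ combined with the sign analysis above; the only thing one must be careful about is to invoke \eqref{ineq:disc} to justify clearing the denominator without flipping the inequality.
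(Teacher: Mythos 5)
Your proof is correct and is essentially the same argument as in the paper: both reduce the comparison of the two candidates in the $\max$ to the sign of $\theta^2-2p$, using $3\theta^2-4p>0$ from \eqref{ineq:disc} to square and clear the denominator safely. The only cosmetic difference is that the paper first factors out the common $\sqrt{2}$ and compares $1$ with $|\theta|/\sqrt{3\theta^2-4p}$, whereas you compare the two full quantities directly.
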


\begin{proof}
Note that $C_1=\sqrt{2}\cdot\max\{1,\frac{|\theta|}{\sqrt{3\theta^2-4p}}\}$ and that
\begin{align*}
1<\frac{|\theta|}{\sqrt{3\theta^2-4p}}
&\Leftrightarrow
    3\theta^2-4p<\theta^2\\
&\Leftrightarrow
    2\theta^2-4p<0\\
&\Leftrightarrow
    \theta^2-2p<0.
\end{align*}
\end{proof}

\begin{lem}\label{lem:bounds} For all positive integers $n$,
\begin{align}
|a_n+p c_n-b_n\theta|
    =|X_n|
    &<\dfrac{C_1}{\lambda^{n/2}}\label{ineq:Xn}\\
|a_n+p c_n-c_n\theta^2|
    =|Y_n|
    &<\frac{C_1}{\lambda^{n/2}}\label{ineq:Yn}\\
|b_n\theta-c_n\theta^2|
    =|Z_n|
    &<\frac{2|\theta|}{\sqrt{3\theta^2-4p}}
    \cdot
    \frac{1}{\lambda^{n/2}}
    \leq
    \frac{\sqrt{2}C_1}{\lambda^{n/2}}\label{ineq:Zn}\\
|\lambda^n-(3\theta^2-p)c_n|
    &<\frac{(1+\sqrt{2})C_1}{\lambda^{n/2}}\label{ineq:lam}
    %(C_1+\frac{2|\theta|}{\sqrt{3\theta^2-4p}}\right)\cdot\frac{1}{\lambda^{n/2}}
%|a_n+p c_n-b_n\theta|=|X_n|&<\dfrac{\sqrt{2}|\theta|}{\sqrt{3\theta^2-4p}}\cdot\dfrac{1}{\lambda^{n/2}}\label{ineq:Xn}\\
%|a_n+p c_n-c_n\theta^2|=|Y_n|&<\frac{\sqrt{2}|\theta|}{\sqrt{3\theta^2-4p}}\cdot\frac{1}{\lambda^{n/2}}\label{ineq:Yn}\\
%|b_n\theta-c_n\theta^2|=|Z_n|&<\frac{2|\theta|}{\sqrt{3\theta^2-4p}}\cdot\frac{1}{\lambda^{n/2}}\label{ineq:Zn}\\
%|\lambda^n-(3\theta^2-p)c_n|&<\frac{(2+\sqrt{2})|\theta|}{\sqrt{3\theta^2-4p}}\cdot\frac{1}{\lambda^{n/2}}\label{ineq:lam}
\end{align}
\end{lem}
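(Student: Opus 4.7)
The plan is to apply Lemma~\ref{lem:ellipsoididentity} with $\zeta = \lambda^n$, so that $(a_n,b_n,c_n)$ plays the role of $(x,y,z)$. Since $\lambda$ is a unit we have $N(\lambda^n) = N(\lambda)^n \in \{\pm 1\}$, and $\lambda > 1$ gives $\lambda^n > 0$. Substituting into \eqref{eq:ident1} yields
\[
\bigl(2(a_n+pc_n) - b_n\theta - c_n\theta^2\bigr)^2 + (3\theta^2-4p)(b_n - c_n\theta)^2 = \frac{4 N(\lambda)^n}{\lambda^n}.
\]
The left-hand side is a sum of squares with positive coefficients (using \eqref{ineq:disc}), hence non-negative, which forces $N(\lambda)^n \geq 0$ for all $n \geq 1$, so $N(\lambda) = 1$. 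Both identities of Lemma~\ref{lem:ellipsoididentity} therefore have the clean right-hand sides $4/\lambda^n$ and $2/\lambda^n$, respectively.

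From the displayed identity I would read off \eqref{ineq:Zn} directly: the second summand is at most $4/\lambda^n$, so $|b_n - c_n\theta| \leq 2/(\sqrt{3\theta^2-4p}\,\lambda^{n/2})$, and multiplying by $|\theta|$ gives the first bound on $|Z_n|$; the bound $\leq \sqrt{2}\,C_1/\lambda^{n/2}$ is then immediate from the definition of $C_1$. For \eqref{ineq:Xn} and \eqref{ineq:Yn} I would use \eqref{eq:ident2}, which becomes
\[
X_n^2 + Y_n^2 + \bigl(1 - \tfrac{2p}{\theta^2}\bigr) Z_n^2 = \frac{2}{\lambda^n}.
\]
Here the coefficient $1-2p/\theta^2$ may change sign, so I split into cases. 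If $\theta^2 - 2p \geq 0$, every term on the left is non-negative, giving $|X_n|,|Y_n| \leq \sqrt{2}/\lambda^{n/2}$, which equals $C_1/\lambda^{n/2}$ by Lemma~\ref{lem:C1altdefn}. If $\theta^2 - 2p < 0$, I move the $Z_n^2$ term to the right and apply the bound just obtained:
\[
X_n^2 + Y_n^2 \leq \frac{2}{\lambda^n} + \frac{4(2p-\theta^2)}{(3\theta^2-4p)\lambda^n} = \frac{2(3\theta^2-4p) + 4(2p-\theta^2)}{(3\theta^2-4p)\lambda^n} = \frac{2\theta^2}{(3\theta^2-4p)\lambda^n},
\]
so $|X_n|,|Y_n| \leq \sqrt{2}\,|\theta|/(\sqrt{3\theta^2-4p}\,\lambda^{n/2}) = C_1/\lambda^{n/2}$, again by Lemma~\ref{lem:C1altdefn}.

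For \eqref{ineq:lam}, I would expose the linear identity
\[
Y_n + Z_n = (a_n + pc_n - c_n\theta^2) + (b_n\theta - c_n\theta^2) = \lambda^n - (3\theta^2 - p)c_n,
\]
after which the triangle inequality together with \eqref{ineq:Yn} and the sharper second form of \eqref{ineq:Zn} yields the required $(1+\sqrt{2})C_1/\lambda^{n/2}$ bound. The main delicacy is the sign-of-$(1-2p/\theta^2)$ case split, where a potentially negative summand must be absorbed using the $Z_n$-bound and one verifies that the arithmetic collapses to exactly the Case-2 shape of $C_1$ from Lemma~\ref{lem:C1altdefn}. Strictness of each inequality is automatic, since equality anywhere would force $Z_n = 0$, i.e., $b_n = c_n\theta$ with $b_n,c_n \in \Q$, whence $b_n = c_n = 0$ and $\lambda^n = a_n$ is a rational integer unit equal to $\pm 1$, contradicting $\lambda^n > 1$.
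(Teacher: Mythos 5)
Your proposal tracks the paper's proof nearly line for line: both invoke Lemma~\ref{lem:ellipsoididentity} with $\zeta = \lambda^n$, both derive \eqref{ineq:Zn} from \eqref{eq:ident1} by dropping the $(X_n+Y_n)^2$ term, both handle \eqref{ineq:Xn}--\eqref{ineq:Yn} from \eqref{eq:ident2} via the same case split on the sign of $\theta^2-2p$, and both obtain \eqref{ineq:lam} from the linear identity $Y_n + Z_n = \lambda^n - (3\theta^2-p)c_n$ and the triangle inequality. Your additional observation that $N(\lambda)=1$ (the paper simply asserts this, deriving it elsewhere from $\lambda_2\overline{\lambda_2} = |\lambda_2|^2 > 0$) is a valid alternative justification.

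One small imprecision: your blanket claim that ``equality anywhere would force $Z_n = 0$'' is correct for the Case~2 bound on $X_n,Y_n$ (matching the paper's argument exactly), and strictness in Case~1 is inherited from the strict $Z_n$-bound, but it is not the right reason for strictness of \eqref{ineq:Zn} itself. There, equality would force $X_n + Y_n = 0$, i.e.\ $2(a_n+pc_n) - b_n\theta - c_n\theta^2 = 0$; $\Q$-linear independence of $1,\theta,\theta^2$ then gives $a_n = b_n = c_n = 0$, hence $\lambda^n = 0$, which is the contradiction. (The paper passes over this point silently, writing the strict inequality directly, so the gap is shared.)
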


\begin{proof}
%{\em Proof.}
(Third inequality) With $\zeta=\lambda^n$ in \eqref{eq:ident1},
\begin{equation*}
(X_n+Y_n)^2+(3\theta^2-4p)\left(\frac{Z_n}{\theta}\right)^2
    =\frac{4N(\lambda^n)}{\lambda^n}=\frac{4}{\lambda^n},
\end{equation*}
\begin{equation*}
(3\theta^2-4p)\left(\frac{Z_n}{\theta}\right)^2
    <\frac{4}{\lambda^n}.
\end{equation*}
Then since $3\theta^2-4p>0$ (from \eqref{ineq:disc}),
\begin{align*}
Z_n^2
    &<\frac{4\theta^2}{3\theta^2-4p}\cdot\frac{1}{\lambda^n},\\
|Z_n|
    &<\frac{2|\theta|}
        {\sqrt{3\theta^2-4p}}
        \cdot\frac{1}{\lambda^{n/2}}
    \leq \frac{\sqrt{2}C_1}{\lambda^{n/2}}.
\end{align*}
(First and second inequalities) By \eqref{eq:ident2},
\begin{equation}\label{eq:ellipsoid}
X_n^2+Y_n^2+\left(\frac{\theta^2-2p}{\theta^2}\right)Z_n^2=\frac{2}{\lambda^n}.
\end{equation}
\\
(Case 1: $\theta^2-2p<0$.) From \eqref{eq:ellipsoid} we have
\begin{equation*}
X_n^2+Y_n^2
=\frac{2p-\theta^2}{\theta^2} Z_n^2+\frac{2}{\lambda^n}.
\end{equation*}
Since $\frac{2p-\theta^2}{\theta^2}Z_n^2>0$, and since
$Z_n^2
    <\frac{4\theta^2}{3\theta^2-4p}\cdot\frac{1}{\lambda^n}$,
\begin{align*}
X_n^2+Y_n^2
&<\frac{2p-\theta^2}{\theta^2}\left( \frac{4\theta^2}{3\theta^2-4p}\cdot\frac{1}{\lambda^n}\right)+\frac{2}{\lambda^n}\\
&=\left(\frac{4(2p-\theta^2)}{(3\theta^2-4p)}+2\right)\frac{1}{\lambda^n}\\
&=\frac{4(2p-\theta^2)+2(3\theta^2-4p)}{3\theta^2-4p}\cdot\frac{1}{\lambda^n}\\
&=\frac{2\theta^2}{3\theta^2-4p}\cdot\frac{1}{\lambda^n}.
\end{align*}
So
\begin{equation*}
|X_n|,|Y_n|
<
\frac{\sqrt{2}|\theta|}{\sqrt{3\theta^2-4p}}\cdot\frac{1}{\lambda^{n/2}}.
\end{equation*}
By Lemma \ref{lem:C1altdefn}, the RHS of this inequality is $\frac{C_1}{\lambda^{n/2}}$.
\\
(Case 2: $\theta^2-2p>0$.) Suppose $\theta^2-2p>0$. Then
\begin{equation*}
X_n^2+Y_n^2
\leq
X_n^2+Y_n^2+\left(\frac{\theta^2-2p}{\theta^2}\right)Z_n^2
=\frac{2}{\lambda^n}.
\end{equation*}
We claim that equality cannot hold with our choice of $\lambda$. Note that we would have equality iff $b_n\theta-c_n\theta^2=Z_n=0$. Since $b_n,c_n\in\Q$ and $\theta\notin\Q$, this would mean $b_n=c_n=0$, so $\lambda^n=a_n$. Then
\begin{equation*}
1=N_{K/\Q}(\lambda)=N_{K/\Q}(a_n)=\sigma_1(a_n)\sigma_2(a_n)\sigma_3(a_n)=a_n^3=(\lambda^n)^3.
\end{equation*}
But $\lambda>1$ (by assumption), so $\lambda^{3n}>1$. Therefore we have the strict inequality
\begin{equation*}
X_n^2+Y_n^2<\frac{2}{\lambda^n},
\end{equation*}
and so
\begin{equation*}
|X_n|,|Y_n|
<
\frac{\sqrt{2}}{\lambda^{n/2}}.
\end{equation*}
By Lemma \ref{lem:C1altdefn}, the RHS of this inequality equals $\frac{C_1}{\lambda^{n/2}}$.\\

(Fourth inequality) Since
\begin{align*}
\lambda^n-(3\theta^2-p)c_n
&=a_n+b_n\theta+c_n\theta^2-3 c_n\theta^2+pc_n\\
&=(a_n+pc_n-c_n\theta^2)+(b_n\theta-c_n\theta^2)\\
&=Y_n+Z_n,
\end{align*}
we have
\begin{equation*}
|\lambda^n-(3\theta^2-p)c_n|
\leq|Y_n|+|Z_n|
=(1+\sqrt{2})\frac{C_1}{\lambda^{n/2}}.
\end{equation*}
%
%\begin{align*}
%|\lambda^n-(3\theta^2-p)c_n|
%&=|(a_n+b_n\theta+c_n\theta^2)-3 c_n\theta^2+pc_n|\\
%&=|(a_n+pc_n-c_n\theta^2)+(b_n\theta-c_n\theta^2)|\\
%&=|Y_n+Z_n|\\
%&<|Y_n|+|Z_n|
%\end{align*}
\end{proof}

{\bf Recall: } We are using the notation $\langle x\rangle$ to denote $x-\lfloor x+\frac{1}{2}\rfloor$, the signed distance from $x$ to the nearest integer (so $\|x\|=|\langle x\rangle|$).

\begin{lem}\label{lem:normkntheta}
\begin{itemize}%[leftmargin=*]
\item[]
\item[(a)] If $\lambda^{n/2}>\dfrac{2\sqrt{2}dC_1}{|\theta|}$, then
$\langle k_n\theta\rangle%=|db_n-k_n\theta|
=-\dfrac{d Z_n}{\theta}$.
\item[(b)] If $\lambda^{n/2}>2 dC_1$, then
$\langle k_n\theta^2\rangle%=|da_n+pk_n-k_n\theta^2|
=-d Y_n$.
\end{itemize}
\end{lem}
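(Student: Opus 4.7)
The plan is to observe that $\langle x\rangle = -\eta$ is equivalent to the two conditions (i) $x + \eta \in \Z$ and (ii) $\eta \in [-\tfrac12,\tfrac12)$ (up to boundary convention). So for each part, I verify these two conditions separately — the first is a routine algebraic identity using the definitions of $X_n,Y_n,Z_n$ and $k_n$, and the second is where the size hypotheses on $\lambda^{n/2}$ come in through the bounds of Lemma~\ref{lem:bounds}.

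For part (a), I compute $k_n\theta - \bigl(-\frac{dZ_n}{\theta}\bigr) = dc_n\theta + \frac{d(b_n\theta - c_n\theta^2)}{\theta} = dc_n\theta + db_n - dc_n\theta = db_n$. Since $b_n \in \frac{1}{d}\Z$ (because $\lambda^n \in \O_K \subseteq \frac{1}{d}\Z[\theta]$), this is an integer, giving condition (i). For condition (ii), the bound $|Z_n| < \frac{2|\theta|}{\sqrt{3\theta^2 - 4p}}\lambda^{-n/2}$ from \eqref{ineq:Zn} gives
\[
\left|\frac{dZ_n}{\theta}\right| < \frac{2d}{\sqrt{3\theta^2 - 4p}}\cdot\frac{1}{\lambda^{n/2}} \le \frac{\sqrt{2}\,dC_1}{|\theta|\lambda^{n/2}},
\]
and the hypothesis $\lambda^{n/2} > \frac{2\sqrt{2}\,dC_1}{|\theta|}$ forces this to be $< \tfrac12$, so $-dZ_n/\theta$ is indeed the signed distance from $k_n\theta$ to its nearest integer $db_n$.

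For part (b), the analogous algebra is $k_n\theta^2 - (-dY_n) = dc_n\theta^2 + d(a_n + pc_n - c_n\theta^2) = d(a_n + pc_n) = da_n + pk_n$. Since $a_n \in \frac{1}{d}\Z$, $p\in\Z$, and $k_n\in\Z$, this is an integer, giving (i). For (ii), the bound $|Y_n| < C_1\lambda^{-n/2}$ from \eqref{ineq:Yn} gives $|dY_n| < dC_1\lambda^{-n/2}$, and the hypothesis $\lambda^{n/2} > 2dC_1$ forces this to be $<\tfrac12$.

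There is no real obstacle here — the lemma is essentially the statement that the bounds in Lemma~\ref{lem:bounds} are tight enough to pin down the nearest-integer rounding of $k_n\theta$ and $k_n\theta^2$ as explicit linear combinations of $a_n,b_n,c_n$. The only subtlety is keeping careful track of the factor $d$ (which clears denominators so that the candidate nearest integers $db_n$ and $d(a_n+pc_n)$ are genuinely integral), and verifying that the numerical constants in the hypotheses $\frac{2\sqrt{2}\,dC_1}{|\theta|}$ and $2dC_1$ are precisely what is needed to turn the bounds of Lemma~\ref{lem:bounds} into strict $< \tfrac12$ inequalities.
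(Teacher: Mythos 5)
Your proposal is correct and follows essentially the same route as the paper: identify the candidate nearest integer ($db_n$ for part (a), $da_n + pk_n$ for part (b)), note that it is genuinely an integer because $\lambda^n \in \O_K \subseteq \frac{1}{d}\Z[\theta]$ and $p \in \Z$, and then use the bounds from Lemma~\ref{lem:bounds} together with the size hypotheses on $\lambda^{n/2}$ to force the distance below $\frac12$. Your explicit split into conditions (i) and (ii) is just a cleaner restatement of the paper's one-line argument, and your intermediate inequality $\frac{2d}{\sqrt{3\theta^2-4p}} \le \frac{\sqrt{2}\,dC_1}{|\theta|}$ (which the paper absorbs by quoting the $\le \sqrt{2}C_1/\lambda^{n/2}$ form of \eqref{ineq:Zn} directly) is the correct consequence of the definition of $C_1$.
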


\begin{proof}
(a) Suppose $\lambda^{n/2}>\dfrac{2\sqrt{2}dC_1}{|\theta|}$.  Then $\dfrac{\sqrt{2}C_1}{\lambda^{n/2}}<\dfrac{|\theta|}{2d}$, so by \eqref{ineq:Zn}
\begin{equation*}
|db_n-dc_n\theta|
=\left|\dfrac{d Z_n}{\theta}\right|
<\frac{d}{|\theta|}
    \dfrac{\sqrt{2}C_1}{\lambda^{n/2}}
<\frac{1}{2}.
\end{equation*}
So since $db_n$ is an integer (by the choice of $d$), it is the closest integer to $dc_n\theta=k_n\theta$. That is,
\begin{equation*}
\langle k_n\theta\rangle=k_n\theta -db_n=-\dfrac{d Z_n}{\theta}.
\end{equation*}

(b) The proof is identical to (a). If $\lambda^{n/2}>2 dC_1$, then $\dfrac{C_1}{\lambda^{n/2}}<\dfrac{1}{2d}$, and so by \eqref{ineq:Yn}
\begin{equation*}
|da_n+p dc_n-dc_n\theta^2|=|dY_n|<\frac{dC_1}{\lambda^{n/2}}<\frac{1}{2}.
\end{equation*}
Now $da_n+pdc_n$ is an integer (since $p\in\Z$, and since $da_n,dc_n\in\Z$ by our choice of $d$), so it is the closest integer to $dc_n\theta^2=k_n\theta^2$. So
\begin{equation*}
\langle k_n\theta^2\rangle=k_n\theta^2-(da_n+pk_n)=-Y_n.
\end{equation*}

\end{proof}

\begin{lem}\label{lem:cnposinc}
\begin{itemize}
\item[]
\item[(a)] If $\lambda^{3n/2}>(1+\sqrt{2})C_1$, then $c_n>0$.
\item[(b)] If $\lambda^{3n/2}>\dfrac{(1+\sqrt{2})C_1}{\sqrt{\lambda}(\sqrt{\lambda}-1)}$, then $c_n$ is increasing.
\end{itemize}
\end{lem}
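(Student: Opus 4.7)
The plan is to prove both parts directly from the fourth inequality of Lemma~\ref{lem:bounds}, namely
\[
|\lambda^n-(3\theta^2-p)c_n|<\frac{(1+\sqrt{2})C_1}{\lambda^{n/2}},
\]
using only the positivity of $3\theta^2-p$ (inequality \eqref{ineq:theta3}) and a little arithmetic. No new structural ideas are needed; the task is to convert size estimates on $\lambda^n-(3\theta^2-p)c_n$ into lower bounds on $c_n$ and $c_{n+1}-c_n$.

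For part (a), the bound gives
\[
(3\theta^2-p)c_n>\lambda^n-\frac{(1+\sqrt{2})C_1}{\lambda^{n/2}}.
\]
The hypothesis $\lambda^{3n/2}>(1+\sqrt{2})C_1$ is exactly what makes the right-hand side positive, so dividing by $3\theta^2-p>0$ yields $c_n>0$.

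For part (b), I would apply the fourth inequality at both indices $n$ and $n+1$ and subtract:
\[
(3\theta^2-p)(c_{n+1}-c_n)>\lambda^{n+1}-\lambda^n-\frac{(1+\sqrt{2})C_1}{\lambda^{(n+1)/2}}-\frac{(1+\sqrt{2})C_1}{\lambda^{n/2}}.
\]
Factoring $\lambda^n(\lambda-1)=\lambda^n(\sqrt{\lambda}-1)(\sqrt{\lambda}+1)$ on the left term and $\frac{(1+\sqrt{2})C_1}{\lambda^{n/2}}\cdot\frac{1+\sqrt{\lambda}}{\sqrt{\lambda}}$ on the right, the common factor $(1+\sqrt{\lambda})$ cancels and one is left with
\[
(3\theta^2-p)(c_{n+1}-c_n)>(1+\sqrt{\lambda})\Bigl[\lambda^n(\sqrt{\lambda}-1)-\frac{(1+\sqrt{2})C_1}{\sqrt{\lambda}\,\lambda^{n/2}}\Bigr].
\]
The bracket is positive precisely when $\lambda^{3n/2}\sqrt{\lambda}(\sqrt{\lambda}-1)>(1+\sqrt{2})C_1$, i.e.\ under the stated hypothesis; again dividing by $3\theta^2-p>0$ gives $c_{n+1}>c_n$.

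There is no real obstacle here: both parts are bookkeeping around the fourth inequality of Lemma~\ref{lem:bounds}. The only point that warrants care is keeping the direction of inequalities correct when subtracting the two instances in part (b) (using the upper bound on $(3\theta^2-p)c_n$ and the lower bound on $(3\theta^2-p)c_{n+1}$), and the algebraic factorization $\lambda-1=(\sqrt{\lambda}-1)(\sqrt{\lambda}+1)$ that produces the cleanly stated threshold.
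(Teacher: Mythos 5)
Your argument is correct and follows essentially the same route as the paper: part (a) by rearranging the fourth inequality of Lemma~\ref{lem:bounds} and using $3\theta^2-p>0$, and part (b) by combining the lower bound at $n+1$ with the upper bound at $n$ and factoring $\lambda-1=(\sqrt{\lambda}-1)(\sqrt{\lambda}+1)$ to obtain the stated threshold. (A minor wording slip: the factor $1+\sqrt{\lambda}$ does not ``cancel'' but is factored out of both terms; the algebra you display is nonetheless correct.)
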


\begin{proof}
Put $\eta=(1+\sqrt{2})C_1$.\\

(a) Suppose $\lambda^{3n/2}>\eta$. Then $\lambda^n>\dfrac{\eta}{\lambda^{n/2}}$, or
\begin{equation*}
\lambda^n-\dfrac{\eta}{\lambda^{n/2}}>0.
\end{equation*}
By \eqref{ineq:lam}, we have
\begin{equation*}
\lambda^n-\frac{\eta}{\lambda^{n/2}}<(3\theta^2-p)c_n.
\end{equation*}
So $0<(3\theta^2-p)c_n$. And $3\theta^2-p>0$ (by \ref{ineq:theta3}), so $c_n>0$.\\

(b) We use \eqref{ineq:lam} again to get
\begin{align*}
(3\theta^2-p)c_{n+1}&>\lambda^{n+1}-\frac{\eta}{\lambda^{(n+1)/2}},\\
(3\theta^2-p)c_n&<\lambda^n+\frac{\eta}{\lambda^{n/2}}.
\end{align*}
Then
\begin{equation*}
(3\theta^2-p)(c_{n+1}-c_n)
>\lambda^n(\lambda-1)-\frac{\eta}{\lambda^{n/2}}\left(1+\frac{1}{\sqrt{\lambda}}\right),
\end{equation*}
which is positive if
\begin{equation*}
\lambda^n(\lambda -1)
>\frac{\eta}{\lambda^{n/2}}
    \left(1+\frac{1}{\sqrt{\lambda}}\right),
\end{equation*}
or
\begin{equation*}
\lambda^{3n/2}
>\frac{\eta(\sqrt{\lambda}+1)}{\sqrt{\lambda}
    (\lambda-1)}
=\frac{\eta}{\sqrt{\lambda}
    (\sqrt{\lambda}-1)}.
\end{equation*}
\end{proof}

\begin{lem}\label{ineq:sqrtlam} If $\lambda^{3n/2}>2(1+\sqrt{2})C_1$, then
\begin{equation}
|\lambda^{n/2}-\sqrt{3\theta^2-p}\cdot c_n^{1/2}|
<%\frac{2|\theta|}{\sqrt{3\theta^2-4p}}\cdot\frac{1}{\lambda^n}
\frac{\sqrt{2}C_1}{\lambda^n}.
\end{equation}
\end{lem}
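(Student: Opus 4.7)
The plan is to factor the expression $\lambda^n - (3\theta^2-p)c_n$ as a difference of squares, using the bound from inequality \eqref{ineq:lam} of Lemma~\ref{lem:bounds} in the numerator and a lower bound for the sum in the denominator. Since $\lambda^{3n/2} > 2(1+\sqrt{2})C_1$ implies in particular $\lambda^{3n/2} > (1+\sqrt{2})C_1$, Lemma~\ref{lem:cnposinc}(a) gives $c_n > 0$, so $c_n^{1/2}$ is a well-defined positive real; and since $3\theta^2-p > 0$ by \eqref{ineq:theta3}, $\sqrt{3\theta^2-p}$ is well-defined too.

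First I would write
\[
\lambda^n - (3\theta^2-p)c_n
=\bigl(\lambda^{n/2}-\sqrt{3\theta^2-p}\cdot c_n^{1/2}\bigr)\bigl(\lambda^{n/2}+\sqrt{3\theta^2-p}\cdot c_n^{1/2}\bigr),
\]
so that
\[
\bigl|\lambda^{n/2}-\sqrt{3\theta^2-p}\cdot c_n^{1/2}\bigr|
=\frac{\bigl|\lambda^n-(3\theta^2-p)c_n\bigr|}{\lambda^{n/2}+\sqrt{3\theta^2-p}\cdot c_n^{1/2}}.
\]
By \eqref{ineq:lam} the numerator is bounded above by $(1+\sqrt{2})C_1/\lambda^{n/2}$, so the task reduces to bounding the denominator from below.

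To that end, I would use \eqref{ineq:lam} again in the form
\[
(3\theta^2-p)c_n > \lambda^n - \frac{(1+\sqrt{2})C_1}{\lambda^{n/2}}.
\]
The hypothesis $\lambda^{3n/2} > 2(1+\sqrt{2})C_1$ is exactly equivalent to $\lambda^n/2 > (1+\sqrt{2})C_1/\lambda^{n/2}$, which gives $(3\theta^2-p)c_n > \lambda^n/2$, hence $\sqrt{3\theta^2-p}\cdot c_n^{1/2} > \lambda^{n/2}/\sqrt{2}$. Therefore
\[
\lambda^{n/2}+\sqrt{3\theta^2-p}\cdot c_n^{1/2} > \lambda^{n/2}\left(1+\tfrac{1}{\sqrt{2}}\right) = \lambda^{n/2}\cdot\frac{1+\sqrt{2}}{\sqrt{2}}.
\]

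Combining the two bounds yields
\[
\bigl|\lambda^{n/2}-\sqrt{3\theta^2-p}\cdot c_n^{1/2}\bigr|
< \frac{(1+\sqrt{2})C_1/\lambda^{n/2}}{\lambda^{n/2}(1+\sqrt{2})/\sqrt{2}}
= \frac{\sqrt{2}\,C_1}{\lambda^n},
\]
which is the claimed inequality. There is no real obstacle here; the main thing to notice is that the hypothesis on $\lambda^{3n/2}$ is precisely what one needs to force $(3\theta^2-p)c_n$ to exceed $\lambda^n/2$, and in turn to absorb the extra factor of $1+\sqrt{2}$ in the numerator into $\sqrt{2}$.
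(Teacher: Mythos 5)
Your proof is correct and follows essentially the same route as the paper's: factor $\lambda^n-(3\theta^2-p)c_n$ as a difference of squares, bound the numerator using \eqref{ineq:lam}, and bound the sum in the denominator from below by showing $(3\theta^2-p)c_n>\lambda^n/2$ under the given hypothesis. The algebra and the role of the hypothesis $\lambda^{3n/2}>2(1+\sqrt{2})C_1$ are identical to the paper's argument.
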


\begin{proof}
Put $\eta=(\sqrt{2}+1)C_1$ as before, and suppose $\lambda^{3n/2}>2\eta$. By Lemma \ref{lem:cnposinc}, $c_n$ is positive. Now
\begin{equation*}
|\lambda^{n/2}-\sqrt{3\theta^2-p}\cdot c_n^{1/2}|\cdot
|\lambda^{n/2}+\sqrt{3\theta^2-p}\cdot c_n^{1/2}|
=|\lambda^n-(3\theta^2-p)c_n|<\frac{\eta}{\lambda^{n/2}}
\end{equation*}
(by \eqref{ineq:lam}), so
\begin{equation*}
|\lambda^{n/2}-\sqrt{3\theta^2-p}\cdot c_n^{1/2}|<\frac{\eta}{\lambda^{n/2}}
\cdot
\left(\lambda^{n/2}+\sqrt{3\theta^2-p}\cdot c_n^{1/2}\right)^{-1}.
\end{equation*}
We rewrite $\lambda^{3n/2}>2\eta$ as $\dfrac{\eta}{\lambda^{n/2}}<\dfrac{\lambda^n}{2}$, and then
(by \eqref{ineq:lam} again)
\begin{gather*}
(3\theta^2-p)c_n>\lambda^n-\frac{\eta}{\lambda^{n/2}}>\lambda^n-\frac{\lambda^n}{2}=\frac{\lambda^n}{2},\\
\sqrt{3\theta^2-p}\cdot c_n^{1/2}>\frac{\lambda^{n/2}}{\sqrt{2}},\\
\lambda^{n/2}+\sqrt{3\theta^2-p}\cdot c_n^{1/2}>\left(1+\frac{1}{\sqrt{2}}\right)\lambda^{n/2}.
\end{gather*}
So
\begin{align*}
|\lambda^{n/2}-\sqrt{3\theta^2-p}\cdot c_n^{1/2}|
    &<\frac{\eta}{\lambda^{n/2}}
    \cdot
    \left(\left(1+\frac{1}{\sqrt{2}}\right)\lambda^{n/2}\right)^{-1}\\
    &=\frac{\eta\sqrt{2}}{\sqrt{2}+1}\cdot\frac{1}{\lambda^n}\\
    &=\frac{(\sqrt{2}+1)C_1\sqrt{2}}{\sqrt{2}+1}
    \cdot
    \frac{1}{\lambda^n}\\
    &=\frac{\sqrt{2}C_1}{\lambda^n}.
\end{align*}

\end{proof}

\begin{cor}\label{ineq:knbound}
If $\lambda^{3n/2}>2(1+\sqrt{2})C_1$, then
\begin{equation*}
%\frac{\sqrt{d}}{\sqrt{3\theta^2-p}}\cdot\frac{\lambda^{n/2}}{2}
\frac{C_2}{2}\cdot\lambda^{n/2}
<
k_n^{1/2}
<
%\frac{\sqrt{d}}{\sqrt{3\theta^2-p}}\cdot\frac{3\lambda^{n/2}}{2}.
\frac{3 C_2}{2}\cdot\lambda^{n/2}.
\end{equation*}
%If moreover, $n\geq 2$, then
%\begin{equation*}
%k_n^{1/2}
%<
%(C_3\lambda)^{n/2}
%\end{equation*}
%(where $C_3=\max\{1,\frac{3C_2}{2}\}$ as in Definition~\ref{defn:constants}).
\end{cor}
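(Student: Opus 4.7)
The plan is to derive this corollary as an almost immediate consequence of Lemma~\ref{ineq:sqrtlam}, by converting the bound on $|\lambda^{n/2}-\sqrt{3\theta^2-p}\cdot c_n^{1/2}|$ into a bound on $|C_2\lambda^{n/2}-k_n^{1/2}|$ using the definitions $k_n = dc_n$ and $C_2 = \sqrt{d}/\sqrt{3\theta^2-p}$.

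First I would observe that
\[
C_2\sqrt{3\theta^2-p}\cdot c_n^{1/2} = \sqrt{d}\cdot c_n^{1/2} = \sqrt{dc_n} = k_n^{1/2},
\]
so multiplying the inequality of Lemma~\ref{ineq:sqrtlam} through by $C_2$ gives
\[
|C_2\lambda^{n/2} - k_n^{1/2}| < \frac{\sqrt{2}\,C_1 C_2}{\lambda^n}
\]
whenever $\lambda^{3n/2} > 2(1+\sqrt{2})C_1$. Equivalently,
\[
C_2\lambda^{n/2} - \frac{\sqrt{2}\,C_1 C_2}{\lambda^n} < k_n^{1/2} < C_2\lambda^{n/2} + \frac{\sqrt{2}\,C_1 C_2}{\lambda^n}.
\]

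The next step is to show that the error term is controlled by $\tfrac{1}{2}C_2\lambda^{n/2}$, i.e., that $\frac{\sqrt{2}\,C_1 C_2}{\lambda^n} \leq \frac{C_2\lambda^{n/2}}{2}$. This inequality is equivalent to $\lambda^{3n/2} \geq 2\sqrt{2}\,C_1$, which follows from the hypothesis $\lambda^{3n/2} > 2(1+\sqrt{2})C_1 > 2\sqrt{2}\,C_1$. Combining gives
\[
\frac{C_2}{2}\lambda^{n/2} < k_n^{1/2} < \frac{3C_2}{2}\lambda^{n/2},
\]
which is the desired conclusion.

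There is no real obstacle here; the statement is a cosmetic repackaging of Lemma~\ref{ineq:sqrtlam} that bakes in the relationship between $c_n$ and $k_n$ and isolates the relative-error estimate into the clean constants $C_2/2$ and $3C_2/2$, which can then be used conveniently in later arguments.
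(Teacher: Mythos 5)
Your proof is correct and follows essentially the same route as the paper: both start from Lemma~\ref{ineq:sqrtlam}, bound the error term $\sqrt{2}C_1/\lambda^n$ by $\lambda^{n/2}/2$ using the hypothesis $\lambda^{3n/2}>2(1+\sqrt{2})C_1>2\sqrt{2}C_1$, and then scale by $C_2$ while invoking $k_n=dc_n$. The only cosmetic difference is that the paper establishes $\frac{\lambda^{n/2}}{2}<\sqrt{3\theta^2-p}\,c_n^{1/2}<\frac{3\lambda^{n/2}}{2}$ first and multiplies by $C_2$ at the end, whereas you multiply by $C_2$ first and then bound the resulting error.
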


\begin{proof}
Suppose $\lambda^{3n/2}>2(1+\sqrt{2})C_1$. From Lemma \ref{ineq:sqrtlam}, we have
\begin{equation}\label{ineq:cnlowerbound}
\lambda^{n/2}-\frac{\sqrt{2}C_1}{\lambda^n}
<
\sqrt{3\theta^2-p}\cdot c_n^{1/2}
<
\lambda^{n/2}+\frac{\sqrt{2}C_1}{\lambda^n}.
\end{equation}
Now $\lambda^{3n/2}>2(1+\sqrt{2})C_1>2\sqrt{2}C_1$, so
\begin{equation*}
\frac{\sqrt{2}C_1}{\lambda^n}<\frac{\lambda^{n/2}}{2}.
\end{equation*}
Combining this with \eqref{ineq:cnlowerbound} gives us
\begin{equation*}
\frac{\lambda^{n/2}}{2}
<
\sqrt{3\theta^2-p}\cdot c_n^{1/2}
<
\frac{3\lambda^{n/2}}{2}.
\end{equation*}
Now multiply through by $C_2=\frac{\sqrt{d}}{\sqrt{3\theta^2-p}}$ (and recall that $k_n=dc_n$).
%\begin{equation*}
%\frac{C_2}{2}\cdot\lambda^{n/2}
%<
%k_n^{1/2}
%<
%%\frac{\sqrt{d}}{\sqrt{3\theta^2-p}}\cdot\frac{3\lambda^{n/2}}{2}.
%\frac{3 C_2}{2}\cdot\lambda^{n/2}.
%\end{equation*}
%In particular,
%\begin{equation*}
%k_n^{1/2}
%<
%\frac{3C_2}{2}\lambda^{n/2}
%\leq
%C_3\lambda^{n/2}.
%\end{equation*}
%If $n\geq 2$, then (since $C_3\geq 1$) $C_3\leq C_3^{n/2}$. So
%\begin{align*}
%k_n^{1/2}
%<
%(C_3\lambda)^{n/2}.
%\end{align*}
\end{proof}

\begin{lem}\label{lem:lambda2n}
For all positive integers $n$,
\begin{equation*}
\lambda_2^n=\dfrac{(X_n+Y_n)}{2}+\dfrac{\sqrt{4p-3\theta^2}}{2}\frac{Z_n}{\theta}.
\end{equation*}

\end{lem}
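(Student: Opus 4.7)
The identity is essentially a bookkeeping exercise: it repackages an intermediate computation already carried out during the proof of Lemma~\ref{lem:ellipsoididentity}. In that earlier proof, for a general $\zeta = x + y\theta + z\theta^2$, I derived the expression
\begin{equation*}
\zeta_2 \;=\; \tfrac{1}{2}\bigl(2(x+pz) - y\theta - z\theta^2\bigr) \;+\; \tfrac{\sqrt{4p-3\theta^2}}{2}\,(y - z\theta).
\end{equation*}
The plan is simply to specialize this to $\zeta = \lambda^n$, so that $(x,y,z) = (a_n,b_n,c_n)$, and then match terms against the definitions of $X_n$, $Y_n$, $Z_n$.

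Concretely, after substitution, I would observe that
\begin{equation*}
2(a_n + p c_n) - b_n\theta - c_n\theta^2 \;=\; (a_n + pc_n - b_n\theta) + (a_n + pc_n - c_n\theta^2) \;=\; X_n + Y_n,
\end{equation*}
directly from the definitions of $X_n$ and $Y_n$. For the coefficient of $\sqrt{4p-3\theta^2}/2$, I would factor out $1/\theta$ from $b_n - c_n\theta$ to obtain $(b_n\theta - c_n\theta^2)/\theta = Z_n/\theta$, using that $\theta \ne 0$ (which holds since $x^3 - px - q$ is irreducible, so $q \ne 0$, so $0$ is not a root). Substituting both identifications yields exactly the claimed formula.

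Alternatively, if one prefers a self-contained derivation, one could expand $\lambda_2^n = a_n + b_n\theta_2 + c_n\theta_2^2$ using Remark~\ref{rem:thetaprime} and the squaring computation from Lemma~\ref{lem:ellipsoididentity} (namely $\theta_2^2 = p + \tfrac{-\theta^2 - \theta\sqrt{4p-3\theta^2}}{2}$) and collect the rational and irrational parts of $\sqrt{4p-3\theta^2}$. Either route is a direct computation and I do not anticipate any obstacle — the only minor point to verify is the identification $b_n - c_n\theta = Z_n/\theta$, which is immediate.
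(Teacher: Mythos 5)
Your proposal is correct and matches the paper's proof: the paper also substitutes $\theta_2 = \frac{-\theta+\sqrt{4p-3\theta^2}}{2}$ and $\theta_2^2 = p + \frac{-\theta^2 - \theta\sqrt{4p-3\theta^2}}{2}$ into $\lambda_2^n = a_n + b_n\theta_2 + c_n\theta_2^2$ (explicitly noting it reuses the computation "as in the proof of Lemma~\ref{lem:ellipsoididentity}") and then identifies $a_n+pc_n-\frac{b_n\theta}{2}-\frac{c_n\theta^2}{2}=\frac{X_n+Y_n}{2}$ and $b_n-c_n\theta=\frac{Z_n}{\theta}$. Your observation that specializing the earlier $\zeta_2$ formula suffices is exactly the intent, and your remark about $\theta\neq 0$ covers a point the paper leaves implicit.
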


\begin{proof}
As in the proof of Lemma \ref{lem:ellipsoididentity}, we use $\theta_2=\frac{-\theta+\sqrt{4p-3\theta^2}}{2}$ (see Remark \ref{rem:thetaprime}) to find $\lambda_2^n$ in terms of $\theta$:
\begin{align*}
\lambda_2^n
&=a_n+b_n\theta_2+c_n\theta_2^2\\
&=a_n+b_n\left(\frac{-\theta+\sqrt{4p-3\theta^2}}{2}\right)+c_n\left(p+\frac{-\theta^2-\theta\sqrt{4p-3\theta^2}}{2}\right)\\
&=\left(a_n+pc_n-\frac{b_n\theta}{2}-\frac{c_n\theta^2}{2}\right)+\frac{\sqrt{4p-3\theta^2}}{2}(b_n-c_n\theta)\\
&=\frac{X_n+Y_n}{2}+\frac{\sqrt{4p-3\theta^2}}{2}\frac{Z_n}{\theta}.
\end{align*}
\end{proof}

\begin{cor}\label{cor:imth2n}
For all positive integers $n$,
\begin{equation*}
\Im(\lambda_2^n)
=\frac{\sqrt{3\theta^2-4p}}{2}
    \frac{Z_n}{\theta}
=\frac{\sqrt{3\theta^2-4p}}{2}
    (b_n-c_n\theta)
\end{equation*}
\end{cor}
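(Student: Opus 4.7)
The plan is to read off the imaginary part directly from the formula in Lemma \ref{lem:lambda2n}. By inequality \eqref{ineq:disc} we have $3\theta^2-4p>0$, so the quantity $4p-3\theta^2$ under the radical appearing in Lemma \ref{lem:lambda2n} is negative. Thus I would rewrite
\[
\sqrt{4p-3\theta^2} \;=\; i\sqrt{3\theta^2-4p},
\]
where the right-hand radical is a positive real number.

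Next I would observe that every other quantity appearing in the decomposition
\[
\lambda_2^n \;=\; \frac{X_n+Y_n}{2} + \frac{\sqrt{4p-3\theta^2}}{2}\cdot\frac{Z_n}{\theta}
\]
is real: the coordinates $a_n, b_n, c_n$ lie in $\Q$, and $p, \theta$ are real, so $X_n = a_n + pc_n - b_n\theta$ and $Y_n = a_n + pc_n - c_n\theta^2$ are real, and $Z_n/\theta = b_n - c_n\theta$ is also real. Substituting the rewritten radical therefore gives
\[
\lambda_2^n \;=\; \frac{X_n+Y_n}{2} + i\cdot\frac{\sqrt{3\theta^2-4p}}{2}\cdot\frac{Z_n}{\theta},
\]
a manifest decomposition into real and imaginary parts.

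Reading off the imaginary part yields
\[
\Im(\lambda_2^n) \;=\; \frac{\sqrt{3\theta^2-4p}}{2}\cdot\frac{Z_n}{\theta}
\;=\; \frac{\sqrt{3\theta^2-4p}}{2}(b_n - c_n\theta),
\]
which is exactly the desired identity. There is really no obstacle here — the whole point is that once one has Lemma \ref{lem:lambda2n} in hand, the corollary is just the observation that the radical is imaginary and the rest is real. The only thing to be careful about is the sign convention for $\sqrt{4p-3\theta^2}$, which is pinned down by the choice of $\sigma_2$ made in Remark \ref{rem:thetaprime}.
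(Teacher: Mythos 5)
Your proposal is correct and matches the paper's proof, which simply cites Lemma~\ref{lem:lambda2n} together with the fact that $4p-3\theta^2<0$ and declares the result immediate. You have merely filled in the details—rewriting the radical as $i\sqrt{3\theta^2-4p}$ and noting the remaining terms are real—that the paper leaves to the reader.
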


\begin{proof}
This follows immediately from the previous lemma and the fact that (by Lemma \ref{ineq:disc}) $4p-3\theta^2<0$.
\end{proof}

\begin{lem}\label{lem:denseinS1}
The set $\{(\sqrt{\lambda}\lambda_2)^n:n\in\N\}$ is a dense subset of the unit circle.
\end{lem}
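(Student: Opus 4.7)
The plan is to establish two facts about $\sqrt{\lambda}\lambda_2$: it lies on the unit circle, and it is not a root of unity. Once both are in hand, density of $\{(\sqrt{\lambda}\lambda_2)^n\}$ in $S^1$ follows from a classical Kronecker/Weyl equidistribution argument.

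For the first fact, observe that $\lambda \in \O_K^\times$ has norm $N_{K/\Q}(\lambda) = \lambda_1 \lambda_2 \lambda_3 = \pm 1$. Since $K$ has exactly one real embedding, the non-real embeddings $\sigma_2,\sigma_3$ are swapped by complex conjugation, so $\lambda_3 = \overline{\lambda_2}$ and $\lambda_2\lambda_3 = |\lambda_2|^2 > 0$. Combined with $\lambda_1 = \lambda > 0$, this forces $N_{K/\Q}(\lambda) = 1$ and $|\lambda_2|^2 = 1/\lambda$, so $|\sqrt{\lambda}\lambda_2| = 1$.

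For the second fact, suppose for contradiction that $(\sqrt{\lambda}\lambda_2)^q = 1$ for some positive integer $q$. Squaring yields $\lambda^q \lambda_2^{2q} = 1$, hence $\lambda_2^{2q} = \lambda^{-q} \in \R$. Taking complex conjugates, $\lambda_3^{2q} = \overline{\lambda_2^{2q}} = \lambda^{-q} = \lambda_2^{2q}$. Set $\mu = \lambda^{2q} \in \O_K$; then $\sigma_2(\mu) = \sigma_3(\mu) = \lambda^{-q}$. A primitive element of a degree-$3$ extension has three distinct embeddings into $\C$, so $\mu$ is not primitive; since $[K:\Q] = 3$ is prime, $\Q(\mu) = \Q$, i.e.\ $\mu \in \Q$. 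But then $\sigma_1(\mu) = \sigma_2(\mu)$, giving $\lambda^{2q} = \lambda^{-q}$, so $\lambda^{3q} = 1$, contradicting $\lambda > 1$.

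Writing $\sqrt{\lambda}\lambda_2 = e^{i\phi}$, the two facts above say $\phi \in \R$ and $\phi/(2\pi) \notin \Q$. Kronecker's theorem (or Weyl equidistribution) then gives that $\{n\phi \bmod 2\pi : n \in \N\}$ is dense in $[0, 2\pi)$, so $\{(\sqrt{\lambda}\lambda_2)^n\}$ is dense in the unit circle. The main obstacle is the second step; the algebraic identity $\sigma_2(\mu) = \sigma_3(\mu)$ forcing $\mu \in \Q$ is the crux, and it relies essentially on $[K:\Q]$ being prime together with $K$ having only one real embedding.
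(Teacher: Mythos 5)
Your proof is correct, and the key step — showing $\sqrt{\lambda}\lambda_2$ is not a root of unity — takes a genuinely different route from the paper. The paper uses its explicit coordinate machinery: from Corollary~\ref{cor:imth2n}, $\Im(\lambda_2^m) = \frac{\sqrt{3\theta^2-4p}}{2}(b_m - c_m\theta)$, so $(\sqrt{\lambda}\lambda_2)^k = 1$ would force $b_m = c_m = 0$ for every multiple $m$ of $k$, contradicting Lemma~\ref{lem:cnposinc} (which says $c_n$ is eventually positive). Your argument is instead purely Galois-theoretic and self-contained: from $(\sqrt{\lambda}\lambda_2)^q = 1$ you deduce $\lambda_2^{2q} = \lambda_3^{2q} = \lambda^{-q}$, so $\mu := \lambda^{2q}$ satisfies $\sigma_2(\mu) = \sigma_3(\mu)$; since $[\Q(\mu):\Q]$ divides the prime $3$ and cannot be $3$ (a primitive element would have three distinct conjugates), $\mu \in \Q$, whence $\sigma_1(\mu) = \sigma_2(\mu)$ gives $\lambda^{3q} = 1$, contradicting $\lambda > 1$. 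Your version avoids any dependence on the sequences $\{b_n\}, \{c_n\}$ and the auxiliary Lemma~\ref{lem:cnposinc}, which is cleaner in isolation; the paper's version is natural in context because that machinery is already in place and reused elsewhere. (A small shortcut at the end: once $\mu \in \Q \cap \O_K^\times = \{\pm 1\}$, the contradiction with $\mu = \lambda^{2q} > 1$ is immediate, without needing to equate $\sigma_1(\mu)$ and $\sigma_2(\mu)$ explicitly.) The first half, establishing $|\sqrt{\lambda}\lambda_2| = 1$, matches the paper's proof, with the welcome extra detail of explaining why $N(\lambda) = +1$ rather than $\pm 1$.
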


\begin{proof}
It is enough to show that $|\sqrt{\lambda}\lambda_2|=1$ and that $(\sqrt{\lambda}\lambda_2)^k\neq 1$ for $k\in\N$.
Since $\lambda_2$ and $\lambda_3$ are complex conjugates and $\lambda=\lambda_1$ is a unit,
\begin{equation*}
|\sqrt{\lambda}\lambda_2|^2=(\sqrt{\lambda}\lambda_2)(\sqrt{\lambda}\lambda_3)=\lambda_1 \lambda_2 \lambda_3=N(\lambda)=1.
\end{equation*}
Now suppose that $k$ is a positive integer and $(\sqrt{\lambda}\lambda_2)^k= 1$. Then $\Im((\sqrt{\lambda}\lambda_2)^{m})=0$ for every positive integer multiple $m$ of $k$.
Since
\begin{equation*}
\Im((\sqrt{\lambda}\lambda_2)^{m})=\lambda^{m/2}\Im (\lambda_2^{m})=\lambda^{m/2}\frac{\sqrt{3\theta^2-4p}}{2} \frac{Z_m}{\theta},
\end{equation*}
and since $\lambda^{m/2}$ and $\sqrt{3\theta^2-4p}$ are both nonzero, we would have
\begin{equation*}
b_{m}-c_{m}\theta=\frac{Z_{m}}{\theta}=0,
\end{equation*}
or $b_{m}=c_{m}\theta$. Since $b_{m},c_{m}\in\Q$ and $\theta\notin\Q$, this would imply $b_{m}=c_{m}=0$. In particular, we would have $c_m=0$ for every positive multiple $m$ of $k$. But by Lemma \ref{lem:cnposinc}, eventually $\{c_n\}$ is positive (and so there can be only finitely many $m$ such that $c_m=0$). Therefore $(\sqrt{\lambda}\lambda_2)^k\neq 1$.
\end{proof}

\begin{defn}\label{defn:phi}
Put
\begin{equation*}
\phi=\arctan
    \left(
    \frac{\sqrt{3\theta^2-4p}\cdot(b_1-c_1\theta)}{2(a_1+pc_1)-b_1\theta-c_1\theta^2}
    \right)
\end{equation*}
\end{defn}

\begin{lem}
$\sqrt{\lambda}\lambda_2=\pm e^{i\phi}$
\end{lem}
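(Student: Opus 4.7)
The plan is to unpack $\lambda_2$ explicitly using Lemma~\ref{lem:lambda2n} with $n=1$, identify its polar form, and then multiply by the positive real scalar $\sqrt{\lambda}$ (which preserves the argument) to read off the claim.

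First, I would apply Lemma~\ref{lem:lambda2n} with $n=1$ to write
\begin{equation*}
\lambda_2 = \frac{X_1+Y_1}{2} + \frac{\sqrt{4p-3\theta^2}}{2}\cdot\frac{Z_1}{\theta}.
\end{equation*}
Substituting the definitions $X_1 + Y_1 = 2(a_1+pc_1)-b_1\theta-c_1\theta^2$ and $Z_1/\theta = b_1 - c_1\theta$, and using inequality~\eqref{ineq:disc} to write $\sqrt{4p-3\theta^2} = i\sqrt{3\theta^2-4p}$, I get
\begin{equation*}
\lambda_2 = \frac{2(a_1+pc_1)-b_1\theta-c_1\theta^2}{2} + i\,\frac{\sqrt{3\theta^2-4p}\,(b_1-c_1\theta)}{2}.
\end{equation*}
So $\Re(\lambda_2)$ and $\Im(\lambda_2)$ are, up to the common factor $1/2$, exactly the denominator and numerator inside the arctangent of Definition~\ref{defn:phi}. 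Therefore
\begin{equation*}
\tan(\arg \lambda_2) = \frac{\sqrt{3\theta^2-4p}\,(b_1-c_1\theta)}{2(a_1+pc_1)-b_1\theta-c_1\theta^2} = \tan\phi.
\end{equation*}

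Next, since $\arctan$ takes values in $(-\pi/2,\pi/2)$, the identity $\tan(\arg\lambda_2)=\tan\phi$ gives $\arg \lambda_2 \equiv \phi \pmod{\pi}$; that is, $\arg\lambda_2 = \phi$ or $\arg\lambda_2 = \phi+\pi$, depending on the sign of $\Re(\lambda_2)$. Before concluding I should also verify that $\lambda_2 \neq 0$ and that the denominator $2(a_1+pc_1)-b_1\theta-c_1\theta^2$ is nonzero so $\phi$ is well-defined: the first follows because $\lambda_2$ is a Galois conjugate of the unit $\lambda$, hence nonzero; the second would only fail if both real and imaginary parts of $\lambda_2$ vanished (handled) or if one needed to interpret $\arctan(\pm\infty) = \pm\pi/2$, which is harmless for the argument.

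Finally, multiplying by the positive real number $\sqrt{\lambda}>0$ does not change the argument, so $\arg(\sqrt{\lambda}\lambda_2) = \arg\lambda_2 \in \{\phi,\phi+\pi\}$. By Lemma~\ref{lem:denseinS1}, $|\sqrt{\lambda}\lambda_2|=1$, hence
\begin{equation*}
\sqrt{\lambda}\,\lambda_2 = e^{i\arg(\sqrt{\lambda}\lambda_2)} = e^{i\phi} \quad\text{or}\quad e^{i(\phi+\pi)} = -e^{i\phi},
\end{equation*}
which is precisely $\pm e^{i\phi}$. The only subtle point is the sign ambiguity arising from the range of $\arctan$, which the statement already absorbs into the $\pm$, so there is no real obstacle beyond careful bookkeeping.
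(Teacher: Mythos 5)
Your proof is correct and follows essentially the same route as the paper: use Lemma~\ref{lem:lambda2n} to compute real and imaginary parts, observe that the resulting tangent of the argument matches $\tan\phi$, and invoke Lemma~\ref{lem:denseinS1} for $|\sqrt{\lambda}\lambda_2|=1$. The extra remarks on well-definedness (nonvanishing of $\lambda_2$ and of the denominator) are careful bookkeeping the paper leaves implicit, but the structure of the argument is the same.
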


\begin{proof}
Since $\sqrt{\lambda}\lambda_2$ is on the unit circle (by \ref{lem:denseinS1}), it can be expressed as $e^{iw}$ for some $w\in[-\pi/2,3\pi/2)$. By Lemma \ref{lem:lambda2n}, we have
\begin{equation*}
e^{iw}
=\sqrt{\lambda}\lambda_2
=\sqrt{\lambda}\left(\dfrac{2(a_1+pc_1)-b_1\theta-c_1\theta^2}{2}
    +i \dfrac{\sqrt{3\theta^2-4p}}{2}(b_1-c_1\theta)\right),
\end{equation*}
so
\begin{equation*}
\tan w
=\frac{\Im \sqrt{\lambda}\lambda_2}{\Re \sqrt{\lambda}\lambda_2}
=\frac{\sqrt{3\theta^2-4p}\cdot(b_1-c_1\theta)}{2(a_1+pc_1)-b_1\theta-c_1\theta^2}
=\tan \phi.
\end{equation*}
Therefore either $w=\phi$ or $w=\phi+\pi$, so
%\begin{equation*}
$\sqrt{\lambda}\lambda_2=e^{iw}=\pm e^{i\phi}$.
%\end{equation*}
\end{proof}

\begin{lem}\label{lem:convergents}
Let $\{P_n/Q_n\}$ be the sequence of convergents of $\phi/\pi$. For $n\geq 1$,
\begin{equation*}
\left|
\Im (\sqrt{\lambda}\lambda_2)^{Q_n}
\right|
<
\frac{\pi}{Q_{n+1}}.
\end{equation*}
\end{lem}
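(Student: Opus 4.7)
The plan is to reduce the statement to a classical fact about continued fractions applied to $\phi/\pi$. By the previous lemma we have $\sqrt{\lambda}\lambda_2 = \pm e^{i\phi}$, so
\[
(\sqrt{\lambda}\lambda_2)^{Q_n} = (\pm 1)^{Q_n} e^{i Q_n \phi},
\]
and therefore
\[
\bigl|\Im (\sqrt{\lambda}\lambda_2)^{Q_n}\bigr| = |\sin(Q_n \phi)|.
\]
So the whole statement is equivalent to $|\sin(Q_n\phi)| < \pi/Q_{n+1}$.

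Next I would invoke the standard convergent estimate: since $P_n/Q_n$ is the $n$th convergent of $\phi/\pi$, we have
\[
\left|\frac{\phi}{\pi} - \frac{P_n}{Q_n}\right| < \frac{1}{Q_n Q_{n+1}},
\]
which after multiplying by $Q_n \pi$ gives $|Q_n \phi - P_n \pi| < \pi/Q_{n+1}$. Since $P_n$ is an integer, $\sin(Q_n\phi - P_n\pi) = (-1)^{P_n}\sin(Q_n\phi)$, so $|\sin(Q_n\phi)| = |\sin(Q_n\phi - P_n\pi)|$.

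Finally, using the elementary bound $|\sin x| \le |x|$ with $x = Q_n\phi - P_n\pi$, we get
\[
|\sin(Q_n\phi)| = |\sin(Q_n\phi - P_n\pi)| \le |Q_n\phi - P_n\pi| < \frac{\pi}{Q_{n+1}},
\]
which proves the claim. There is no real obstacle here; the only thing to be careful about is the sign ambiguity from $\sqrt{\lambda}\lambda_2 = \pm e^{i\phi}$ and the $(-1)^{P_n}$ factor, both of which disappear after taking absolute values.
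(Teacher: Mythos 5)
Your proposal is correct and follows essentially the same route as the paper: apply the convergent inequality $|\phi/\pi - P_n/Q_n| < 1/(Q_n Q_{n+1})$, rewrite $|\Im(\sqrt{\lambda}\lambda_2)^{Q_n}|$ as $|\sin(Q_n\phi)| = |\sin(Q_n\phi - P_n\pi)|$, and finish with $|\sin x| \le |x|$. The only cosmetic difference is that you explicitly track the $\pm$ sign from $\sqrt{\lambda}\lambda_2 = \pm e^{i\phi}$ and the $(-1)^{P_n}$ factor, whereas the paper silently absorbs them into the absolute value.
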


\begin{proof}
By the theory of continued fractions (see \cite{bib:Kh}, Theorem 9),
\begin{equation*}
\left|\frac{\phi}{\pi}-\frac{P_n}{Q_n}\right|
<\frac{1}{Q_n Q_{n+1}},
\end{equation*}
or
\begin{equation*}
\left|Q_n\phi -P_n\pi \right|
<\frac{\pi}{Q_{n+1}}.
\end{equation*}
Then
\begin{equation*}
|\Im (\sqrt{\lambda}\lambda_2)^{Q_n}|
=|\Im e^{i Q_n\phi}|
=|\sin Q_n\phi|
=|\sin(Q_n\phi- P_n\pi)|
\leq |Q_n\phi-P_n\pi |
<\frac{\pi}{Q_{n+1}}.
\end{equation*}
\end{proof}

%Recall our notation: $\langle x\rangle$ denotes $x-\lfloor x+\frac{1}{2}\rfloor$, the signed distance from $x$ to the nearest integer (that is, $\|x\|=|\langle x\rangle|$).

\begin{lem}\label{lem:LWnorm}

\begin{itemize}
\item[]
\item[\normalfont (a)] If $\|x_1\|+\cdots+\|x_k\|\leq \frac{1}{2}$, then $\langle x_1+\cdots+x_k\rangle=\langle x_1\rangle+\cdots+\langle x_k\rangle$.
\item[\normalfont (b)] $\langle -x\rangle=-\langle x \rangle$
\item[\normalfont (c)] If $n\in\Z$ and $|n|\|x\|<\frac{1}{2}$, then $\langle nx\rangle=n\langle x\rangle$.
\end{itemize}
\end{lem}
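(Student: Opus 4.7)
The plan is to unpack the definition $\langle x\rangle = x - \lfloor x + \tfrac12\rfloor$ by writing each $x_i = n_i + r_i$ with $n_i = \lfloor x_i + \tfrac12\rfloor \in \Z$ and $r_i = \langle x_i\rangle \in [-\tfrac12, \tfrac12)$; then $\|x_i\| = |r_i|$ by definition. For part (a), this gives
\begin{equation*}
x_1 + \cdots + x_k = (n_1 + \cdots + n_k) + (r_1 + \cdots + r_k)
\end{equation*}
with an integer first summand, and the triangle inequality combined with the hypothesis yields $|r_1 + \cdots + r_k| \leq \|x_1\| + \cdots + \|x_k\| \leq \tfrac12$. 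So $r_1 + \cdots + r_k$ lies in $[-\tfrac12, \tfrac12]$, and provided it lands in $[-\tfrac12, \tfrac12)$ the uniqueness of the nearest-integer decomposition gives $\langle x_1 + \cdots + x_k\rangle = r_1 + \cdots + r_k = \langle x_1\rangle + \cdots + \langle x_k\rangle$.

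For part (b), with $x = n + r$ as above, we have $-x = -n + (-r)$ with $-r \in (-\tfrac12, \tfrac12]$; provided $r \neq -\tfrac12$, this gives $\langle -x\rangle = -r = -\langle x\rangle$. Part (c) then follows by combining (a) and (b): for $n \geq 0$, apply (a) with $k = n$ copies of $x$, noting that $n\|x\| < \tfrac12$ verifies the hypothesis, to get $\langle nx\rangle = n\langle x\rangle$; for $n < 0$, write $n = -m$ with $m > 0$ and chain
\begin{equation*}
\langle nx\rangle = \langle -(mx)\rangle = -\langle mx\rangle = -m\langle x\rangle = n\langle x\rangle.
\end{equation*}

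There is really no substantive obstacle in this proof—each part is immediate from the definition together with the triangle inequality. The only minor technicality is the boundary behavior at $\pm\tfrac12$, a standard artifact of the half-open convention $\langle x\rangle \in [-\tfrac12, \tfrac12)$. For (c) the hypothesis is strict so the issue never arises, and for (a) the boundary case $r_1 + \cdots + r_k = \tfrac12$ only occurs in the degenerate situation where $\sum \|x_i\| = \tfrac12$ with all $r_i \geq 0$, which can be treated separately or interpreted by reading $\langle \cdot \rangle$ as a class in $\R/\Z$.
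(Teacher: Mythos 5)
Your proof is essentially the same as the paper's: decompose each $x_i$ into integer part plus remainder, sum, apply the triangle inequality, and observe the remainder sum is already in the right range. The paper writes $x_i=n_i+\delta_i$ with $|\delta_i|\le\frac12$ and simply asserts $\langle m'+\delta'\rangle=\delta'$ whenever $|\delta'|\le\frac12$, so the two arguments are structurally identical.

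Where you go beyond the paper is in flagging the boundary case, and you are right to be suspicious. With the convention $\langle x\rangle=x-\lfloor x+\frac12\rfloor\in[-\frac12,\frac12)$, the statement of (a) is in fact false on the boundary: take $k=2$, $x_1=x_2=\frac14$, so that $\|x_1\|+\|x_2\|=\frac12$, yet $\langle x_1\rangle+\langle x_2\rangle=\frac12$ while $\langle x_1+x_2\rangle=\langle\frac12\rangle=-\frac12$. So this is not a case that ``can be treated separately'' by the same argument; it is a genuine counterexample to the lemma as stated. The paper's own proof silently makes the same unjustified step $\langle m'+\delta'\rangle=\delta'$ at $\delta'=\frac12$. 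The safe fix is to replace $\le\frac12$ by $<\frac12$ in the hypothesis of (a), which is harmless since every invocation in the paper (e.g.\ in the proofs of Theorems~2 and~3 and in part~(c) itself) already has a strict bound. Your suggestion of working in $\R/\Z$ also works, but it changes what the lemma asserts, so the cleaner repair here is the strict inequality.
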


\begin{proof}

(a) Say $x_i=n_i+\delta_i$ for each $i$, where $n_i\in\Z$ and $|\delta_i|\leq\frac{1}{2}$. Then each $\langle x_i\rangle=\delta_i$. Put
\begin{align*}
m'&=n_1+\cdots+n_k,\\
\delta'&=\delta_1+\cdots+\delta_k.
\end{align*}
Then
\begin{align*}
\langle x_1+\cdots+x_k\rangle
=\langle(n_1+\cdots+n_k)+(\delta_1+\cdots+\delta_k)\rangle
=\langle m'+\delta'\rangle.
\end{align*}
Since
\begin{align*}
|\delta'|\leq |\delta_1|+\cdots+|\delta_k|=\|x_1\|+\cdots\|x_k\|\leq\frac{1}{2},
\end{align*}
we have
\begin{align*}
\langle x_1+\cdots+x_k\rangle=
\langle m'+\delta'\rangle=\delta'
=\delta_1+\cdots+\delta_k
=\langle x_1\rangle+\cdots+\langle x_k\rangle.
\end{align*}

(b) Say $x=n+\delta$, where $n\in\Z$ and $|\delta|\leq\frac{1}{2}$. Then $\langle x\rangle=\delta$. Now $-x=-n+(-\delta)$, where $-n\in\Z$ and $|-\delta|\leq\frac{1}{2}$, so
\begin{equation*}
\langle -x\rangle=-\delta=-\langle x\rangle.
\end{equation*}

(c) If $n=0$, then $\langle nx\rangle=n\langle x\rangle$. If $n\neq 0$ and $|n|\|x\|<\frac{1}{2}$, put $x_1=x_2=\cdots=x_{|n|}=x$ in part (a) to get $|n|\langle x\rangle=\langle |n|x\rangle$. Therefore $\langle nx\rangle=n\langle x\rangle$ if $n=|n|$. Now if $n=-|n|$, we apply part (b) to get
\begin{equation*}
\langle nx\rangle=\langle -|n|x\rangle=-\langle |n|x\rangle=- |n|\langle x\rangle=n \langle x\rangle.
\end{equation*}
\end{proof}

\begin{rem}\label{rem:1abdependent}
In the introduction, we mentioned that it is straightforward to show Littlewood's conjecture for the pair $(\alpha,\beta)$ when $1,\alpha,\beta$ is $\Q$-linearly dependent. If~$\alpha$ or~$\beta$ is rational (WLOG say $\alpha=\frac{r}{s}$, with $r,s\in\Z$ and $s>0$), then for all $k$, $ks$ is an integer for which
\begin{equation*}
ks\|ks\alpha\|\|ks\beta\|
=
ks\cdot 0\cdot \|ks\beta\|
=
0.
\end{equation*}
If $\alpha$ and $\beta$ are irrational, the following lemma gives us an increasing sequence $\{k_n\}$ of integers for which $k_n\|k_n\alpha\|\|k_n\beta\|<\frac{C}{k_n}$ (for some~$C$) for all $n$, and therefore that Littlewood's conjecture holds for the pair $(\alpha,\beta)$.
\end{rem}

\begin{lem}\label{lem:1abdependent}
If $\alpha,\beta\in \R$ are such that $1,\alpha,\beta$ is $\Q$-linearly dependent, then we can construct an increasing sequence $\{k_n\}$ of positive integers such that $\{k_n\|k_n\alpha\|\}$ and $\{k_n\|k_n\beta\|\}$ are both bounded.
%\begin{gather*}
%\{k_n\|k_n\alpha\|\},\\
%\{k_n\|k_n\beta\|\},\\
%%\{k_n^2\|k_n\alpha\|\|k_n\beta\|\}
%\end{gather*}
%are each bounded. %(In particular, $\{k_n\}$ is a Peck sequence for the pair $(\alpha,\beta)$.)
\end{lem}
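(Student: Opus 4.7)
\medskip

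The plan is to split into cases based on whether $\alpha$ or $\beta$ is rational. In every case, the sequence $\{k_n\}$ will come from multiplying a fixed positive integer (a clearing denominator) by the sequence of denominators of convergents of an appropriate irrational continued fraction expansion. The fundamental tool is the standard fact that if $\{Q_n\}$ denotes the denominators of the convergents of an irrational number $\gamma$, then $\{Q_n\}$ is a strictly increasing sequence of positive integers satisfying $\|Q_n\gamma\|<1/Q_{n+1}$; see \cite{bib:Kh}.

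First I would dispatch the rational subcases. If both $\alpha$ and $\beta$ are rational, write $\alpha=r_1/s_1$ and $\beta=r_2/s_2$ in lowest terms with $s_1,s_2>0$ and set $k_n=n\cdot s_1 s_2$; then $\|k_n\alpha\|=\|k_n\beta\|=0$, so both sequences are identically zero, hence bounded. If exactly one of them, say $\alpha$, is rational, write $\alpha=r/s$ with $s>0$, let $\{Q_n\}$ be the denominators of the convergents of $\beta$, and set $k_n=sQ_n$. Then $\|k_n\alpha\|=0$ while $\|k_n\beta\|\le s\|Q_n\beta\|<s/Q_{n+1}$, so $k_n\|k_n\beta\|<s^2 Q_n/Q_{n+1}<s^2$. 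The symmetric argument handles $\beta$ rational.

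Now suppose both $\alpha$ and $\beta$ are irrational. The given $\Q$-linear dependence $a+b\alpha+c\beta=0$ (with $a,b,c\in\Q$ not all zero) must have $c\neq 0$, for otherwise $b\alpha=-a$ would force $\alpha\in\Q$. Thus we can solve to obtain $\beta=u+v\alpha$ with $u,v\in\Q$, and $v\neq 0$ since $\beta$ is irrational. Choose a positive integer $s$ such that $su,sv\in\Z$, let $\{Q_n\}$ be the denominators of the convergents of $\alpha$, and set $k_n=sQ_n$; this is an increasing sequence of positive integers. Then
\[
k_n\|k_n\alpha\|\le sQ_n\cdot s\|Q_n\alpha\|<\frac{s^2 Q_n}{Q_{n+1}}<s^2,
\]
so $\{k_n\|k_n\alpha\|\}$ is bounded. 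For $\beta$, observe that
\[
k_n\beta=sQ_n(u+v\alpha)=(su)Q_n+(sv)Q_n\alpha,
\]
and $(su)Q_n\in\Z$, so $\|k_n\beta\|=\|(sv)Q_n\alpha\|\le|sv|\,\|Q_n\alpha\|<|sv|/Q_{n+1}$, giving
\[
k_n\|k_n\beta\|<sQ_n\cdot\frac{|sv|}{Q_{n+1}}<s^2|v|.
\]
Hence $\{k_n\|k_n\beta\|\}$ is also bounded, completing the proof.

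There is essentially no technical obstacle here; the only care required is in the case analysis and in confirming that absorbing the rational denominators into $k_n$ turns $k_n\beta$ into an integer plus a rational multiple of $Q_n\alpha$, so that $\|Q_n\alpha\|<1/Q_{n+1}$ does all the work.
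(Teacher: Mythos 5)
Your proof is correct, and the core argument in the main (both-irrational) case is the same as the paper's: write $\beta$ as a rational affine function of $\alpha$, clear denominators by a fixed integer $s$, and take $k_n = s q_n$ where $q_n$ is the sequence of denominators of convergents of $\alpha$. Two small differences are worth noting. First, you bound $\|sq_n\alpha\|$ and $\|(sv)q_n\alpha\|$ using the unconditional fact that $\|mx\|\le|m|\,\|x\|$ for $m\in\Z$, which gives the bounds $k_n\|k_n\alpha\|<s^2$ and $k_n\|k_n\beta\|<s^2|v|$ for \emph{all} $n$ at once; the paper instead invokes Lemma~\ref{lem:LWnorm}(c) to get the exact identities $\lb sq_n\alpha\rb=s\lb q_n\alpha\rb$ and $\lb r_1 q_n\alpha\rb=r_1\lb q_n\alpha\rb$, which only hold once $\|q_n\alpha\|$ is small enough, so its displayed bounds formally hold only eventually (and one must implicitly absorb the finitely many remaining terms into the constant). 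Your route is marginally cleaner in that respect. Second, you explicitly dispatch the cases where one or both of $\alpha,\beta$ is rational; the paper handles those in Remark~\ref{rem:1abdependent} immediately preceding the lemma and reserves the lemma's proof for the irrational case. Both are valid organizational choices.
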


\begin{proof}
Let $\{q_n\}$ be the sequence of denominators of convergents of $\alpha$. Suppose
\begin{equation*}
\beta
=
\frac{r_1}{s} \alpha+\frac{r_2}{s},
\end{equation*}
where $r_1,r_2,s\in\Z$ and $s>0$. So $s\beta=r_1\alpha+r_2$. Put $k_n=s q_n$. For all $n$, $\|q_n\alpha\|<\frac{1}{q_n}$. So eventually (say for $n>n'$) both $|r_1|\|q_n\alpha\|$ and $s\|q_n\alpha\|$ are less than $\frac{1}{2}$. Suppose $n>n'$. Then by Lemma~\ref{lem:LWnorm}(c),
$\lb r_1 q_n\alpha\rb=r_1\lb q_n\alpha\rb$ and $\lb s q_n\alpha\rb=s\lb q_n\alpha\rb$. Now
\begin{equation*}
\lb k_n\alpha\rb
=
\lb s q_n\alpha\rb
=
s \lb q_n\alpha\rb,
\end{equation*}
\begin{equation*}
\lb k_n\beta\rb
=
\lb s q_n\beta\rb
=
\lb q_n\cdot s\beta\rb
=
\lb q_n(r_1\alpha+r_2)\rb
=
\lb r_1q_n\alpha+q_n r_2\rb
=
\lb r_1q_n\alpha\rb
\end{equation*}
(this last equality due to $q_nr_2$ being an integer). So
\begin{gather*}
k_n\|k_n \alpha\|=k_n\cdot s\|q_n\alpha\|<k_n\frac{s}{q_n}=k_n\frac{s^2}{k_n}=s^2,\\
k_n\|k_n \beta\|=k_n\cdot |r_1|\|q_n\alpha\|<k_n\frac{|r_1|}{q_n}=s|r_1|.
\end{gather*}
\end{proof}

\subsection{Equation of the Ellipse}

Put $u_n=k_n^{1/2}\langle k_n\theta\rangle$ and $v_n=k_n^{1/2}\langle k_n\theta^2\rangle$ (and assume that $k_n>0$), and consider the sequence $\{(u_n,v_n)\}$. We will see in the proof of Theorem~1 that $u_n$ and $v_n$ are bounded, but we can say more about the relationship between them:
 %Motivated by the fact that the graphs of the points appear to be on (or close to) an ellipse (as witnessed in Figures~\ref{fig:LWseqex1a} and~\ref{fig:LWseqex1b} and the graphs in Chapter~3), we have the following:

\begin{lem}\label{lem:ellipse}
The points $(u_n,v_n)$ get arbitrarily close to the ellipse
\begin{equation*}
(\theta x-2 y)^2+(3\theta^2-4p)x^2
=
\frac{4 d^3}{3\theta^2-p}.
\end{equation*}
\end{lem}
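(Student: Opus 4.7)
The plan is to evaluate the quantity $f(u_n,v_n) := (\theta u_n - 2 v_n)^2 + (3\theta^2-4p)u_n^2$ directly and show that it converges to $4d^3/(3\theta^2-p)$. Since $(u_n,v_n)$ stays in a bounded region (this follows from combining $|Z_n| = O(\lambda^{-n/2})$ in Lemma~\ref{lem:bounds} with the upper bound $k_n^{1/2} < \tfrac{3}{2}C_2\lambda^{n/2}$ from Corollary~\ref{ineq:knbound}, and similarly for $v_n$ via $|Y_n|$), convergence of $f(u_n,v_n)$ to the constant on the right-hand side immediately forces the Euclidean distance from $(u_n,v_n)$ to the ellipse $\{f=4d^3/(3\theta^2-p)\}$ to tend to zero.

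For the computation itself, first apply Lemma~\ref{lem:normkntheta}: once $n$ is large enough that its hypotheses are met, we have $u_n = -k_n^{1/2}\, d Z_n/\theta$ and $v_n = -k_n^{1/2}\, d Y_n$. Using the elementary identity $X_n = Y_n - Z_n$ (immediate from the definitions), one computes
\begin{equation*}
\theta u_n - 2 v_n = k_n^{1/2} d (2Y_n - Z_n) = k_n^{1/2} d (X_n + Y_n).
\end{equation*}
Therefore
\begin{equation*}
f(u_n,v_n) = k_n d^2 \left[(X_n+Y_n)^2 + (3\theta^2-4p)(Z_n/\theta)^2\right].
\end{equation*}
The bracketed quantity is exactly the left-hand side of identity \eqref{eq:ident1} of Lemma~\ref{lem:ellipsoididentity} applied to $\zeta = \lambda^n$ (since $X_n+Y_n = 2(a_n+pc_n) - b_n\theta - c_n\theta^2$ and $Z_n/\theta = b_n - c_n\theta$), and so equals $4N(\lambda^n)/\lambda^n = 4/\lambda^n$. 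Recalling $k_n = dc_n$ gives the clean identity
\begin{equation*}
f(u_n,v_n) = \frac{4 d^3 c_n}{\lambda^n}.
\end{equation*}

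It remains to show $c_n/\lambda^n \to 1/(3\theta^2-p)$, which is exactly inequality \eqref{ineq:lam} of Lemma~\ref{lem:bounds} divided by $\lambda^n$: the error is $O(\lambda^{-3n/2})$. Thus $f(u_n,v_n) \to 4d^3/(3\theta^2-p)$, and boundedness of the sequence finishes the argument. There is no genuine obstacle here — the proof is a direct substitution once the right identity is isolated; the only real content is recognizing that the combination $(\theta x - 2y)^2 + (3\theta^2-4p)x^2$ has been engineered precisely so that the substitution $x = u_n$, $y = v_n$ matches the left-hand side of \eqref{eq:ident1}.
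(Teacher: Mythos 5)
Your proof is correct and follows essentially the same route as the paper's: both invoke Lemma~\ref{lem:normkntheta} to replace $u_n,v_n$ by $-k_n^{1/2}dZ_n/\theta$ and $-k_n^{1/2}dY_n$, both use $X_n = Y_n - Z_n$ to match the combination $\theta u_n - 2v_n$ to the left side of identity \eqref{eq:ident1} evaluated at $\zeta = \lambda^n$, and both finish by using \eqref{ineq:lam} to show $4d^3c_n/\lambda^n \to 4d^3/(3\theta^2-p)$. One small merit of your write-up over the paper's: you explicitly flag that boundedness of $(u_n,v_n)$ is what converts ``$f(u_n,v_n)$ converges to the constant'' into ``distance from $(u_n,v_n)$ to the level set tends to $0$'' — the paper leaves this step implicit.
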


\begin{proof}
By Lemma~\ref{lem:normkntheta}, eventually
$\langle k_n\theta \rangle=-\frac{d Z_n}{\theta}$
and
$\langle k_n\theta^2 \rangle=-d Y_n$. By Lemma~\ref{lem:ellipsoididentity} (and by the definitions of $a_n$, $b_n$, $c_n$)
\begin{equation*}
(X_n+Y_n)^2+(3\theta^2-4p)\left(\frac{Z_n}{\theta}\right)^2
=
\frac{4}{\lambda^n},
\end{equation*}
or (since $X_n=Y_n-Z_n$)
\begin{gather*}
(2Y_n-Z_n)^2+(3\theta^2-4p)\left(\frac{Z_n}{\theta}\right)^2
=
\frac{4}{\lambda^n}
,\\
(2dY_n-dZ_n)^2+(3\theta^2-4p)\left(\frac{dZ_n}{\theta}\right)^2
=
\frac{4d^2}{\lambda^n}
,\\
(-2\langle k_n\theta \rangle+\theta\langle k_n\theta^2 \rangle)^2+(3\theta^2-4p)\langle k_n\theta \rangle^2
=
\frac{4d^2}{\lambda^n}
.
\end{gather*}
Multiplying both sides by $k_n$, we have
\begin{equation*}
(\theta u_n-2 v_n)^2+(3\theta^2-4p)u_n^2
=
4d^2\frac{k_n}{\lambda^n}.
\end{equation*}
By Lemma~\ref{lem:bounds},
\begin{equation*}
\frac{d\lambda^n}{3\theta^2-p}-\frac{C'}{\lambda^{n/2}}
<
k_n
<
\frac{d\lambda^n}{3\theta^2-p}+\frac{C'}{\lambda^{n/2}},
\end{equation*}
where $C'=\frac{(1+\sqrt{2})C_1 d}{3\theta^2-p}$. So
\begin{equation*}
\frac{4d^3}{3\theta^2-p}-\frac{4d^2C'}{\lambda^{3n/2}}
<
k_n\frac{ 4d^2}{\lambda^n}
<
\frac{4d^3}{3\theta^2-p}+\frac{4d^2C'}{\lambda^{3n/2}},
\end{equation*}
or
\begin{equation*}
\frac{4d^3}{3\theta^2-p}-\frac{4d^2C'}{\lambda^{3n/2}}
<
(\theta u_n-2 v_n)^2+(3\theta^2-4p)u_n^2
<
\frac{4d^3}{3\theta^2-p}+\frac{4d^2C'}{\lambda^{3n/2}}.
\end{equation*}
Since $\lambda>1$, this shows that the points $\{(u_n,v_n)\}$ get arbitrarily close to
\begin{equation*}
(\theta x-2 y)^2+(3\theta^2-4p)x^2
=
\frac{4 d^3}{3\theta^2-p}.
\end{equation*}
Since $3\theta^2-4p>0$~\eqref{ineq:disc}, this is an ellipse.
\end{proof}

%% -------------------------------------------

%\section{Theorems and Corollaries}

\eject

\section{Theorem 1}
\label{sec:thm1}

{\bf Theorem 1. } {\em
Suppose $\theta\in\R$ is the only real root of an irreducible polynomial of the form $x^3-px-q\in\Z[x]$. Then we can construct a Peck sequence for the pair $(\theta,\theta^2)$.
}

\begin{proof}[Proof.]
Suppose $x^3-px-q\in\Z[x]$ is irreducible and has only one real root $\theta$. Let $K$ denote $\Q(\theta)$. Let $d$ be a positive integer such that $\O_K\subset\frac{1}{d}\Z[\theta]$. We can find this with PARI/GP by finding an integral basis using \verb|bnfinit(X^3-p*X-q).zk|, and then letting $d$ be the largest denominator in this integral basis. We want to find an infinite-order unit $\lambda$ satisfying
\begin{equation*}
\lambda>\left((1+\sqrt{2})C_1\right)^{2/3},
\end{equation*}
where (as in Definition~\ref{defn:constants})
$%\begin{equation*}
C_1=\max\{\sqrt{2},\frac{\sqrt{2}|\theta|}{\sqrt{3\theta^2-4p}}\}.
$%\end{equation*}
We can produce such a $\lambda$ as follows. We can find a fundamental unit $\ep_0$ of $\O_K$ (for example with PARI/GP, using \verb|bnfinit(X^3-p*X-q).fu|). By putting
\begin{equation*}
\ep'=
\max\left\{\pm\ep_0,\pm\frac{1}{\ep_0}\right\},
%\max(\ep_0^2,\frac{1}{\ep_0^2}), & \ep_0<0,
\end{equation*}
we have a unit with $\ep'>1$. %(Since $\ep'\neq \pm1$, it has infinite order.)
Put
\begin{equation*}
m'
=
\left\lceil
\frac{2}{3}
\log_{\ep'} (1+\sqrt{2})C_1
\right\rceil,
\end{equation*}
and finally
\begin{equation*}
\lambda=(\ep')^{m'}.
\end{equation*}

As before, we define $\{a_n\}$, $\{b_n\}$, $\{c_n\}$, and $\{k_n\}$ by
\begin{equation*}
a_n+b_n\theta+c_n\theta^2=\lambda^n
\end{equation*}
and $k_n=d c_n$. We will show that $\{k_n\}$ is a Peck sequence for the pair $(\theta,\theta^2)$.\\

We need to show that
\begin{equation}
\max\{\|k_n\theta\|,\|k_n\theta^2\|\}<\frac{M_{\theta_1}}{k_n^{1/2}}
\end{equation}
for some constant $M_{\theta_1}$, and that $\{k_n\}$ has a subsequence $\{\psi_n\}$ such that
\begin{equation}\label{thm1:ineq:logpsi}
\|\psi_n\theta\|
<
\frac{M_{\theta_2}}{\psi_n^{1/2}\log \psi_n}
\end{equation}
for some constant $M_{\theta_2}$.
\\

Since $\lambda>\left((1+\sqrt{2})C_2\right)^{2/3}$, we have
\begin{equation*}
\lambda^{3n/2}>(1+\sqrt{2})C_1
\end{equation*}
for all $n$, and so each $c_n$ (and therefore each $k_n$) is positive. Put
\begin{equation*}
N_\theta'
=
\max
\left\{
\dfrac{2\sqrt{2}dC_1}{|\theta|},
2 dC_1,
\left(2(1+\sqrt{2})C_1\right)^{1/3},
\lambda^{1/2}
\right\}
\end{equation*}
and $n_\theta=2\log_\lambda N_\theta'$. Suppose $n>n_\theta$. We note that $n\geq 2$, since $n>n_\theta$ is an integer and $n_\theta\geq 1$. (We will use this later on.) Now
\begin{gather*}
\lambda^{n/2}>\dfrac{2\sqrt{2}dC_1}{|\theta|},\\%\label{ineq:nprime1}\\
\lambda^{n/2}>2 dC_1,\\%\label{ineq:nprime2}\\
\lambda^{3n/2}>2(1+\sqrt{2})C_1,%\label{ineq:nprime3}
%\lambda^{3n/2}>\dfrac{(1+\sqrt{2})C_1}{\sqrt{\lambda}(\sqrt{\lambda}-1)}.
\end{gather*}
so (by Lemma \ref{lem:normkntheta} and Corollary \ref{ineq:knbound})
\begin{gather}
\|k_n\theta\|=\left|\dfrac{d Z_n}{\theta}\right|,\label{thm1:eqn:knthetanorm}\\
\|k_n\theta^2\|=|d Y_n|,\label{thm1:eqn:kntheta2norm}
\end{gather}
and
\begin{equation}\label{thm1:ineq:sqrtkn}
%\frac{1}{2}C_2\lambda^{n/2}
%<
k_n^{1/2}
<
\frac{3}{2} C_2\lambda^{n/2}
\end{equation}
(recall from Definition~\ref{defn:constants} that $C_2=\frac{\sqrt{d}}{\sqrt{3\theta^2-p}}$).
Combining \eqref{thm1:eqn:knthetanorm} and \eqref{thm1:eqn:kntheta2norm} with %\eqref{ineq:Yn} and \eqref{ineq:Zn} from
Lemma \ref{lem:bounds} gives us
\begin{gather*}
\|k_n\theta\|
=
\frac{d}{|\theta|}|Z_n|
<
\frac{d\sqrt{2}C_1}{|\theta|}\frac{1}{\lambda^{n/2}},\\
\|k_n\theta^2\|
=
d|Y_n|
<
\frac{dC_1}{\lambda^{n/2}}.
\end{gather*}
Together with \eqref{thm1:ineq:sqrtkn}, we have
\begin{gather*}
\|k_n\theta\|
<
\frac{d\sqrt{2}C_1}{|\theta|}
    \cdot
    \frac{3C_2}{2}
    \cdot
    \frac{1}{k_n^{1/2}},\\
\|k_n\theta^2\|
<
dC_1
    \cdot
    \frac{3C_2}{2}
    \cdot
    \frac{1}{k_n^{1/2}}.
\end{gather*}
Put
\begin{equation*}
M'
=
\max_{i \leq n_\theta}
\{
k_i^{1/2}\|k_i\theta\|,
k_i^{1/2}\|k_i\theta^2\|
\}
\end{equation*}
and
\begin{equation*}
M_{\theta_1}
=
\max
\{
M',\frac{3 d\sqrt{2} C_1 C_2}{2|\theta|},\frac{3 d C_1 C_2}{2}
\}.
\end{equation*}
Then for all $n$,
\begin{equation*}
\max
\{
\|k_n\theta\|,
\|k_n\theta^2\|
\}
<
\frac{M_{\theta_1}}{k_n^{1/2}}.
\end{equation*}

%%%%%%%%%%%%%%%%%%%%%%%%%%%%%%%%%%%%%%%%%%%%%%%%%%%%%%%%%%%%%%%%%%%%%%%%%%%%%%%%%%
%%%%%%%%%%%%%%%%%%%%%%%% Proof of Part 1 %%%%%%%%%%%%%%%%%%%%%%%%%%%%%%%%%%%%%%%%%
%%%%%%%%%%%%%%%%%%%%%%%%%%%%%%%%%%%%%%%%%%%%%%%%%%%%%%%%%%%%%%%%%%%%%%%%%%%%%%%%%%

%%%%%%%%%%%%%%%%%%%%%%%%%%%%%%%%%%%%%%%%%%%%%%%%%%%%%%%%%%%%%%%%%%%%%%%%%%%%%%%%%%
%%%%%%%%%%%%%%%%%%%%%%%% Proof of Part 2 %%%%%%%%%%%%%%%%%%%%%%%%%%%%%%%%%%%%%%%%%
%%%%%%%%%%%%%%%%%%%%%%%%%%%%%%%%%%%%%%%%%%%%%%%%%%%%%%%%%%%%%%%%%%%%%%%%%%%%%%%%%%

To find a subsequence of $\{k_n\}$ satisfying \eqref{thm1:ineq:logpsi}, we first combine \eqref{thm1:eqn:knthetanorm} with Corollary \ref{cor:imth2n} to get that (for $n>n_\theta$)
\begin{equation*}
\|k_n\theta\|
=
d\left|\frac{Z_n}{\theta}\right|
=
\frac{2d}{\sqrt{3\theta^2-4p}}
    \left|
    \Im \lambda_2^n
    \right|.
\end{equation*}
This, along with \eqref{thm1:ineq:sqrtkn}, gives us
\begin{equation*}
k_n^{1/2}\|k_n\theta\|
<
\frac{3 C_2}{2}
    \cdot
    \frac{2d}{\sqrt{3\theta^2-4p}}
    \cdot
    \lambda^{n/2}
    \left|
    \Im \lambda_2^n
    \right|
=
\frac{3dC_2}{2\sqrt{3\theta^2-4p}}
    \left|
    \Im (\sqrt{\lambda}\lambda_2)^n
    \right|.
\end{equation*}
Put
\begin{equation*}
\phi=\arctan
    \left(
    \frac{\sqrt{3\theta^2-4p}(b_1-c_1\theta)}{2(a_1+pc_1)-b_1\theta-c_1\theta^2}
    \right)
\end{equation*}
(so that $\sqrt{\lambda}\lambda_2=\pm e^{i\phi}$), and let $\{P_n/Q_n\}$ be the sequence of convergents of $\phi/\pi$. Then (by Lemma \ref{lem:convergents})
\begin{equation}\label{thm1:ineq:kQnLW}
k_{Q_n}^{1/2}\|k_{Q_n}\theta\|
<
\frac{3dC_2}{2\sqrt{3\theta^2-4p}}
    \left|
    \Im (\lambda^{1/2}\lambda_2)^{Q_n}
    \right|
<
\frac{3dC_2}{2\sqrt{3\theta^2-4p}}
\cdot
\frac{\pi}{Q_{n+1}}
\end{equation}
for $Q_n>n_\theta$.

By Corollary~\ref{ineq:knbound},
\begin{equation*}
k_{Q_{n+1}}^{1/2}
<
\frac{3C_2}{2}\lambda^{Q_{n+1}/2}
\leq
C_3\lambda^{Q_{n+1}/2}
\end{equation*}
(where $C_3=\max\{1,\frac{3C_2}{2}\}$ as in Definition~\ref{defn:constants}). Earlier we noted that $n\geq 2$, and so $Q_{n+1}\geq n\geq 2$ (because $\{Q_n\}$ is bounded below termwise by the Fibonacci sequence.) Then (since $C_3\geq 1$) $C_3\leq C_3^{Q_{n+1}/2}$, so
\begin{equation}\label{thm1:kQnplus1}
k_{Q_{n+1}}^{1/2}
<
(C_3\lambda)^{Q_{n+1}/2}.
\end{equation}
Then
\begin{equation*}
\frac{1}{Q_{n+1}}
<
\frac{\log C_3\lambda}{\log k_{Q_{n+1}}}.
\end{equation*}
So putting
\begin{equation*}
M_{\theta_2}
=
\frac{3dC_2\pi }{2\sqrt{3\theta^2-4p}}
\cdot
\log C_3\lambda,
\end{equation*}
we have that for $Q_n>n_\theta$, the subsequence $\{k_{Q_n}\}$ satisfies
\begin{equation}\label{thm1:LWprodbound1}
k_{Q_n}^{1/2}\|k_{Q_n}\theta\|
<
\frac{3dC_2}{2\sqrt{3\theta^2-4p}}
    \cdot
    \frac{\pi}{Q_{n+1}}
<
\frac{M_{\theta_2}}{\log k_{Q_{n+1}}}.
\end{equation}
Since $Q_{n+1}>Q_n$ for $n\geq 2$, this gives us
\begin{equation}
\|k_{Q_n}\theta\|
<
\frac{M_{\theta_2}}{k_{Q_n}^{1/2}\log k_{Q_{n}}}
\end{equation}
when $Q_n>n_\theta$. We can shift the sequence $\{k_{Q_n}\}$ to produce a sequence $\{\psi_n\}$ such that
%Put $\psi_n=k_{Q_{n+1-n'}}$, where $n'$ is the first integer such that $Q_{n'}>n_\theta$. Then
\begin{equation*}
\|\psi_n\theta\|
<
\frac{M_{\theta_2}}{\psi_n^{1/2}\log \psi_n}
\end{equation*}
for all $n$.
\end{proof}

%% -------------------------------------------

\section{Theorem 2}
\label{sec:thm2}

{\bf Theorem 2. } {\em Suppose $\alpha\in\R$ is the only real root of an irreducible cubic polynomial in $\Q[x]$. Then we can construct a Peck sequence for the pair $(\alpha,\alpha^2)$.
}

%%%%%%%%%%%%%%%%%%%%%%%%%%%%%%%%%%%%%%%%%%%%%%%%%%%%%%%%%%%%%%%%%%%%%%%%%%%%%%%%%%%%%%%%%%%%%%%%%%%
%%%%%%%%%%%%%%%%%%%%%%%%%%%%%             reductions                 %%%%%%%%%%%%%%%%%%%%%%%%%%%%%%
%%%%%%%%%%%%%%%%%%%%%%%%%%%%%%%%%%%%%%%%%%%%%%%%%%%%%%%%%%%%%%%%%%%%%%%%%%%%%%%%%%%%%%%%%%%%%%%%%%%

\begin{proof}
First, we make some reductions in order to use Theorem~1. By clearing denominators, we get an irreducible cubic
\begin{equation*}
f(x)=Ax^3+Bx^2+Cx+D\in\Z[x]
\end{equation*}
(so $A\neq 0$) for which $\alpha$ is the only real root. WLOG we assume $A$ is positive. By putting
\begin{equation*}
g(x)=27A^2
    f
    \left(
    \frac{x-B}{3A}
    \right),
\end{equation*}
we transform $f(x)$ into the form $x^3-px-q\in\Z[x]$, where
\begin{gather*}
p=3A(B^2-3AC),\\
q=-2B^3+9ABC-27A^2D.
\end{gather*}
Since $f$ is irreducible, so is $g$. Recall that $\alpha_i$ denotes $\sigma_i(\alpha)$ for $i=1,2,3$ (where $\sigma_1$ is the real embedding). Put
\begin{equation*}
\theta_i=3A \alpha_i+B.
\end{equation*}
Now $\theta:=\theta_1$ is real, and $\theta_2,\theta_3$ are complex, and all three satisfy
\begin{equation*}
g(\theta_i)=27A^2f(\alpha_i)=0.
\end{equation*}
So $\theta$ satisfies the hypothesis of Theorem 1.\\

Let $\{k_n\}$, $C_2$, $C_3$, $M_{\theta_1}$, $M_{\theta_2}$, $\{Q_j\}$, $N_\theta'$, $n_\theta$ be as in the proof of Theorem 1. Define the sequence $\{\ell_n\}$ by
\begin{equation*}
\ell_n=9A^2 k_n.
\end{equation*}
%We will show that $\{\ell_n\}$ is a Peck sequence for the pair $(\alpha,\alpha^2)$.\\
To show that $\{\ell_n\}$ is a Peck sequence for $(\alpha,\alpha^2)$, we first note that since $\{k_n\}$ is an increasing sequence of positive integers, so is $\{\ell_n\}$. To show the other properties, we put
\begin{equation*}
N_\alpha'
=
\max
\{
N_\theta',
12 AM_{\theta_1},
16|B|M_{\theta_1}
\}
\end{equation*}
and $n_\alpha=2 \log_\lambda N_\alpha'$, and use the following:

%%%%%%%%%%%%%%%%%%%%%%%%%%%%%%%%%%%%%%%%%%%%%%%%%%%%%%%%%%%%%%%%%%%%%%%%%%%%%%%%%%%%%%%%%%%%%%%%%%%
%%%%%%%%%%%%%%%%%%%%%%%%%%%%%                lemma                   %%%%%%%%%%%%%%%%%%%%%%%%%%%%%%
%%%%%%%%%%%%%%%%%%%%%%%%%%%%%%%%%%%%%%%%%%%%%%%%%%%%%%%%%%%%%%%%%%%%%%%%%%%%%%%%%%%%%%%%%%%%%%%%%%%

\begin{lem}
For $n>n_\alpha$,
\begin{equation*}
%\|\ell_n\alpha\|=3A\|k_n\theta\|
\langle \ell_n\alpha\rangle
=
3A \langle k_n\theta\rangle
\end{equation*}
and
\begin{equation*}
%\|\ell_n\alpha^2\|\leq \|k_n\theta^2\|+2|B|\|k_n\theta\|.
\langle \ell_n\alpha^2\rangle
=
\langle k_n\theta^2\rangle-2B\langle k_n\theta\rangle.
\end{equation*}
\end{lem}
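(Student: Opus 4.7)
The plan is to reduce everything to $k_n\theta$ and $k_n\theta^2$ via the substitution $\theta = 3A\alpha + B$, then invoke Lemma~\ref{lem:LWnorm} to push the $\langle\cdot\rangle$ across the resulting linear combinations. The threshold $n_\alpha$ has been chosen exactly so the hypotheses of that lemma hold.

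First, I would do the purely algebraic step: from $\alpha = (\theta - B)/(3A)$ we get
\begin{equation*}
\ell_n\alpha = 9A^2 k_n \cdot \frac{\theta - B}{3A} = 3A\,k_n\theta - 3AB\,k_n,
\end{equation*}
\begin{equation*}
\ell_n\alpha^2 = 9A^2 k_n \cdot \frac{\theta^2 - 2B\theta + B^2}{9A^2} = k_n\theta^2 - 2B\,k_n\theta + B^2 k_n.
\end{equation*}
Since $k_n \in \Z$, the quantities $3ABk_n$ and $B^2 k_n$ are integers, so applying $\langle \cdot \rangle$ eliminates them: $\langle \ell_n\alpha\rangle = \langle 3A\,k_n\theta\rangle$ and $\langle \ell_n\alpha^2\rangle = \langle k_n\theta^2 - 2B\,k_n\theta\rangle$.

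Next, I would verify that for $n > n_\alpha$, the hypotheses of Lemma~\ref{lem:LWnorm}(a) and~(c) are met. From Theorem 1 we have $\|k_n\theta\|, \|k_n\theta^2\| < M_{\theta_1}/k_n^{1/2}$, and Corollary~\ref{ineq:knbound} gives $k_n^{1/2} > (C_2/2)\lambda^{n/2}$; the thresholds $12A\,M_{\theta_1}$ and $16|B|\,M_{\theta_1}$ packed into $N_\alpha'$ guarantee that $|3A|\|k_n\theta\|, |2B|\|k_n\theta\|, \|k_n\theta^2\|$ are each small enough (say, each at most $1/4$, or jointly summing to at most $1/2$, as needed). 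Granting this, Lemma~\ref{lem:LWnorm}(c) gives $\langle 3A\,k_n\theta\rangle = 3A\langle k_n\theta\rangle$, which settles the first identity. For the second, the same lemma yields $\langle -2B\,k_n\theta\rangle = -2B\langle k_n\theta\rangle$, and then Lemma~\ref{lem:LWnorm}(a) applied to $x_1 = k_n\theta^2$ and $x_2 = -2B\,k_n\theta$ gives
\begin{equation*}
\langle k_n\theta^2 - 2B\,k_n\theta\rangle = \langle k_n\theta^2\rangle + \langle -2B\,k_n\theta\rangle = \langle k_n\theta^2\rangle - 2B\langle k_n\theta\rangle,
\end{equation*}
completing the second identity.

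The only nontrivial step is the bookkeeping in choosing $N_\alpha'$: one must translate $n > n_\alpha$ (equivalently $\lambda^{n/2} > N_\alpha'$) into a bound on $k_n^{1/2}$ via Corollary~\ref{ineq:knbound}, and then ensure $M_{\theta_1}/k_n^{1/2}$ is small enough after multiplying by $|3A|$ or $|2B|$. The specific constants $12A\,M_{\theta_1}$ and $16|B|\,M_{\theta_1}$ in the definition of $N_\alpha'$ are tailored to produce the $\leq 1/2$ inequalities required by Lemma~\ref{lem:LWnorm}; checking these reduces to elementary inequalities and is the only place where any care is needed. No new ideas are required beyond the identities already recorded in Lemma~\ref{lem:LWnorm} and the approximation bounds from Theorem 1.
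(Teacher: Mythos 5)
Your proposal follows the paper's proof essentially step for step: the same reduction $\ell_n\alpha = 3Ak_n\theta - 3ABk_n$ and $\ell_n\alpha^2 = k_n\theta^2 - 2Bk_n\theta + B^2k_n$, the same dropping of the integer terms, and the same application of Lemma~\ref{lem:LWnorm}(a)--(c) after checking $3A\|k_n\theta\| < \tfrac12$ and $\|k_n\theta^2\| + 2|B|\|k_n\theta\| < \tfrac12$ via the Theorem~1 bound and Corollary~\ref{ineq:knbound}. The bookkeeping you defer is precisely what the paper writes out (it deduces $k_n^{1/2} > 6AM_{\theta_1}$ and $k_n^{1/2} > 8|B|M_{\theta_1}$ from the definition of $N_\alpha'$), so the argument is correct and not a different route.
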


\begin{proof}[Proof of Lemma]%\let\qed\relax
Our strategy is to use Lemma \ref{lem:LWnorm} to get
$\langle \ell_n\alpha\rangle$ and $\langle \ell_n\alpha^2\rangle$
in terms of
$\langle k_n\theta\rangle$ and $\langle k_n\theta^2\rangle$. %, and use the fact that $\|x\|=|\langle x\rangle|$.
Let $n>n_\alpha$. Then
\begin{gather*}
\lambda^{n/2}>12 AM_{\theta_1},\\
\lambda^{n/2}>16|B|M_{\theta_1}.
\end{gather*}
Since $n>n_\alpha>n_\theta$, we also have $k_n^{1/2}>\frac{\lambda^{n/2}}{2}$ (by Corollary \ref{ineq:knbound}). So
\begin{gather*}
k_n^{1/2}>6AM_{\theta_1},\\
k_n^{1/2}>8|B|M_{\theta_1}.
\end{gather*}
Now
\begin{gather*}
3A\alpha=\theta-B,\\
9A^2\alpha^2=(3A\alpha)^2=\theta^2-2B\theta+B^2,
\end{gather*}
so
\begin{gather*}
\ell_n\alpha
=
9A^2 k_n\alpha
=
3Ak_n(3A\alpha)
=
3Ak_n(\theta-B)
=
3Ak_n\theta-3ABk_n,\\
\ell_n\alpha^2
=
9A^2k_n\alpha^2
=
k_n(3A\alpha)^2
=
k_n(\theta-B)^2
=
k_n\theta^2-2Bk_n\theta+B^2 k_n .
\end{gather*}
Since $3ABk_n$ and $B^2 k_n$ are integers, we have
\begin{equation}\label{eq:mnalpha1a}
\langle \ell_n\alpha\rangle
=\langle 3Ak_n\theta-3ABk_n\rangle
=\langle 3A k_n\theta\rangle
\end{equation}
and
\begin{equation}\label{eq:mnalpha2a}
\langle \ell_n\alpha^2\rangle
=\langle k_n\theta^2-2Bk_n\theta+B^2k_n\rangle
=\langle k_n\theta^2-2Bk_n\theta\rangle.
\end{equation}
Now $k_n^{1/2}>6AM_{\theta_1}>4M_{\theta_1}$ (the second inequality holding because $A\geq 1$) and $k_n^{1/2}>8|B|M_{\theta_1}$, so
\begin{gather*}
3A\|k_n\theta\|
<3A\frac{M_{\theta_1}}{k_n^{1/2}}
<3A\frac{M_{\theta_1}}{6AM_{\theta_1}}
=\frac{1}{2},\\
\|k_n\theta^2\|
<
\frac{M_{\theta_1}}{k_n^{1/2}}
<
\frac{M_{{\theta_1}}}{4M_{\theta_1}}
<
\frac{1}{4},\\
2|B|\|k_n\theta\|
<
2|B|\frac{M_{\theta_1}}{k_n^{1/2}}
<
2|B|\frac{M_{\theta_1}}{8|B|M_{\theta_1}}
=
\frac{1}{4}.
\end{gather*}
Since $3A\|k_n\theta\|<\frac{1}{2}$ and $2|B|\|k_n\theta\|<\frac{1}{2}$, we have (by Lemma \ref{lem:LWnorm}~(c))
\begin{gather*}%\label{eq:mnalpha1b}
\langle 3A k_n\theta\rangle
=3A \langle k_n\theta\rangle,\\
\langle 2B k_n\theta\rangle
=2B \langle k_n\theta\rangle.
\end{gather*}
And since
\begin{equation*}
\|k_n\theta^2\|+\|2Bk_n\theta\|
=
\|k_n\theta^2\|+2|B|\|k_n\theta\|
<
\frac{1}{2},
\end{equation*}
we have (by Lemma\ref{lem:LWnorm}~(b) and (c))
\begin{equation*}
\langle k_n\theta^2-2B k_n\theta\rangle
=
\langle k_n\theta^2\rangle+\langle -2B k_n\theta\rangle
=
\langle k_n\theta^2\rangle-\langle 2B k_n\theta\rangle
=
\langle k_n\theta^2\rangle-2B\langle k_n\theta\rangle.
\end{equation*}
Then
\begin{gather*}
\langle \ell_n\alpha\rangle
=
\langle 3A k_n\theta\rangle
=
3A \langle k_n\theta\rangle,\\
\langle \ell_n\alpha^2\rangle
=
\langle k_n\theta^2-2Bk_n\theta\rangle
=
\langle k_n\theta^2\rangle-2B\langle k_n\theta\rangle.
\end{gather*}
%and so
%\begin{gather*}
%\|\ell_n\alpha\|
%=
%|\langle \ell_n\alpha\rangle|
%=
%|3A\langle k_n\theta\rangle|
%=
%3A\|k_n\theta\|,\\
%\|\ell_n\alpha^2\|
%=
%|\langle \ell_n\alpha^2\rangle|
%=
%|\langle k_n\theta\rangle-\langle 2B k_n\theta\rangle|
%\leq
%\|k_n\theta^2\|+2|B|\|k_n\theta\|.
%\end{gather*}
%This proves the lemma.
\end{proof}

%%%%%%%%%%%%%%%%%%%%%%%%%%%%%%%%%%%%%%%%%%%%%%%%%%%%%%%%%%%%%%%%%%%%%%%%%%%%%%%%%%%%%%%%%%%%%%%%%%%
%%%%%%%%%%%%%%%%%%%%%%%%%%%%%            seqs bounded                %%%%%%%%%%%%%%%%%%%%%%%%%%%%%%
%%%%%%%%%%%%%%%%%%%%%%%%%%%%%%%%%%%%%%%%%%%%%%%%%%%%%%%%%%%%%%%%%%%%%%%%%%%%%%%%%%%%%%%%%%%%%%%%%%%

From here, it is fairly straightforward (and similar to Theorem 1) to show that $\{\ell_n\}$ is a Peck sequence for the pair $(\alpha,\alpha^2)$. For $n>n_\alpha$
\begin{equation*}
\ell_n^{1/2}\|\ell_n\alpha\|
=(3Ak_n^{1/2})(3A\|k_n\theta\|)
=9A^2k_n^{1/2}\|k_n\theta\|
<9A^2M_{\theta_1}
\end{equation*}
and
\begin{align*}
\ell_n^{1/2}\|\ell_n\alpha^2\|
&\leq
\left(3Ak_n^{1/2}\right)\left(\|k_n\theta^2\|+2|B|\|k_n\theta\|\right)%\\
<
%3A\left(M_{\theta_1}+2|B|M_{\theta_1}\right)\\
%&=
3AM_{\theta_1}\left(1+2|B|\right).
\end{align*}
\\
Put
\begin{equation*}
M_\alpha'
=
\max_{i\leq n_\alpha}
    \{
    \ell_i^{1/2}\|\ell_i\alpha\|,
    \ell_i^{1/2}\|\ell_i\alpha^2\|
    \}
\end{equation*}
and
\begin{equation*}
M_{\alpha_1}
=
\max
    \{
    M_\alpha',
    9A^2M_{\theta_1},
    3AM_{\theta_1}(1+2|B|)
    \}.
\end{equation*}
Then
\begin{equation}\label{thm2:ineq:Malpha}
\max
    \{
    \ell_n^{1/2}\|\ell_n\alpha\|,
    \ell_n^{1/2}\|\ell_n\alpha^2\|
    \}
<
M_{\alpha_1}
\end{equation}
for all $n$.

%%%%%%%%%%%%%%%%%%%%%%%%%%%%%%%%%%%%%%%%%%%%%%%%%%%%%%%%%%%%%%%%%%%%%%%%%%%%%%%%%%%%%%%%%%%%%%%%%%%
%%%%%%%%%%%%%%%%%%%%%%%%%%%%%              subsequence               %%%%%%%%%%%%%%%%%%%%%%%%%%%%%%
%%%%%%%%%%%%%%%%%%%%%%%%%%%%%%%%%%%%%%%%%%%%%%%%%%%%%%%%%%%%%%%%%%%%%%%%%%%%%%%%%%%%%%%%%%%%%%%%%%%

Now consider the subsequence $\{\ell_{Q_n}\}$. In equations \eqref{thm1:ineq:kQnLW} and \eqref{thm1:kQnplus1} in the proof of Theorem~1, we saw that
\begin{equation*}
k_{Q_n}^{1/2}\|k_{Q_n}\theta\|
<
\frac{3dC_2}{2\sqrt{3\theta^2-4p}}
\cdot
\frac{\pi}{Q_{n+1}}
\end{equation*}
and
\begin{gather*}
k_{Q_{n+1}}^{1/2}
<
C_3 \lambda^{Q_{n+1}/2}
\end{gather*}
when $Q_n>n_\theta$. If $Q_n>n_\alpha\geq n_\theta$, then
\begin{equation}\label{thm2:ineq:lQnLW}
\ell_{Q_n}^{1/2}\|\ell_{Q_n}\alpha\|
=
9A^2k_{Q_n}^{1/2}\|k_{Q_n}\theta\|
<
9A^2
\cdot
\frac{3dC_2}{2\sqrt{3\theta^2-4p}}
\cdot
\frac{\pi}{Q_{n+1}}.
\end{equation}
Since $Q_{n+1}/2\geq 1$ and $3A\geq 1$,
\begin{equation}\label{thm2:ineq:lQn}
\ell_{Q_{n+1}}^{1/2}
=
3Ak_{Q_{n+1}}^{1/2}
<
3 A (C_3 \lambda)^{Q_{n+1}/2}
\leq
(3AC_3\lambda)^{Q_{n+1}/2},
\end{equation}
or
\begin{equation*}
\frac{1}{Q_{n+1}}
<
\frac
    {\log (3 A C_3 \lambda)}
    {\log \ell_{Q_{n+1}}}.
\end{equation*}
Then
\begin{equation}
\ell_{Q_n}^{1/2}\|\ell_{Q_n}\alpha\|
<
9A^2
\cdot
\frac{3dC_2}{2\sqrt{3\theta^2-4p}}
\cdot
\frac
    {\pi\log (3 A C_3 \lambda)}
    {\log \ell_{Q_{n+1}}}.
\end{equation}
Putting
\begin{equation*}
M_{\alpha_2}
=
9A^2
\cdot
\frac{3dC_2}{2\sqrt{3\theta^2-4p}}
\cdot
\pi\log (3 A C_3 \lambda),
\end{equation*}
and since $Q_{n+1}>Q_n$ for $n\geq 2$, we have
\begin{equation*}
\|\ell_{Q_n}\alpha\|
<
\frac{M_{\alpha_2}}{\ell_{Q_n}^{1/2}\log \ell_{Q_n}}
\end{equation*}
when $Q_n>n_\alpha$. By an appropriate shift of the sequence $\{\ell_{Q_n}\}$, we get a sequence $\{\psi_n\}$ such that
%Similar to before, put $\psi_n=\ell_{Q_{n+1-n'}}$, where $n'$ is the first integer such that $Q_{n'}>n_\alpha$. Then
\begin{equation*}
\|\psi_n\alpha\|
<
\frac{M_{\alpha_2}}{\psi_n^{1/2}\log \psi_n}
\end{equation*}
for all $n$.
\end{proof}

%% -------------------------------------------

\section{Theorem 3}
\label{sec:thm3}
{\bf Theorem 3. }{\em
Suppose $\alpha,\beta\in K$, where $K\subseteq \R$ is a cubic field with only one real embedding.
Then we can construct a Peck sequence for the pair $(\alpha,\beta)$.
}

\begin{proof}
Let $\alpha, \beta\in K\subset \R$, where $K$ is a cubic field which has only one real embedding. As before, let $\sigma_1$ be the real embedding and $\sigma_2,\sigma_3$ the complex embeddings.\\
\\
{\bf Case 1: \boldmath $\alpha\in\Q$ or $\beta\in\Q$. } WLOG suppose $\beta=\frac{u}{v}$. If $\alpha\in\Q$ (say $\alpha=\frac{r}{s}$), then put $m_n=|nsv|$. If $\alpha\notin\Q$, then put $m_n=|v| q_n$, where $\{q_n\}$ is the sequence of denominators of convergents of $\alpha$. In the first case, $\|m_n\alpha\|=\|m_n\beta\|=0$ for all $n$. In the second case, $\|m_n\beta\|=0$ for all $n$, and eventually $\|m_n\alpha\|=|v|\|q_n\alpha\|<\frac{|v|}{q_n^2}$. In either case, $\{m_n^{1/2}\|m_n\alpha\|\}$, $\{m_n^{1/2}\|m_n\beta\|\}$, and $\{m_n^{1/2}\log m_n \|m_n\alpha\|\}$ are all bounded.\\
\\
{\bf Case 2: \boldmath $\alpha,\beta\notin\Q$.}
%If $1,\alpha,\beta$ is $\Q$-linearly dependent, then Lemma~\ref{lem:abdependent} describes how to construct a Peck sequence for the pair $(\alpha,\beta)$. Assume $1,\alpha,\beta$ is $\Q$-linearly independent.
%Since $[\Q(\alpha):\Q]>1$ divides $[K:\Q]=3$, we must have $[\Q(\alpha):\Q]=3$. So the minimal polynomial of $\alpha$ is
%\begin{equation*}
%F_\alpha(x)
%=
%(x-\sigma_1(\alpha))
%(x-\sigma_2(\alpha))
%(x-\sigma_3(\alpha)).
%\end{equation*}
%Both $\sigma_2(\alpha)$ and $\sigma_3(\alpha)$ are complex (since $\alpha\notin \Q$), so $F_\alpha$ has only one real root $\alpha$. Now $F_\alpha$ is in $\Q[x]$, so clearing denominators, we get (for some $A>0$)
%\begin{equation*}
%Ax^3+Bx^2+Cx+D
%=
%A \cdot F_\alpha(x)
%\in\Z[x],
%\end{equation*}
%where $\gcd(A,B,C,D)=1$. Call this polynomial $f(x)$. Since $F_\alpha(x)$ is irreducible, so is $f(x)$. Therefore $\alpha$ is the only real root of an irreducible polynomial in $\Z[x]$, so it satisfies the conditions of Theorem~2.\\
Let $F_\alpha(x)$ be the minimal polynomial (over $\Q$) of $\alpha$. Since $\alpha\notin\Q$, $F_\alpha$ is a cubic. Clearing denominators, we get (for some $A>0$)
\begin{equation*}
f(x)
:=
Ax^3+Bx^2+Cx+D
:=
A \cdot F_\alpha(x)
\in\Z[x],
\end{equation*}
where $\gcd(A,B,C,D)=1$. Since $f\in\Z[x]$ is an irreducible cubic with one real root~$\alpha$, we use Theorem~2.\\

Let $\{k_n\}$, $\{\ell_n\}$, $C_2$, $C_3$, $M_{\alpha_1}$, $M_{\alpha_2}$, $\{Q_j\}$, $N_{\alpha}'$, $n_\alpha$, be as in the proofs of Theorems~1 and 2. Since $1,\alpha,\alpha^2$ is a $\Q$-basis for $K$, we can write
\begin{equation*}
\beta=\frac{r_0}{s}+\frac{r_1}{s}\alpha+\frac{r_2}{s}\alpha^2,
\end{equation*}
where $r_0,r_1,r_2,s\in\Z$ and $s>0$. (We note that, since $\beta\notin\Q$, $r_1$ and $r_2$ are not both zero.) Put $m_n=s\ell_n$. We will show that $\{m_n\}$ is a Peck sequence for the pair $(\alpha,\beta)$. Put
\begin{equation*}
N_\beta'
=
\max
    \left\{
    \frac{8 M_{\alpha_1}|r_1|}{3A},
    \frac{8 M_{\alpha_1}|r_2|}{3A},
    \frac{4s M_{\alpha_1}}{3A},
    N_\alpha'
    \right\}
\end{equation*}
and
\begin{equation*}
n_\beta=2\log_\lambda N_\beta'.
\end{equation*}
As in the proof of Theorem~2, we define $n_\beta$ so that we can use Lemma~\ref{lem:LWnorm} to get bounds on $\{m_n^{1/2}\|m_n\alpha\|\}$ and $\{m_n^{1/2}\|m_n\beta\|\}$. We do this with the following:

%%%%%%%%%%%%%%%%%%%%%%%%%%%%%%%%%%%%%%%%%%%%%%%%%%%%%%%%%%%%%%%%%%%%%%%%%%%%%%%%%%%%%%%%%%%%%%%%%%%
%%%%%%%%%%%%%%%%%%%%%%%%%%%%%                lemma                   %%%%%%%%%%%%%%%%%%%%%%%%%%%%%%
%%%%%%%%%%%%%%%%%%%%%%%%%%%%%%%%%%%%%%%%%%%%%%%%%%%%%%%%%%%%%%%%%%%%%%%%%%%%%%%%%%%%%%%%%%%%%%%%%%%

\begin{lem}\label{lem:thm3}
For $n>n_\beta$
\begin{gather*}
\langle m_n \alpha \rangle
=
s \langle \ell_n \alpha \rangle
%\|m_n\alpha\|
%=
%s\|\ell_n\alpha\|
,\\
\langle
    m_n \beta
\rangle
=
r_1
\langle
    \ell_n \alpha
\rangle
+
r_2
\langle
    \ell_n \alpha^2
\rangle.
%\|m_n\beta\|
%\leq
%|r_1|\|\ell_n\alpha\|
%+
%|r_2|\|\ell_n\alpha^2\|.
\end{gather*}
\end{lem}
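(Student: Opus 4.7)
\textbf{Proof proposal for Lemma \ref{lem:thm3}.} The plan is to reduce both assertions to applications of Lemma \ref{lem:LWnorm}, using the $\alpha$-bounds already established in Theorem~2 together with the size condition $\lambda^{n/2} > N_\beta'$ to make every stray integer factor small enough in the $\|\cdot\|$-norm.

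First I would unpack the definitions. Since $s\beta = r_0 + r_1\alpha + r_2\alpha^2$, multiplying by $\ell_n$ gives
\[
m_n\beta \;=\; s\ell_n\beta \;=\; \ell_n r_0 \,+\, r_1\ell_n\alpha \,+\, r_2\ell_n\alpha^2,
\]
and $\ell_n r_0 \in \Z$, so $\langle m_n\beta\rangle = \langle r_1\ell_n\alpha + r_2\ell_n\alpha^2\rangle$. Similarly $m_n\alpha = s\ell_n\alpha$. It therefore suffices to verify the three numerical smallness estimates
\[
s\|\ell_n\alpha\| < \tfrac{1}{2}, \qquad |r_1|\,\|\ell_n\alpha\| < \tfrac{1}{4}, \qquad |r_2|\,\|\ell_n\alpha^2\| < \tfrac{1}{4},
\]
for then Lemma \ref{lem:LWnorm}(c) pulls out the scalars $s$, $r_1$, $r_2$ and Lemma \ref{lem:LWnorm}(a)+(b) splits the sum $r_1\ell_n\alpha + r_2\ell_n\alpha^2$, yielding both claimed identities. (The cases $r_1=0$ or $r_2=0$ are trivially included.)

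To get these three estimates, I would combine \eqref{thm2:ineq:Malpha} of Theorem~2, which says $\ell_n^{1/2}\|\ell_n\alpha\|, \ell_n^{1/2}\|\ell_n\alpha^2\| < M_{\alpha_1}$ for every $n$, with a lower bound on $\ell_n^{1/2}$. For $n > n_\beta \geq n_\alpha \geq n_\theta$, Corollary \ref{ineq:knbound} gives $k_n^{1/2} > \tfrac{1}{2}\lambda^{n/2}$, and $\ell_n = 9A^2 k_n$ gives $\ell_n^{1/2} > \tfrac{3A}{2}\lambda^{n/2} > \tfrac{3A}{2} N_\beta'$. The definition of $N_\beta'$ was tailored exactly so that this last quantity dominates each of $4M_{\alpha_1}|r_1|$, $4M_{\alpha_1}|r_2|$, and $2sM_{\alpha_1}$. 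Dividing $M_{\alpha_1}$ by $\ell_n^{1/2}$ then produces the three needed bounds.

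The routine step is packaging these into Lemma \ref{lem:LWnorm}: since $s\|\ell_n\alpha\| < \tfrac{1}{2}$, part (c) gives $\langle m_n\alpha\rangle = s\langle \ell_n\alpha\rangle$; since $|r_1|\|\ell_n\alpha\|+|r_2|\|\ell_n\alpha^2\|<\tfrac{1}{2}$, parts (a)--(c) give $\langle r_1\ell_n\alpha + r_2\ell_n\alpha^2\rangle = r_1\langle\ell_n\alpha\rangle + r_2\langle\ell_n\alpha^2\rangle$. There is no real obstacle; the only delicate point is bookkeeping, making sure the constants chosen in $N_\beta'$ actually cover each coefficient $r_1, r_2, s$ that appears, so that the strict inequalities $<\tfrac14$ and $<\tfrac12$ hold even in the boundary cases. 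Once that check is done, the lemma follows.
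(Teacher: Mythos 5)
Your proposal is correct and follows essentially the same route as the paper: reduce $\langle m_n\alpha\rangle$ and $\langle m_n\beta\rangle$ to expressions in $\langle\ell_n\alpha\rangle, \langle\ell_n\alpha^2\rangle$ after discarding the integer term $r_0\ell_n$, bound $s\|\ell_n\alpha\|$, $|r_1|\|\ell_n\alpha\|$, $|r_2|\|\ell_n\alpha^2\|$ using \eqref{thm2:ineq:Malpha} together with the $\ell_n^{1/2} > \tfrac{3A}{2}\lambda^{n/2}$ estimate from Corollary \ref{ineq:knbound} and the definition of $N_\beta'$, and then invoke Lemma \ref{lem:LWnorm}(a)--(c). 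The only cosmetic difference is that you consistently use $M_{\alpha_1}$ for the $r_2$-bound, whereas the paper's proof momentarily writes $M_{\alpha_2}$ (a typo, since the definition of $N_\beta'$ uses $M_{\alpha_1}$ throughout), so your version is arguably the cleaner statement of the same argument.
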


\begin{proof}[Proof of Lemma.]
Let $n>n_\beta$ and note that
\begin{gather*}
\lambda^{n/2}
>
N_\beta'
\geq
\frac{2}{3A}
\max
    \{
    4 M_{\alpha_1} |r_1|,
    4 M_{\alpha_2} |r_2|,
    2 s M_{\alpha_1}
    \}
,%\\
%3 A \frac{\lambda^{n/2}}{2}
%\geq
%\max
%    \{
%    4 M_{\alpha_1} |r_1|,
%    4 M_{\alpha_2} |r_2|,
%    2 s M_{\alpha_1}
%    \}
%,
\end{gather*}
and so (using Corollary~\ref{ineq:knbound} and that $\ell_n=9A^2k_n$)
\begin{equation*}
\ell_n^{n/2}
=
3 A k_n^{n/2}
>
3 A \frac{\lambda^{n/2}}{2}
>
\max
    \{
    4 M_{\alpha_1} |r_1|,
    4 M_{\alpha_2} |r_2|,
    2 s M_{\alpha_1}
    \}
.
\end{equation*}
Then
\begin{gather}
|r_1|\frac{M_{\alpha_1}}{\ell_n^{1/2}}
<
\frac{1}{4}
,\label{thm2:ineq:r1Malpha}\\
|r_2|\frac{M_{\alpha_2}}{\ell_n^{1/2}}
<
\frac{1}{4}
,\label{thm2:ineq:r2Malpha}\\
s\frac{M_{\alpha_1}}{\ell_n^{1/2}}
<
\frac{1}{2}.\label{thm2:ineq:s1Malpha}
\end{gather}
Now
\begin{gather*}
s\beta
=
r_0 + r_1 \alpha + r_2 \alpha^2
,\\
m_n \beta
=
(s \ell_n) \beta
=
\ell_n (s\beta)
=
\ell_n (r_0+ r_1 \alpha + r_2 \alpha^2).
\end{gather*}
Since $r_0 \ell_n\in\Z$,
\begin{equation*}
\langle m_n\beta \rangle
=
\langle
    r_1 \ell_n \alpha + r_2 \ell_n \alpha^2
\rangle.
\end{equation*}
Combining \eqref{thm2:ineq:Malpha} (from Theorem~2) with \eqref{thm2:ineq:r1Malpha} and \eqref{thm2:ineq:r2Malpha}, we have
\begin{gather*}
|r_1|\|\ell_n\alpha\|
<
|r_1|\frac{M_{\alpha_1}}{\ell_n^{1/2}}
<
\frac{1}{4}
<\frac{1}{2}
\end{gather*}
and
\begin{equation*}
|r_2|\|\ell_n\alpha^2\|
<
|r_2|\frac{M_{\alpha_2}}{\ell_n^{1/2}}
<
\frac{1}{4}
<
\frac{1}{2},
\end{equation*}
so Lemma~\ref{lem:LWnorm}(c) yields
\begin{gather*}
\langle
    r_1 \ell_n \alpha
\rangle
=
r_1
\langle
    \ell_n \alpha
\rangle
,\\
\langle
    r_2 \ell_n \alpha^2
\rangle
=
r_2
\langle
    \ell_n \alpha^2
\rangle.
\end{gather*}
Then since
\begin{equation*}
\|r_1 \ell_n \alpha\|
+
\|r_2 \ell_n \alpha^2\|
=
|r_1| \|\ell_n \alpha\|
+
|r_2| \|\ell_n \alpha^2\|
<
\frac{1}{4}+\frac{1}{4}
=
\frac{1}{2},
\end{equation*}
Lemma~\ref{lem:LWnorm}(a) gives us
\begin{equation*}
\langle
    r_1 \ell_n \alpha + r_2 \ell_n \alpha^2
\rangle
=
\langle
    r_1 \ell_n \alpha
\rangle
+
\langle
    r_2 \ell_n \alpha^2
\rangle.
\end{equation*}
So
\begin{equation*}
\langle
    m_n \beta
\rangle
=
\langle
    r_1 \ell_n \alpha
\rangle
+
\langle
    r_2 \ell_n \alpha^2
\rangle
=
r_1
\langle
    \ell_n \alpha
\rangle
+
r_2
\langle
    \ell_n \alpha^2
\rangle.
\end{equation*}
%thus
%\begin{equation*}
%\|m_n \beta\|
%\leq
%|r_1|\|\ell_n\alpha\|
%+
%|r_2|\|\ell_n\alpha^2\|.
%\end{equation*}
To show
$\langle m_n \alpha \rangle
=
s \langle \ell_n \alpha \rangle$,
%$\|m_n \alpha\|=s\|\ell_n\alpha\|$,
we use \eqref{thm2:ineq:Malpha} with \eqref{thm2:ineq:s1Malpha} to get
\begin{equation*}
s \|\ell_n \alpha\|
<
s\frac{M_{\alpha_1}}{\ell_n^{1/2}}
<
\frac{1}{2}.
\end{equation*}
Then (by Lemma~\ref{lem:LWnorm}(c))
%\begin{gather*}
$\langle m_n \alpha \rangle
=
\langle s \ell_n \alpha \rangle
=
s \langle \ell_n \alpha \rangle.$
%\end{gather*}
%so
%\begin{equation*}
%\|m_n \alpha\|
%=
%|\langle m_n \alpha \rangle|
%=
%|\langle s \ell_n \alpha \rangle|
%=
%|s\langle \ell_n \alpha \rangle|
%=
%s\|\ell_n \alpha\|.
%\end{equation*}
%(This ends the proof of the lemma.)
\end{proof}

%%%%%%%%%%%%%%%%%%%%%%%%%%%%%%%%%%%%%%%%%%%%%%%%%%%%%%%%%%%%%%%%%%%%%%%%%%%%%%%%%%%%%%%%%%%%%%%%%%%
%%%%%%%%%%%%%%%%%%%%%%%%%%%%%            seqs bounded                %%%%%%%%%%%%%%%%%%%%%%%%%%%%%%
%%%%%%%%%%%%%%%%%%%%%%%%%%%%%%%%%%%%%%%%%%%%%%%%%%%%%%%%%%%%%%%%%%%%%%%%%%%%%%%%%%%%%%%%%%%%%%%%%%%

With this lemma in hand, the rest of the proof of Theorem~3 is nearly identical to the arguments in the proofs of Theorems~1 and~2. The lemma, along with \eqref{thm2:ineq:Malpha}, gives us that for $n>n_\beta$,
\begin{gather*}
m_n^{1/2}\|m_n\alpha\|
=
(s^{1/2}\ell_n^{1/2})(s\|\ell_n \alpha\|)
=
s^{3/2}\ell_n^{1/2}\|\ell_n \alpha\|
<
s^{3/2}M_{\alpha_1}
,\\
m_n^{1/2}\|m_n\beta\|
=
(s^{1/2}\ell_n^{1/2})
    (|r_1|\|\ell_n \alpha\|+|r_2|\|\ell_n \alpha^2\|)
<
s^{3/2}(|r_1|+|r_2|)M_{\alpha_1}.
\end{gather*}
We noted earlier that $r_1$ and $r_2$ are not both zero, so $|r_1|+|r_2|\geq 1$, and therefore
\begin{equation*}
m_n^{1/2}\|m_n\alpha\|
<
s^{3/2}(|r_1|+|r_2|)M_{\alpha_1}.
\end{equation*}
So with
\begin{equation*}
M_{\beta}'
=
\max_{i\leq n_\beta}
    \{
    m_n^{1/2}\|m_n\alpha\|,
    m_n^{1/2}\|m_n\beta\|
    \}
\end{equation*}
and
\begin{equation*}
M_{\beta_1}
=
\max
    \{
    M_\beta',
    %s^{3/2}M_{\alpha_1},
    s^{3/2}(|r_1|+|r_2|)M_{\alpha_1}
    \}
,
\end{equation*}
we have
\begin{equation*}
\max
    \{
    m_n^{1/2}\|m_n\alpha\|,
    m_n^{1/2}\|m_n\beta\|
    \}
\leq
M_{\beta_1}
\end{equation*}
for all $n$.\\

%%%%%%%%%%%%%%%%%%%%%%%%%%%%%%%%%%%%%%%%%%%%%%%%%%%%%%%%%%%%%%%%%%%%%%%%%%%%%%%%%%%%%%%%%%%%%%%%%%%
%%%%%%%%%%%%%%%%%%%%%%%%%%%%%              subsequence               %%%%%%%%%%%%%%%%%%%%%%%%%%%%%%
%%%%%%%%%%%%%%%%%%%%%%%%%%%%%%%%%%%%%%%%%%%%%%%%%%%%%%%%%%%%%%%%%%%%%%%%%%%%%%%%%%%%%%%%%%%%%%%%%%%

Consider the subsequence $\{m_{Q_n}\}$. In \eqref{thm2:ineq:lQnLW} and \eqref{thm2:ineq:lQn} from the proof of Theorem~2, we had
\begin{equation*}
\ell_{Q_n}^{1/2}\|\ell_{Q_n}\alpha\|
<
9A^2
\cdot
\frac{3dC_2}{2\sqrt{3\theta^2-4p}}
\cdot
\frac{\pi}{Q_{n+1}}
\end{equation*}
and
\begin{equation*}
\ell_{Q_{n+1}}^{1/2}
<
(3 A C_3 \lambda)^{Q_{n+1}/2}
\end{equation*}
for $Q_n>n_\alpha$. With Lemma~\ref{lem:thm3}, we have that
\begin{equation}\label{thm3:ineq:mQnLW}
m_{Q_n}^{1/2}\|m_{Q_n} \alpha\|
=
s^{3/2} \ell_{Q_n}^{1/2}\|\ell_n \alpha\|
<
9A^2s^{3/2}
\cdot
\frac{3dC_2}{2\sqrt{3\theta^2-4p}}
\cdot
\frac{\pi}{Q_{n+1}}
\end{equation}
for $Q_n>n_\beta\geq n_\alpha$. Since $Q_{n+1}/2\geq 1$ and $s\geq 1$, we have
\begin{equation*}
m_{Q_{n+1}}^{1/2}
=
s^{1/2} \ell_{Q_{n+1}}^{1/2}
<
s^{1/2}(3A C_3 \lambda)^{Q_{n+1}/2}
\geq
(3As^{1/2}C_3\lambda)^{Q_{n+1}/2},
\end{equation*}
from which we get
\begin{equation*}
\frac{1}{Q_{n+1}}
\leq
\frac{\log(3As^{1/2}C_3\lambda)}{\log m_{Q_{n+1}}}.
\end{equation*}
Then
\begin{equation}\label{thm3:ineq:mQnLW}
m_{Q_n}^{1/2}\|m_{Q_n} \alpha\|
<
9A^2s^{3/2}
\cdot
\frac{3dC_2}{2\sqrt{3\theta^2-4p}}
\cdot
\frac{\pi\log(3As^{1/2}C_3\lambda)}{\log m_{Q_{n+1}}}.
\end{equation}
Put
\begin{equation*}
M_{\beta_2}
=
9A^2s^{3/2}
\cdot
\frac{3dC_2\pi}{2\sqrt{3\theta^2-4p}}
\cdot
\log(3As^{1/2}C_3\lambda).
\end{equation*}
Then since $Q_{n+1}>Q_n$ for $n\geq 2$, we get
\begin{equation*}
\|m_{Q_n}\alpha\|
<
\frac{M_{\beta_2}}{m_{Q_n}^{1/2}\log m_{Q_n}}
\end{equation*}
for $n>n_\beta$. By appropriately shifting the sequence $\{m_{Q_n}\}$, we have a sequence $\{\psi_n\}$ such that
%Let $n'$ be the first integer such that $Q_{n'}>n_\beta$. Put $\psi_n=m_{Q_{n+1-n'}}$. Then
\begin{equation*}
\|\psi_n\alpha\|
<
\frac{M_{\beta_2}}{\psi_n^{1/2}\log \psi_n}
\end{equation*}
for all $n$.
\end{proof}

\section{Further Questions}
\label{sec:furtherquestions}
%In this dissertation we construct simultaneous approximations of pairs $(\alpha,\beta)$ from certain cubic fields $K$.

\subsection{Constructing Simultaneous Approximations}
The main idea of this paper is to take an appropriate $\theta$ with $K=\Q(\theta)$, find a unit $\lambda\in\O_K$ with $\lambda>1$, and then use the sequence $\{\lambda^n\}$ to produce a sequence that satisfies Peck's inequalities for the power basis $1,\theta,\theta^2$.

\begin{enumerate}
\item Our most immediate question is whether this method could work when $K$ is a totally real cubic field? (So far, our method has failed to produce Peck sequences in this case.)

    Peck's proof in \cite{bib:Pe} makes no distinction between the cases that $K$ has one real embedding or three real embeddings. In the second case, the unit group has rank~2. So if this method were to work in that case, then perhaps finding an appropriate $\lambda$ is a more delicate process than in the rank~1 case (where our main requirement was only that $\lambda>1$).
\item Another question is whether this method could generalize to find simultaneous approximations of pairs coming from higher-degree number fields?

    For example, consider $K=\Q(\sqrt{2},\sqrt{3})$. Since $K=\Q(\theta)$ for $\theta=\sqrt{2}+\sqrt{3}$, could we take powers of an appropriate unit to construct ``good'' simultaneous approximations to the basis $1,\theta,\theta^2,\theta^3$? If so, that would also give us simultaneous approximations to the pair $(\sqrt{2},\sqrt{3})$. How good would such approximations be? (I.e., would they satisfy Peck's inequalities? Would they be sharp enough to show Littlewood's conjecture for the pair $(\sqrt{2},\sqrt{3})$?)

\end{enumerate}

\subsection{Questions Based on Heuristics}
Let $f:\N\to(0,\infty)$, and consider sequences of the form $\{f(n)\|n\alpha\|\|n\beta\|\}$ for $\alpha,\beta\in\R$. Based on the probabilistic argument in Section~\ref{sec:heuristics}, it seems natural to ask:
\begin{enumerate}
\item[3.]
If $\sum_{n=1}^\infty \frac{1}{f(n)}$ diverges, then is
\begin{equation*}
\liminf_{n\to\infty} f(n)\|n\alpha\|\|n\beta\|=0?
\end{equation*}
\item[4.]
If $\sum_{n=1}^\infty\frac{1}{f(n)}$ converges, then for almost all $\alpha,\beta\in\R$ is
\begin{equation*}
\liminf_{n\to\infty} f(n)\|n\alpha\|\|n\beta\|>0?
\end{equation*}
\end{enumerate}

We could also consider a subset $S\subset \N$ and the set $\{s\|s\alpha\|\|s\beta\|:s\in S\}$. Based on the probabilistic argument again, it seems reasonable to ask:

\begin{enumerate}
\item[5.] If $\sum_{s\in S}\frac{1}{s}$ diverges, then is
\begin{equation*}
\inf_{s\in S}s\|s\alpha\|\|s\beta\|=0?
\end{equation*}
\item[6.] If $\sum_{s\in S}\frac{1}{s}$ converges, then for almost all $\alpha,\beta$ is
\begin{equation*}
\inf_{s\in S}s\|s\alpha\|\|s\beta\|>0?
\end{equation*}
\end{enumerate}

%
%Show that the eigenvalues of $T$ are $\sigma_i(\alpha)$ for the embeddings $\sigma_i$ of $K$ into $\C$? (and show what the eigenvectors are?) Then the eigenvalues of each $T_\zeta$ are the conjugates $\sigma_i(\zeta)$. (All $T_\zeta\in\Q[T]$ have same eigenvectors.)
%If $x^3-px-q\in\Z[x]$ is irreducible and has only one real root $\theta$, then we saw in Lemma~\ref{lem:ellipsoididentity} how $N_{K/\Q}(x+y\theta+z\theta^2)$ factors in $\Q(\theta)[x,y,z]$. This factorization is how we are able to get $\langle k_n\theta\rangle$ and $\langle k_n\theta^2\rangle$ in terms of $a_n$, $b_n$, and $c_n$. Since finding similar factorizations for $N_{K/\Q}$ has been elusive for higher-degree $K$,
%
%Another question relates to
%
%Take cubic field $K$ and extend $N_{K/\Q}$ (on $(a,b,c)\in\Q^3$) to function $N(x,y,z)$ on $\R^3$. Talk about the surface $N(x,y,z)=1$ (it's asymptotic to $N(x,y,z)=0$, which is the union of a plane and a line) and what the sequence of units looks like when $K$ has only one real embedding. If $\lambda>1$, the sequence $(a_n,b_n,c_n)$ spirals out around the line $t(\theta^2,\theta,1)$ at a roughly-constant rate of rotation.

% Heuristics?

%% ================================================================

\chapter{Littlewood's Conjecture (Examples)}
\label{sec:exs}
\newcommand{\figwidth}{.9}

If $\alpha,\beta\in K$, where $K$ is a cubic field with only one real embedding, then as a result of the theorems, we are able to construct an eventually-increasing sequence $\{\psi_n\}$ %:=\{m_{Q_n}\}$
of positive integers satisfying
\begin{equation*}
\psi_n\|\psi_n\alpha\|\|\psi_n\beta\|
<
\frac{C}{\log \psi_n}
\end{equation*}
(for some $C>0$), thus providing a constructive proof that Littlewood's conjecture holds for the pair $(\alpha,\beta)$. However, in the proofs of the theorems, we actually have a slightly sharper bound (and get the $\frac{C}{\log\psi_n}$ bound as a consequence).\\

In this chapter, we will consider this sharper bound
\begin{equation*}
\psi_n\|\psi_n\alpha\|\|\psi_n\beta\|
<
\frac{C_0}{Q_{n+1}},
\end{equation*}
and we will compare $C_0/Q_{n+1}$ to the actual value of $\psi_n\|\psi_n\alpha\|\|\psi_n\beta\|$ for several examples of pairs $(\alpha,\beta)$.

\section{Constructing $\{\psi_n\}$ and Calculating the Bound}
To bound the Littlewood product $\psi_n\|\psi_n\alpha\|\|\psi_n\beta\|$, we will consider separate bounds on $\psi_n^{1/2}\|\psi_n\alpha\|$ and $\psi_n^{1/2}\|\psi_n\beta\|$, as we did in the proofs of the theorems. We are interested in {\em eventual} bounds, so we will slightly modify our setup in the following ways:
\begin{itemize}
\item[(i)] In the proof of Theorem~1, we wanted $\lambda>((1+\sqrt{2})C_1)^{1/3}$. For our constructions, we will just require $\lambda>1$.
\item[(ii)] The constants $M_{\theta_1}$, $M_{\alpha_1}$, and $M_{\beta_1}$ were defined so that certain inequalities held for all $n$. We will relax that requirement so that the corresponding inequalities hold {\em eventually}, and then we will slightly modify the definitions of the constants. We will denote these re-defined constants by $\widetilde{M_\theta}$, $\widetilde{M_\alpha}$, and $\widetilde{M_{\beta_1}}$. We will define $\widetilde{N}$ so that the inequalities hold for $\lambda^{n/2}>\widetilde{N}$.
\item[(iii)] In the proofs of the theorems, we took $\{\psi_n\}$ to be an appropriately shifted subsequence (of $\{k_{Q_n}\}$ or $\{\ell_{Q_n}\}$ or $\{m_{Q_n}\}$). Instead, we will take $\{\psi_n\}$ to be $m_{Q_n}$, and construct a constant $n_0$ for which $\psi_n$ satisfies the inequalities when $Q_n>n_0$.
\end{itemize}

We summarize our results in the following algorithm to construct $\{\psi_n\}=\{m_{Q_n}\}$ and to compute $n_0$, $C_0$ such that
\begin{equation*}
\psi_n\|\psi_n\alpha\|\|\psi_n\beta\|
\leq
\frac{C_0}{Q_{n+1}}
\end{equation*}
for $Q_n>n_0$. We will assume that $\alpha,\beta\notin\Q$, and that we have the polynomial $f(x)=Ax^3+Bx^2+Cx+D\in\Z[x]$ for which: $f(\alpha)=0$, $\gcd(A,B,C,D)=1$, and $A>0$. In the case that $B=0$, we can modify the algorithm to reduce $C_0$ by a factor of 81. In each step of the algorithm, we will state any modifications for the case $B=0$.

\subsection{The Algorithm}

\newcommand{\algwidtha}{1.01\textwidth}
\newcommand{\algshifta}{-2.1cm}
\newcommand{\algwidthb}{.99\textwidth}
\newcommand{\algshiftb}{-1.9cm}

\begin{itemize}
\item[]
\begin{itemize}
\item[] \hskip\algshifta\vbox{\hrule width\algwidtha height 2pt}
\item[\bf Given:    ] $\alpha,\beta$: $\alpha$ is real root of $f(x)=Ax^3+Bx^2+Cx+D$, $\beta=\frac{r_0}{s}+\frac{r_1}{s}\alpha+\frac{r_2}{s}\alpha^2$
\item[] \hskip\algshiftb\vbox{\hrule width\algwidthb height 1pt}
\item[\bf Step 1.   ] Define $g\in\Z[x]$, $p,q\in\Z$, $\theta\in K$ by:
\begin{gather*}
g(x)=27A^2f\left(\frac{x-B}{3A}\right)=x^3-px-q\\
p=3(B^2-3AC)\\
q=-2B^3+9ABC-27A^2D\\
\theta=3A\alpha+B
\end{gather*}
(These are chosen so that $\theta$ is the real root of $g(x)$.)
Choose a positive integer $d$ such that $\O_K\subset \frac{1}{d}\Z[\theta]$
(this can be computed in
PARI/GP\footnote{By \verb|bnfinit(X^3-p*X-q).zk|}).

\item[] \hskip\algshiftb\makebox[\algwidthb]{\dotfill}
\item[If $B=0$:] Put $\theta=A\alpha$, $p=-AC$, $q=-A^2D$, $g(x)=A^2f(x/A)=x^3-px-q$

%%%%%%%%%%%%%%%%%%%%%%%%%%%%%%%%%%%%%%%%%%%%%%%%%%%%%%%%%%%%%%%%%%%%%%%%%
%%%%%%%%%%%%%%%%%%%%%%%%%%%%      step  2     %%%%%%%%%%%%%%%%%%%%%%%%%%%
%%%%%%%%%%%%%%%%%%%%%%%%%%%%%%%%%%%%%%%%%%%%%%%%%%%%%%%%%%%%%%%%%%%%%%%%%

\item[] \hskip\algshiftb\vbox{\hrule width\algwidthb height 1pt}
\item[\bf Step 2.   ] Find fundamental unit $\ep_0$ of $\O_K$
(can be computed in
PARI/GP\footnote{By \verb|bnfinit(X^3-p*X-q).fu|}),
 and put
\begin{equation*}
\lambda=\max\{\pm\ep_0,\pm{\ep_0}^{-1}\}
\end{equation*}

%%%%%%%%%%%%%%%%%%%%%%%%%%%%%%%%%%%%%%%%%%%%%%%%%%%%%%%%%%%%%%%%%%%%%%%%%
%%%%%%%%%%%%%%%%%%%%%%%%%%%%      step  3     %%%%%%%%%%%%%%%%%%%%%%%%%%%
%%%%%%%%%%%%%%%%%%%%%%%%%%%%%%%%%%%%%%%%%%%%%%%%%%%%%%%%%%%%%%%%%%%%%%%%%

\item[] \hskip\algshiftb\vbox{\hrule width\algwidthb height 1pt}
\item[\bf Step 3.   ] Define the sequences $\{a_n\}$, $\{b_n\}$, $\{c_n\}$, $\{m_n\}$ by:
\begin{gather*}
a_n+b_n\theta+c_n\theta^2 = \lambda^n\\
m_n=(9A^2sd)c_n
\end{gather*}
\item[] \hskip\algshiftb\makebox[\algwidthb]{\dotfill}
\item[If $B=0$:] Put $m_n=(A^2sd)c_n$.

%%%%%%%%%%%%%%%%%%%%%%%%%%%%%%%%%%%%%%%%%%%%%%%%%%%%%%%%%%%%%%%%%%%%%%%%%
%%%%%%%%%%%%%%%%%%%%%%%%%%%%      step  4     %%%%%%%%%%%%%%%%%%%%%%%%%%%
%%%%%%%%%%%%%%%%%%%%%%%%%%%%%%%%%%%%%%%%%%%%%%%%%%%%%%%%%%%%%%%%%%%%%%%%%

\item[] \hskip\algshiftb\vbox{\hrule width\algwidthb height 1pt}
\item[\bf Step 4.   ] Define the constant $\phi$ and the sequence $\{Q_n\}$ by:
\begin{equation*}
\phi=\arctan
    \left(
    \frac{\sqrt{3\theta^2-4p}(b_1-c_1\theta)}{2(a_1+pc_1)-b_1\theta-c_1\theta^2}
    \right)
\end{equation*}
And $\{Q_n\}$ is the sequence of denominators of the {\em non-integer} convergents of $\phi/\pi$

%%%%%%%%%%%%%%%%%%%%%%%%%%%%%%%%%%%%%%%%%%%%%%%%%%%%%%%%%%%%%%%%%%%%%%%%%
%%%%%%%%%%%%%%%%%%%%%%%%%%%%      step  5     %%%%%%%%%%%%%%%%%%%%%%%%%%%
%%%%%%%%%%%%%%%%%%%%%%%%%%%%%%%%%%%%%%%%%%%%%%%%%%%%%%%%%%%%%%%%%%%%%%%%%

\item[] \hskip\algshiftb\vbox{\hrule width\algwidthb height 1pt}
\item[\bf Step 5.   ] Define constants $C_1$, $C_2$, %$C_3$,
$\widetilde{M_\theta}$, $\widetilde{M_\alpha}$, $\widetilde{M_{\beta_1}}$, $\widetilde{M_{\beta_2}}$, $\widetilde{N}$ by:
\begin{align*}
C_1
&=
\max\left(
    \sqrt{2},
    \frac{\sqrt{2}|\theta|}{\sqrt{3\theta^2-4p}}\right
    ),\\
C_2
&=
\frac{\sqrt{d}}{\sqrt{3\theta^2-p}},\\
%C_3
%&=
%\max
%    \left\{
%    1,
%    \frac{3C_2}{2}
%    \right\}\\
\widetilde{M_\theta}
&=
\max
\left\{
\frac{3 d\sqrt{2} C_1 C_2}{2|\theta|},\frac{3 d C_1 C_2}{2}
\right\}\\
\widetilde{M_\alpha}
&=
\max\left\{
    9A^2\widetilde{M_\theta},
    3A\widetilde{M_\theta}(1+2|B|)
    \right\},\\
\widetilde{M_{\beta_1}}
&=
s^{3/2}(|r_1|+|r_2|)\widetilde{M_\alpha}\\
\widetilde{M_{\beta_2}}
&=
9A^2s^{3/2}
\cdot
\frac{3dC_2\pi}{2\sqrt{3\theta^2-4p}}
%\cdot
%\log(3As^{1/2}C_3\lambda)
\\
%\end{align*}
%\begin{align*}
\widetilde{N}
&=
\max
    \Bigg\{
    \lambda^{1/2},
    \frac{2\sqrt{2}d C_1}{|\theta|},
    2dC_1,
    \left(2(1+\sqrt{2})C_2\right)^{1/3},
    12A\widetilde{M_\theta},\\
    &\qquad\qquad\qquad\qquad\qquad\qquad\qquad
    16|B|\widetilde{M_\theta},
    \frac{8\widetilde{M_\alpha}|r_1|}{3A},
    \frac{8\widetilde{M_\alpha}|r_2|}{3A},
    \frac{4s\widetilde{M_\alpha}}{3A}
    \Bigg\}
\end{align*}

\item[] \hskip\algshiftb\makebox[\algwidthb]{\dotfill}
\item[If $B=0$:] Put
\begin{align*}
\widetilde{M_\alpha}
&=
A^2\widetilde{M_\theta},\\
\widetilde{M_{\beta_2}}
&=
A^2s^{3/2}
\cdot
\frac{3dC_2\pi}{2\sqrt{3\theta^2-4p}}
%\cdot
%\log(As^{1/2}C_3\lambda)
,\\
\widetilde{N}
&=
\max
    \Bigg\{
    \lambda^{1/2},
    \frac{2\sqrt{2}d C_1}{|\theta|},
    2dC_1,
    \left(2(1+\sqrt{2})C_2\right)^{1/3},
    4A\widetilde{M_\theta},\\
    &\qquad\qquad\qquad\qquad\qquad\qquad\qquad
    \frac{8\widetilde{M_\alpha}|r_1|}{A},
    \frac{8\widetilde{M_\alpha}|r_2|}{A},
    \frac{4s\widetilde{M_\alpha}}{A}
    \Bigg\}.
\end{align*}

%%%%%%%%%%%%%%%%%%%%%%%%%%%%%%%%%%%%%%%%%%%%%%%%%%%%%%%%%%%%%%%%%%%%%%%%%
%%%%%%%%%%%%%%%%%%%%%%%%%%%%      step  6     %%%%%%%%%%%%%%%%%%%%%%%%%%%
%%%%%%%%%%%%%%%%%%%%%%%%%%%%%%%%%%%%%%%%%%%%%%%%%%%%%%%%%%%%%%%%%%%%%%%%%

\item[] \hskip\algshiftb\vbox{\hrule width\algwidthb height 1pt}
\item[\bf Step 6.   ] Define the sequence $\{\psi_n\}$ and constants $C_0$, $n_0$ by:
\begin{gather*}
\psi_n
=
m_{Q_n}\\
C_0
=
\widetilde{M_{\beta_1}} \cdot\widetilde{M_{\beta_2}}\\
n_0
=
2\log_\lambda \widetilde{N}
\end{gather*}
%\begin{align*}
%\psi_n
%&=
%m_{Q_n}\\
%C_0
%&=
%\widetilde{M_{\beta_1}} \widetilde{M_{\beta_2}}\\
%n_0
%&=
%2\log_\lambda \widetilde{N}
%\end{align*}

%%%%%%%%%%%%%%%%%%%%%%%%%%%%%%%%%%%%%%%%%%%%%%%%%%%%%%%%%%%%%%%%%%%%%%%%%
%%%%%%%%%%%%%%%%%%%%%%%%%%%%      result      %%%%%%%%%%%%%%%%%%%%%%%%%%%
%%%%%%%%%%%%%%%%%%%%%%%%%%%%%%%%%%%%%%%%%%%%%%%%%%%%%%%%%%%%%%%%%%%%%%%%%

\item[] \hskip\algshiftb\vbox{\hrule width\algwidthb height 1pt}
\item[\bf Result.   ] For $Q_n>n_0$
\begin{equation*}
\psi_n\|\psi_n\alpha\|\|\psi_n\beta\|
\leq
\frac{C_0}{Q_{n+1}}.
\end{equation*}

\item[] \hskip\algshifta\vbox{\hrule width\algwidtha height 2pt}
\end{itemize}
\end{itemize}

\begin{rem} As we stated earlier, the purpose of modifying the algorithm for the case $B=0$ is to sharpen our bounds. It will reduce $C_0$ by a factor of 81, while at worst it will increase $n_0$ by up to $\log_\lambda 9$. (For instance, in Example~1 it reduces $C_0$ from $87.46$ to $1.08$ and decreases $n_0$ from $4.85$ to $3.22$.)
\end{rem}

\section{Examples}

\subsection{Example 1: $\theta^3=2$}
In Example~1 from from Chapter~\ref{chap:intro} (Section~\ref{intro:ex1}), we considered the pair $(\theta,\theta^2)$ for $\theta=2^{1/3}$. So we take $\alpha=\theta$, $f(x)=x^3-2$, and $\beta=\frac{0}{1}+\frac{0}{1}\theta+\frac{1}{1}\theta^2$. Therefore we have $A=1$, $B=0$, $r_0=r_1=0$, $r_2=s=1$. (We remark that the discriminant of $K$ is $-108$.)

\begin{itemize}
  \item[]
  \begin{itemize}
%%%%%%%%%%%%%%%%%%%%%%%%%%%%%%%%%%%%%%%%%%%%%%%%%%%%%%%%%%%%%%%%%%%%%%%%%%%%%%%%%%%%%%%%%%%%%%%%%%%%%%%%%%%%%
%%%%%%%%%%%%%%%%%%%%%%%%%%%%%%%%%%%%%%%%%%%       STEP 1       %%%%%%%%%%%%%%%%%%%%%%%%%%%%%%%%%%%%%%%%%%%%%%
%%%%%%%%%%%%%%%%%%%%%%%%%%%%%%%%%%%%%%%%%%%%%%%%%%%%%%%%%%%%%%%%%%%%%%%%%%%%%%%%%%%%%%%%%%%%%%%%%%%%%%%%%%%%%
\item[\bf Step 1. ] We already have $\theta=\alpha=2^{1/3}$, and $f(x)$ is already in the form $x^3-px-q$ (with $p=0$ and $q=2$). Using PARI/GP, we see that $\O_K=\Z[\theta]$, so we put $d=1$.
%%%%%%%%%%%%%%%%%%%%%%%%%%%%%%%%%%%%%%%%%%%%%%%%%%%%%%%%%%%%%%%%%%%%%%%%%%%%%%%%%%%%%%%%%%%%%%%%%%%%%%%%%%%%%
%%%%%%%%%%%%%%%%%%%%%%%%%%%%%%%%%%%%%%%%%%%       STEP 2       %%%%%%%%%%%%%%%%%%%%%%%%%%%%%%%%%%%%%%%%%%%%%%
%%%%%%%%%%%%%%%%%%%%%%%%%%%%%%%%%%%%%%%%%%%%%%%%%%%%%%%%%%%%%%%%%%%%%%%%%%%%%%%%%%%%%%%%%%%%%%%%%%%%%%%%%%%%%
\item[\bf Step 2. ] Using PARI/GP, we find that a fundamental unit is
$
\ep_0=\theta-1\approx 0.2599.
$
Since $0<\ep_0<1$, we put
\begin{equation*}
\lambda=\ep_0^{-1}=1+\theta+\theta^2\approx 3.8473.
\end{equation*}
%%%%%%%%%%%%%%%%%%%%%%%%%%%%%%%%%%%%%%%%%%%%%%%%%%%%%%%%%%%%%%%%%%%%%%%%%%%%%%%%%%%%%%%%%%%%%%%%%%%%%%%%%%%%%
%%%%%%%%%%%%%%%%%%%%%%%%%%%%%%%%%%%%%%%%%%%       STEP 3       %%%%%%%%%%%%%%%%%%%%%%%%%%%%%%%%%%%%%%%%%%%%%%
%%%%%%%%%%%%%%%%%%%%%%%%%%%%%%%%%%%%%%%%%%%%%%%%%%%%%%%%%%%%%%%%%%%%%%%%%%%%%%%%%%%%%%%%%%%%%%%%%%%%%%%%%%%%%
\item[\bf Step 3. ] Define the sequences $\{a_n\}$, $\{b_n\}$, $\{c_n\}$, $\{m_n\}$ by
\begin{equation*}
a_n+b_n\theta+c_n\theta^2=\lambda^n
\end{equation*}
and $m_n=(A^2sd)c_n=c_n$. (We saw the points $({m_n^{1/2}}\langle m_n\theta\rangle,{m_n^{1/2}}\langle m_n\theta^2\rangle)$ in Figures~\ref{fig:LWseqex1long} and~\ref{fig:LWseqex1short}.)
%%%%%%%%%%%%%%%%%%%%%%%%%%%%%%%%%%%%%%%%%%%%%%%%%%%%%%%%%%%%%%%%%%%%%%%%%%%%%%%%%%%%%%%%%%%%%%%%%%%%%%%%%%%%%
%%%%%%%%%%%%%%%%%%%%%%%%%%%%%%%%%%%%%%%%%%%       STEP 4       %%%%%%%%%%%%%%%%%%%%%%%%%%%%%%%%%%%%%%%%%%%%%%
%%%%%%%%%%%%%%%%%%%%%%%%%%%%%%%%%%%%%%%%%%%%%%%%%%%%%%%%%%%%%%%%%%%%%%%%%%%%%%%%%%%%%%%%%%%%%%%%%%%%%%%%%%%%%
\item[\bf Step 4. ] Put
\begin{equation*}
\phi
=
\arctan
\left(
\frac{\sqrt{3}\cdot\theta(1-\theta)}{2-\theta-\theta^2}
\right)
\approx
0.5899.
\end{equation*}
The first few (non-integer) convergents of $\phi/\pi\approx 0.18777$ are:
\begin{equation*}
\frac{1}{5},\ \frac{3}{16},\ \frac{43}{229},\ \frac{1551}{8260},\ \frac{1594}{8489},\ \frac{3145}{16749},\ \frac{4739}{25238},\ \frac{64752}{344843},\ \frac{198995}{1059767},\ \frac{263747}{1404610}
\end{equation*}
%%%%%%%%%%%%%%%%%%%%%%%%%%%%%%%%%%%%%%%%%%%%%%%%%%%%%%%%%%%%%%%%%%%%%%%%%%%%%%%%%%%%%%%%%%%%%%%%%%%%%%%%%%%%%
%%%%%%%%%%%%%%%%%%%%%%%%%%%%%%%%%%%%%%%%%%%       STEP 5       %%%%%%%%%%%%%%%%%%%%%%%%%%%%%%%%%%%%%%%%%%%%%%
%%%%%%%%%%%%%%%%%%%%%%%%%%%%%%%%%%%%%%%%%%%%%%%%%%%%%%%%%%%%%%%%%%%%%%%%%%%%%%%%%%%%%%%%%%%%%%%%%%%%%%%%%%%%%
\item[\bf Step 5. ] We have the following values for the constants:
\begin{center}
\begin{tabular}{c c c c c}
$C_1$ &$=$ & $\sqrt{2}$ &$\approx$  &1.41421\\
$C_2$ &$=$ & $\frac{1}{\theta\sqrt{3}}$ & $\approx$ &0.458243\\
$\widetilde{M_\theta}$ &$=$ &$\frac{\sqrt{3}}{\theta^2}$ &$\approx$ &1.09112\\
$\widetilde{M_\alpha}$ & $=$ & $M_\theta$\\
$\widetilde{M_{\beta_1}}$ & $=$ & $M_\alpha$\\
$\widetilde{M_{\beta_2}}$ &= &$\frac{\pi}{2\theta^2}$ &$\approx$ &0.989540
\end{tabular}
\end{center}
We also have
\begin{align*}
\widetilde{N}
&=
\max
\left\{
\lambda^{1/2},
\frac{4}{\theta},
2\sqrt{2},
\left(\frac{2(1+\sqrt{2})}{\theta\sqrt{2}}\right)^{1/3},
\frac{4\sqrt{3}}{\theta^2},
0,
\frac{8\sqrt{3}}{\theta^2},
\frac{4\sqrt{3}}{\theta^2}
\right\}\\
&\approx
\max\{
1.961,3.175,2.828,1.303,4.364,0,8.729,4.364
\}\\
&= 8.729.
\end{align*}
%%%%%%%%%%%%%%%%%%%%%%%%%%%%%%%%%%%%%%%%%%%%%%%%%%%%%%%%%%%%%%%%%%%%%%%%%%%%%%%%%%%%%%%%%%%%%%%%%%%%%%%%%%%%%
%%%%%%%%%%%%%%%%%%%%%%%%%%%%%%%%%%%%%%%%%%%       STEP 6       %%%%%%%%%%%%%%%%%%%%%%%%%%%%%%%%%%%%%%%%%%%%%%
%%%%%%%%%%%%%%%%%%%%%%%%%%%%%%%%%%%%%%%%%%%%%%%%%%%%%%%%%%%%%%%%%%%%%%%%%%%%%%%%%%%%%%%%%%%%%%%%%%%%%%%%%%%%%
\item[\bf Step 6. ] We put
\begin{center}
\begin{tabular}{c c c c c}
$C_0$ &$=$ & $\frac{\sqrt{3}\cdot \pi}{4\theta}$ &$\approx$  & 1.07971\\
$n_0$ &$=$ & $2\log_\lambda\frac{8\sqrt{3}}{\theta^2}$ & $\approx$ &3.216
\end{tabular}
\end{center}
and we define $\{\psi_n\}$ by $\psi_n=m_{Q_n}$. The first several terms are:
\begin{gather*}
m_5,\ m_{16},\ m_{229},\ m_{8260},\ m_{8489},\ m_{16749},\ m_{25238},\ m_{344843},\ m_{1059767},\ c_{1404610}.
\end{gather*}
%%%%%%%%%%%%%%%%%%%%%%%%%%%%%%%%%%%%%%%%%%%%%%%%%%%%%%%%%%%%%%%%%%%%%%%%%%%%%%%%%%%%%%%%%%%%%%%%%%%%%%%%%%%%%
%%%%%%%%%%%%%%%%%%%%%%%%%%%%%%%%%%%%%%%%%%%       RESULT       %%%%%%%%%%%%%%%%%%%%%%%%%%%%%%%%%%%%%%%%%%%%%%
%%%%%%%%%%%%%%%%%%%%%%%%%%%%%%%%%%%%%%%%%%%%%%%%%%%%%%%%%%%%%%%%%%%%%%%%%%%%%%%%%%%%%%%%%%%%%%%%%%%%%%%%%%%%%
\item[\bf Result. ] For $Q_n\geq 4>n_0$,
\begin{equation*}
\psi_n\|\psi_n\theta\|\|\psi_n\theta\|
<
\frac{C_0}{Q_{n+1}}
\approx
\frac{1.07971}{Q_{n+1}}.
\end{equation*}
In Table~\ref{tab:LWex0.0.2} we list the first several values of $\frac{C_0}{Q_{n+1}}$ and compare it to the actual values of $\psi_n\|\psi_n\theta\|\|\psi_n\theta^2\|$ (Even with \verb|$MaxExtraPrecision| set to $10^{250,000,000}$, Mathematica~8.0 was unable to calculate this for $n=12$.)%, reaching an overflow error after 98 minutes.)
\end{itemize}
\end{itemize}

\begin{table}[htb]
\centering
\begin{tabular}{p{1.9 cm} p{5.3 cm} p{2.9 cm} p{2.8 cm}}\toprule
$Q_n$ & $\psi_n$ $(=m_{Q_n})$ & $\psi_n\|\psi_n\theta\|\|\psi_n\theta^2\|$ & $C_0/Q_{n+1}$\\
\midrule
5  &  177  &  $0.0320119$  &  $0.0674819$\\
16  &  483870160  &  $0.00265565$  &  $0.00471489$\\
229  &  $210 \dots 617$     (134 digits)  &  $0.000071868$  &  $0.000130716$\\
8260  &  $539 \dots 526$     (4833 digits)  &  $0.0000473553$  &  $0.000127189$\\
8489  &  $540 \dots 581$     (4967 digits)  &  $0.0000245330$  &  $0.0000644642$\\
16749  &  $138 \dots 317$     (9801 digits)  &  $0.0000228153$  &  $0.0000427811$\\
25238  &  $357 \dots 401$     (14768 digits)  &  $1.72115*10^{-6}$  &  $3.13102*10^{-6}$\\
344843  &  $141 \dots 112$     (201788 digits)  &  $4.38682*10^{-7}$  &  $1.01882*10^{-6}$\\
1059767  &  $109 \dots 020$     (620132 digits)  &  $4.05112*10^{-7}$  &  $7.6869*10^{-7}$\\
1404610  &  $740 \dots 203$     (821919 digits)  &  $3.35689*10^{-8}$  &  $6.02682*10^{-8}$\\
17915087  &  $405 \dots 320$ (10483166) digits  & $2.28507*10^{-9}$ & $4.2809*10^{-9}$ \\
252215828 & $743 \dots 566$ (147586247) digits  & & $3.99699*10^{-9}$\\
% decimal version
%5  &  177  &  0.0320119  &  0.0674819\\
%16  &  483870160  &  0.00265565  &  0.00471489\\
%229  &  $210 \dots 617$     (134 digits)  &  0.000071868 &  0.000130716\\
%8260  &  $539 \dots 526$    (4833 digits)  &  0.0000473553 &  0.000127189\\
%8489  &  $540 \dots 581$     (4967) digits  &  0.0000245330 &  0.0000644642\\
%16749  &  $138 \dots 317$     (9801) digits  &  0.0000228153 &  0.0000427811\\
%25238  &  $357 \dots 401$     (14768) digits  &  0.00000172115  &  0.00000313102\\
%344843  &  $141 \dots 112$   (201788) digits  &  0.000000438682  &  0.00000101882\\
%1059767  &  $109 \dots 020$  (620132) digits  &  0.000000405112  &  0.00000076869\\
%1404610  &  $740 \dots 203$  (821919) digits  &  0.000000033569  &  0.00000006027\\
%17915087  &  $405 \dots 320$ (10483166) digits & & 0.000000004281 \\
%252215828 & $743 \dots 566$ (147586247) digits  & & 0.000000003997\\
\bottomrule
\end{tabular}
\caption{$\psi_n\|\psi_n\theta\|\|\psi_n\theta^2\|$ and $C_0/Q_{n+1}$ for $\theta=\sqrt[3]{2}$}
\label{tab:LWex0.0.2}
\end{table}

%General::ovfl: Overflow occurred in computation. >>
%General::stop: Further output of General::ovfl will be suppressed during this calculation. >>
%$MaxPrecision::prec: In increasing internal precision while attempting to evaluate -936637900538976803414378606903389654740822201559787662653761795907483397218356693162253936087013757662847729726549415299201460158670599951633707596831419353485262149966836578834818<<147585887>>437625807112559292327423182794250283383632381218274954131903648998331326116471343036422175671551433544175728502043007358645626908752476040017491774991697939330625307678505078289138<<1>><<1>>, the limit $MaxPrecision = \[Infinity] was reached. Increasing the value of $MaxPrecision may help resolve the uncertainty. >>
% 5875 sec = 98 minutes

\newpage

%%%%%%%%%%%%%%%%%%%%%%%%%%%%%%%%%%%%%%%%%%%%%%%%%%%%%%%%%%%%%%%%%%%%%%%%%%%%%%%%%%%%%%%%%%%%%%%%%%%%%%%%%%%%%%%%%%%%    #
%%%%%%%%%%%%%%%%%%%%%%%%%%%%%%%%%%%%%%%%%%%%%%%      EXAMPLE 2                                  %%%%%%%%%%%%%%%%%%%%    # %%%%%%%%%%%%%%%%%%%%%%%%%%%%%%%%%%%%%%%%%%%%%%%                                                 %%%%%%%%%%%%%%%%%%%%
%%%%%%%%%%%%%%%%%%%%%%%%%%%%%%%%%%%%%%%%%%%%%%%%%%%%%%%%%%%%%%%%%%%%%%%%%%%%%%%%%%%%%%%%%%%%%%%%%%%%%%%%%%%%%%%%%%%%

\subsection{Example 2: $\theta^3=\theta+1$} %-------------------------------------------------------------------------------------
Let $\theta\approx 1.32472$ be the real root of $f(x)=x^3-x-1$, and consider the pair $(\theta,\theta^2)$. (We chose this example to have a unit $\lambda$ close to 1, so that we can calculate the Littlewood product $\psi_n\|\psi_n\theta\|\|\psi_n\theta^2\|$ for larger $n$ than in other examples.) We take $\alpha=\theta$ and $\beta=\theta^2$, so (as in Example~1) put $A=1$, $B=0$, $r_0=r_1=0$, $r_2=s=1$. The discriminant of $K$ is $-23$.

\begin{itemize}
  \item[]
\begin{itemize}
%%%%%%%%%%%%%%%%%%%%%%%%%%%%%%%%%%%%%%%%%%%%%%%%%%%%%%%%%%%%%%%%%%%%%%%%%%%%%%%%%%%%%%%%%%%%%%%%%%%%%%%%%%%%%
%%%%%%%%%%%%%%%%%%%%%%%%%%%%%%%%%%%%%%%%%%%       STEP 1       %%%%%%%%%%%%%%%%%%%%%%%%%%%%%%%%%%%%%%%%%%%%%%
%%%%%%%%%%%%%%%%%%%%%%%%%%%%%%%%%%%%%%%%%%%%%%%%%%%%%%%%%%%%%%%%%%%%%%%%%%%%%%%%%%%%%%%%%%%%%%%%%%%%%%%%%%%%%
\item[\bf Step 1. ] We already have $\theta=\alpha$, and $f(x)$ is already in the form $x^3-px-q$ (with $p=1$ and $q=1$). Using PARI/GP, we see that $\O_K=\Z[\theta]$, so we put $d=1$.
%%%%%%%%%%%%%%%%%%%%%%%%%%%%%%%%%%%%%%%%%%%%%%%%%%%%%%%%%%%%%%%%%%%%%%%%%%%%%%%%%%%%%%%%%%%%%%%%%%%%%%%%%%%%%
%%%%%%%%%%%%%%%%%%%%%%%%%%%%%%%%%%%%%%%%%%%       STEP 2       %%%%%%%%%%%%%%%%%%%%%%%%%%%%%%%%%%%%%%%%%%%%%%
%%%%%%%%%%%%%%%%%%%%%%%%%%%%%%%%%%%%%%%%%%%%%%%%%%%%%%%%%%%%%%%%%%%%%%%%%%%%%%%%%%%%%%%%%%%%%%%%%%%%%%%%%%%%%
\item[\bf Step 2. ] Now $\theta$ is already a unit, since the constant term of $f(x)$ is $-1$. So since $\theta>1$, we will put
\begin{equation*}
\lambda=\theta\approx 1.32472.
\end{equation*}
%%%%%%%%%%%%%%%%%%%%%%%%%%%%%%%%%%%%%%%%%%%%%%%%%%%%%%%%%%%%%%%%%%%%%%%%%%%%%%%%%%%%%%%%%%%%%%%%%%%%%%%%%%%%%
%%%%%%%%%%%%%%%%%%%%%%%%%%%%%%%%%%%%%%%%%%%       STEP 3       %%%%%%%%%%%%%%%%%%%%%%%%%%%%%%%%%%%%%%%%%%%%%%
%%%%%%%%%%%%%%%%%%%%%%%%%%%%%%%%%%%%%%%%%%%%%%%%%%%%%%%%%%%%%%%%%%%%%%%%%%%%%%%%%%%%%%%%%%%%%%%%%%%%%%%%%%%%%
\item[\bf Step 3. ] Define the sequences $\{a_n\}$, $\{b_n\}$, $\{c_n\}$, $\{m_n\}$ by
\begin{equation*}
a_n+b_n\theta+c_n\theta^2=\lambda^n
\end{equation*}
and $m_n=(A^2sd)c_n=c_n$. (We plot the points $({m_n^{1/2}}\langle m_n\theta\rangle,{m_n^{1/2}}\langle m_n\theta^2\rangle)$ for $n=1$ to~200 in Figure \ref{fig:LWseqex0.1.1}.)
%%%%%%%%%%%%%%%%%%%%%%%%%%%%%%%%%%%%%%%%%%%%%%%%%%%%%%%%%%%%%%%%%%%%%%%%%%%%%%%%%%%%%%%%%%%%%%%%%%%%%%%%%%%%%
%%%%%%%%%%%%%%%%%%%%%%%%%%%%%%%%%%%%%%%%%%%       STEP 4       %%%%%%%%%%%%%%%%%%%%%%%%%%%%%%%%%%%%%%%%%%%%%%
%%%%%%%%%%%%%%%%%%%%%%%%%%%%%%%%%%%%%%%%%%%%%%%%%%%%%%%%%%%%%%%%%%%%%%%%%%%%%%%%%%%%%%%%%%%%%%%%%%%%%%%%%%%%%
\item[\bf Step 4. ] Put
\begin{equation*}
\phi
=
\arctan
\left(
\frac{\sqrt{3\theta^2-4}}{-\theta}
\right)
\approx
-0.703858.
\end{equation*}
The first few convergents of $\phi/\pi\approx -0.224045$ are:
\begin{equation*}
-\frac{1}{4}, -\frac{2}{9}, -\frac{13}{58}, -\frac{41}{183}, -\frac{1038}{4633}, -\frac{1079}{4816}, -\frac{2117}{9449}, -\frac{15898}{70959}, -\frac{18015}{80408}, -\frac{63969148}{285519359}
\end{equation*}
%%%%%%%%%%%%%%%%%%%%%%%%%%%%%%%%%%%%%%%%%%%%%%%%%%%%%%%%%%%%%%%%%%%%%%%%%%%%%%%%%%%%%%%%%%%%%%%%%%%%%%%%%%%%%
%%%%%%%%%%%%%%%%%%%%%%%%%%%%%%%%%%%%%%%%%%%       STEP 5       %%%%%%%%%%%%%%%%%%%%%%%%%%%%%%%%%%%%%%%%%%%%%%
%%%%%%%%%%%%%%%%%%%%%%%%%%%%%%%%%%%%%%%%%%%%%%%%%%%%%%%%%%%%%%%%%%%%%%%%%%%%%%%%%%%%%%%%%%%%%%%%%%%%%%%%%%%%%
\item[\bf Step 5. ] We have

\begin{tabular}[t]{rcl rcl rcl}
$C_1$  & $\approx$ &1.66593
\qquad\qquad
&$C_2$  & $\approx$ &0.484238
\qquad\qquad
&$\widetilde{M_\theta}$  & $\approx$ &1.29181\\
$\widetilde{M_\alpha}$  & $\approx$ &1.29181
&$\widetilde{M_{\beta_1}}$  & $\approx$ &1.29181
&$\widetilde{M_{\beta_2}}$  & $\approx$ & 2.02917
\end{tabular}

and
\begin{align*}
\widetilde{N}
&\approx
\max\{ 1.151, 3.557, 3.332, 1.327, 5.167, 0, 10.334, 5.167 \}
=
10.334
\end{align*}
%%%%%%%%%%%%%%%%%%%%%%%%%%%%%%%%%%%%%%%%%%%%%%%%%%%%%%%%%%%%%%%%%%%%%%%%%%%%%%%%%%%%%%%%%%%%%%%%%%%%%%%%%%%%%
%%%%%%%%%%%%%%%%%%%%%%%%%%%%%%%%%%%%%%%%%%%       STEP 6       %%%%%%%%%%%%%%%%%%%%%%%%%%%%%%%%%%%%%%%%%%%%%%
%%%%%%%%%%%%%%%%%%%%%%%%%%%%%%%%%%%%%%%%%%%%%%%%%%%%%%%%%%%%%%%%%%%%%%%%%%%%%%%%%%%%%%%%%%%%%%%%%%%%%%%%%%%%%
\item[\bf Step 6. ] We put
\begin{align*}
C_0  & \approx 2.6213\\
n_0   & \approx 16.6109
\end{align*}
and we define $\{\psi_n\}$ by $\psi_n=m_{Q_n}$. The first several terms are:
\begin{gather*}
m_{4},\  m_{9},\  m_{58},\  m_{183},\  m_{4633},\  m_{4816},\  m_{9449},\  m_{70959},\  m_{80408},\  m_{285519359}
\end{gather*}
%%%%%%%%%%%%%%%%%%%%%%%%%%%%%%%%%%%%%%%%%%%%%%%%%%%%%%%%%%%%%%%%%%%%%%%%%%%%%%%%%%%%%%%%%%%%%%%%%%%%%%%%%%%%%
%%%%%%%%%%%%%%%%%%%%%%%%%%%%%%%%%%%%%%%%%%%       RESULT       %%%%%%%%%%%%%%%%%%%%%%%%%%%%%%%%%%%%%%%%%%%%%%
%%%%%%%%%%%%%%%%%%%%%%%%%%%%%%%%%%%%%%%%%%%%%%%%%%%%%%%%%%%%%%%%%%%%%%%%%%%%%%%%%%%%%%%%%%%%%%%%%%%%%%%%%%%%%
\item[\bf Result. ] For $Q_n\geq 17>n_0$,
\begin{equation*}
\psi_n\|\psi_n\theta\|\|\psi_n\theta\|
<
\frac{C_0}{Q_{n+1}}
\approx
\frac{2.6213}{Q_{n+1}}.
\end{equation*}
In Table~\ref{tab:LWseqex0.1.1} we compute the bound $\frac{C_0}{Q_{n+1}}$ and compare it to the actual values of $\psi_n\|\psi_n\theta\|\|\psi_n\theta^2\|$  for the first few $n$.
\end{itemize}
\end{itemize}

\begin{figure}[h!tb]
    \centering
    %\fbox{
        \includegraphics[width=\figwidth\textwidth
%        ,bb=76 384 422 716   % get these from 5th line in .eps file
%        ,viewport=0 0 595 842
%       ,trim=60 420 185 10
        ]
        {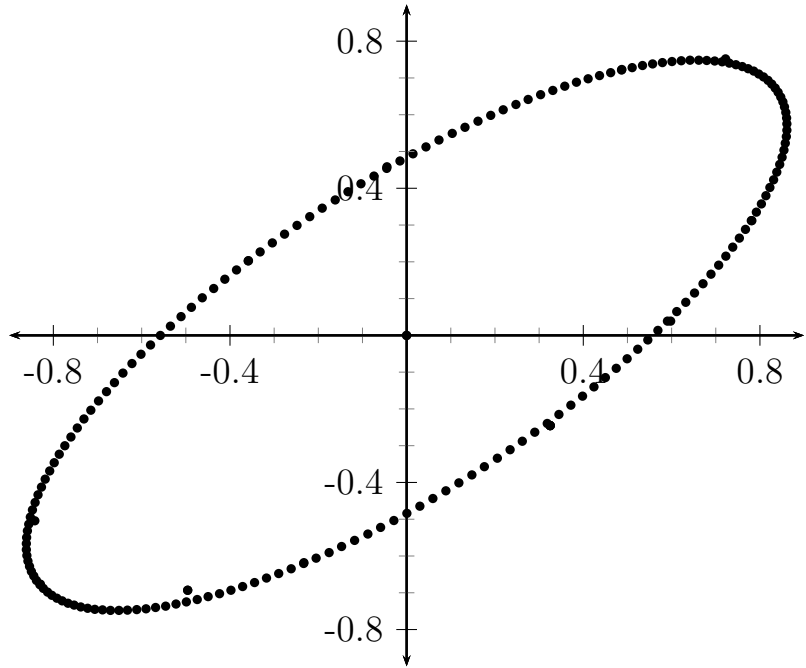}
    %}
    \caption{
    $({m_n^{1/2}}\langle m_n\theta\rangle,{m_n^{1/2}}\langle m_n\theta^2\rangle)$ for $\theta^3=\theta+1$, $n\leq 200$.
    }
    \label{fig:LWseqex0.1.1}
\end{figure}

\begin{table}[h!tb]
\centering
\begin{tabular}{p{1.9 cm} p{5.1 cm} p{2.9 cm} p{2.8 cm}}\toprule
$Q_n$ & $\psi_n$ $(=m_{Q_n})$ & $\psi_n\|\psi_n\theta\|\|\psi_n\theta^2\|$ & $C_0/Q_{n+1}$\\
\midrule
4  &  1  &  0.079596  &  0.291255\\
9  &  3  &  0.0205192  &  0.0451948\\
58  &  2839729  &  0.0072113  &  0.014324\\
183  &  5232446865180756766896  &  0.000276774  &  0.000565788\\
4633  &  $147 \dots 145$     (566 digits)  &  0.000146703  &  0.000544289\\
4816  &  $328 \dots 896$     (588 digits)  &  0.000130300  &  0.000277415\\
9449  &  $206 \dots 593$     (1154 digits)  &  0.0000162949  &  0.000036941\\
70959  &  $133 \dots 906$     (8666 digits)  &  0.0000162888  &  0.000032599\\
80408  &  $117 \dots 377$     (9820 digits)  &  $4.58789*10^{-9}$  &  $9.18080*10^{-9}$\\
285519359  &  $891 \dots 001$  (34868601 digits)  & $2.53019*10^{-9}$ &  $9.17822*10^{-9}$  \\ % took 5 minutes
285599767  &  $447 \dots 577$     (34878421 digits)  &  $2.05770*10^{-9}$ &  $4.58976*10^{-9}$  \\ % took 5 minutes
571119126  &  $169 \dots 901$     (69747023 digits)  & $4.72492*10^{-10}$ &  $1.01993*10^{-9}$  \\ % took 44 minutes
%80408  &  $117 \dots 377$     (9820 digits)  &  0.000000004588  &  0.000000009181\\
%285519359  &  $891 \dots 001$     (34868601 digits)  & &             0.000000009178  \\
%285599767  &  $447 \dots 577$     (34878421 digits)  & &             0.000000004589  \\
%571119126  &  $169 \dots 901$     (69747023 digits)  & &             0.000000001019  \\
\bottomrule
\end{tabular}
\caption{$\psi_n\|\psi_n\theta\|\|\psi_n\theta^2\|$ and $C_0/Q_{n+1}$, where $\theta^3=\theta+1$}
\label{tab:LWseqex0.1.1}
\end{table}

\newpage

%%%%%%%%%%%%%%%%%%%%%%%%%%%%%%%%%%%%%%%%%%%%%%%%%%%%%%%%%%%%%%%%%%%%%%%%%%%%%%%%%%%%%%%%%%%%%%%%%%%%%%%%%%%%%%%%%%%%    #
%%%%%%%%%%%%%%%%%%%%%%%%%%%%%%%%%%%%%%%%%%%%%%%      EXAMPLE 3A                                 %%%%%%%%%%%%%%%%%%%%    # %%%%%%%%%%%%%%%%%%%%%%%%%%%%%%%%%%%%%%%%%%%%%%%                                                 %%%%%%%%%%%%%%%%%%%%    #
%%%%%%%%%%%%%%%%%%%%%%%%%%%%%%%%%%%%%%%%%%%%%%%%%%%%%%%%%%%%%%%%%%%%%%%%%%%%%%%%%%%%%%%%%%%%%%%%%%%%%%%%%%%%%%%%%%%%

\subsection{Example 3: $\alpha^3=7\alpha^2+2$}
Consider the pair $(\alpha,\alpha^{-1}$), where $\alpha$ is the real root of $f(x)=x^3-7x^2-2$. This field has discriminant $-2852$. Since
\begin{gather*}
\alpha^3-7\alpha^2=2,\\
\alpha
\left(
\frac{\alpha^2-7\alpha}{2}
\right)
=1,
\end{gather*}
we can write $\alpha^{-1}$ in the basis $1,\alpha,\alpha^2$ as
\begin{equation*}
\alpha^{-1}=-\frac{7}{2}\alpha+\frac{1}{2}\alpha^2
\end{equation*}
(i.e., we have $r_0=0$, $r_1=-7$, $r_2=1$, $s=2$).

\begin{itemize}
  \item[]
\begin{itemize}
%%%%%%%%%%%%%%%%%%%%%%%%%%%%%%%%%%%%%%%%%%%%%%%%%%%%%%%%%%%%%%%%%%%%%%%%%%%%%%%%%%%%%%%%%%%%%%%%%%%%%%%%%%%%%
%%%%%%%%%%%%%%%%%%%%%%%%%%%%%%%%%%%%%%%%%%%       STEP 1       %%%%%%%%%%%%%%%%%%%%%%%%%%%%%%%%%%%%%%%%%%%%%%
%%%%%%%%%%%%%%%%%%%%%%%%%%%%%%%%%%%%%%%%%%%%%%%%%%%%%%%%%%%%%%%%%%%%%%%%%%%%%%%%%%%%%%%%%%%%%%%%%%%%%%%%%%%%%
\item[\bf Step 1. ] We have $g(x)=x^3-147x-740$ (in particular, $p=147$), and we put $\theta=3\alpha-7\approx 14.121$. Using PARI/GP, we find that $\O_k\subset\frac{1}{9}\Z[\theta]$ (so we take $d=9$).
%%%%%%%%%%%%%%%%%%%%%%%%%%%%%%%%%%%%%%%%%%%%%%%%%%%%%%%%%%%%%%%%%%%%%%%%%%%%%%%%%%%%%%%%%%%%%%%%%%%%%%%%%%%%%
%%%%%%%%%%%%%%%%%%%%%%%%%%%%%%%%%%%%%%%%%%%       STEP 2       %%%%%%%%%%%%%%%%%%%%%%%%%%%%%%%%%%%%%%%%%%%%%%
%%%%%%%%%%%%%%%%%%%%%%%%%%%%%%%%%%%%%%%%%%%%%%%%%%%%%%%%%%%%%%%%%%%%%%%%%%%%%%%%%%%%%%%%%%%%%%%%%%%%%%%%%%%%%
\item[\bf Step 2. ] Using PARI/GP, we find that a fundamental unit is
\begin{equation*}
\ep_0=-\frac{4309}{9}- \frac{62}{9}\theta + \frac{26}{9}\theta^2\approx-0.0000108758.
\end{equation*}
Then
\begin{equation*}
\lambda
=
\max\{\pm\ep_0,\pm\ep_0^{-1}\}
=
-\ep_0^{-1}
=
\frac{96109}{9}
+
\frac{25898}{9}\theta
+
\frac{1834}{9}\theta^2
\approx
91946.994
\end{equation*}
%%%%%%%%%%%%%%%%%%%%%%%%%%%%%%%%%%%%%%%%%%%%%%%%%%%%%%%%%%%%%%%%%%%%%%%%%%%%%%%%%%%%%%%%%%%%%%%%%%%%%%%%%%%%%
%%%%%%%%%%%%%%%%%%%%%%%%%%%%%%%%%%%%%%%%%%%       STEP 3       %%%%%%%%%%%%%%%%%%%%%%%%%%%%%%%%%%%%%%%%%%%%%%
%%%%%%%%%%%%%%%%%%%%%%%%%%%%%%%%%%%%%%%%%%%%%%%%%%%%%%%%%%%%%%%%%%%%%%%%%%%%%%%%%%%%%%%%%%%%%%%%%%%%%%%%%%%%%
\item[\bf Step 3. ] Define the sequences $\{a_n\}$, $\{b_n\}$, $\{c_n\}$ as before, and define $\{m_n\}$ by
 $m_n=(9sd)c_n=162 c_n$. (We plot the points $({m_n^{1/2}}\langle m_n\theta\rangle,{m_n^{1/2}}\langle m_n\theta^2\rangle)$ for $n=1$ to~100 in Figure~\ref{fig:LWseqex7.0.2}.)
%%%%%%%%%%%%%%%%%%%%%%%%%%%%%%%%%%%%%%%%%%%%%%%%%%%%%%%%%%%%%%%%%%%%%%%%%%%%%%%%%%%%%%%%%%%%%%%%%%%%%%%%%%%%%
%%%%%%%%%%%%%%%%%%%%%%%%%%%%%%%%%%%%%%%%%%%       STEP 4       %%%%%%%%%%%%%%%%%%%%%%%%%%%%%%%%%%%%%%%%%%%%%%
%%%%%%%%%%%%%%%%%%%%%%%%%%%%%%%%%%%%%%%%%%%%%%%%%%%%%%%%%%%%%%%%%%%%%%%%%%%%%%%%%%%%%%%%%%%%%%%%%%%%%%%%%%%%%
\item[\bf Step 4. ] Put
\begin{equation*}
\phi
=
\arctan\left(
\frac{\sqrt{3\theta^2-588}\cdot(25898-1834\cdot\theta)}
{731414-25898\cdot\theta-1834\cdot\theta^2}
\right)
 \approx -0.253982
\end{equation*}
The first few convergents of $\phi/\pi\approx -0.0808448$ are:
\begin{equation*}
-\frac{1}{12}, -\frac{2}{25}, -\frac{3}{37}, -\frac{8}{99}, -\frac{19}{235}, -\frac{46}{569}, -\frac{111}{1373}, -\frac{1378}{17045}, -\frac{1489}{18418}, -\frac{7334}{90717}.
\end{equation*}
%%%%%%%%%%%%%%%%%%%%%%%%%%%%%%%%%%%%%%%%%%%%%%%%%%%%%%%%%%%%%%%%%%%%%%%%%%%%%%%%%%%%%%%%%%%%%%%%%%%%%%%%%%%%%
%%%%%%%%%%%%%%%%%%%%%%%%%%%%%%%%%%%%%%%%%%%       STEP 5       %%%%%%%%%%%%%%%%%%%%%%%%%%%%%%%%%%%%%%%%%%%%%%
%%%%%%%%%%%%%%%%%%%%%%%%%%%%%%%%%%%%%%%%%%%%%%%%%%%%%%%%%%%%%%%%%%%%%%%%%%%%%%%%%%%%%%%%%%%%%%%%%%%%%%%%%%%%%
\item[\bf Step 5. ] We have

\begin{tabular}[t]{rcl rcl rcl}
$C_1$  & $\approx$ & 6.24920
\qquad\qquad
&$C_2$  & $\approx$ & 0.141231
\qquad\qquad
&$\widetilde{M_\theta}$  & $\approx$ & 11.9149\\
$\widetilde{M_\alpha}$  & $\approx$ & 536.169
&$\widetilde{M_{\beta_1}}$  & $\approx$ & 12132.1
&$\widetilde{M_{\beta_2}}$  & $\approx$ & 47.7138
\end{tabular}

and
\begin{align*}
\widetilde{N}
&\approx
%\max\{ 303.228, 11.2654, 112.486, 0.880195, 142.978, 1334.46, 10008.5, 1429.78, 1429.78 \}
%=
10008.5
\end{align*}
%%%%%%%%%%%%%%%%%%%%%%%%%%%%%%%%%%%%%%%%%%%%%%%%%%%%%%%%%%%%%%%%%%%%%%%%%%%%%%%%%%%%%%%%%%%%%%%%%%%%%%%%%%%%%
%%%%%%%%%%%%%%%%%%%%%%%%%%%%%%%%%%%%%%%%%%%       STEP 6       %%%%%%%%%%%%%%%%%%%%%%%%%%%%%%%%%%%%%%%%%%%%%%
%%%%%%%%%%%%%%%%%%%%%%%%%%%%%%%%%%%%%%%%%%%%%%%%%%%%%%%%%%%%%%%%%%%%%%%%%%%%%%%%%%%%%%%%%%%%%%%%%%%%%%%%%%%%%
\item[\bf Step 6. ] We have
\begin{align*}
C_0  & \approx 578869\\
n_0   & \approx 1.6119
\end{align*}
and we define $\{\psi_n\}$ by $\psi_n=m_{Q_n}$. The first several terms are:
\begin{gather*}
m_{12},\  m_{25},\  m_{37},\  m_{99},\  m_{235},\  m_{569},\  m_{1373},\  m_{17045},\  m_{18418},\  m_{90717},\  m_{381286}.
\end{gather*}
%%%%%%%%%%%%%%%%%%%%%%%%%%%%%%%%%%%%%%%%%%%%%%%%%%%%%%%%%%%%%%%%%%%%%%%%%%%%%%%%%%%%%%%%%%%%%%%%%%%%%%%%%%%%%
%%%%%%%%%%%%%%%%%%%%%%%%%%%%%%%%%%%%%%%%%%%       RESULT       %%%%%%%%%%%%%%%%%%%%%%%%%%%%%%%%%%%%%%%%%%%%%%
%%%%%%%%%%%%%%%%%%%%%%%%%%%%%%%%%%%%%%%%%%%%%%%%%%%%%%%%%%%%%%%%%%%%%%%%%%%%%%%%%%%%%%%%%%%%%%%%%%%%%%%%%%%%%
\item[\bf Result. ] For $Q_n\geq 2>n_0$,
\begin{equation*}
\psi_n\|\psi_n\alpha\|\|\psi_n\alpha^{-1}\|
<
\frac{C_0}{Q_{n+1}}
\approx
\frac{578869}{Q_{n+1}}.
\end{equation*}
In Table~\ref{tab:LWseqex7.0.2}
we compute the bound $\frac{C_0}{Q_{n+1}}$ and compare it to the actual values of $\psi_n\|\psi_n\theta\|\|\psi_n\theta^2\|$  for the first few $n$.
\end{itemize}
\end{itemize}

\begin{figure}[h!tb]
    \centering
    %\fbox{
        \includegraphics[width=\figwidth\textwidth
%        ,bb=76 384 422 716   % get these from 5th line in .eps file
%        ,viewport=0 0 595 842
%       ,trim=60 420 185 10
        ]
        {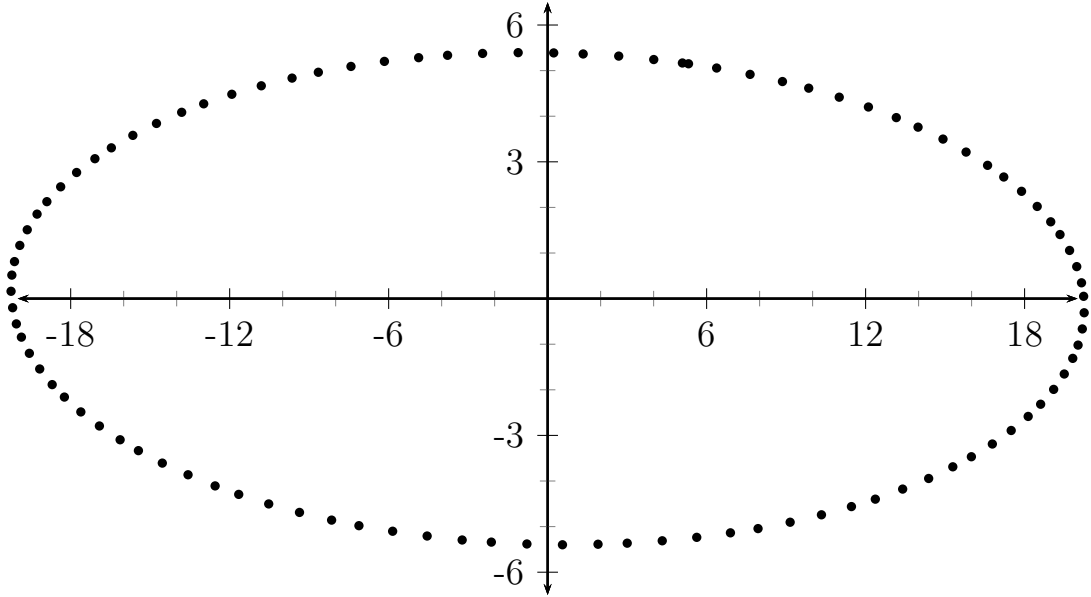}
    %}
    \caption{
    $(m_n^{1/2}\langle m_n\alpha\rangle,m_n^{1/2}\langle m_n\alpha^{-1}\rangle)$ for $\alpha^3=7\alpha^2+2$, $n\leq 100$.
    }
    \label{fig:LWseqex7.0.2}
\end{figure}

\begin{table}[h!tb]
\centering
\begin{tabular}{p{1.9 cm} p{5.1 cm} p{3.2 cm} p{2.8 cm}}\toprule
$Q_n$ & $\psi_n$ $(=m_{Q_n})$ & $\psi_n\|\psi_n\alpha\|\|\psi_n\alpha^{-1}\|$ & $C_0/Q_{n+1}$\\
\midrule
12  &  $131 \dots 120$     (60 digits)  &  10.2213  &  23154.8\\
25  &  $440 \dots 692$     (124 digits)  &  7.2066  &  15645.1\\
37  &  $160 \dots 332$     (184 digits)  &  3.00062  &  5847.17\\
99  &  $881 \dots 988$     (491 digits)  &  1.24736  &  2463.27\\
235  &  $969 \dots 892$     (1166 digits)  &  0.50306  &  1017.35\\
569  &  $642 \dots 132$     (2824 digits)  &  0.242022  &  421.609\\
1373  &  $310 \dots 052$     (6815 digits)  &  0.0188969  &  33.9612\\
17045  &  $112 \dots 812$     (84604 digits)  &  0.0152817  &  31.4295\\
18418  &  $975 \dots 496$     (91418 digits)  &  0.00361497  &  6.38105\\
90717  &  $615 \dots 348$     (450277 digits)  &  0.00082197  &  1.5182\\
381286  &  $939 \dots 264$     (1892526 digits)  & $0.000327067$  & 0.678398\\
853289  &  $379 \dots 748$     (4235331 digits)  & $0.000167840$ & 0.468882\\
1234575  &  $894 \dots 812$     (6127858 digits)  & $0.000159226$ & 0.277254\\
2087864  &  $851 \dots 344$     (10363190 digits)  & $8.6140*10^{-6}$ & 0.0149131\\
\bottomrule
\end{tabular}
\caption{$\psi_n\|\psi_n\alpha\|\|\psi_n\alpha^{-1}\|$ and $C_0/Q_{n+1}$, where $\alpha^3=7\alpha^2+2$}
\label{tab:LWseqex7.0.2}
\end{table}

\newpage

%%%%%%%%%%%%%%%%%%%%%%%%%%%%%%%%%%%%%%%%%%%%%%%%%%%%%%%%%%%%%%%%%%%%%%%%%%%%%%%%%%%%%%%%%%%%%%%%%%%%%%%%%%%%%%%%%%%%    #
%%%%%%%%%%%%%%%%%%%%%%%%%%%%%%%%%%%%%%%%%%%%%%%      EXAMPLE 3B                                 %%%%%%%%%%%%%%%%%%%%    # %%%%%%%%%%%%%%%%%%%%%%%%%%%%%%%%%%%%%%%%%%%%%%%                                                 %%%%%%%%%%%%%%%%%%%%    #
%%%%%%%%%%%%%%%%%%%%%%%%%%%%%%%%%%%%%%%%%%%%%%%%%%%%%%%%%%%%%%%%%%%%%%%%%%%%%%%%%%%%%%%%%%%%%%%%%%%%%%%%%%%%%%%%%%%%

\subsection{Example 4: $\theta^2=147\cdot\theta+740$}
Consider the pair $(\theta,\theta^2)$, where $\theta^3=147\cdot\theta+740$ as in Example~3. (The field is still the same as in Example~3, so the discriminant is still $-2852$.) We put $A=1$ and $B=0$, and (since we're taking $\beta=\theta^2$) $r_0=r_1=0$, $r_2=s=1$.

\begin{itemize}
  \item[]
\begin{itemize}
%%%%%%%%%%%%%%%%%%%%%%%%%%%%%%%%%%%%%%%%%%%%%%%%%%%%%%%%%%%%%%%%%%%%%%%%%%%%%%%%%%%%%%%%%%%%%%%%%%%%%%%%%%%%%
%%%%%%%%%%%%%%%%%%%%%%%%%%%%%%%%%%%%%%%%%%%       STEP 1       %%%%%%%%%%%%%%%%%%%%%%%%%%%%%%%%%%%%%%%%%%%%%%
%%%%%%%%%%%%%%%%%%%%%%%%%%%%%%%%%%%%%%%%%%%%%%%%%%%%%%%%%%%%%%%%%%%%%%%%%%%%%%%%%%%%%%%%%%%%%%%%%%%%%%%%%%%%%
\item[\bf Step 1. ] We already have $\theta\approx 14.121$, and $f(x)$ is already in the form $x^3-px-q$ (with $p=147$ and $q=740$). We already found in Example~3 that we can take $d=9$.
%%%%%%%%%%%%%%%%%%%%%%%%%%%%%%%%%%%%%%%%%%%%%%%%%%%%%%%%%%%%%%%%%%%%%%%%%%%%%%%%%%%%%%%%%%%%%%%%%%%%%%%%%%%%%
%%%%%%%%%%%%%%%%%%%%%%%%%%%%%%%%%%%%%%%%%%%       STEP 2       %%%%%%%%%%%%%%%%%%%%%%%%%%%%%%%%%%%%%%%%%%%%%%
%%%%%%%%%%%%%%%%%%%%%%%%%%%%%%%%%%%%%%%%%%%%%%%%%%%%%%%%%%%%%%%%%%%%%%%%%%%%%%%%%%%%%%%%%%%%%%%%%%%%%%%%%%%%%
\item[\bf Step 2. ] In the previous example, we found
\begin{equation*}
\lambda
=
\frac{96109}{9}
+
\frac{25898}{9}\theta
+
\frac{1834}{9}\theta^2
\approx
91946.994
\end{equation*}
%%%%%%%%%%%%%%%%%%%%%%%%%%%%%%%%%%%%%%%%%%%%%%%%%%%%%%%%%%%%%%%%%%%%%%%%%%%%%%%%%%%%%%%%%%%%%%%%%%%%%%%%%%%%%
%%%%%%%%%%%%%%%%%%%%%%%%%%%%%%%%%%%%%%%%%%%       STEP 3       %%%%%%%%%%%%%%%%%%%%%%%%%%%%%%%%%%%%%%%%%%%%%%
%%%%%%%%%%%%%%%%%%%%%%%%%%%%%%%%%%%%%%%%%%%%%%%%%%%%%%%%%%%%%%%%%%%%%%%%%%%%%%%%%%%%%%%%%%%%%%%%%%%%%%%%%%%%%
\item[\bf Step 3. ] Define the sequences $\{a_n\}$, $\{b_n\}$, $\{c_n\}$ as before and define $\{m_n\}$ by
 $m_n=d c_n=9c_n$. (We plot the points $({m_n^{1/2}}\langle m_n\theta\rangle,{m_n^{1/2}}\langle m_n\theta^2\rangle)$ for $n=1$ to~100 in Figure~\ref{fig:LWseqex0.147.740}.)
%%%%%%%%%%%%%%%%%%%%%%%%%%%%%%%%%%%%%%%%%%%%%%%%%%%%%%%%%%%%%%%%%%%%%%%%%%%%%%%%%%%%%%%%%%%%%%%%%%%%%%%%%%%%%
%%%%%%%%%%%%%%%%%%%%%%%%%%%%%%%%%%%%%%%%%%%       STEP 4       %%%%%%%%%%%%%%%%%%%%%%%%%%%%%%%%%%%%%%%%%%%%%%
%%%%%%%%%%%%%%%%%%%%%%%%%%%%%%%%%%%%%%%%%%%%%%%%%%%%%%%%%%%%%%%%%%%%%%%%%%%%%%%%%%%%%%%%%%%%%%%%%%%%%%%%%%%%%
\item[\bf Step 4. ] Since $\lambda$ is the same as in Example~3, so is $\phi$ (and therefore the sequence $\{Q_n\}$).

%%%%%%%%%%%%%%%%%%%%%%%%%%%%%%%%%%%%%%%%%%%%%%%%%%%%%%%%%%%%%%%%%%%%%%%%%%%%%%%%%%%%%%%%%%%%%%%%%%%%%%%%%%%%%
%%%%%%%%%%%%%%%%%%%%%%%%%%%%%%%%%%%%%%%%%%%       STEP 5       %%%%%%%%%%%%%%%%%%%%%%%%%%%%%%%%%%%%%%%%%%%%%%
%%%%%%%%%%%%%%%%%%%%%%%%%%%%%%%%%%%%%%%%%%%%%%%%%%%%%%%%%%%%%%%%%%%%%%%%%%%%%%%%%%%%%%%%%%%%%%%%%%%%%%%%%%%%%
\item[\bf Step 5. ] We have

\begin{tabular}[t]{rcl rcl rcl}
$C_1$  &  $\approx$  &  $6.24920$
\qquad\qquad
& $C_2$  &  $\approx$  &  $0.141231$
\qquad\qquad
& $\widetilde{M_\theta}$  &  $\approx$  &  $11.9149$
\\
$\widetilde{M_\alpha}$  &  $\approx$  &  $11.9149$
& $\widetilde{M_{\beta_1}}$  &  $\approx$  &  $11.9149$
& $\widetilde{M_{\beta_2}}$  &  $\approx$  &  $1.87438$
\end{tabular}

and
\begin{align*}
\widetilde{N}
\approx
\max\{ 303.228, 11.27, 112.5, 0.8802, 47.66, 0, 95.32, 47.66 \}
=
\lambda^{1/2}
\end{align*}
%%%%%%%%%%%%%%%%%%%%%%%%%%%%%%%%%%%%%%%%%%%%%%%%%%%%%%%%%%%%%%%%%%%%%%%%%%%%%%%%%%%%%%%%%%%%%%%%%%%%%%%%%%%%%
%%%%%%%%%%%%%%%%%%%%%%%%%%%%%%%%%%%%%%%%%%%       STEP 6       %%%%%%%%%%%%%%%%%%%%%%%%%%%%%%%%%%%%%%%%%%%%%%
%%%%%%%%%%%%%%%%%%%%%%%%%%%%%%%%%%%%%%%%%%%%%%%%%%%%%%%%%%%%%%%%%%%%%%%%%%%%%%%%%%%%%%%%%%%%%%%%%%%%%%%%%%%%%
\item[\bf Step 6. ] We have
\begin{gather*}
C_0   \approx 22.3329,\\
n_0    = 2\log_\lambda \widetilde{N}= 1.
\end{gather*}
and we define $\{\psi_n\}$ (as before) by $\psi_n=m_{Q_n}$.
%%%%%%%%%%%%%%%%%%%%%%%%%%%%%%%%%%%%%%%%%%%%%%%%%%%%%%%%%%%%%%%%%%%%%%%%%%%%%%%%%%%%%%%%%%%%%%%%%%%%%%%%%%%%%
%%%%%%%%%%%%%%%%%%%%%%%%%%%%%%%%%%%%%%%%%%%       RESULT       %%%%%%%%%%%%%%%%%%%%%%%%%%%%%%%%%%%%%%%%%%%%%%
%%%%%%%%%%%%%%%%%%%%%%%%%%%%%%%%%%%%%%%%%%%%%%%%%%%%%%%%%%%%%%%%%%%%%%%%%%%%%%%%%%%%%%%%%%%%%%%%%%%%%%%%%%%%%
\item[\bf Result. ] For $Q_n>n_0=1$,
\begin{equation*}
\psi_n\|\psi_n\theta\|\|\psi_n\theta\|
<
\frac{C_0}{Q_{n+1}}
\approx
\frac{22.3329}{Q_{n+1}}.
\end{equation*}
In Table~\ref{tab:LWseqex0.147.740}
we compute the bound $\frac{C_0}{Q_{n+1}}$ and compare it to the actual values of $\psi_n\|\psi_n\theta\|\|\psi_n\theta^2\|$  for the first few $n$.
\end{itemize}
\end{itemize}

\begin{figure}[h!tb]
    \centering
    %\fbox{
        \includegraphics[width=\figwidth\textwidth
%        ,bb=76 384 422 716   % get these from 5th line in .eps file
%        ,viewport=0 0 595 842
%       ,trim=60 420 185 10
        ]
        {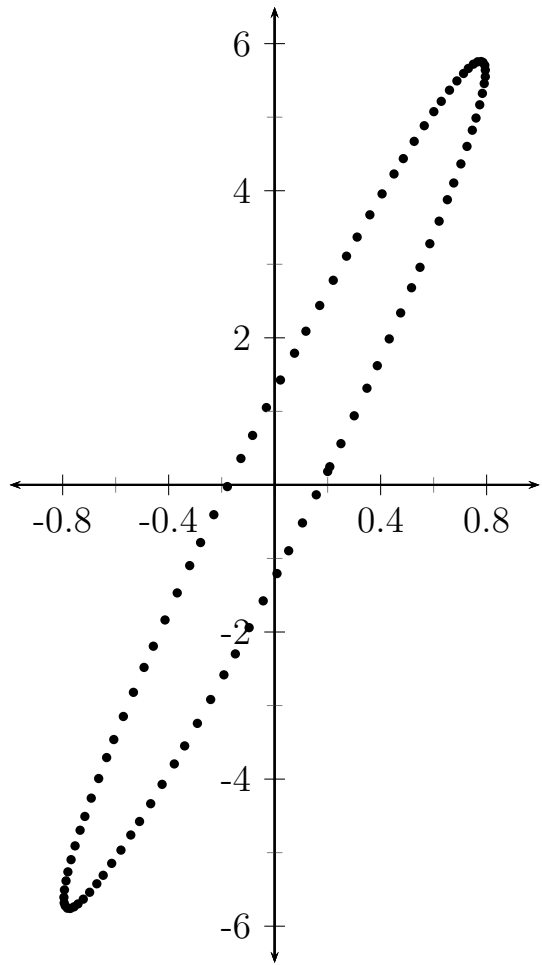}
    %}
    \caption{
    $(m_n^{1/2}\langle m_n\theta\rangle,m_n^{1/2}\langle m_n\theta^2\rangle)$ for $\theta^3=147\cdot\theta+740$,
    $n\leq 100$.
    }
    \label{fig:LWseqex0.147.740}
\end{figure}

\begin{table}[h!tb]
\centering
\begin{tabular}{p{1.9 cm} p{5.1 cm} p{2.9 cm} p{2.8 cm}}\toprule
$Q_n$ & $\psi_n$ $(=m_{Q_n})$ & $\psi_n\|\psi_n\theta\|\|\psi_n\theta^2\|$ & $C_0/Q_{n+1}$\\
\midrule
12  &  $728 \dots 840$     (58 digits)  &  $0.133515$  &  $0.893317$\\
25  &  $244 \dots 594$     (123 digits)  &  $0.0472532$  &  $0.603593$\\
37  &  $892 \dots 074$     (182 digits)  &  $0.0311235$  &  $0.225585$\\
99  &  $489 \dots 166$     (490 digits)  &  $0.0109711$  &  $0.0950337$\\
235  &  $538 \dots 494$     (1165 digits)  &  $0.00475198$  &  $0.0392494$\\
569  &  $357 \dots 674$     (2823 digits)  &  $0.00221919$  &  $0.0162658$\\
1373  &  $172 \dots 114$     (6814 digits)  &  $0.000175104$  &  $0.00131023$\\
17045  &  $626 \dots 434$     (84602 digits)  &  $0.000141411$  &  $0.00121256$\\
18418  &  $542 \dots 972$     (91417 digits)  &  $0.0000334768$  &  $0.000246182$\\
90717  &  $342 \dots 186$     (450276 digits)  & $7.6106*10^{-6}$ & 0.0000585726\\
381286  &  $469 \dots 132$     (1892526 digits)  & $3.02844*10^{-6}$ & 0.0000261728\\
853289  &  $189 \dots 874$     (4235331 digits)  & $1.55407*10^{-6}$ & 0.0000180896\\
1234575  &  $447 \dots 406$     (6127858 digits)  & $1.47433*10^{-6}$ & 0.0000106965\\
2087864  &  $425 \dots 672$     (10363190 digits)  & $7.9760*10^{-8}$ & $5.75352*10^{-7}$\\
\bottomrule
\end{tabular}
\caption{$\psi_n\|\psi_n\theta\|\|\psi_n\theta^2\|$ and $C_0/Q_{n+1}$, where $\theta^3=147\cdot\theta+740$}
\label{tab:LWseqex0.147.740}
\end{table}

\newpage

%%%%%%%%%%%%%%%%%%%%%%%%%%%%%%%%%%%%%%%%%%%%%%%%%%%%%%%%%%%%%%%%%%%%%%%%%%%%%%%%%%%%%%%%%%%%%%%%%%%%%%%%%%%%%%%%%%%%
%%%%%%%%%%%%%%%%%%%%%%%%%%%%%%%%%%%%%%%%%%%%%%%      EXAMPLE 4                                  %%%%%%%%%%%%%%%%%%%%    %%%%%%%%%%%%%%%%%%%%%%%%%%%%%%%%%%%%%%%%%%%%%%%                                                 %%%%%%%%%%%%%%%%%%%%
%%%%%%%%%%%%%%%%%%%%%%%%%%%%%%%%%%%%%%%%%%%%%%%%%%%%%%%%%%%%%%%%%%%%%%%%%%%%%%%%%%%%%%%%%%%%%%%%%%%%%%%%%%%%%%%%%%%%

\subsection{Example  5: $\theta^2=8\cdot\theta+10$}

Let $\theta$ be the real root of $x^3-8x-10$. (This cubic (a ``miracle cubic'', as D.~H.~Lehmer called it) was brought to my attention by John Brillhart, who discovered in 1964 that $\theta$ has several unusually large partial quotients very early in its continued fraction expansion. For example, $a_{17}=22986$, $a_{33}=1501790$, and $a_{121}=16467250$. This is related to the fact that the discriminant of the polynomial is $-652=-4\cdot163$ and that $\Q(\sqrt{-163})$ has class number 1. See \cite{bib:St}.)\\

We consider the pair $(\theta,\theta^2)$. As in Examples~1 and~2, we take $\alpha=\theta$ and $\beta=\theta^2$ (so $A=1$, $B=0$, $r_0=r_1=0$, $r_2=s=1$).

\begin{itemize}
  \item[]
\begin{itemize}
%%%%%%%%%%%%%%%%%%%%%%%%%%%%%%%%%%%%%%%%%%%%%%%%%%%%%%%%%%%%%%%%%%%%%%%%%%%%%%%%%%%%%%%%%%%%%%%%%%%%%%%%%%%%%
%%%%%%%%%%%%%%%%%%%%%%%%%%%%%%%%%%%%%%%%%%%       STEP 1       %%%%%%%%%%%%%%%%%%%%%%%%%%%%%%%%%%%%%%%%%%%%%%
%%%%%%%%%%%%%%%%%%%%%%%%%%%%%%%%%%%%%%%%%%%%%%%%%%%%%%%%%%%%%%%%%%%%%%%%%%%%%%%%%%%%%%%%%%%%%%%%%%%%%%%%%%%%%
\item[\bf Step 1. ] We already have $\theta=\alpha$, and $f(x)$ is already in the form $x^3-px-q$ (with $p=8$ and $q=10$). Using PARI/GP, we see that $\O_K=\Z[\theta]$, so we put $d=1$.
%%%%%%%%%%%%%%%%%%%%%%%%%%%%%%%%%%%%%%%%%%%%%%%%%%%%%%%%%%%%%%%%%%%%%%%%%%%%%%%%%%%%%%%%%%%%%%%%%%%%%%%%%%%%%
%%%%%%%%%%%%%%%%%%%%%%%%%%%%%%%%%%%%%%%%%%%       STEP 2       %%%%%%%%%%%%%%%%%%%%%%%%%%%%%%%%%%%%%%%%%%%%%%
%%%%%%%%%%%%%%%%%%%%%%%%%%%%%%%%%%%%%%%%%%%%%%%%%%%%%%%%%%%%%%%%%%%%%%%%%%%%%%%%%%%%%%%%%%%%%%%%%%%%%%%%%%%%%
\item[\bf Step 2. ] Using PARI/GP, we find that a fundamental unit is
\begin{equation*}
\ep_0
=\theta^2-11
\approx
0.0132932
\end{equation*}
Then (since $0<\ep_0<1$) we put
\begin{equation*}
\lambda
=
\ep_0^{-1}
=
9+10\cdot\theta+3\cdot\theta^2
\approx
75.2262.
\end{equation*}
%%%%%%%%%%%%%%%%%%%%%%%%%%%%%%%%%%%%%%%%%%%%%%%%%%%%%%%%%%%%%%%%%%%%%%%%%%%%%%%%%%%%%%%%%%%%%%%%%%%%%%%%%%%%%
%%%%%%%%%%%%%%%%%%%%%%%%%%%%%%%%%%%%%%%%%%%       STEP 3       %%%%%%%%%%%%%%%%%%%%%%%%%%%%%%%%%%%%%%%%%%%%%%
%%%%%%%%%%%%%%%%%%%%%%%%%%%%%%%%%%%%%%%%%%%%%%%%%%%%%%%%%%%%%%%%%%%%%%%%%%%%%%%%%%%%%%%%%%%%%%%%%%%%%%%%%%%%%
\item[\bf Step 3. ] Define $\{a_n\}$, $\{b_n\}$, $\{c_n\}$ as before and define $\{m_n\}$ by
 $m_n=c_n$. (We plot the points $(m_n^{1/2}\langle m_n\theta\rangle,m_n^{1/2}\langle m_n\theta^2\rangle)$ for $n=1$ to~1000 in Figure~\ref{fig:LWseqex0.8.10}.)
%%%%%%%%%%%%%%%%%%%%%%%%%%%%%%%%%%%%%%%%%%%%%%%%%%%%%%%%%%%%%%%%%%%%%%%%%%%%%%%%%%%%%%%%%%%%%%%%%%%%%%%%%%%%%
%%%%%%%%%%%%%%%%%%%%%%%%%%%%%%%%%%%%%%%%%%%       STEP 4       %%%%%%%%%%%%%%%%%%%%%%%%%%%%%%%%%%%%%%%%%%%%%%
%%%%%%%%%%%%%%%%%%%%%%%%%%%%%%%%%%%%%%%%%%%%%%%%%%%%%%%%%%%%%%%%%%%%%%%%%%%%%%%%%%%%%%%%%%%%%%%%%%%%%%%%%%%%%
\item[\bf Step 4. ] Put
\begin{equation*}
\phi
=
\arctan\left(
\frac{\sqrt{3\theta^2-32}\cdot(10-3\cdot\theta)}
{66-10\cdot\theta-3\cdot\theta^2}
\right)
 \approx
 -0.196350
\end{equation*}
Curiously, the continued fraction of $\phi/\pi\approx -0.062499998136$ also has an early large partial quotient. The continued fraction of $\phi/\pi$ is
\begin{equation*}
[-1;1, 15, 2095966, 30, 1, 2, 1, 1, 3, 1, 3, 1, 1, 1,\dots],
\end{equation*}
and the first few (non-integer) convergents of $\phi/\pi$ are:
\begin{equation*}
-\frac{1}{16}, -\frac{2095966}{33535457}, -\frac{62878981}{1006063726}, -\frac{64974947}{1039599183}, -\frac{192828875}{3085262092}, -\frac{257803822}{4124861275}.
\end{equation*}

%%%%%%%%%%%%%%%%%%%%%%%%%%%%%%%%%%%%%%%%%%%%%%%%%%%%%%%%%%%%%%%%%%%%%%%%%%%%%%%%%%%%%%%%%%%%%%%%%%%%%%%%%%%%%
%%%%%%%%%%%%%%%%%%%%%%%%%%%%%%%%%%%%%%%%%%%       STEP 5       %%%%%%%%%%%%%%%%%%%%%%%%%%%%%%%%%%%%%%%%%%%%%%
%%%%%%%%%%%%%%%%%%%%%%%%%%%%%%%%%%%%%%%%%%%%%%%%%%%%%%%%%%%%%%%%%%%%%%%%%%%%%%%%%%%%%%%%%%%%%%%%%%%%%%%%%%%%%
\item[\bf Step 5. ] We have

\begin{tabular}[t]{rcl rcl rcl}
$C_1$  & $\approx$ & 4.60238
\qquad\qquad
&$C_2$  & $\approx$ & 0.199841
\qquad\qquad
&$\widetilde{M_\theta}$  & $\approx$ & 1.37961\\
$\widetilde{M_\alpha}$  & $\approx$ & 1.37961
&$\widetilde{M_{\beta_1}}$  & $\approx$ & 1.37961
&$\widetilde{M_{\beta_2}}$  & $\approx$ & 0.923493
\end{tabular}

and
\begin{align*}
\widetilde{N}
\approx
\max\{ 8.6733, 3.923, 9.205, 0.9882, 5.518, 0, 11.04, 5.518 \}
=
11.04
\end{align*}
%%%%%%%%%%%%%%%%%%%%%%%%%%%%%%%%%%%%%%%%%%%%%%%%%%%%%%%%%%%%%%%%%%%%%%%%%%%%%%%%%%%%%%%%%%%%%%%%%%%%%%%%%%%%%
%%%%%%%%%%%%%%%%%%%%%%%%%%%%%%%%%%%%%%%%%%%       STEP 6       %%%%%%%%%%%%%%%%%%%%%%%%%%%%%%%%%%%%%%%%%%%%%%
%%%%%%%%%%%%%%%%%%%%%%%%%%%%%%%%%%%%%%%%%%%%%%%%%%%%%%%%%%%%%%%%%%%%%%%%%%%%%%%%%%%%%%%%%%%%%%%%%%%%%%%%%%%%%
\item[\bf Step 6. ] We have
\begin{align*}
C_0  & \approx 1.27406\\
n_0   & \approx 1.11156
\end{align*}
and we define $\{\psi_n\}$ by $\psi_n=m_{Q_n}$. The first several terms are:
\begin{gather*}
m_{16},\  m_{33535457},\  m_{1006063726},\  m_{1039599183},\  m_{3085262092},\  m_{4124861275},\  m_{7210123367}.
\end{gather*}
%%%%%%%%%%%%%%%%%%%%%%%%%%%%%%%%%%%%%%%%%%%%%%%%%%%%%%%%%%%%%%%%%%%%%%%%%%%%%%%%%%%%%%%%%%%%%%%%%%%%%%%%%%%%%
%%%%%%%%%%%%%%%%%%%%%%%%%%%%%%%%%%%%%%%%%%%       RESULT       %%%%%%%%%%%%%%%%%%%%%%%%%%%%%%%%%%%%%%%%%%%%%%
%%%%%%%%%%%%%%%%%%%%%%%%%%%%%%%%%%%%%%%%%%%%%%%%%%%%%%%%%%%%%%%%%%%%%%%%%%%%%%%%%%%%%%%%%%%%%%%%%%%%%%%%%%%%%
\item[\bf Result. ] For $Q_n\geq 2>n_0$,
\begin{equation*}
\psi_n\|\psi_n\theta\|\|\psi_n\theta\|
<
\frac{C_0}{Q_{n+1}}
\approx
\frac{1.27406}{Q_{n+1}}.
\end{equation*}
In Table~\ref{tab:LWseqex0.8.10}
we compute the bound $\frac{C_0}{Q_{n+1}}$ for several values of $n$ and compare it to the actual value of $\psi_n\|\psi_n\theta\|\|\psi_n\theta^2\|$  for $n=1$ and~2. (Since $Q_n$ is so large for $n>2$, calculating more values of $\psi_n\|\psi_n\theta\|\|\psi_n\theta^2\|$ requires more computing power than my circa-2009 computer has. On my desktop computer, Mathematica~8.0 takes about 45 minutes to calculate $\psi_n\|\psi_n\theta\|\|\psi_n\theta^2\|$ when $\psi_n$ has about 60~million digits, and reaches an overflow error when $\psi_n$ has somewhere between 60~million and 147~million digits.)

In Figure~\ref{fig:LWseqex0.8.10} we have the points $(m_n^{1/2}\langle m_n\theta\rangle,m_n^{1/2}\langle m_n\theta^2\rangle)$ for $n=1$ to~1000,  although there appear to be only 32 points. The reason is that since $Q_2$ is so large, $\phi$ (the angle of rotation of the sequence
$\{(\Im(\sqrt{\lambda}\lambda_2)^n,\Re(\sqrt{\lambda}\lambda_2)^n)\}$
)
is nearly $-\frac{\pi}{16}$. A transformation of the point $(\Im(\sqrt{\lambda}\lambda_2)^n,\Re(\sqrt{\lambda}\lambda_2)^n)$
yields the point
\begin{equation*}
(\lambda^{1/2}\langle m_n\theta\rangle,\lambda^{1/2}\langle m_n\theta^2\rangle),
\end{equation*}
and
\begin{equation*}
(m_n^{1/2}\langle m_n\theta\rangle,m_n^{1/2}\langle m_n\theta^2\rangle)
\approx
C'(\lambda^{1/2}\langle m_n\theta\rangle,\lambda^{1/2}\langle m_n\theta^2\rangle)
\end{equation*}
for some $C'$.
\end{itemize}
\end{itemize}

\begin{figure}[h!tb]
    \centering
    %\fbox{
        \includegraphics[width=\figwidth\textwidth
%        ,bb=76 384 422 716   % get these from 5th line in .eps file
%        ,viewport=0 0 595 842
%       ,trim=60 420 185 10
        ]
        {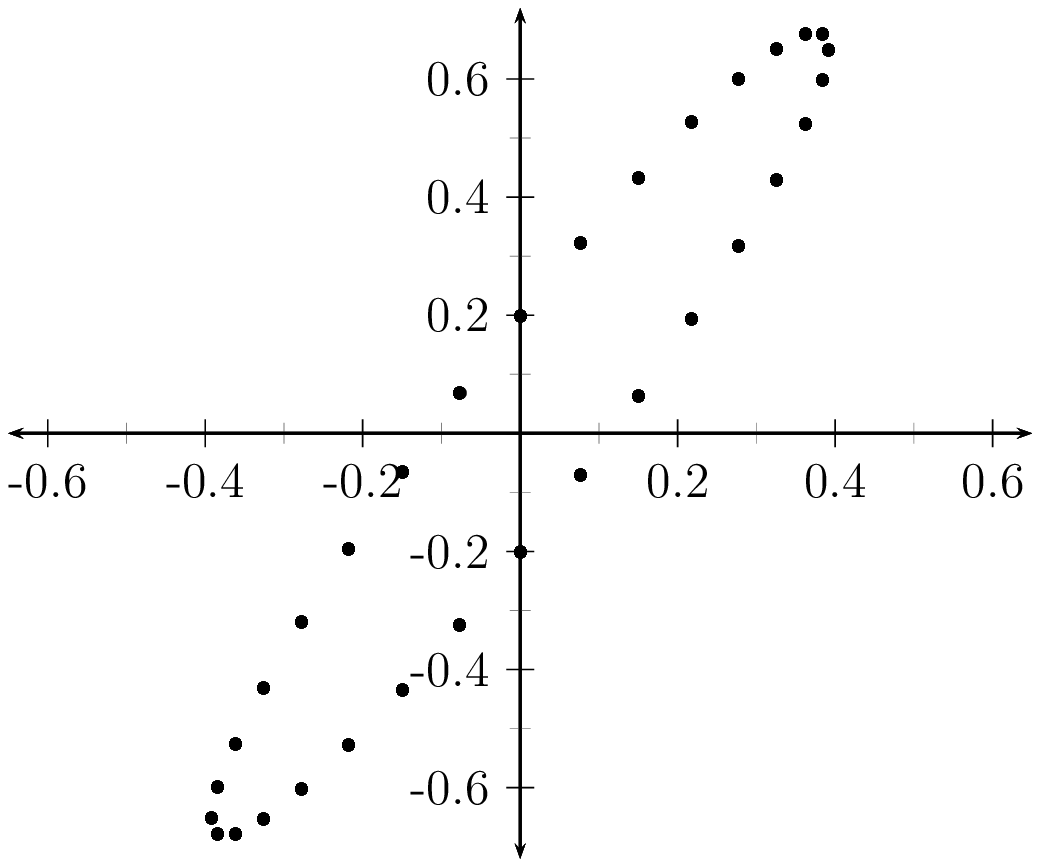}
    %}
    \caption{
    $(m_n^{1/2}\langle m_n\theta\rangle,m_n^{1/2}\langle m_n\theta^2\rangle)$ for
    $\theta^3=8\theta+10$, $n\leq 1000$.
    }
    \label{fig:LWseqex0.8.10}
\end{figure}

\begin{table}[h!tb]
\centering
\begin{tabular}{p{2.8 cm} p{4 cm} p{2.9 cm} p{2.8 cm}}\toprule
$Q_n$ & $\psi_n$ $(=m_{Q_n})$ & $\psi_n\|\psi_n\theta\|\|\psi_n\theta^2\|$ & $C_0/Q_{n+1}$\\
\midrule
16  &  $420 \dots 880$     (29 digits)  &  $7.3376 * 10^{-9}$  &  $3.79915*10^{-8}$\\
33 535 457  &  (63 million digits)  &  $2.38861*10^{-10}$ & $1.26638*10^{-9}$\\ % took 44 minutes to compute
1 006 063 726  &  (1.89 billion digits)  &  & $1.22553*10^{-9}$\\
1 039 599 183  &  (1.95 billion digits)  &  & $4.12951*10^{-10}$\\
3 085 262 092  &  (5.8 billion digits)  &  & $3.08874*10^{-10}$\\
4 124 861 275  &  (7.7 billion digits)  &  & $1.76705*10^{-10}$\\
7 210 123 367  &  (13.5 billion digits)  &  & $4.94681*10^{-11}$\\
25 755 231 376  &  (48 billion digits)  &  & $3.86485*10^{-11}$\\
32 965 354 743  &  (62 billion digits)  &  & $1.0221*10^{-11}$\\
124 651 295 605  &  (234 billion digits)  &  & $8.0833*10^{-12}$\\
\bottomrule
\end{tabular}
\caption{$\psi_n\|\psi_n\theta\|\|\psi_n\theta^2\|$ and $C_0/Q_{n+1}$, where $\theta^3=8\theta+10$}
\label{tab:LWseqex0.8.10}
\end{table}

\newpage

%%%%%%%%%%%%%%%%%%%%%%%%%%%%%%%%%%%%%%%%%%%%%%%%%%%%%%%%%%%%%%%%%%%%%%%%%%%%%%%%%%%%%%%%%%%%%%%%%%%%%%%%%%%%%%%%%%%%
%%%%%%%%%%%%%%%%%%%%%%%%%%%%%%%%%%%%%%%%%%%%%%%      EXAMPLE 6                                  %%%%%%%%%%%%%%%%%%%%    %%%%%%%%%%%%%%%%%%%%%%%%%%%%%%%%%%%%%%%%%%%%%%%                                                 %%%%%%%%%%%%%%%%%%%%
%%%%%%%%%%%%%%%%%%%%%%%%%%%%%%%%%%%%%%%%%%%%%%%%%%%%%%%%%%%%%%%%%%%%%%%%%%%%%%%%%%%%%%%%%%%%%%%%%%%%%%%%%%%%%%%%%%%%

\subsection{Example 6: $\theta^3=-\theta+1$}
We consider the pair $(\theta,\theta^2)$, so $A=1$, $B=0$, $r_0=r_1=s=0$, $r_2=s=1$. The discriminant of $K$ is $-31$.

\begin{itemize}
  \item[]
\begin{itemize}
%%%%%%%%%%%%%%%%%%%%%%%%%%%%%%%%%%%%%%%%%%%%%%%%%%%%%%%%%%%%%%%%%%%%%%%%%%%%%%%%%%%%%%%%%%%%%%%%%%%%%%%%%%%%%
%%%%%%%%%%%%%%%%%%%%%%%%%%%%%%%%%%%%%%%%%%%       STEP 1       %%%%%%%%%%%%%%%%%%%%%%%%%%%%%%%%%%%%%%%%%%%%%%
%%%%%%%%%%%%%%%%%%%%%%%%%%%%%%%%%%%%%%%%%%%%%%%%%%%%%%%%%%%%%%%%%%%%%%%%%%%%%%%%%%%%%%%%%%%%%%%%%%%%%%%%%%%%%
\item[\bf Step 1. ] We already have $f(x)=x^3+x-1$ in the form $x^3-px-q$ (with $p=-1$ and $q=1$). Using PARI/GP, we see that $\O_K=\Z[\theta]$, so we put $d=1$.
%%%%%%%%%%%%%%%%%%%%%%%%%%%%%%%%%%%%%%%%%%%%%%%%%%%%%%%%%%%%%%%%%%%%%%%%%%%%%%%%%%%%%%%%%%%%%%%%%%%%%%%%%%%%%
%%%%%%%%%%%%%%%%%%%%%%%%%%%%%%%%%%%%%%%%%%%       STEP 2       %%%%%%%%%%%%%%%%%%%%%%%%%%%%%%%%%%%%%%%%%%%%%%
%%%%%%%%%%%%%%%%%%%%%%%%%%%%%%%%%%%%%%%%%%%%%%%%%%%%%%%%%%%%%%%%%%%%%%%%%%%%%%%%%%%%%%%%%%%%%%%%%%%%%%%%%%%%%
\item[\bf Step 2. ] Since $\theta\approx 0.6823$ is already a unit, and since $0<\theta<1$, we put
\begin{equation*}
\lambda=\theta^{-1}=\theta^2+1\approx 1.46557.
\end{equation*}
%%%%%%%%%%%%%%%%%%%%%%%%%%%%%%%%%%%%%%%%%%%%%%%%%%%%%%%%%%%%%%%%%%%%%%%%%%%%%%%%%%%%%%%%%%%%%%%%%%%%%%%%%%%%%
%%%%%%%%%%%%%%%%%%%%%%%%%%%%%%%%%%%%%%%%%%%       STEP 3       %%%%%%%%%%%%%%%%%%%%%%%%%%%%%%%%%%%%%%%%%%%%%%
%%%%%%%%%%%%%%%%%%%%%%%%%%%%%%%%%%%%%%%%%%%%%%%%%%%%%%%%%%%%%%%%%%%%%%%%%%%%%%%%%%%%%%%%%%%%%%%%%%%%%%%%%%%%%
\item[\bf Step 3. ] Define the sequences $\{a_n\}$, $\{b_n\}$, $\{c_n\}$ as before and define $\{m_n\}$ by
 $m_n=c_n$. (We plot the points $\{{m_n^{1/2}}\langle m_n\theta\rangle,{m_n^{1/2}}\langle m_n\theta^2\rangle\}$ for $n=1,\dots,500$ in Figure \ref{fig:LWseqex0.-1.1}.)
%%%%%%%%%%%%%%%%%%%%%%%%%%%%%%%%%%%%%%%%%%%%%%%%%%%%%%%%%%%%%%%%%%%%%%%%%%%%%%%%%%%%%%%%%%%%%%%%%%%%%%%%%%%%%
%%%%%%%%%%%%%%%%%%%%%%%%%%%%%%%%%%%%%%%%%%%       STEP 4       %%%%%%%%%%%%%%%%%%%%%%%%%%%%%%%%%%%%%%%%%%%%%%
%%%%%%%%%%%%%%%%%%%%%%%%%%%%%%%%%%%%%%%%%%%%%%%%%%%%%%%%%%%%%%%%%%%%%%%%%%%%%%%%%%%%%%%%%%%%%%%%%%%%%%%%%%%%%
\item[\bf Step 4. ] Put
\begin{equation*}
\phi
=
\arctan\left(
\frac{\sqrt{3\theta^2+4}}
{\theta}
\right)
 \approx 1.28511,
\end{equation*}
The first few convergents of $\phi/\pi \approx 0.409065$ are:
\begin{equation*}
\frac{1}{2}, \frac{2}{5}, \frac{9}{22}, \frac{704}{1721}, \frac{7753}{18953}, \frac{16210}{39627}, \frac{202273}{494477}, \frac{825302}{2017535}, \frac{81907171}{200230442}, \frac{246546815}{602708861}
\end{equation*}
%%%%%%%%%%%%%%%%%%%%%%%%%%%%%%%%%%%%%%%%%%%%%%%%%%%%%%%%%%%%%%%%%%%%%%%%%%%%%%%%%%%%%%%%%%%%%%%%%%%%%%%%%%%%%
%%%%%%%%%%%%%%%%%%%%%%%%%%%%%%%%%%%%%%%%%%%       STEP 5       %%%%%%%%%%%%%%%%%%%%%%%%%%%%%%%%%%%%%%%%%%%%%%
%%%%%%%%%%%%%%%%%%%%%%%%%%%%%%%%%%%%%%%%%%%%%%%%%%%%%%%%%%%%%%%%%%%%%%%%%%%%%%%%%%%%%%%%%%%%%%%%%%%%%%%%%%%%%
\item[\bf Step 5. ] We have

\begin{tabular}[t]{rcl rcl rcl}
$C_1$  &  $\approx$  &  $1.41421$
\qquad\qquad
& $C_2$  &  $\approx$  &  $0.645940$
\qquad\qquad
& $\widetilde{M_\theta}$  &  $\approx$  &  $2.84001$
\\
$\widetilde{M_\alpha}$  &  $\approx$  &  $2.84001$
& $\widetilde{M_{\beta_1}}$  &  $\approx$  &  $2.84001$
& $\widetilde{M_{\beta_2}}$  &  $\approx$  &  $1.31029$
\end{tabular}

and
\begin{align*}
\widetilde{N}
\approx
\max\{ 1.21061, 5.862, 2.828, 1.461, 11.36, 0, 22.72, 11.36 \}
=
22.72
\end{align*}
%%%%%%%%%%%%%%%%%%%%%%%%%%%%%%%%%%%%%%%%%%%%%%%%%%%%%%%%%%%%%%%%%%%%%%%%%%%%%%%%%%%%%%%%%%%%%%%%%%%%%%%%%%%%%
%%%%%%%%%%%%%%%%%%%%%%%%%%%%%%%%%%%%%%%%%%%       STEP 6       %%%%%%%%%%%%%%%%%%%%%%%%%%%%%%%%%%%%%%%%%%%%%%
%%%%%%%%%%%%%%%%%%%%%%%%%%%%%%%%%%%%%%%%%%%%%%%%%%%%%%%%%%%%%%%%%%%%%%%%%%%%%%%%%%%%%%%%%%%%%%%%%%%%%%%%%%%%%
\item[\bf Step 6. ] We have
\begin{align*}
C_0  & \approx 3.72125\\
n_0   & \approx 16.3416
\end{align*}
and we define $\{\psi_n\}$ by $\psi_n=m_{Q_n}$. The first several terms are:
\begin{gather*}
m_{2},\  m_{5},\  m_{22},\  m_{1721},\  m_{18953},\  m_{39627},\  m_{494477},\  m_{2017535},\  m_{200230442},\  m_{602708861}
\end{gather*}
%%%%%%%%%%%%%%%%%%%%%%%%%%%%%%%%%%%%%%%%%%%%%%%%%%%%%%%%%%%%%%%%%%%%%%%%%%%%%%%%%%%%%%%%%%%%%%%%%%%%%%%%%%%%%
%%%%%%%%%%%%%%%%%%%%%%%%%%%%%%%%%%%%%%%%%%%       RESULT       %%%%%%%%%%%%%%%%%%%%%%%%%%%%%%%%%%%%%%%%%%%%%%
%%%%%%%%%%%%%%%%%%%%%%%%%%%%%%%%%%%%%%%%%%%%%%%%%%%%%%%%%%%%%%%%%%%%%%%%%%%%%%%%%%%%%%%%%%%%%%%%%%%%%%%%%%%%%
\item[\bf Result. ] For $Q_n\geq 17>n_0$,
\begin{equation*}
\psi_n\|\psi_n\theta\|\|\psi_n\theta\|
<
\frac{C_0}{Q_{n+1}}
\approx
\frac{3.72125}{Q_{n+1}}.
\end{equation*}
In Table~\ref{tab:LWseqex0.-1.1}
we compute the bound $\frac{C_0}{Q_{n+1}}$ and compare it to the actual values of $\psi_n\|\psi_n\theta\|\|\psi_n\theta^2\|$  for the first few $n$.
\end{itemize}
\end{itemize}

\begin{figure}[h!tb]
    \centering
    %\fbox{
        \includegraphics[width=\figwidth\textwidth
        ]
        {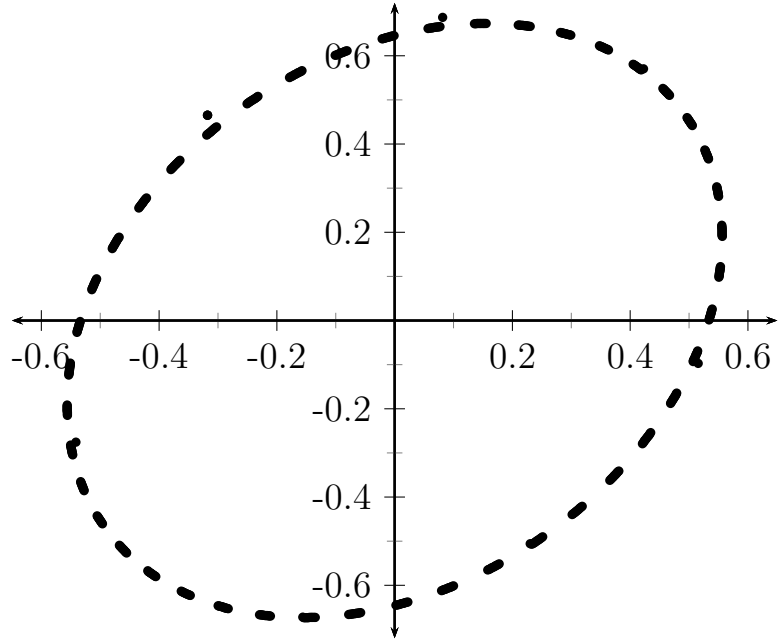}
    %}
    \caption{
    $(\sqrt{m_n}\langle m_n\theta\rangle,\sqrt{m_n}\langle m_n\theta^2\rangle)$ for
    $\theta^3=-\theta+1$, $n\leq 500$
    }
    \label{fig:LWseqex0.-1.1}
\end{figure}
\begin{table}[h!tb]
\centering
\begin{tabular}{p{1.9 cm} p{5.1 cm} p{2.9 cm} p{2.8 cm}}\toprule
$Q_n$ & $\psi_n$ $(=m_{Q_n})$ & $\psi_n\|\psi_n\theta\|\|\psi_n\theta^2\|$ & $C_0/Q_{n+1}$\\
\midrule
2  &  1  &  $0.147899$  &  $0.744249$\\
5  &  3  &  $0.055917$  &  $0.169148$\\
22  &  1873  &  $0.00065464$  &  $0.00216226$\\
1721  &  $208 \dots 364$     (286 digits)  &  $0.000057056$  &  $0.000196341$\\
18953  &  $890 \dots 536$     (3146 digits)  &  $0.0000274071$  &  $0.0000939069$\\
39627  &  $948 \dots 318$     (6578 digits)  &  $2.23749*10^{-6}$  &  $7.52562*10^{-6}$\\
494477  &  $169 \dots 664$     (82087 digits)  &  $5.5797*10^{-7}$  &  $1.84445*10^{-6}$\\
2017535  &  $259 \dots 473$     (334925 digits)  &  $5.6172*10^{-9}$  &  $1.85848*10^{-8}$\\
200230442  &  $781 \dots 124$     (33239641 digits)  &  $1.86084*10^{-9}$  &  $6.17420*10^{-9}$\\
\bottomrule
\end{tabular}
\caption{$\psi_n\|\psi_n\theta\|\|\psi_n\theta^2\|$ and $C_0/Q_{n+1}$, where $\theta^3=-\theta+1$}
\label{tab:LWseqex0.-1.1}
\end{table}

\newpage

%%%%%%%%%%%%%%%%%%%%%%%%%%%%%%%%%%%%%%%%%%%%%%%%%%%%%%%%%%%%%%%%%%%%%%%%%%%%%%%%%%%%%%%%%%%%%%%%%%%%%%%%%%%%%%%%%%%%
%%%%%%%%%%%%%%%%%%%%%%%%%%%%%%%%%%%%%%%%%%%%%%%      EXAMPLE 7                                  %%%%%%%%%%%%%%%%%%%%    %%%%%%%%%%%%%%%%%%%%%%%%%%%%%%%%%%%%%%%%%%%%%%%                                                 %%%%%%%%%%%%%%%%%%%%
%%%%%%%%%%%%%%%%%%%%%%%%%%%%%%%%%%%%%%%%%%%%%%%%%%%%%%%%%%%%%%%%%%%%%%%%%%%%%%%%%%%%%%%%%%%%%%%%%%%%%%%%%%%%%%%%%%%%

\subsection{Example 7: $\alpha^3=\alpha^2+\alpha+1$}
Consider the pair $(\alpha,\alpha^{-1})$, where $\alpha\approx 1.83929$ is the real root of $f(x)=x^3-x^2-x-1$. (The discriminant of $f$ is $-44$.) Note that $\alpha(\alpha^2-\alpha-1)=1$, or $\alpha^{-1}=\alpha^2-\alpha-1$. So we have $A=1$, $B=-1$, $r_0=s=1$, $r_1=r_2=-1$.

\begin{itemize}
  \item[]
\begin{itemize}
%%%%%%%%%%%%%%%%%%%%%%%%%%%%%%%%%%%%%%%%%%%%%%%%%%%%%%%%%%%%%%%%%%%%%%%%%%%%%%%%%%%%%%%%%%%%%%%%%%%%%%%%%%%%%
%%%%%%%%%%%%%%%%%%%%%%%%%%%%%%%%%%%%%%%%%%%       STEP 1       %%%%%%%%%%%%%%%%%%%%%%%%%%%%%%%%%%%%%%%%%%%%%%
%%%%%%%%%%%%%%%%%%%%%%%%%%%%%%%%%%%%%%%%%%%%%%%%%%%%%%%%%%%%%%%%%%%%%%%%%%%%%%%%%%%%%%%%%%%%%%%%%%%%%%%%%%%%%
\item[\bf Step 1. ] We transform $f$ to
\begin{equation*}
g(x)
=
27 f\left(\frac{x+1}{3}\right)
=
x^3-12x-38,
\end{equation*}
which has $\theta:=3\alpha-1\approx 4.51786$ as its real root. Using PARI/GP, we find that $\O_K\subset \frac{1}{9}\Z[\theta]$ (so we put $d=9$).
%%%%%%%%%%%%%%%%%%%%%%%%%%%%%%%%%%%%%%%%%%%%%%%%%%%%%%%%%%%%%%%%%%%%%%%%%%%%%%%%%%%%%%%%%%%%%%%%%%%%%%%%%%%%%
%%%%%%%%%%%%%%%%%%%%%%%%%%%%%%%%%%%%%%%%%%%       STEP 2       %%%%%%%%%%%%%%%%%%%%%%%%%%%%%%%%%%%%%%%%%%%%%%
%%%%%%%%%%%%%%%%%%%%%%%%%%%%%%%%%%%%%%%%%%%%%%%%%%%%%%%%%%%%%%%%%%%%%%%%%%%%%%%%%%%%%%%%%%%%%%%%%%%%%%%%%%%%%
\item[\bf Step 2. ] Note that $\alpha=\frac{\theta+1}{3}\approx 1.83929$ is a unit. Since $\alpha>1$, we take $\lambda=\alpha$.
%%%%%%%%%%%%%%%%%%%%%%%%%%%%%%%%%%%%%%%%%%%%%%%%%%%%%%%%%%%%%%%%%%%%%%%%%%%%%%%%%%%%%%%%%%%%%%%%%%%%%%%%%%%%%
%%%%%%%%%%%%%%%%%%%%%%%%%%%%%%%%%%%%%%%%%%%       STEP 3       %%%%%%%%%%%%%%%%%%%%%%%%%%%%%%%%%%%%%%%%%%%%%%
%%%%%%%%%%%%%%%%%%%%%%%%%%%%%%%%%%%%%%%%%%%%%%%%%%%%%%%%%%%%%%%%%%%%%%%%%%%%%%%%%%%%%%%%%%%%%%%%%%%%%%%%%%%%%
\item[\bf Step 3. ] Define the sequences $\{a_n\}$, $\{b_n\}$, $\{c_n\}$ as before and define $\{m_n\}$ by
 $m_n=9A^2sd c_n=81 c_n$. (We plot the points $\{{m_n^{1/2}}\langle m_n\alpha\rangle,{m_n^{1/2}}\langle m_n\alpha^{-1}\rangle\}$ for $n=1,\dots,200$ in Figure~\ref{fig:LWseqex1.1.1}.)
%%%%%%%%%%%%%%%%%%%%%%%%%%%%%%%%%%%%%%%%%%%%%%%%%%%%%%%%%%%%%%%%%%%%%%%%%%%%%%%%%%%%%%%%%%%%%%%%%%%%%%%%%%%%%
%%%%%%%%%%%%%%%%%%%%%%%%%%%%%%%%%%%%%%%%%%%       STEP 4       %%%%%%%%%%%%%%%%%%%%%%%%%%%%%%%%%%%%%%%%%%%%%%
%%%%%%%%%%%%%%%%%%%%%%%%%%%%%%%%%%%%%%%%%%%%%%%%%%%%%%%%%%%%%%%%%%%%%%%%%%%%%%%%%%%%%%%%%%%%%%%%%%%%%%%%%%%%%
\item[\bf Step 4. ] Put
\begin{equation*}
\phi
=
\arctan\left(
\frac{\sqrt{3\theta^2-48}}
{2-\theta}
\right)
 \approx -0.965359,
\end{equation*}
The first few convergents of $\phi/\pi \approx -0.307283$ are:
\begin{equation*}
-\frac{1}{3}, -\frac{3}{10}, -\frac{4}{13}, -\frac{55}{179}, -\frac{59}{192}, -\frac{173}{563}, -\frac{578}{1881}, -\frac{13467}{43826}, -\frac{162182}{527793}, -\frac{175649}{571619}
\end{equation*}
%%%%%%%%%%%%%%%%%%%%%%%%%%%%%%%%%%%%%%%%%%%%%%%%%%%%%%%%%%%%%%%%%%%%%%%%%%%%%%%%%%%%%%%%%%%%%%%%%%%%%%%%%%%%%
%%%%%%%%%%%%%%%%%%%%%%%%%%%%%%%%%%%%%%%%%%%       STEP 5       %%%%%%%%%%%%%%%%%%%%%%%%%%%%%%%%%%%%%%%%%%%%%%
%%%%%%%%%%%%%%%%%%%%%%%%%%%%%%%%%%%%%%%%%%%%%%%%%%%%%%%%%%%%%%%%%%%%%%%%%%%%%%%%%%%%%%%%%%%%%%%%%%%%%%%%%%%%%
\item[\bf Step 5. ] We have

\begin{tabular}[t]{rcl rcl rcl}
$C_1$  &  $\approx$  &  $1.75637$
\qquad\qquad
& $C_2$  &  $\approx$  &  $0.427555$
\qquad\qquad
& $\widetilde{M_\theta}$  &  $\approx$  &  $10.1378$
\\
$\widetilde{M_\alpha}$  &  $\approx$  &  $91.2398$
& $\widetilde{M_{\beta_1}}$  &  $\approx$  &  $182.480$
& $\widetilde{M_{\beta_2}}$  &  $\approx$  &  $44.8628$
\end{tabular}

and
\begin{align*}
\widetilde{N}
\approx
\max\{ 1.356, 9.896, 31.61, 1.273, 121.7, 162.2, 243.3, 243.3, 121.7 \}
=
243.3
\end{align*}
%%%%%%%%%%%%%%%%%%%%%%%%%%%%%%%%%%%%%%%%%%%%%%%%%%%%%%%%%%%%%%%%%%%%%%%%%%%%%%%%%%%%%%%%%%%%%%%%%%%%%%%%%%%%%
%%%%%%%%%%%%%%%%%%%%%%%%%%%%%%%%%%%%%%%%%%%       STEP 6       %%%%%%%%%%%%%%%%%%%%%%%%%%%%%%%%%%%%%%%%%%%%%%
%%%%%%%%%%%%%%%%%%%%%%%%%%%%%%%%%%%%%%%%%%%%%%%%%%%%%%%%%%%%%%%%%%%%%%%%%%%%%%%%%%%%%%%%%%%%%%%%%%%%%%%%%%%%%
\item[\bf Step 6. ] We have
\begin{align*}
C_0  & \approx 8186.54\\
n_0   & \approx 18.0326
\end{align*}
and we define $\{\psi_n\}$ by $\psi_n=m_{Q_n}$. The first several terms are:
\begin{gather*}
m_{3},\  m_{10},\  m_{13},\  m_{179},\  m_{192},\  m_{563},\  m_{1881},\  m_{43826},\  m_{527793},\  m_{571619},\  m_{1671031}
\end{gather*}
%%%%%%%%%%%%%%%%%%%%%%%%%%%%%%%%%%%%%%%%%%%%%%%%%%%%%%%%%%%%%%%%%%%%%%%%%%%%%%%%%%%%%%%%%%%%%%%%%%%%%%%%%%%%%
%%%%%%%%%%%%%%%%%%%%%%%%%%%%%%%%%%%%%%%%%%%       RESULT       %%%%%%%%%%%%%%%%%%%%%%%%%%%%%%%%%%%%%%%%%%%%%%
%%%%%%%%%%%%%%%%%%%%%%%%%%%%%%%%%%%%%%%%%%%%%%%%%%%%%%%%%%%%%%%%%%%%%%%%%%%%%%%%%%%%%%%%%%%%%%%%%%%%%%%%%%%%%
\item[\bf Result. ] For $Q_n\geq 19>n_0$,
\begin{equation*}
\psi_n\|\psi_n\alpha\|\|\psi_n\alpha^{-1}\|
<
\frac{C_0}{Q_{n+1}}
\approx
\frac{8186.54}{Q_{n+1}}.
\end{equation*}
In Table~\ref{tab:LWseqex1.1.1}
we compute the bound $\frac{C_0}{Q_{n+1}}$ and compare it to the actual values of $\psi_n\|\psi_n\alpha\|\|\psi_n\alpha^{-1}\|$  for the first few $n$.
\end{itemize}
\end{itemize}

\begin{figure}[htb]
    \centering
    %\fbox{
        \includegraphics[width=\figwidth\textwidth
        ]
        {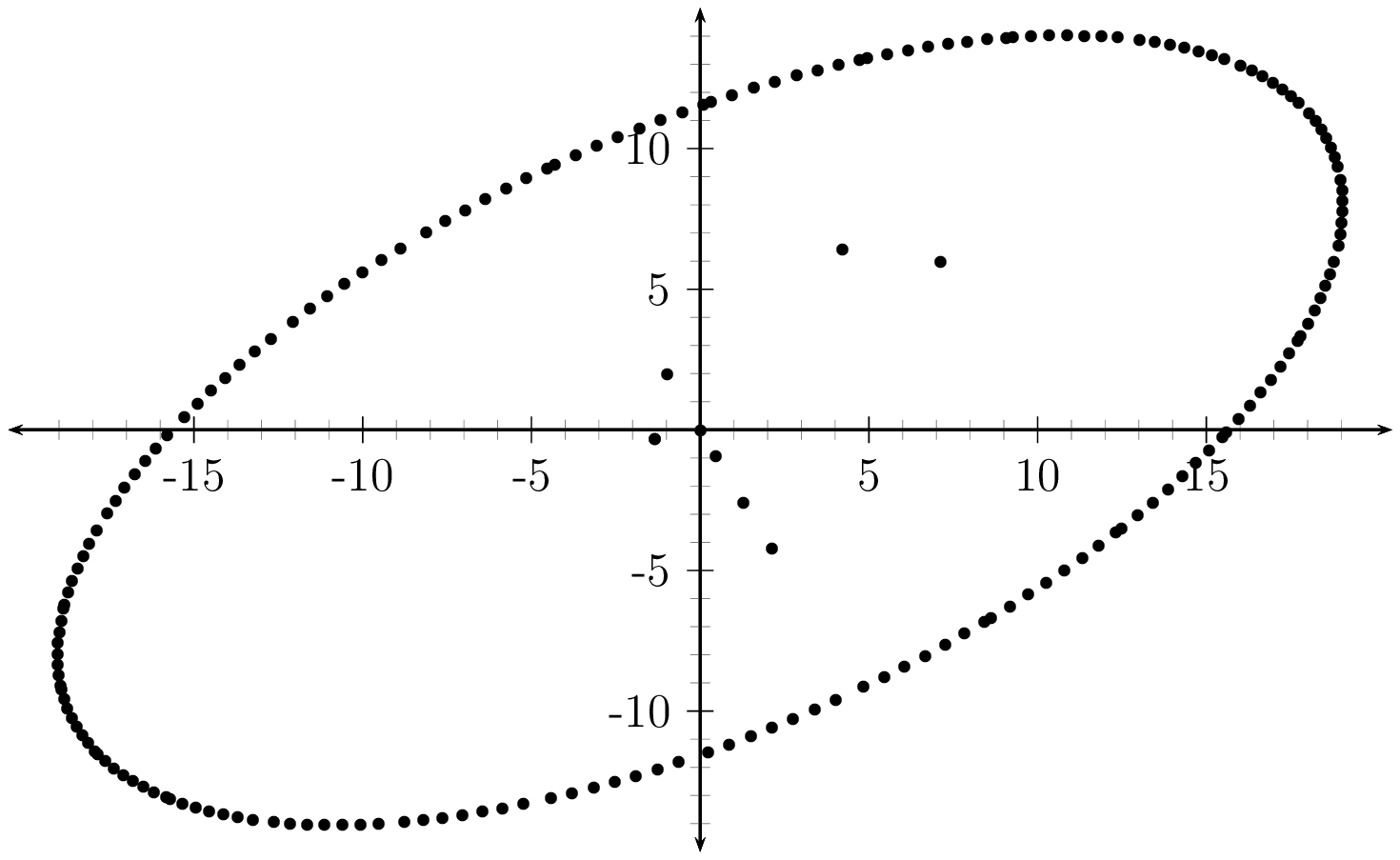}
    %}
    \caption{
    $(\sqrt{m_n}\langle m_n\alpha\rangle,\sqrt{m_n}\langle m_n\alpha^{-1}\rangle)$ for
    $\alpha^3=\alpha^2+\alpha+1$, $n\leq 200$.
    }
    \label{fig:LWseqex1.1.1}
\end{figure}

\begin{table}
\centering
\begin{tabular}{p{1.9 cm} p{5.1 cm} p{3.3 cm} p{2.8 cm}}\toprule
$Q_n$ & $\psi_n$ $(=m_{Q_n})$ & $\psi_n\|\psi_n\alpha\|\|\psi_n\alpha^{-1}\|$ & $C_0/Q_{n+1}$\\
\midrule
3  &  9  &  $0.429094$  &  $818.654$\\
10  &  729  &  $40.7299$  &  $629.734$\\
13  &  4536  &  $3.71296$  &  $45.7349$\\
179  &  $387 \dots 121$     (48 digits)  &  $2.54725$  &  $42.6382$\\
192  &  $106 \dots 568$     (52 digits)  &  $1.10673$  &  $14.5409$\\
563  &  $163 \dots 601$     (150 digits)  &  $0.361992$  &  $4.35223$\\
1881  &  $105 \dots 308$     (499 digits)  &  $0.0157040$  &  $0.186796$\\
43826  &  $553 \dots 553$     (11599 digits)  &  $0.00123216$  &  $0.0155109$\\
527793  &  $218 \dots 856$     (139681 digits)  &  $0.00091720$  &  $0.0143217$\\
571619  &  $735 \dots 889$     (151279 digits)  &  $0.000314968$  &  $0.00489910$\\
1671031  &  $436 \dots 357$     (442238 digits)  &  $0.000287266$  &  $0.00365039$\\
2242650  &  $195 \dots 217$     (593517 digits)  &  $0.0000277022$  &  $0.000339725$\\
24097531  &  $242 \dots 645$     (6377399 digits)  &  $0.0000102440$  &  $0.000162310$\\
50437712  &  $424 \dots 464$     (13348313 digits)  &  $7.2142*10^{-6}$  &  $0.000109834$\\
74535243  &  $625 \dots 773$     (19725711 digits)  &  $3.02985*10^{-6}$  &  $0.0000410336$\\
199508198  &  $613 \dots 799$     (52799734 digits)  &  $1.15446*10^{-6}$  &  $0.0000172875$\\
\bottomrule
\end{tabular}
\caption{$\psi_n\|\psi_n\alpha\|\|\psi_n\alpha^{-1}\|$ and $C_0/Q_{n+1}$, where $\alpha^3=-\alpha^2-\alpha-1$}
\label{tab:LWseqex1.1.1}
\end{table}

\newpage

%%%%%%%%%%%%%%%%%%%%%%%%%%%%%%%%%%%%%%%%%%%%%%%%%%%%%%%%%%%%%%%%%%%%%%%%%%%%%%%%%%%%%%%%%%%%%%%%%%%%%%%%%%%%%%%%%%%%
%%%%%%%%%%%%%%%%%%%%%%%%%%%%%%%%%%%%%%%%%%%%%%%      EXAMPLE 8                                  %%%%%%%%%%%%%%%%%%%%    %%%%%%%%%%%%%%%%%%%%%%%%%%%%%%%%%%%%%%%%%%%%%%%                                                 %%%%%%%%%%%%%%%%%%%%
%%%%%%%%%%%%%%%%%%%%%%%%%%%%%%%%%%%%%%%%%%%%%%%%%%%%%%%%%%%%%%%%%%%%%%%%%%%%%%%%%%%%%%%%%%%%%%%%%%%%%%%%%%%%%%%%%%%%

\subsection{Example 8: $\theta^3=-2\cdot\theta+1$}

Let $\theta$ be the real root of $f(x)=x^3+2x-1$, and consider the pair $(\theta,\theta^2)$. (The discriminant of $f$ is $-59$.) We have $A=1$, $B=0$, $r_0=r_1=0$, $r_2=s=1$.

\begin{itemize}
  \item[]
\begin{itemize}
%%%%%%%%%%%%%%%%%%%%%%%%%%%%%%%%%%%%%%%%%%%%%%%%%%%%%%%%%%%%%%%%%%%%%%%%%%%%%%%%%%%%%%%%%%%%%%%%%%%%%%%%%%%%%
%%%%%%%%%%%%%%%%%%%%%%%%%%%%%%%%%%%%%%%%%%%       STEP 1       %%%%%%%%%%%%%%%%%%%%%%%%%%%%%%%%%%%%%%%%%%%%%%
%%%%%%%%%%%%%%%%%%%%%%%%%%%%%%%%%%%%%%%%%%%%%%%%%%%%%%%%%%%%%%%%%%%%%%%%%%%%%%%%%%%%%%%%%%%%%%%%%%%%%%%%%%%%%
\item[\bf Step 1. ] We already have $f$ in the form $x^3-px-q$, with $p=-2$ and $q=1$. Using PARI/GP, we find that $\O_K=\Z[\theta]$, so we put $d=1$.
%%%%%%%%%%%%%%%%%%%%%%%%%%%%%%%%%%%%%%%%%%%%%%%%%%%%%%%%%%%%%%%%%%%%%%%%%%%%%%%%%%%%%%%%%%%%%%%%%%%%%%%%%%%%%
%%%%%%%%%%%%%%%%%%%%%%%%%%%%%%%%%%%%%%%%%%%       STEP 2       %%%%%%%%%%%%%%%%%%%%%%%%%%%%%%%%%%%%%%%%%%%%%%
%%%%%%%%%%%%%%%%%%%%%%%%%%%%%%%%%%%%%%%%%%%%%%%%%%%%%%%%%%%%%%%%%%%%%%%%%%%%%%%%%%%%%%%%%%%%%%%%%%%%%%%%%%%%%
\item[\bf Step 2. ] Since $\theta\approx 0.4534$ is a unit and $0<\theta<1$, we put
\begin{equation*}
\lambda=\theta^{-1}=\theta^2+2\approx 2.20557.
\end{equation*}
%%%%%%%%%%%%%%%%%%%%%%%%%%%%%%%%%%%%%%%%%%%%%%%%%%%%%%%%%%%%%%%%%%%%%%%%%%%%%%%%%%%%%%%%%%%%%%%%%%%%%%%%%%%%%
%%%%%%%%%%%%%%%%%%%%%%%%%%%%%%%%%%%%%%%%%%%       STEP 3       %%%%%%%%%%%%%%%%%%%%%%%%%%%%%%%%%%%%%%%%%%%%%%
%%%%%%%%%%%%%%%%%%%%%%%%%%%%%%%%%%%%%%%%%%%%%%%%%%%%%%%%%%%%%%%%%%%%%%%%%%%%%%%%%%%%%%%%%%%%%%%%%%%%%%%%%%%%%
\item[\bf Step 3. ] Define the sequences $\{a_n\}$, $\{b_n\}$, $\{c_n\}$ as before and (since $B=0$ and $d=s=1$) define $\{m_n\}$ by $m_n=c_n$. (We plot the points $\{{m_n^{1/2}}\langle m_n\theta\rangle,{m_n^{1/2}}\langle
 m_n\theta^2\rangle\}$ for $n=1,\dots,200$ in Figure~\ref{fig:LWseqex0.-2.1}.)
%%%%%%%%%%%%%%%%%%%%%%%%%%%%%%%%%%%%%%%%%%%%%%%%%%%%%%%%%%%%%%%%%%%%%%%%%%%%%%%%%%%%%%%%%%%%%%%%%%%%%%%%%%%%%
%%%%%%%%%%%%%%%%%%%%%%%%%%%%%%%%%%%%%%%%%%%       STEP 4       %%%%%%%%%%%%%%%%%%%%%%%%%%%%%%%%%%%%%%%%%%%%%%
%%%%%%%%%%%%%%%%%%%%%%%%%%%%%%%%%%%%%%%%%%%%%%%%%%%%%%%%%%%%%%%%%%%%%%%%%%%%%%%%%%%%%%%%%%%%%%%%%%%%%%%%%%%%%
\item[\bf Step 4. ] Put
\begin{equation*}
\phi
=
\arctan\left(
\frac{\sqrt{3\theta^2+8}}
{\theta}
\right)
 \approx 1.41755,
\end{equation*}
The first few convergents of $\phi/\pi \approx 0.45122$ are:
\begin{equation*}
\frac{1}{2}, \frac{4}{9}, \frac{5}{11}, \frac{9}{20}, \frac{14}{31}, \frac{37}{82}, \frac{8265}{18317}, \frac{16567}{36716}, \frac{190502}{422193}, \frac{20971787}{46477946}, \frac{21162289}{46900139}, \frac{2243012132}{4970992541}
\end{equation*}
%%%%%%%%%%%%%%%%%%%%%%%%%%%%%%%%%%%%%%%%%%%%%%%%%%%%%%%%%%%%%%%%%%%%%%%%%%%%%%%%%%%%%%%%%%%%%%%%%%%%%%%%%%%%%
%%%%%%%%%%%%%%%%%%%%%%%%%%%%%%%%%%%%%%%%%%%       STEP 5       %%%%%%%%%%%%%%%%%%%%%%%%%%%%%%%%%%%%%%%%%%%%%%
%%%%%%%%%%%%%%%%%%%%%%%%%%%%%%%%%%%%%%%%%%%%%%%%%%%%%%%%%%%%%%%%%%%%%%%%%%%%%%%%%%%%%%%%%%%%%%%%%%%%%%%%%%%%%
\item[\bf Step 5. ] We have

\begin{tabular}[t]{rcl rcl rcl}
$C_1$  &  $\approx$  &  $1.41421$
\qquad\qquad
& $C_2$  &  $\approx$  &  $0.618191$
\qquad\qquad
& $\widetilde{M_\theta}$  &  $\approx$  &  $4.09039$
\\
$\widetilde{M_\alpha}$  &  $\approx$  &  $4.09039$
& $\widetilde{M_{\beta_1}}$  &  $\approx$  &  $4.09039$
& $\widetilde{M_{\beta_2}}$  &  $\approx$  &  $0.992414$
\end{tabular}

and
\begin{align*}
\widetilde{N}
\approx
\max\{ 1.48512, 8.822, 2.828, 1.440, 16.36, 0, 32.72, 16.36 \}
=
32.72
\end{align*}
%%%%%%%%%%%%%%%%%%%%%%%%%%%%%%%%%%%%%%%%%%%%%%%%%%%%%%%%%%%%%%%%%%%%%%%%%%%%%%%%%%%%%%%%%%%%%%%%%%%%%%%%%%%%%
%%%%%%%%%%%%%%%%%%%%%%%%%%%%%%%%%%%%%%%%%%%       STEP 6       %%%%%%%%%%%%%%%%%%%%%%%%%%%%%%%%%%%%%%%%%%%%%%
%%%%%%%%%%%%%%%%%%%%%%%%%%%%%%%%%%%%%%%%%%%%%%%%%%%%%%%%%%%%%%%%%%%%%%%%%%%%%%%%%%%%%%%%%%%%%%%%%%%%%%%%%%%%%
\item[\bf Step 6. ] We have
\begin{align*}
C_0  & \approx 4.05936\\
n_0   & \approx 8.81958
\end{align*}
and we define $\{\psi_n\}$ by $\psi_n=m_{Q_n}$. The first several terms are:
\begin{gather*}
m_{2},\  m_{9},\  m_{11},\  m_{20},\  m_{31},\  m_{82},\  m_{18317},\  m_{36716},\  m_{422193},\  m_{46477946},\  m_{46900139}
\end{gather*}
%%%%%%%%%%%%%%%%%%%%%%%%%%%%%%%%%%%%%%%%%%%%%%%%%%%%%%%%%%%%%%%%%%%%%%%%%%%%%%%%%%%%%%%%%%%%%%%%%%%%%%%%%%%%%
%%%%%%%%%%%%%%%%%%%%%%%%%%%%%%%%%%%%%%%%%%%       RESULT       %%%%%%%%%%%%%%%%%%%%%%%%%%%%%%%%%%%%%%%%%%%%%%
%%%%%%%%%%%%%%%%%%%%%%%%%%%%%%%%%%%%%%%%%%%%%%%%%%%%%%%%%%%%%%%%%%%%%%%%%%%%%%%%%%%%%%%%%%%%%%%%%%%%%%%%%%%%%
\item[\bf Result. ] For $Q_n\geq 9>n_0$,
\begin{equation*}
\psi_n\|\psi_n\theta\|\|\psi_n\theta^2\|
<
\frac{C_0}{Q_{n+1}}
\approx
\frac{4.05936}{Q_{n+1}}.
\end{equation*}
In Table~\ref{tab:LWseqex0.-2.1}
we compute the bound $\frac{C_0}{Q_{n+1}}$ and compare it to the actual values of
$\psi_n\|\psi_n\theta\|\|\psi_n\theta^2\|$  for the first few $n$.
\end{itemize}
\end{itemize}

\begin{figure}[htb]
    \centering
    %\fbox{
        \includegraphics[width=\figwidth\textwidth
        ]
        {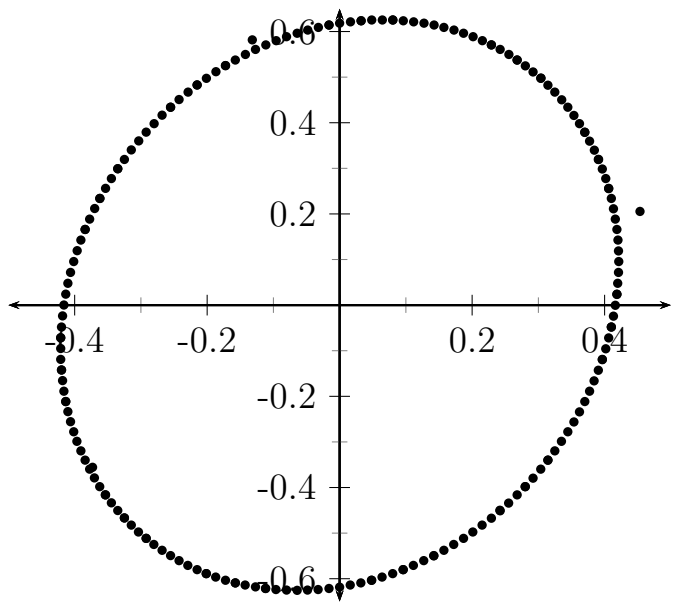}
    %}
    \caption{
    $(\sqrt{m_n}\langle m_n\theta\rangle,\sqrt{m_n}\langle m_n\theta^2\rangle)$ for
    $\theta^3=-2 \theta+1$, $n\leq 200$.
    }
    \label{fig:LWseqex0.-2.1}
\end{figure}

\begin{table}
\centering
\begin{tabular}{p{1.9 cm} p{5.1 cm} p{3.3 cm} p{2.8 cm}}\toprule
$Q_n$ & $\psi_n$ $(=m_{Q_n})$ & $\psi_n\|\psi_n\alpha\|\|\psi_n\alpha^{-1}\|$ & $C_0/Q_{n+1}$\\
\midrule
2  &  2  &  $0.076640$  &  $0.451040$\\
9  &  472  &  $0.050131$  &  $0.369032$\\
11  &  2296  &  $0.0291294$  &  $0.202968$\\
20  &  2835694  &  $0.0201199$  &  $0.130947$\\
31  &  17036776865  &  $0.0098903$  &  $0.0495043$\\
82  &  $563 \dots 769$     (28 digits)  &  $0.0000445638$  &  $0.000221617$\\
18317  &  $710 \dots 188$     (6292 digits)  &  $0.0000213130$  &  $0.000110561$\\
36716  &  $194 \dots 174$     (12613 digits)  &  $1.93599*10^{-6}$  &  $9.61493*10^{-6}$\\
422193  &  $427 \dots 544$     (145032 digits)  &  $1.74428*10^{-8}$  &  $8.73394*10^{-8}$\\
46477946  &  $495 \dots 762$     (15966138 digits)  &  $1.72784*10^{-8}$  &  $8.65532*10^{-8}$\\
46900139  &  $554 \dots 712$     (16111170 digits)  &  $1.64421*10^{-10}$  &  $8.16609*10^{-10}$\\
\bottomrule
\end{tabular}
\caption{$\psi_n\|\psi_n\theta\|\|\psi_n\theta^2\|$ and $C_0/Q_{n+1}$, where $\theta^3=-2\theta+1$}
\label{tab:LWseqex0.-2.1}
\end{table}

\newpage

%%%%%%%%%%%%%%%%%%%%%%%%%%%%%%%%%%%%%%%%%%%%%%%%%%%%%%%%%%%%%%%%%%%%%%%%%%%%%%%%%%%%%%%%%%%%%%%%%%%%%%%%%%%%%%%%%%%%
%%%%%%%%%%%%%%%%%%%%%%%%%%%%%%%%%%%%%%%%%%%%%%%      EXAMPLE 9                                  %%%%%%%%%%%%%%%%%%%%    %%%%%%%%%%%%%%%%%%%%%%%%%%%%%%%%%%%%%%%%%%%%%%%                                                 %%%%%%%%%%%%%%%%%%%%
%%%%%%%%%%%%%%%%%%%%%%%%%%%%%%%%%%%%%%%%%%%%%%%%%%%%%%%%%%%%%%%%%%%%%%%%%%%%%%%%%%%%%%%%%%%%%%%%%%%%%%%%%%%%%%%%%%%%

\subsection{Example 9: $\theta^3=2\cdot\theta+2$}

Let $\theta$ be the real root of $f(x)=x^3-2x-2$, and consider the pair $(\theta,\theta^2)$. (The discriminant of $f$ is $-76$.) We have $A=1$, $B=0$, $r_0=r_1=0$, $r_2=s=1$.

\begin{itemize}
  \item[]
\begin{itemize}
%%%%%%%%%%%%%%%%%%%%%%%%%%%%%%%%%%%%%%%%%%%%%%%%%%%%%%%%%%%%%%%%%%%%%%%%%%%%%%%%%%%%%%%%%%%%%%%%%%%%%%%%%%%%%
%%%%%%%%%%%%%%%%%%%%%%%%%%%%%%%%%%%%%%%%%%%       STEP 1       %%%%%%%%%%%%%%%%%%%%%%%%%%%%%%%%%%%%%%%%%%%%%%
%%%%%%%%%%%%%%%%%%%%%%%%%%%%%%%%%%%%%%%%%%%%%%%%%%%%%%%%%%%%%%%%%%%%%%%%%%%%%%%%%%%%%%%%%%%%%%%%%%%%%%%%%%%%%
\item[\bf Step 1. ] We already have $f$ in the form $x^3-px-q$, with $p=2$ and $q=2$. Using PARI/GP, we find that $\O_K=\Z[\theta]$, so we put $d=1$.
%%%%%%%%%%%%%%%%%%%%%%%%%%%%%%%%%%%%%%%%%%%%%%%%%%%%%%%%%%%%%%%%%%%%%%%%%%%%%%%%%%%%%%%%%%%%%%%%%%%%%%%%%%%%%
%%%%%%%%%%%%%%%%%%%%%%%%%%%%%%%%%%%%%%%%%%%       STEP 2       %%%%%%%%%%%%%%%%%%%%%%%%%%%%%%%%%%%%%%%%%%%%%%
%%%%%%%%%%%%%%%%%%%%%%%%%%%%%%%%%%%%%%%%%%%%%%%%%%%%%%%%%%%%%%%%%%%%%%%%%%%%%%%%%%%%%%%%%%%%%%%%%%%%%%%%%%%%%
\item[\bf Step 2. ] Using PARI/GP, we find that a fundamental unit is
\begin{equation*}
\ep_0=\theta+1\approx 2.76929.
\end{equation*}
Since $\ep_0>1$, we put $\lambda=\ep_0$.

%%%%%%%%%%%%%%%%%%%%%%%%%%%%%%%%%%%%%%%%%%%%%%%%%%%%%%%%%%%%%%%%%%%%%%%%%%%%%%%%%%%%%%%%%%%%%%%%%%%%%%%%%%%%%
%%%%%%%%%%%%%%%%%%%%%%%%%%%%%%%%%%%%%%%%%%%       STEP 3       %%%%%%%%%%%%%%%%%%%%%%%%%%%%%%%%%%%%%%%%%%%%%%
%%%%%%%%%%%%%%%%%%%%%%%%%%%%%%%%%%%%%%%%%%%%%%%%%%%%%%%%%%%%%%%%%%%%%%%%%%%%%%%%%%%%%%%%%%%%%%%%%%%%%%%%%%%%%
\item[\bf Step 3. ] Define the sequences $\{a_n\}$, $\{b_n\}$, $\{c_n\}$ as before and define $\{m_n\}$ by $m_n=c_n$. (We plot the points $\{{m_n^{1/2}}\langle m_n\theta\rangle,{m_n^{1/2}}\langle
 m_n\theta^2\rangle\}$ for $n=1,\dots,200$ in Figure~\ref{fig:LWseqex0.2.2}.)
%%%%%%%%%%%%%%%%%%%%%%%%%%%%%%%%%%%%%%%%%%%%%%%%%%%%%%%%%%%%%%%%%%%%%%%%%%%%%%%%%%%%%%%%%%%%%%%%%%%%%%%%%%%%%
%%%%%%%%%%%%%%%%%%%%%%%%%%%%%%%%%%%%%%%%%%%       STEP 4       %%%%%%%%%%%%%%%%%%%%%%%%%%%%%%%%%%%%%%%%%%%%%%
%%%%%%%%%%%%%%%%%%%%%%%%%%%%%%%%%%%%%%%%%%%%%%%%%%%%%%%%%%%%%%%%%%%%%%%%%%%%%%%%%%%%%%%%%%%%%%%%%%%%%%%%%%%%%
\item[\bf Step 4. ] Put
\begin{equation*}
\phi
=
\arctan\left(
\frac{\sqrt{3\theta^2-8}}
{2-\theta}
\right)
 \approx 1.37763,
\end{equation*}
The first few convergents of $\phi/\pi \approx 0.438515$ are:
\begin{equation*}
\frac{1}{2}, \frac{3}{7}, \frac{4}{9}, \frac{7}{16}, \frac{25}{57}, \frac{82}{187}, \frac{189}{431}, \frac{3484}{7945}, \frac{3673}{8376}, \frac{25522}{58201}, \frac{105761}{241180}, \frac{131283}{299381}, \frac{893459}{2037466}
\end{equation*}
%%%%%%%%%%%%%%%%%%%%%%%%%%%%%%%%%%%%%%%%%%%%%%%%%%%%%%%%%%%%%%%%%%%%%%%%%%%%%%%%%%%%%%%%%%%%%%%%%%%%%%%%%%%%%
%%%%%%%%%%%%%%%%%%%%%%%%%%%%%%%%%%%%%%%%%%%       STEP 5       %%%%%%%%%%%%%%%%%%%%%%%%%%%%%%%%%%%%%%%%%%%%%%
%%%%%%%%%%%%%%%%%%%%%%%%%%%%%%%%%%%%%%%%%%%%%%%%%%%%%%%%%%%%%%%%%%%%%%%%%%%%%%%%%%%%%%%%%%%%%%%%%%%%%%%%%%%%%
\item[\bf Step 5. ] We have

\begin{tabular}[t]{rcl rcl rcl}
$C_1$  &  $\approx$  &  $2.12140$
\qquad\qquad
& $C_2$  &  $\approx$  &  $0.367826$
\qquad\qquad
& $\widetilde{M_\theta}$  &  $\approx$  &  $1.17046$
\\
$\widetilde{M_\alpha}$  &  $\approx$  &  $1.17046$
& $\widetilde{M_{\beta_1}}$  &  $\approx$  &  $1.17046$
& $\widetilde{M_{\beta_2}}$  &  $\approx$  &  $1.46957$
\end{tabular}

and
\begin{align*}
\widetilde{N}
\approx
\max\{ 1.66412, 3.391, 4.243, 1.211, 4.682, 0, 9.364, 4.682 \}
=
9.364
\end{align*}
%%%%%%%%%%%%%%%%%%%%%%%%%%%%%%%%%%%%%%%%%%%%%%%%%%%%%%%%%%%%%%%%%%%%%%%%%%%%%%%%%%%%%%%%%%%%%%%%%%%%%%%%%%%%%
%%%%%%%%%%%%%%%%%%%%%%%%%%%%%%%%%%%%%%%%%%%       STEP 6       %%%%%%%%%%%%%%%%%%%%%%%%%%%%%%%%%%%%%%%%%%%%%%
%%%%%%%%%%%%%%%%%%%%%%%%%%%%%%%%%%%%%%%%%%%%%%%%%%%%%%%%%%%%%%%%%%%%%%%%%%%%%%%%%%%%%%%%%%%%%%%%%%%%%%%%%%%%%
\item[\bf Step 6. ] We have
\begin{align*}
C_0  & \approx 1.72008\\
n_0   & \approx 4.39202
\end{align*}
and we define $\{\psi_n\}$ by $\psi_n=m_{Q_n}$. The first several terms are:
\begin{gather*}
m_{2},\  m_{7},\  m_{9},\  m_{16},\  m_{57},\  m_{187},\  m_{431},\  m_{7945},\  m_{8376},\  m_{58201},\  m_{241180},\  m_{299381}
\end{gather*}
%%%%%%%%%%%%%%%%%%%%%%%%%%%%%%%%%%%%%%%%%%%%%%%%%%%%%%%%%%%%%%%%%%%%%%%%%%%%%%%%%%%%%%%%%%%%%%%%%%%%%%%%%%%%%
%%%%%%%%%%%%%%%%%%%%%%%%%%%%%%%%%%%%%%%%%%%       RESULT       %%%%%%%%%%%%%%%%%%%%%%%%%%%%%%%%%%%%%%%%%%%%%%
%%%%%%%%%%%%%%%%%%%%%%%%%%%%%%%%%%%%%%%%%%%%%%%%%%%%%%%%%%%%%%%%%%%%%%%%%%%%%%%%%%%%%%%%%%%%%%%%%%%%%%%%%%%%%
\item[\bf Result. ] For $Q_n\geq 5>n_0$,
\begin{equation*}
\psi_n\|\psi_n\theta\|\|\psi_n\theta^2\|
<
\frac{C_0}{Q_{n+1}}
\approx
\frac{1.72008}{Q_{n+1}}.
\end{equation*}
In Table~\ref{tab:LWseqex0.2.2}
we compute the bound $\frac{C_0}{Q_{n+1}}$ and compare it to the actual values of
$\psi_n\|\psi_n\theta\|\|\psi_n\theta^2\|$  for the first few $n$.
\end{itemize}
\end{itemize}

\begin{figure}[htb]
    \centering
        \includegraphics[width=\figwidth\textwidth
        ]
        {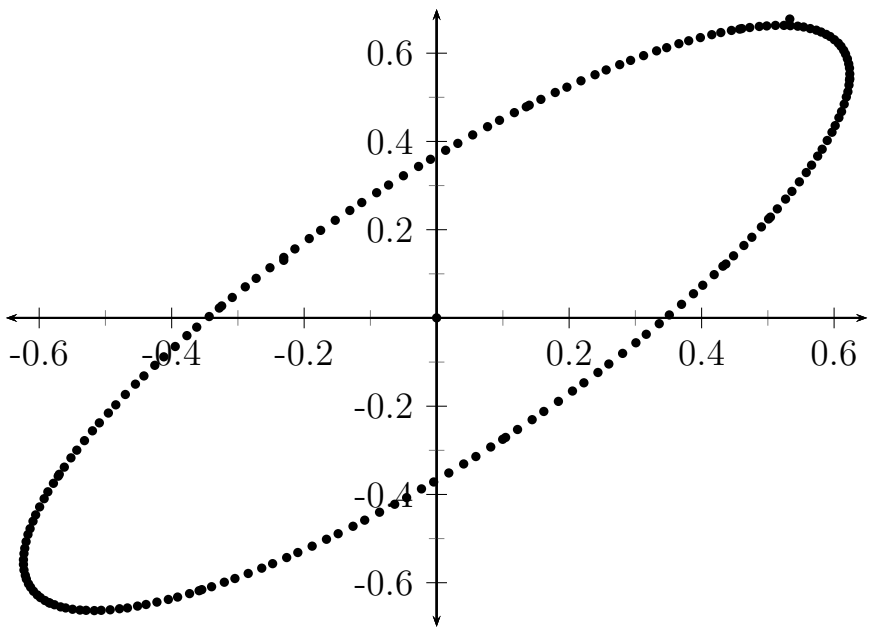}
    \caption{
    $(\sqrt{m_n}\langle m_n\theta\rangle,\sqrt{m_n}\langle m_n\theta^2\rangle)$ for
    $\theta^3=2 \theta+2$, $n\leq 200$.
    }
    \label{fig:LWseqex0.2.2}
\end{figure}

\begin{table}[h!tb]
\centering
\begin{tabular}{p{1.9 cm} p{5.3 cm} p{3.3 cm} p{2.8 cm}}\toprule
$Q_n$ & $\psi_n$ $(=m_{Q_n})$ & $\psi_n\|\psi_n\theta\|\|\psi_n\theta^2\|$ & $C_0/Q_{n+1}$\\
\midrule
2  &  1  &  $0.0300832$  &  $0.245725$\\
7  &  169  &  $0.064779$  &  $0.191120$\\
9  &  1296  &  $0.0281634$  &  $0.107505$\\
16  &  1618776  &  $0.0125768$  &  $0.0301768$\\
57  &  2219769241218582281661888  &  $0.00328181$  &  $0.00919827$\\
187  &  $714 \dots 835$     (82 digits)  &  $0.00165204$  &  $0.00399089$\\
431  &  $619 \dots 593$     (190 digits)  &  $0.000086617$  &  $0.000216498$\\
7945  &  $564 \dots 776$     (3514 digits)  &  $0.000074675$  &  $0.000205358$\\
8376  &  $258 \dots 036$     (3705 digits)  &  $0.0000120265$  &  $0.0000295541$\\
58201  &  $274 \dots 936$     (25746 digits)  &  $2.47362*10^{-6}$  &  $7.13192*10^{-6}$\\
241180  &  $436 \dots 500$     (106690 digits)  &  $2.13313*10^{-6}$  &  $5.74544*10^{-6}$\\
299381  &  $885 \dots 886$     (132436 digits)  &  $3.40429*10^{-7}$  &  $8.44223*10^{-7}$\\
2037466  &  $342 \dots 477$     (901311 digits)  &  $9.0588*10^{-8}$  &  $2.68268*10^{-7}$\\
6411779  &  $143 \dots 251$     (2836371 digits)  &  $6.8663*10^{-8}$  &  $2.03577*10^{-7}$\\
8449245  &  $362 \dots 526$     (3737682 digits)  &  $2.19251*10^{-8}$  &  $5.41594*10^{-8}$\\
31759514  &  $274 \dots 581$     (14049418 digits)  &  $2.88800*10^{-9}$  &  $7.45377*10^{-9}$\\
\bottomrule
\end{tabular}
\caption{$\psi_n\|\psi_n\theta\|\|\psi_n\theta^2\|$ and $C_0/Q_{n+1}$, where $\theta^3=2\theta+2$}
\label{tab:LWseqex0.2.2}
\end{table}

\newpage

%%%%%%%%%%%%%%%%%%%%%%%%%%%%%%%%%%%%%%%%%%%%%%%%%%%%%%%%%%%%%%%%%%%%%%%%%%%%%%%%%%%%%%%%%%%%%%%%%%%%%%%%%%%%%%%%%%%%
%%%%%%%%%%%%%%%%%%%%%%%%%%%%%%%%%%%%%%%%%%%%%%%      EXAMPLE 9                                  %%%%%%%%%%%%%%%%%%%%    %%%%%%%%%%%%%%%%%%%%%%%%%%%%%%%%%%%%%%%%%%%%%%%                                                 %%%%%%%%%%%%%%%%%%%%
%%%%%%%%%%%%%%%%%%%%%%%%%%%%%%%%%%%%%%%%%%%%%%%%%%%%%%%%%%%%%%%%%%%%%%%%%%%%%%%%%%%%%%%%%%%%%%%%%%%%%%%%%%%%%%%%%%%%

\subsection{Example 10: $\alpha^3=\alpha^2-\alpha+2$}

Let $\alpha\approx 1.35321$ be the real root of $f(x)=x^3-x^2+x-2$, and consider the pair $(\alpha,\alpha^2)$. (The discriminant of $f$ is $-83$.) We have $A=1$, $B=-1$, $r_0=r_1=0$, $r_2=s=1$.

\begin{itemize}
  \item[]
\begin{itemize}
%%%%%%%%%%%%%%%%%%%%%%%%%%%%%%%%%%%%%%%%%%%%%%%%%%%%%%%%%%%%%%%%%%%%%%%%%%%%%%%%%%%%%%%%%%%%%%%%%%%%%%%%%%%%%
%%%%%%%%%%%%%%%%%%%%%%%%%%%%%%%%%%%%%%%%%%%       STEP 1       %%%%%%%%%%%%%%%%%%%%%%%%%%%%%%%%%%%%%%%%%%%%%%
%%%%%%%%%%%%%%%%%%%%%%%%%%%%%%%%%%%%%%%%%%%%%%%%%%%%%%%%%%%%%%%%%%%%%%%%%%%%%%%%%%%%%%%%%%%%%%%%%%%%%%%%%%%%%
\item[\bf Step 1. ] We transform $f$ to
\begin{equation*}
g(x)
=
27 f\left(\frac{x+1}{3}\right)
=
x^3+6x-47,
\end{equation*}
which has $\theta:=3\alpha-1\approx 3.05963$ as its real root. Using PARI/GP, we find that $\O_K\subset \frac{1}{9}\Z[\theta]$ (so we put $d=9$).
%%%%%%%%%%%%%%%%%%%%%%%%%%%%%%%%%%%%%%%%%%%%%%%%%%%%%%%%%%%%%%%%%%%%%%%%%%%%%%%%%%%%%%%%%%%%%%%%%%%%%%%%%%%%%
%%%%%%%%%%%%%%%%%%%%%%%%%%%%%%%%%%%%%%%%%%%       STEP 2       %%%%%%%%%%%%%%%%%%%%%%%%%%%%%%%%%%%%%%%%%%%%%%
%%%%%%%%%%%%%%%%%%%%%%%%%%%%%%%%%%%%%%%%%%%%%%%%%%%%%%%%%%%%%%%%%%%%%%%%%%%%%%%%%%%%%%%%%%%%%%%%%%%%%%%%%%%%%
\item[\bf Step 2. ] Using PARI/GP, we find that a fundamental unit is
\begin{equation*}
\ep_0=-\frac{2}{3}+\frac{1}{3}\theta\approx 0.35321.
\end{equation*}
Since $0<\ep_0<1$, we put
\begin{equation*}
\lambda
=
\ep_0^{-1}
=
\frac{10}{9}+\frac{2}{9}\theta+ \frac{1}{9}\theta^2
\approx
2.83118
\end{equation*}

%%%%%%%%%%%%%%%%%%%%%%%%%%%%%%%%%%%%%%%%%%%%%%%%%%%%%%%%%%%%%%%%%%%%%%%%%%%%%%%%%%%%%%%%%%%%%%%%%%%%%%%%%%%%%
%%%%%%%%%%%%%%%%%%%%%%%%%%%%%%%%%%%%%%%%%%%       STEP 3       %%%%%%%%%%%%%%%%%%%%%%%%%%%%%%%%%%%%%%%%%%%%%%
%%%%%%%%%%%%%%%%%%%%%%%%%%%%%%%%%%%%%%%%%%%%%%%%%%%%%%%%%%%%%%%%%%%%%%%%%%%%%%%%%%%%%%%%%%%%%%%%%%%%%%%%%%%%%
\item[\bf Step 3. ] Define the sequences $\{a_n\}$, $\{b_n\}$, $\{c_n\}$ as before and define $\{m_n\}$ by
 $m_n=9A^2sd c_n=81 c_n$. (We plot the points $\{{m_n^{1/2}}\langle m_n\alpha\rangle,{m_n^{1/2}}\langle m_n\alpha^2\rangle\}$ for $n\leq 200$ in Figure~\ref{fig:LWseqex1.-1.2}.)
%%%%%%%%%%%%%%%%%%%%%%%%%%%%%%%%%%%%%%%%%%%%%%%%%%%%%%%%%%%%%%%%%%%%%%%%%%%%%%%%%%%%%%%%%%%%%%%%%%%%%%%%%%%%%
%%%%%%%%%%%%%%%%%%%%%%%%%%%%%%%%%%%%%%%%%%%       STEP 4       %%%%%%%%%%%%%%%%%%%%%%%%%%%%%%%%%%%%%%%%%%%%%%
%%%%%%%%%%%%%%%%%%%%%%%%%%%%%%%%%%%%%%%%%%%%%%%%%%%%%%%%%%%%%%%%%%%%%%%%%%%%%%%%%%%%%%%%%%%%%%%%%%%%%%%%%%%%%
\item[\bf Step 4. ] Put
\begin{equation*}
\phi
=
\arctan\left(
\frac{\sqrt{3\theta^2+24}(2-\theta)}
{8-2\theta-\theta^2}
\right)
 \approx 0.796415,
\end{equation*}
The first few convergents of $\phi/\pi \approx 0.253507$ are:
\begin{equation*}
\frac{1}{3}, \frac{1}{4}, \frac{18}{71}, \frac{235}{927}, \frac{253}{998}, \frac{2259}{8911}, \frac{4771}{18820}, \frac{7030}{27731}, \frac{25861}{102013}, \frac{84613}{333770}, \frac{110474}{435783}, \frac{195087}{769553}
\end{equation*}
%%%%%%%%%%%%%%%%%%%%%%%%%%%%%%%%%%%%%%%%%%%%%%%%%%%%%%%%%%%%%%%%%%%%%%%%%%%%%%%%%%%%%%%%%%%%%%%%%%%%%%%%%%%%%
%%%%%%%%%%%%%%%%%%%%%%%%%%%%%%%%%%%%%%%%%%%       STEP 5       %%%%%%%%%%%%%%%%%%%%%%%%%%%%%%%%%%%%%%%%%%%%%%
%%%%%%%%%%%%%%%%%%%%%%%%%%%%%%%%%%%%%%%%%%%%%%%%%%%%%%%%%%%%%%%%%%%%%%%%%%%%%%%%%%%%%%%%%%%%%%%%%%%%%%%%%%%%%
\item[\bf Step 5. ] We have

\begin{tabular}[t]{rcl rcl rcl}
$C_1$  &  $\approx$  &  $1.41421$
\qquad\qquad
& $C_2$  &  $\approx$  &  $0.513861$
\qquad\qquad
& $\widetilde{M_\theta}$  &  $\approx$  &  $9.81058$
\\
$\widetilde{M_\alpha}$  &  $\approx$  &  $88.2952$
& $\widetilde{M_{\beta_1}}$  &  $\approx$  &  $88.2952$
& $\widetilde{M_{\beta_2}}$  &  $\approx$  &  $27.1782$
\end{tabular}

and
\begin{align*}
\widetilde{N}
\approx
\max\{ 1.68261, 11.77, 25.46, 1.354, 117.7, 157.0, 0, 235.5, 117.7 \}
=
235.5
\end{align*}
%%%%%%%%%%%%%%%%%%%%%%%%%%%%%%%%%%%%%%%%%%%%%%%%%%%%%%%%%%%%%%%%%%%%%%%%%%%%%%%%%%%%%%%%%%%%%%%%%%%%%%%%%%%%%
%%%%%%%%%%%%%%%%%%%%%%%%%%%%%%%%%%%%%%%%%%%       STEP 6       %%%%%%%%%%%%%%%%%%%%%%%%%%%%%%%%%%%%%%%%%%%%%%
%%%%%%%%%%%%%%%%%%%%%%%%%%%%%%%%%%%%%%%%%%%%%%%%%%%%%%%%%%%%%%%%%%%%%%%%%%%%%%%%%%%%%%%%%%%%%%%%%%%%%%%%%%%%%
\item[\bf Step 6. ] We have
\begin{align*}
C_0  & \approx 2399.70\\
n_0   & \approx 10.4959
\end{align*}
and we define $\{\psi_n\}$ by $\psi_n=m_{Q_n}$. The first several terms are:
\begin{gather*}
m_{3},\  m_{4},\  m_{71},\  m_{927},\  m_{998},\  m_{8911},\  m_{18820},\  m_{27731},\  m_{102013},\  m_{333770},\  m_{435783}
\end{gather*}
%%%%%%%%%%%%%%%%%%%%%%%%%%%%%%%%%%%%%%%%%%%%%%%%%%%%%%%%%%%%%%%%%%%%%%%%%%%%%%%%%%%%%%%%%%%%%%%%%%%%%%%%%%%%%
%%%%%%%%%%%%%%%%%%%%%%%%%%%%%%%%%%%%%%%%%%%       RESULT       %%%%%%%%%%%%%%%%%%%%%%%%%%%%%%%%%%%%%%%%%%%%%%
%%%%%%%%%%%%%%%%%%%%%%%%%%%%%%%%%%%%%%%%%%%%%%%%%%%%%%%%%%%%%%%%%%%%%%%%%%%%%%%%%%%%%%%%%%%%%%%%%%%%%%%%%%%%%
\item[\bf Result. ] For $Q_n\geq 11>n_0$,
\begin{equation*}
\psi_n\|\psi_n\alpha\|\|\psi_n\alpha^2\|
<
\frac{C_0}{Q_{n+1}}
\approx
\frac{2399.70}{Q_{n+1}}.
\end{equation*}
In Table~\ref{tab:LWseqex1.-1.2}
we compute the bound $\frac{C_0}{Q_{n+1}}$ and compare it to the actual values of
$\psi_n\|\psi_n\alpha\|\|\psi_n\alpha^2\|$  for the first few $n$.
\end{itemize}
\end{itemize}

\begin{figure}[h!tb]
    \centering
        \includegraphics[width=\figwidth\textwidth
        ]
        {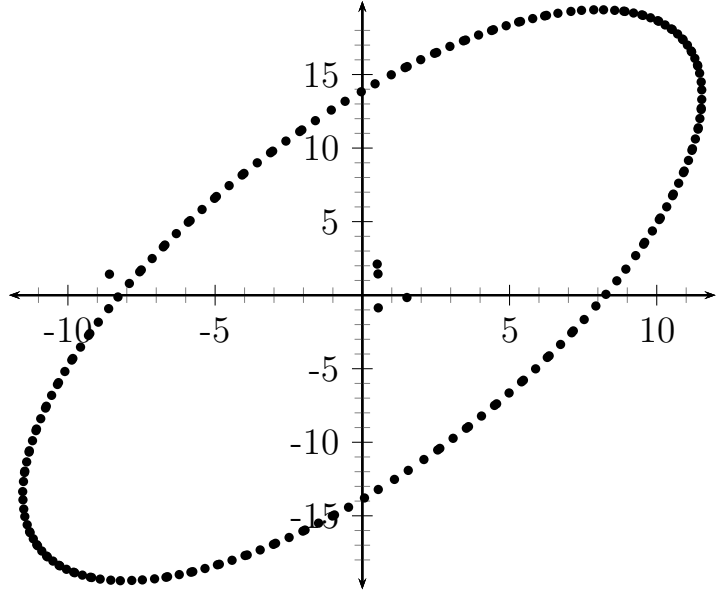}
    \caption{
    $(\sqrt{m_n}\langle m_n\alpha\rangle,\sqrt{m_n}\langle m_n\alpha^2\rangle)$ for
    $\alpha^3=\alpha^2-\alpha+2$, $n\leq 200$.
    }
    \label{fig:LWseqex1.-1.2}
\end{figure}

\begin{table}[h!tb]
\centering
\begin{tabular}{p{1.9 cm} p{5.3 cm} p{3.3 cm} p{2.8 cm}}\toprule
$Q_n$ & $\psi_n$ $(=m_{Q_n})$ & $\psi_n\|\psi_n\alpha\|\|\psi_n\alpha^2\|$ & $C_0/Q_{n+1}$\\
\midrule
3  &  54  &  $0.461096$  &  $599.925$\\
4  &  153  &  $1.07036$  &  $33.7986$\\
71  &  $292 \dots 560$     (33 digits)  &  $0.50606$  &  $2.58867$\\
927  &  $223 \dots 190$     (420 digits)  &  $0.454662$  &  $2.40451$\\
998  &  $274 \dots 634$     (452 digits)  &  $0.054208$  &  $0.269296$\\
8911  &  $715 \dots 385$     (4028 digits)  &  $0.0196042$  &  $0.127508$\\
18820  &  $248 \dots 673$     (8507 digits)  &  $0.0150207$  &  $0.0865350$\\
27731  &  $748 \dots 424$     (12534 digits)  &  $0.00457984$  &  $0.0235235$\\
102013  &  $777 \dots 457$     (46107 digits)  &  $0.00128298$  &  $0.00718968$\\
333770  &  $262 \dots 058$     (150854 digits)  &  $0.00073076$  &  $0.00550664$\\
435783  &  $859 \dots 070$     (196960 digits)  &  $0.00055223$  &  $0.00311830$\\
769553  &  $949 \dots 944$     (347813 digits)  &  $0.000178517$  &  $0.000874386$\\
2744442  &  $549 \dots 968$     (1240398 digits)  &  $0.0000166865$  &  $0.0000850536$\\
28213973  &  $411 \dots 832$     (12751787 digits)  &  $0.0000116516$  &  $0.0000775137$\\
30958415  &  $950 \dots 680$     (13992184 digits)  &  $5.0349*10^{-6}$  &  $0.0000266246$\\
\bottomrule
\end{tabular}
\caption{$\psi_n\|\psi_n\alpha\|\|\psi_n\alpha^2\|$ and $C_0/Q_{n+1}$, where $\alpha^3=\alpha^2-\alpha+2$}
\label{tab:LWseqex1.-1.2}
\end{table}

\newpage

%%%%%%%%%%%%%%%%%%%%%%%%%%%%%%%%%%%%%%%%%%%%%%%%%%%%%%%%%%%%%%%%%%%%%%%%%%%%%%%%%%%%%%%%%%%%%%%%%%%%%%%%%%%%%%%%%%%%
%%%%%%%%%%%%%%%%%%%%%%%%%%%%%%%%%%%%%%%%%%%%%%%      EXAMPLE 11                                 %%%%%%%%%%%%%%%%%%%%    %%%%%%%%%%%%%%%%%%%%%%%%%%%%%%%%%%%%%%%%%%%%%%%                                                 %%%%%%%%%%%%%%%%%%%%
%%%%%%%%%%%%%%%%%%%%%%%%%%%%%%%%%%%%%%%%%%%%%%%%%%%%%%%%%%%%%%%%%%%%%%%%%%%%%%%%%%%%%%%%%%%%%%%%%%%%%%%%%%%%%%%%%%%%

\subsection{Example 11: $\theta^3=\theta+2$}

Let $\theta$ be the real root of $f(x)=x^3-x-2$, and consider the pair $(\theta,\theta^2)$. (The discriminant of $f$ is $-104$.) We have $A=1$, $B=0$, $r_0=r_1=0$, $r_2=s=1$.

\begin{itemize}
  \item[]
\begin{itemize}
%%%%%%%%%%%%%%%%%%%%%%%%%%%%%%%%%%%%%%%%%%%%%%%%%%%%%%%%%%%%%%%%%%%%%%%%%%%%%%%%%%%%%%%%%%%%%%%%%%%%%%%%%%%%%
%%%%%%%%%%%%%%%%%%%%%%%%%%%%%%%%%%%%%%%%%%%       STEP 1       %%%%%%%%%%%%%%%%%%%%%%%%%%%%%%%%%%%%%%%%%%%%%%
%%%%%%%%%%%%%%%%%%%%%%%%%%%%%%%%%%%%%%%%%%%%%%%%%%%%%%%%%%%%%%%%%%%%%%%%%%%%%%%%%%%%%%%%%%%%%%%%%%%%%%%%%%%%%
\item[\bf Step 1. ] We already have $f$ in the form $x^3-px-q$, with $p=1$ and $q=2$. Using PARI/GP, we find that $\O_K=\Z[\theta]$, so we put $d=1$.
%%%%%%%%%%%%%%%%%%%%%%%%%%%%%%%%%%%%%%%%%%%%%%%%%%%%%%%%%%%%%%%%%%%%%%%%%%%%%%%%%%%%%%%%%%%%%%%%%%%%%%%%%%%%%
%%%%%%%%%%%%%%%%%%%%%%%%%%%%%%%%%%%%%%%%%%%       STEP 2       %%%%%%%%%%%%%%%%%%%%%%%%%%%%%%%%%%%%%%%%%%%%%%
%%%%%%%%%%%%%%%%%%%%%%%%%%%%%%%%%%%%%%%%%%%%%%%%%%%%%%%%%%%%%%%%%%%%%%%%%%%%%%%%%%%%%%%%%%%%%%%%%%%%%%%%%%%%%
\item[\bf Step 2. ] Using PARI/GP, we find that a fundamental unit is
\begin{equation*}
\ep_0=1+\theta+\theta^2\approx 4.83598.
\end{equation*}
Since $\ep_0>1$, we put $\lambda=\ep_0$.
%%%%%%%%%%%%%%%%%%%%%%%%%%%%%%%%%%%%%%%%%%%%%%%%%%%%%%%%%%%%%%%%%%%%%%%%%%%%%%%%%%%%%%%%%%%%%%%%%%%%%%%%%%%%%
%%%%%%%%%%%%%%%%%%%%%%%%%%%%%%%%%%%%%%%%%%%       STEP 3       %%%%%%%%%%%%%%%%%%%%%%%%%%%%%%%%%%%%%%%%%%%%%%
%%%%%%%%%%%%%%%%%%%%%%%%%%%%%%%%%%%%%%%%%%%%%%%%%%%%%%%%%%%%%%%%%%%%%%%%%%%%%%%%%%%%%%%%%%%%%%%%%%%%%%%%%%%%%
\item[\bf Step 3. ] Define the sequences $\{a_n\}$, $\{b_n\}$, $\{c_n\}$ as before and define $\{m_n\}$ by $m_n=c_n$. (We plot the points $\{{m_n^{1/2}}\langle m_n\theta\rangle,{m_n^{1/2}}\langle
 m_n\theta^2\rangle\}$ for $n\leq 200$ in Figure~\ref{fig:LWseqex0.1.2}.)
%%%%%%%%%%%%%%%%%%%%%%%%%%%%%%%%%%%%%%%%%%%%%%%%%%%%%%%%%%%%%%%%%%%%%%%%%%%%%%%%%%%%%%%%%%%%%%%%%%%%%%%%%%%%%
%%%%%%%%%%%%%%%%%%%%%%%%%%%%%%%%%%%%%%%%%%%       STEP 4       %%%%%%%%%%%%%%%%%%%%%%%%%%%%%%%%%%%%%%%%%%%%%%
%%%%%%%%%%%%%%%%%%%%%%%%%%%%%%%%%%%%%%%%%%%%%%%%%%%%%%%%%%%%%%%%%%%%%%%%%%%%%%%%%%%%%%%%%%%%%%%%%%%%%%%%%%%%%
\item[\bf Step 4. ] Put
\begin{equation*}
\phi
=
\arctan\left(
\frac{\sqrt{3\theta^2-4}(1-\theta)}
{4-\theta-\theta^2}
\right)
 \approx -1.38945,
\end{equation*}
The first few convergents of $\phi/\pi \approx -0.442276$ are:
\begin{equation*}
-\frac{1}{2}, -\frac{3}{7}, -\frac{4}{9}, -\frac{19}{43}, -\frac{23}{52}, -\frac{272}{615}, -\frac{11175}{25267}, -\frac{11447}{25882}, -\frac{22622}{51149}, -\frac{34069}{77031}, -\frac{90760}{205211}
\end{equation*}
%%%%%%%%%%%%%%%%%%%%%%%%%%%%%%%%%%%%%%%%%%%%%%%%%%%%%%%%%%%%%%%%%%%%%%%%%%%%%%%%%%%%%%%%%%%%%%%%%%%%%%%%%%%%%
%%%%%%%%%%%%%%%%%%%%%%%%%%%%%%%%%%%%%%%%%%%       STEP 5       %%%%%%%%%%%%%%%%%%%%%%%%%%%%%%%%%%%%%%%%%%%%%%
%%%%%%%%%%%%%%%%%%%%%%%%%%%%%%%%%%%%%%%%%%%%%%%%%%%%%%%%%%%%%%%%%%%%%%%%%%%%%%%%%%%%%%%%%%%%%%%%%%%%%%%%%%%%%
\item[\bf Step 5. ] We have

\begin{tabular}[t]{rcl rcl rcl}
$C_1$  &  $\approx$  &  $1.41421$
\qquad\qquad
& $C_2$  &  $\approx$  &  $0.410174$
\qquad\qquad
& $\widetilde{M_\theta}$  &  $\approx$  &  $0.870111$
\\
$\widetilde{M_\alpha}$  &  $\approx$  &  $0.870111$
& $\widetilde{M_{\beta_1}}$  &  $\approx$  &  $0.870111$
& $\widetilde{M_{\beta_2}}$  &  $\approx$  &  $1.12656$
\end{tabular}

and
\begin{align*}
\widetilde{N}
\approx
\max\{ 2.19909, 2.629, 2.828, 1.256, 3.480, 0, 6.961, 3.480 \}
=
6.961
\end{align*}
%%%%%%%%%%%%%%%%%%%%%%%%%%%%%%%%%%%%%%%%%%%%%%%%%%%%%%%%%%%%%%%%%%%%%%%%%%%%%%%%%%%%%%%%%%%%%%%%%%%%%%%%%%%%%
%%%%%%%%%%%%%%%%%%%%%%%%%%%%%%%%%%%%%%%%%%%       STEP 6       %%%%%%%%%%%%%%%%%%%%%%%%%%%%%%%%%%%%%%%%%%%%%%
%%%%%%%%%%%%%%%%%%%%%%%%%%%%%%%%%%%%%%%%%%%%%%%%%%%%%%%%%%%%%%%%%%%%%%%%%%%%%%%%%%%%%%%%%%%%%%%%%%%%%%%%%%%%%
\item[\bf Step 6. ] We have
\begin{align*}
C_0  & \approx 0.980236\\
n_0   & \approx 2.46219
\end{align*}
and we define $\{\psi_n\}$ by $\psi_n=m_{Q_n}$. The first several terms are:
\begin{gather*}
m_{2},\  m_{7},\  m_{9},\  m_{43},\  m_{52},\  m_{615},\  m_{25267},\  m_{25882},\  m_{51149},\  m_{77031},\  m_{205211},\  m_{282242}
\end{gather*}
%%%%%%%%%%%%%%%%%%%%%%%%%%%%%%%%%%%%%%%%%%%%%%%%%%%%%%%%%%%%%%%%%%%%%%%%%%%%%%%%%%%%%%%%%%%%%%%%%%%%%%%%%%%%%
%%%%%%%%%%%%%%%%%%%%%%%%%%%%%%%%%%%%%%%%%%%       RESULT       %%%%%%%%%%%%%%%%%%%%%%%%%%%%%%%%%%%%%%%%%%%%%%
%%%%%%%%%%%%%%%%%%%%%%%%%%%%%%%%%%%%%%%%%%%%%%%%%%%%%%%%%%%%%%%%%%%%%%%%%%%%%%%%%%%%%%%%%%%%%%%%%%%%%%%%%%%%%
\item[\bf Result. ] For $Q_n\geq 3>n_0$,
\begin{equation*}
\psi_n\|\psi_n\theta\|\|\psi_n\theta^2\|
<
\frac{C_0}{Q_{n+1}}
\approx
\frac{0.980236}{Q_{n+1}}.
\end{equation*}
In Table~\ref{tab:LWseqex0.1.2}
we compute the bound $\frac{C_0}{Q_{n+1}}$ and compare it to the actual values of
$\psi_n\|\psi_n\theta\|\|\psi_n\theta^2\|$  for the first few $n$.
\end{itemize}
\end{itemize}

\begin{figure}[h!tb]
    \centering
        \includegraphics[width=\figwidth\textwidth
        ]
        {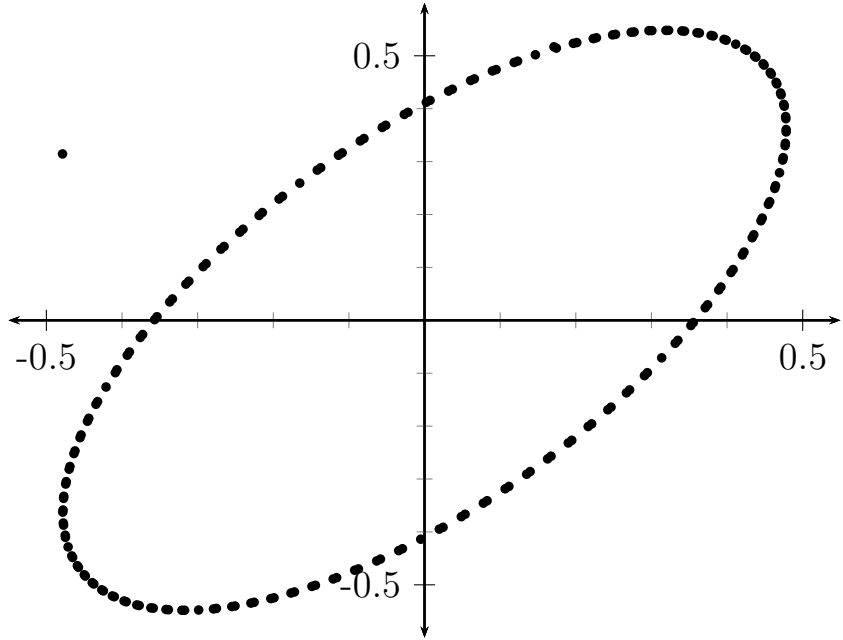}
    \caption{
    $(\sqrt{m_n}\langle m_n\theta\rangle,\sqrt{m_n}\langle m_n\theta^2\rangle)$ for
    $\theta^3=\theta+2$, $n\leq 200$.
    }
    \label{fig:LWseqex0.1.2}
\end{figure}

\begin{table}[h!tb]
\centering
\begin{tabular}{p{1.9 cm} p{5.3 cm} p{3.3 cm} p{2.8 cm}}\toprule
$Q_n$ & $\psi_n$ $(=m_{Q_n})$ & $\psi_n\|\psi_n\theta\|\|\psi_n\theta^2\|$ & $C_0/Q_{n+1}$\\
\midrule
2  &  4  &  $0.088387$  &  $0.140034$\\
7  &  10407  &  $0.0402688$  &  $0.108915$\\
9  &  243385  &  $0.0126439$  &  $0.0227962$\\
43  &  $455 \dots 491$     (29 digits)  &  $0.0104498$  &  $0.0188507$\\
52  &  $659 \dots 364$     (35 digits)  &  $0.00100429$  &  $0.00159388$\\
615  &  $152 \dots 879$     (421 digits)  &  $0.0000240102$  &  $0.0000387951$\\
25267  &  $122 \dots 779$     (17295 digits)  &  $0.0000152616$  &  $0.0000378733$\\
25882  &  $110 \dots 180$     (17716 digits)  &  $8.7519*10^{-6}$  &  $0.0000191643$\\
51149  &  $802 \dots 749$     (35010 digits)  &  $6.5084*10^{-6}$  &  $0.0000127252$\\
77031  &  $528 \dots 079$     (52726 digits)  &  $2.24402*10^{-6}$  &  $4.77672*10^{-6}$\\
205211  &  $791 \dots 131$     (140463 digits)  &  $2.02017*10^{-6}$  &  $3.47303*10^{-6}$\\
282242  &  $248 \dots 196$     (193190 digits)  &  $2.23897*10^{-7}$  &  $3.57048*10^{-7}$\\
2745389  &  $266 \dots 125$     (1879175 digits)  &  $5.0764*10^{-9}$  &  $8.09582*10^{-9}$\\
\bottomrule
\end{tabular}
\caption{$\psi_n\|\psi_n\theta\|\|\psi_n\theta^2\|$ and $C_0/Q_{n+1}$, where $\theta^3=\theta+2$}
\label{tab:LWseqex0.1.2}
\end{table}

%\subsection{[need to add more examples]}
%(Haven't figured out the algorithm for positive discriminant, so can't use Shanks' simplest cubic fields yet.)
%
%From table in back of Cohen's Comp. Alg. NT:
%\begin{itemize}
%\item $X^3-X^2-1$ (disc $-31$)
%\item $X^3-X^2-X-1$ (disc $-44$)
%\item $X^3+2X-1$ (disc $-59$)
%\item $X^3-2X-2$ (disc $-76$)
%\item $X^3+X^2+2X-1$ (disc $-87$)
%\item $X^3-X-2$ (disc $-104$)
%\item $X^3-X^2+3X-2$ (disc $-107$)
%\end{itemize}
%
%From Stark's paper: $X^3-3600X^2+120X-2$ (discriminant is $-186615360108$)
%
%This ex. is another where $\theta$ has huge partial quotients early in the expansion. Just like in case with $x^3-8x-10$, the c.f. of $\phi/\pi$ has a huge second partial quotient (of 6107086473). The seq of convergents starts:
%
%\begin{equation*}
%\frac{1}{4}, -\frac{6107086473}{24428345893}, -\frac{54963778258}{219855113041}, -\frac{1490129099439}{5960516398000}
%\end{equation*}

%\input{disexamples1.tex}
%\input{disexamples2.tex}
%\input{disexamplesfigurestables.tex}

%% ================================================================

\appendix
\chapter{Commands and Algorithms}
\label{appendix}
We construct our sequences $\{c_n\}$ by taking powers of a unit $\lambda$ in $\Q[x]/(x^3-px-q)$, and taking their $\theta^2$-coordinates in the power basis $1,\theta,\theta^2$ (where $\theta$ is the real root of $x^3-px-q$). Two ways of computing the $c_n$'s are by working with polynomials modulo $x^3-px-q$ (for which we used PARI/GP~2.7.2) and by working with matrices (for which we used Mathematica~8.0). We list some of the specific commands and algorithms we used.

\section{Using Polynomials Modulo $x^3-px-q$ in PARI/GP}

Let $f=x^3-px-q$ (for example, \verb|f=x^3-2| or \verb|f=x^3-147*x-740|), and initialize the number field: \verb|k=bnfinit(f);|

\subsection{Finding $\lambda$}

\begin{enumerate}
\item Find a fundamental unit $\ep_0$: \verb|ep=k.fu[1]|
\item Find the real value of $\ep_0$ (get value of $\theta$ and plug it into polynomial for $\ep_0$): \\ \verb|t=polroots(f)[1]; subst(ep.pol,x,t)|
\item Depending on the value of $\ep_0$, let $\lambda$ be one of $\pm \ep_0^{\pm 1}$. For example: \verb|lam=ep^-1| or \verb|lam=-ep|
\end{enumerate}

\subsection{Computing $c_n$}
Assuming we've found a unit $\lambda>1$, we can compute $\lambda^n$ directly (as \verb|lam^n|), and $c_n$ by
\begin{equation*}
\verb|polcoeff((lam^n).pol,2)|
\end{equation*}
We can define a function
\begin{equation*}
\verb|cn(l,n)=polcoeff((l^n).pol,2)|
\end{equation*}
and then compute $c_n$ as \verb|cn(lam,n)|. If we also wanted to be able to find the $a_n$'s or $b_n$'s, we could define a function
\begin{equation*}
\verb|lcoeff(l,n,j)=polcoeff((l^n).pol,j)|
\end{equation*}
where \verb|lcoeff(lam,n,j)| finds the $\theta^j$ coordinate of $\lambda^n$ in the basis $1,\theta,\theta^2$. So \begin{equation*}
\verb|lcoeff(lam,n,0)|
\end{equation*}
would be $a_n$ and
\begin{equation*}
\verb|lcoeff(lam,n,1)|
\end{equation*}
would be $b_n$.\\

\subsection{Finding $m_n$ and Calculating $m_n\|m_n\alpha\|\|m_n\beta\|$}
To find $k_n$, we find an integral basis with \verb|k.zk| and let $d$ be the largest denominator in this integral basis. Then $k_n=d c_n$, and $m_n$ is some integer multiple of $k_n$.\\

To evaluate $n\|n\alpha\|\|n\beta\|$, we can first define $\lb x\rb$ by
\begin{equation*}
\verb|mod1(x)=x-round(x)|
\end{equation*}
and then define
\begin{equation*}
\verb|LWprod(n,a,b)=n*abs(mod1(n*a)*mod1(n*b))|
\end{equation*}
For example, we could find $k_n\|k_n\theta\|\|k_n\theta^2\|$ as
\begin{equation*}
\verb|LWprod(d*cn(lam,n),t,t^2)|
\end{equation*}

\subsection{Increasing Precision and Memory}
To be able to compute this for large $n$, we may need to increase the precision and available memory. To increase the precision to $N$ digits, we use
\begin{equation*}
\verb|default(realprecision,N)|
\end{equation*}
If we increase the precision, we need to re-calculate the values of $\theta$ or $\alpha$ or $\beta$. This may require increasing the available memory with \verb|allocatemem()| several times. For instance, calculating that
\begin{equation*}
c_n\|c_n\sqrt[3]{2}\|\|c_n\sqrt[3]{4}\|\approx 4.05112*10^{-7}
\end{equation*}
for $n=1059767$ requires around 1~million digits of precision (the default is 28 significant digits), and finding $\sqrt[3]{2}$ to this precision requires a memory stack size of around 1024MB (the default is 4MB).

\newpage

\section{Using Matrices}

Given $f(x)=x^3-px-q$ with $\theta$ as a root, we represent an element $\zeta=x+y\theta+z\theta^2$ of $K=\Q(\theta)$ by a vector\footnote{Mathematica treats \verb|{x,y,z}| as either a row vector or column vector, depending on the context.} $(x,y,z)\in\Q^3$ or as a matrix $M_\zeta:=xI+yT+zT^2\in\Q[T]$, where
\begin{equation*}
T=
\begin{pmatrix}
0 & 0 & q\\
1 & 0 & p\\
0 & 1 & 0
\end{pmatrix}
\end{equation*}
is the companion matrix of $f$. We add, subtract, and scalar-multiply componentwise. To multiply $\zeta_1$ and $\zeta_2$ we compute $M_{\zeta_1}\zeta_2$. To divide $\zeta_1/\zeta_2$, we compute $M_{\zeta_2}^{-1}\zeta_2$. (If $\zeta_2\neq 0$, then $M_{\zeta_2}$ is nonsingular.) To compute $\zeta^n$ for $n>0$, we can use repeated squaring on the matrix $M_\zeta$, and then \begin{equation*}
\zeta^n\longleftrightarrow M_\zeta^n\begin{pmatrix}1\\0\\0\end{pmatrix}.
\end{equation*}

We assume we already have $\alpha$ and $\beta$, and have transformed the minimal polynomial of $\alpha$ into the form $g(x)=x^3-px-q$. We initialize the field by
%putting \verb|deg=3|,
building the companion matrix $T$ for $g$, building the vector $(I,T,T^2)$ (which we call \verb|TPowers|), setting up a vector of roots $(\theta_1,\theta_2,\theta_3)$ of $g$, and putting $\theta=\theta_1$. We still use PARI/GP to find $d$ and a unit $\lambda>1$. We list some of the Mathematica functions we used.

\subsection{Computing in $K$}
\begin{enumerate}
\item \verb|KMat[v]| takes the vector \verb|v={x,y,z}| and returns the matrix \verb|v.TPowers| (which is $xI+yT+zT^2$).
\item \verb|KN[a_] := Det[KMat[a]];  (* Norm of a *)|
\item \verb|KTr[a_] := Tr[KMat[a]];  (* Trace of a *)|
\item \verb|KMul[a_, b_] := KMat[a].b   (* multiply a and b in K *)|
\item \verb|KInv[a_] := Inverse[KMat[a]].UnitVector[3, 1]; (* get a^{-1} *)|
\item \verb|KDiv[a_, b_] := Inverse[KMat[b]].a (* a/b in K *)|
\\

\begin{samepage}
\item \begin{verbatim}
(*  calculate a^n (n>0) in K by repeated squaring  *)
KExpPos[a_, n_] := Module[{digits, A, b, i},

  (* get bits of n, highest to lowest *)
  digits = IntegerDigits[n, 2];
  A = KMat[a];              (* matrix for a *)

  (*  repeated squaring step  *)
  b = a;   (* first bit is 1 *)
  For[i = 2, i <= Length[digits], i++,
   b = KMat[b].b;                        (* squaring *)
   If[digits[[i]] == 1,      (* if ith bit is 1, multiply by a *)
    b = A.b;
    ];
   ];
  b
  ]
\end{verbatim}
\end{samepage}
\item
\begin{verbatim}
(*  calculate a^n for any int n   *)
KExp[a_, n_Integer] :=
 If[n > 0, KExpPos[a, n],
  If[n == 0, UnitVector[3, 1],
   KExpPos[KInv[a], -n]
   ]
  ]
\end{verbatim}

\subsection{Calculating $\lb x \rb$, $\|x\|$, $n\|n\alpha\|\|n\beta\|$}

\item \verb|mod1[x_] := x - Round[x]| (this is $\lb x\rb$)
\item \verb|LWNorm[x_] := Abs[mod1[x]]| (this is $\|x\|$)
\item \verb|(*   find LW product to precision prec   *)|\\
\verb|LWProd[n_, a_, b_, prec_] := Abs[N[n mod1[n a] mod1[n b], prec]]|

\subsection{Calculating Sequences of Powers}
\begin{samepage}
\item
\begin{verbatim}
(*  get seq of a^i from m to n *)
KExpSeq[a_, m_, n_] := Module[{A, ai, i, seq},
  A = KMat[a];
  ai = KExp[a, m];
  seq = {ai};

  For[i = m, i < n, i++,
   ai = A.ai;
   seq = Append[seq, ai];
   ];
  seq
  ]
\end{verbatim}
\end{samepage}
\subsection{Calculating the Point $(n^{1/2}\lb n\alpha\rb,n^{1/2} \lb n\beta\rb)$}
\item
\begin{verbatim}
(*   (n^(1/2)<na>,n^(1/2)<nb>)   *)
SqrtMod1Pt[n_, list_, prec_] := N[Sqrt[n] mod1[n list], prec]
\end{verbatim}

\end{enumerate}

\subsection{Calculating the $m_n$'s}
We let \verb|lam = {a,b,c}| be the vector for $\lambda=a+b\theta+c\theta^2$ in the $1,\theta,\theta^2$ basis. We can compute individual $m_n$'s or compute a sequence of them. To get an individual $m_n$, we put
\begin{equation*}
\verb|mn = const KExp[lam,n]|
\end{equation*}
(where \verb|const| is some constant depending on $\alpha$ and $\beta$ and $d$). To find $m_n^{1/2}\lb m_n\alpha\rb$ and $m_n^{1/2}\lb m_n\beta\rb$ we used
\begin{alltt}
SqrtMod1[mn, \(\alpha\), \(\beta\)]
\end{alltt}
and to find $m_n\|m_n\alpha\|\|m_n\beta\|$ (to \verb|prec| significant digits) we used
\begin{alltt}
LWProd[mn,\(\alpha\),\(\beta\),prec]
\end{alltt}

\smallskip

To get a sequences of these, we put (for $m<n$)
\begin{equation*}
\verb|lamnseq = KExpSeq[lam, m, n];|
\end{equation*}
and
\begin{equation*}
\verb|mnseq = const #[[3]] & /@ lamnseq;|
\end{equation*}
This picks the last entry of each vector for $\lambda^n$ in \verb|lamnseq|, and scales by \verb|const|. For a sequence of points $(m_n^{1/2}\lb m_n\alpha\rb,m_n^{1/2}\lb m_n\beta^2\rb)$, we used
\begin{alltt}
mnptseq = Map[SqrtMod1Pt[#, \(\alpha\), \(\beta\), prec] &, mnseq];
\end{alltt}

\subsection{Increasing Precision}
Since $\{m_n\}$ grows exponentially, we soon need to increase \verb|$MaxExtraPrecision| to calculate $m_n^{1/2}\lb m_n\alpha\rb$ and $m_n^{1/2}\lb m_n\beta\rb$ with any accuracy. (In Mathematica~8.0, the default is 50.) To compute these for a few thousand $m_n$'s, \verb|$MaxExtraPrecision=10000| works. For some of the examples (like computing $m_{Q_n}\|m_{Q_n}\alpha\|\|m_{Q_n}\beta\|$ when $m_{Q_n}$ has tens of millions of digits), we used \verb|$MaxExtraPrecision=10^250000000| (250 million).

\titleformat{\chapter}{}{}{0em}{\bf\huge}
%\printindex

%% ================================================================

%% ================================================================
\end{document}